\documentclass[10pt]{amsart}
\usepackage[margin =1in]{geometry}
\newcommand{\sym}{\mathrm{sym}}
\usepackage{amsfonts}
\usepackage{amsmath}
\usepackage{hyperref}
\usepackage{todonotes}
\usepackage{mathrsfs}
\usepackage{amssymb}
\usepackage{enumerate}

\usepackage{comment}
\usepackage{esint}
\newcommand{\T}{\mathbb{T}}
\newcommand{\E}{\mathbb{E}}
\newcommand{\EE}{\mathcal{E}}
\newcommand{\Z}{\mathbb{Z}}
\newcommand{\R}{\mathbb{R}}
\renewcommand{\div}{\mathrm{div}}
\newcommand{\vp}{\varphi}

\usepackage{mathtools}
\mathtoolsset{showonlyrefs}

\allowdisplaybreaks
\newcommand{\vt}{\vartheta}
\newcommand{\Hb}{\mathbb H}
\newcommand{\Vb}{\mathbb V}
\newcommand{\Wb}{\mathbb W}
\newcommand{\F}{\mathscr{F}}
\newcommand{\A}{\mathfrak{A}}
\newcommand{\M}{\mathcal{M}}
\renewcommand{\P}{\mathbb{P}}
\renewcommand{\E}{\mathbb{E}}
\newcommand{\D}{\mathscr{D}}
\numberwithin{equation}{section}
\def\theequation{\arabic{section}.\arabic{equation}}
\newcommand{\be}{\begin{eqnarray}}
	\newcommand{\ee}{\end{eqnarray}}
\newcommand{\ce}{\begin{eqnarray*}}
	\newcommand{\de}{\end{eqnarray*}}
\newtheorem{theorem}{Theorem}[section]
\newtheorem{lemma}[theorem]{Lemma}
\newtheorem{proposition}[theorem]{Proposition}

\theoremstyle{definition}
\newtheorem{remark}[theorem]{Remark}
\newtheorem{assumption}[theorem]{Assumption}
\newtheorem{convention}[theorem]{Convention}
\newtheorem{definition}[theorem]{Definition}

\newcommand{\Sol}{\mathrm{Sol}}

\renewcommand{\d}{\mathrm{d}}
\newcommand{\reg}{{\infty}}

\title[Navier--Stokes--Fourier system with thermal noise]{\LARGE The incompressible Navier--Stokes--Fourier system with  thermal noise}

\date{\today}

\author[Benjamin Gess]{\large Benjamin Gess}
\address[B. Gess]{Institut f\"ur Mathematik, Technische Universit\"at Berlin, 10623 Berlin, Germany \\
and\\
Max--Planck--Institut f\"ur Mathematik in den Naturwissenschaften\\
04103 Leipzig, Germany}
\email{Benjamin.Gess@mis.mpg.de}

\author[Max Sauerbrey]{\large Max Sauerbrey}
\address[M. Sauerbrey]{Max--Planck--Institut f\"ur Mathematik in den Naturwissenschaften\\
	04103 Leipzig, Germany}
\email{maxsauerbrey97@gmail.com}

\author[Zhengyan Wu]{\large Zhengyan Wu}
\address[Z. Wu]{Department of Mathematics, Technische Universit\"at M\"unchen, Boltzmannstr. 3, 85748 Garching, Germany}
\email{wuzh@cit.tum.de} 

\thanks{The first author acknowledges support by the Max Planck Society
	through the Research Group “Stochastic Analysis in the Sciences (SAiS)” and the DFG
	CRC/TRR 388 “Rough Analysis, Stochastic Dynamics and Related Fields”, Project A11.  The third author is supported by the US Army Research Office, grant W911NF2310230.  This research was partially carried out during the second author's visits at the TU Berlin supported by the European Union (ERC,
	FluCo, grant agreement No. 101088488). Views
	and opinions expressed are however those of the author(s) only and do not necessarily reflect those
	of the European Union or of the European Research Council. Neither the European Union nor the
	granting authority can be held responsible for them. We  thank Antonio Agresti for helpful comments on this manuscript.}

\begin{document}
	\maketitle
	
\begin{abstract}
	We establish a solution theory  for the  incompressible Navier--Stokes--Fourier system with thermal noise, posed on the three-dimensional torus. While in the incompressible  deterministic  setting the  equation for the velocity can be solved independently of the temperature, the inclusion of the effects of thermal fluctuations by means of the GENERIC framework leads to a nonlinear gradient noise term, which couples the dynamics of both variables. Therefore, the  analysis poses new challenges, which are absent in the deterministic incompressible Navier--Stokes--Fourier equations. In particular, the a priori estimates used in the deterministic setting are not readily generalizable, the noise introduces strongly nonlinear gradient terms and the total energy lacks convexity. \\
    These challenges are overcome in the present work by a novel variable transformation, and novel entropy dissipation estimates. Thereby, the existence of global-in-time weak solutions for $L_x^2$ initial data, the existence of local-in-time strong solutions for regular initial data, and weak-strong uniqueness are obtained. 
\end{abstract}

\section{Introduction}
In this paper, we establish a solution theory for the \emph{incompressible Navier-Stokes-Fourier system}  subject to \emph{thermal fluctuations}, posed on the \emph{three dimensional torus $\T^3$}, which reads
\begin{align}\begin{split}\label{Eq_intro}
		\partial_tu\,=\,&\nabla\cdot(\nabla_{\sym}u)\,-\,\Pi \nabla\cdot (u\otimes u)\,-\,\Pi \nabla\cdot(\sqrt{\vt}\circ\xi_{F}),\qquad \nabla\cdot u\,=\,0,\\
		\partial_t\vt \,=\, &\Delta \vt-\nabla\cdot(u\vt)\,+\,|\nabla_{\sym} u|^2\,-\,\nabla\cdot(\vt\circ \xi_G)\,-\, \sqrt{\vt}\nabla  u :\circ\, \xi_F, 
	\end{split}
\end{align}
where $\nabla_{\sym}u=\frac{1}{2}(\nabla u+\nabla u^T)$ denotes the symmetrized gradient and $\Pi u =  u -\nabla \Delta^{-1} (\nabla \cdot u)$ the Helmholtz projection onto  divergence free vector fields. Moreover,  
$\xi_F$ and $\xi_G$ are centered, jointly Gaussian random fields with covariance structure
\begin{align}\begin{split}\label{Eq_Noise_Correlation}
		\mathbb{E}[\xi_F^{ij}(s,x )\xi_G^{k}(t,y)]\,=\,&0,
		\\
		\mathbb{E}[ \xi_F^{ij}(s,x)\xi_F^{kl}(t,y)]\,=\,&\big(\delta_{ik} \delta_{jl}+\delta_{il}\delta_{jk}
		\big)\delta(t-s)F(x,y),
		\\
		\mathbb{E}[\xi_G^i (s,x)\xi_G^{j}(t,y)]\,=\,&  \delta_{ij}\delta(t-s)G(x,y) ,
	\end{split}
\end{align}
for $ 1\le i,j,k,l\le 3$ and sufficiently regular 
covariance kernels  $F,G\colon \T^3\times \T^3\to \R$. 
The unknowns  $(u,\vt)$ denote the state variables of an incompressible heat-conducting fluid, namely $u$ is the \emph{velocity} and  $\vt$  is the \emph{temperature} at a given space-time point $(t,x)$. 

For space-time white noise, that is, $F(x,y)=G(x,y) = \delta(x-y)$, \eqref{Eq_intro} is the incompressible version of the \emph{Landau--Lifshitz--Navier--Stokes equation}, cf.\ \cite[Chapter IX]{lifshitz}, and can also be derived from the \emph{GENERIC framework} \cite{Ottinger}, as we recall in Subsection \ref{der:gen} below. These equations join several challenges, including their super-critical nature, in the language of regularity structures, due to the irregularity of space-time white noise, as well as the occurrence of irregular coefficients and the lack of coercivity. In this work, we concentrate on the latter challenges by considering spatially more regular noise. Notably, nevertheless, in suitable joint scaling regimes of vanishing noise and de-correlation length, the correct large deviation behavior is retained \cite{FG23}. 
 
The use of Stratonovich calculus $\circ$ in \eqref{Eq_intro} is essential, since it leads to the  conservation of 
\emph{total energy }
\begin{equation}\label{eqn_enregy_intro}\EE (u,\vt) \,=\, \int_{\T^3} \frac{1}{2}|u|^2 \,+\, \vt \,\d x ,
\end{equation}
as can be seen informally by integration by parts. In contrast, an interpretation in It\^o sense would lead to a strict energy production, violating physical principles. We say that $(u,\vt)$ is \emph{globally admissible}, if $\P$-a.s, 
\begin{align}
	\sup_{t}\EE (u_t,\vt_t) \,\le\, \EE (u_0,\vt_0).
\end{align}
The  three {main results} of this work are:
\begin{enumerate}[(A)]
	\item\label{item_a} The global in time existence of {globally admissible weak martingale solutions}: Theorem \ref{thm-weak-existence}.
	\item\label{item_b} The local in time existence of {strong solutions} for regular data: Theorem \ref{Prop_ex_strong_solution}.
	\item\label{item_c} A corresponding {weak-strong uniqueness} principle: Theorem \ref{thm_WS_main}.
\end{enumerate}
Let us comment on the {challenges} when establishing these results. Since in the  case that $\xi_F = 0$, the velocity equation in \eqref{Eq_intro} can be solved independently of the equation for $\vt$, the fact that \eqref{Eq_intro} is a fully coupled  system is specific to the stochastic case: 
\begin{enumerate}[(i)]
	
	\item As, in contrast to the deterministic Navier--Stokes equation, energy is not dissipated, additional gradient estimates for  $u$ and $\vt$ are not readily available, making the construction of weak solutions more subtle. In the deterministic literature on compressible Navier--Stokes--Fourier equations, this is typically resolved 
	by closing additionally estimates in terms of the \emph{entropy} 
	\begin{equation}\label{eq_ent}
		\mathcal{S} \,=\, \int_{\T^3 }\log(\vt)\,\d x,
	\end{equation} 
	cf.\ \cite[Section 5.2]{feireisl_book} or \cite[Section 2.2.3]{feireisl}. It turns out however, that the entropy estimate for \eqref{Eq_intro} {does 
	 not close at all noise intensities}, see Appendix \ref{app:entropy} for details. Therefore, novel a priori estimates are required. 
	
	\item The lack of an entropy estimate is amplified by the presence of strongly nonlinear  gradient terms and {irregular coefficients} on the right-hand side of \eqref{Eq_intro}, which necessitate a control on $\nabla u$ and $\nabla \vt$ along  an approximation scheme. Moreover, as we detail in Subsection \ref{SS:transformation_is_necessary}, the {gradient nonlinearities do not admit a sign} after writing \eqref{Eq_intro} in It\^o form, so that relaxing the equation to an inequality as in \cite[Section 4.3.1]{feireisl_book} is not a solution.

	\item Regarding weak-strong uniqueness, a relative energy argument, see \cite{wiedemann}, is obstructed by the {lack of strict convexity of the energy} functional \eqref{eqn_enregy_intro}. Moreover, even at small noise intensities, an adaption of the {relative entropy method \cite{FN12} seems not to be possible}, due to the gradient noise terms on the right-hand side of \eqref{Eq_intro}.
\end{enumerate}

\noindent
These difficulties are addressed in this work by the following {novel techniques}:

\begin{enumerate}[(i)]
	
	\item We introduce a {mathematical entropy}
	\begin{equation}\label{eqn1009}
	-\int_{\T^3}\sqrt{2\vt} \,\d x.\end{equation}
     which leads to  an a priori estimate, on the $L^1_{\omega,t,x}$-norm of suitable gradient terms, cf. Subsection \ref{SS:transformation_is_necessary}. 
	
	\item We {eliminate the irregular coefficients} by a transformation of the SPDE, by writing the equations in terms of  $\psi   = \sqrt{2\vt}$ instead of $\vt$ itself. This results  in the equation 
	\begin{align}\begin{split}\label{Eq_new_var_formal}
			\partial_t\psi 
			\,=\, \Delta \psi \,+\, \frac{1}{\psi }\bigl(
			|\nabla \psi|^2 \,+ \,|\nabla_\sym u|^2 
			\bigr) \,-\, \nabla \cdot ( u\psi )\,-\, \nabla \cdot ( \psi \circ\xi_G) \,+\, \frac{1}{2}\psi \circ\nabla \cdot \xi_G\,-\,  \frac{1}{\sqrt{2}}\nabla  u :\circ \xi_F,
	\end{split}\end{align}
	for the new variable 
	and
	\begin{equation}\label{Eq_new_u}
		\partial_tu\,=\,\nabla\cdot(\nabla_{\sym}u)\,-\,\Pi \nabla \cdot (u\otimes u)\,-\,\frac{1}{\sqrt{2}}\Pi \nabla\cdot(\psi\circ \xi_{F}),\qquad \nabla\cdot u\,=\,0,
	\end{equation}
	for the velocity. While difficulties arising from the {gradient nonlinearities} still persist, these do now, after rewriting the equation in It\^o form, {admit a sign}. This allows, in contrast to \eqref{Eq_intro}, to give an attainable notion of weak solutions to \eqref{Eq_new_var_formal}--\eqref{Eq_new_u} in terms of an inequality, see Subsection \ref{SS:transformation_is_necessary} below.

	\item Passing to the new variable  $\psi$ turns the {energy} into the {strictly convex} functional
	\begin{equation}\label{eqn333}
		\EE (u,\psi)  \,=\, \frac{1}{2} \int_{\T^3} |u|^2 \,+\, \psi^2 \,\d x .
	\end{equation}
	This allows us to prove weak-strong uniqueness by establishing a bound on the {novel  relative energy}
	\begin{equation}\label{Eq_rel_energy_intro}\EE ( u,\psi | V, \phi) \,=\, \frac{1}{2} \int |u-V|^2 + |\psi-\Phi|^2\, \d x,
	\end{equation}
	for a weak solution $(u,\psi)$ and a strong solution $(V,\Phi)$. That this approach is intrinsic for the Navier--Stokes--Fourier system with  thermal noise is confirmed by the fact that all noise terms in the relative energy expansion cancel. Moreover, since {$\psi$ may have temporal jumps}, we employ {stochastic analysis tools for jump processes} when calculating the latter, which seems to be new regarding stochastic fluid equations as well.
\end{enumerate}
\noindent

We next recall how to rephrase the deterministic  Navier--Stokes--Fourier system in the General Equation for Non-Equilibrium Reversible-Irreversible Coupling (GENERIC) framework. Based on the fluctuation-dissipation principle (see \cite{Ottinger}), this gives  rise to the thermal noise terms in \eqref{Eq_intro}. We then discuss the necessity of rewriting the system in terms of $(u,\psi)$ in greater detail, introduce the solution concepts, comment further on the strategies of proofs, and conclude the introduction with comments on  related literature.

\subsection{Derivation from the  GENERIC framework}
\label{der:gen}
Equation \eqref{Eq_intro} can be obtained as the incompressible limit of the compressible Navier--Stokes--Fourier system subjected to thermal noise. The latter was originally derived by Landau and Lifshitz by means of the  fluctuation dissipation principle as described in \cite{lifshitz} and is   also referred to as the Landau--Lifshitz--Navier--Stokes equation. It  fits  moreover into the GENERIC framework proposed by \"Ottinger \cite{Ottinger},  which systematizes the derivation of thermal noise terms in  hydrodynamical equations. While consequently \eqref{Eq_intro} can be obtained  by applying the GENERIC framework to derive the full Landau--Lifshitz--Navier--Stokes equation and subsequently imposing $\nabla \cdot u = 0$ as well as a constant mass density, we detail in this section how to obtain it directly from \cite{Ottinger}.

To this end, we first consider  the deterministic situation, in which a heat-conducting, incompressible fluid, with normalized mass and constant specific heat, is modeled by the system 
\begin{align}
		\partial_tu\,=\,&\nabla\cdot \sigma \,-\, \nabla\cdot (u\otimes u)\,-\,\nabla p ,\qquad \nabla\cdot u\,=\,0,\\
	\partial_t\vt \,=\, &
	\nabla ( \kappa \nabla \vt )\,-\, \nabla\cdot(u\vt)\,+\,\sigma : \nabla_{\sym} u,
\end{align}
where $p$ is the pressure, $\kappa$ the thermal conductivity and $\sigma$ the viscous stress tensor. In the incompressible case, the latter is given by $\sigma = 2 \eta \nabla_{\sym} u$, where $\eta $ is the shear viscosity. Assuming  constant material coefficients $\kappa = 1$ and $\eta = 1/2$, we find that
\begin{align}\begin{split} \label{eqn1037}
		\partial_tu\,=\,&\nabla\cdot(\nabla_{\sym}u)\,-\,\Pi \nabla\cdot (u\otimes u),\qquad \nabla\cdot u\,=\,0,\\
		\partial_t\vt \,=\, &\Delta \vt-\nabla\cdot(u\vt)\,+\,|\nabla_{\sym} u|^2, 
	\end{split}
\end{align}
since $\Pi  \nabla p = 0$, i.e., \eqref{Eq_intro} with $\xi_F= 0$ and $\xi_G=0$. To derive the thermal noise term, one needs to recast this equation into the form 
\begin{align}\label{eqn1038}
	\partial_t \mathbf{z} \,=\, \mathcal{L} \cdot  \frac{\delta \mathcal{E}}{\delta \mathbf{z}} \,+\, 
	\mathcal{M} \cdot \frac{\delta \mathcal{S} }{\delta \mathbf{z}},
\end{align}
where $\mathbf{z} = (u,\vt)$,  $\EE$ is the total energy \eqref{eqn_enregy_intro} of the system, $\mathcal{S}$
is the entropy \eqref{eq_ent} and $\mathcal{L}$ and  $\mathcal{M}$ are   antisymmetric and symmetric non-negative definite operators, respectively. The notation $ \frac{\delta A}{\delta \mathbf{z}} $ stands here for the first  variation  of a functional so that 
\[
\frac{\delta \mathcal{E}}{\delta \mathbf{z}}\,=\, 
\begin{bmatrix}
	u\\1
\end{bmatrix},
\qquad \frac{\delta \mathcal{S} }{\delta \mathbf{z}}\,=\,\begin{bmatrix}
	0 \\1/\vt
\end{bmatrix} .
\]
One finds that the choice
\begin{align}\begin{split}\label{eqn_M_and_L}
\mathcal{L}
 \,&=\, 
\begin{bmatrix}
	-\Pi (\nabla \cdot ( (\cdot )\otimes u ) +  (\nabla (\cdot ))\cdot u ) &- \Pi (\vt \nabla (\cdot ))
	\\
	-\nabla \cdot  (\vt  (\cdot ))& 0
\end{bmatrix},\\
\mathcal{M}
\,&=\, 
\begin{bmatrix}
	- \Pi \nabla \cdot (\vt \nabla_{\sym} (\cdot))  & \Pi ( \nabla \cdot ( \vt (\cdot) \nabla_\sym u ))
	\\
	-\vt \nabla_\sym u : \nabla(\cdot )
	& \vt |\nabla_{\sym}u|^2(\cdot ) \,-\, \nabla (\vt^2 \nabla (\cdot ))
\end{bmatrix},
\end{split}
\end{align}
turns indeed \eqref{eqn1038} into \eqref{eqn1037}. Moreover,  we have the required conditions $ \mathcal{L} \frac{\delta \mathcal{S}}{\delta \mathbf{z}} =\mathcal{M} \frac{\delta \mathcal{E}}{\delta \mathbf{z}} =0$ and the bracket $\{A,B\}_{\mathcal{L}} = \frac{\delta A}{\delta \mathbf{z}}\cdot  \mathcal{L} \cdot \frac{\delta B}{\delta \mathbf{z}} $ associated to $\mathcal{L}$ satisfies Jacobi's identity
\begin{equation}\label{Jacobis_identity}
\{ A, \{ B, C\}_{\mathcal{L}}\}_{\mathcal{L}}\,+\, 
\{ B, \{ C,A \}_{\mathcal{L}}\}_{\mathcal{L}}\,+\, 
\{ C, \{ A,B\}_{\mathcal{L}}\}_{\mathcal{L}} \,=\, 0,
\end{equation}
for any triplet of  functionals $A,B$ and $C$.
The details of the latter  we defer to Appendix \ref{sec-app-A}.
 Thereby, all assumptions of the GENERIC framework are satisfied and thermal noise in accordance with the fluctuation dissipation relation is introduced by modulating \eqref{eqn1038} to
\begin{align}\label{eqn1039}
	\partial_t \mathbf{z} \,=\, \mathcal{L} \cdot  \frac{\delta \mathcal{E}}{\delta \mathbf{z}} \,+\, 
	\mathcal{M} \cdot \frac{\delta \mathcal{S} }{\delta \mathbf{z}} \,+\, \mathcal{B } \xi ,
\end{align}
where $\mathcal{B} \mathcal{B}^\top = \mathcal{M}$ and $\xi$ is space-time white noise in a space through which we factor the  linear operator $\mathcal{M}$. We choose
\begin{align}
	\mathcal{B}
	\,&=\, 
	\begin{bmatrix}
		-\Pi (\nabla \cdot  (\sqrt{\vt }(\cdot)_\sym) & 0
		\\
	- \sqrt{\vt} \nabla_{\sym} u :(\cdot) & -\nabla \cdot  ( \vt (\cdot) )
	\end{bmatrix},
\end{align}
acting on the product of matrix- and vector-valued functions, where $(\cdot)_\sym = \frac{1}{2}((\cdot)+ (\cdot )^\top )$ denotes the symmetrization map.
Therefore, we have
\begin{align}
	\mathcal{B}^\top
	\,&=\, 
	\begin{bmatrix}
		\sqrt{\vt} \nabla_\sym (\cdot)  & -\sqrt{\vt} \nabla_{\sym} u (\cdot )
		\\
		0 & \vt \nabla (\cdot )
	\end{bmatrix}
\end{align}
and consequently the desired factorization of $\mathcal{M}$. 

Now let $( \xi_{\mathrm{mat}},  \xi_{\mathrm{vec}})$ be a space-time white noise, with values in the product of matrices and vectors. Then the symmetrization of $\xi_\mathrm{mat} $ is a centered Gaussian with covariance
\begin{equation}
\mathbb{E}\bigl[ \xi_{\mathrm{mat},\sym}^{ij}(s,x)\xi^{kl}_{\mathrm{mat},\sym}(t,y)\bigr]\,=\,\frac{1}{2}\big(\delta_{ik} \delta_{jl}+\delta_{il}\delta_{jk})\delta(t-s)\delta (x-y),
\end{equation}
for  $ 1\le i,j,k\le 3$. Thereby, after rewriting $\mathcal{B} \xi$ in \eqref{eqn1039} in terms of $(\xi_{\mathrm{mat},\sym},\xi_{\mathrm{vec}})$ and convolving these in space, \eqref{Eq_intro}  indeed becomes the Stratonovich interpretation of \eqref{eqn1039}.

\subsection{Noise regularity and  It\^o-to-Stratonovich corrections}\label{SS:noise_ass}
In order to  write \eqref{Eq_intro} in It\^o form, we impose some structural assumptions on the noises: 
 For the remainder of this manuscript, we write the temporally white but spatially correlated Gaussian noise terms $\xi_F$ and $\xi_G$ from \eqref{Eq_intro} and \eqref{Eq_new_var_formal}--\eqref{Eq_new_u}, as 
\begin{equation}\label{Eq_noise_expansion}
	\xi_F=\sum_{n\in\Z^3}f_n\dot{B}_n,\qquad  \xi_G=\sum_{n\in\Z^3}g_n\dot{W}_n,
\end{equation}
for independent, symmetric $3\times 3$-matrix valued Brownian motions $(B_n)_{n\in \Z^3}$ with covariance  
\begin{equation}\label{Eq_noise_matrix_covariation}
	\E[ B_n^{ij} (1)B_n^{kl} (1)] \,=\,  \delta_{ik} \delta_{jl}+\delta_{il}\delta_{jk},
\end{equation}
and  standard $3$-dimensional Brownian motions $(W_n)_{n\in \Z^3}$. The correlation functions  from \eqref{Eq_Noise_Correlation}  can then be recovered via the formulae
\[
F(x,y) \,=\, \sum_{n\in \Z^3} f_n(x)f_n(y) ,\qquad 
G(x,y) \,=\, \sum_{n\in \Z^3} g_n(x)g_n(y) .
\]
Thereby, the following assumption, which we impose throughout, requires a sufficiently smooth spatial correlation structure of $\xi_F$ and $\xi_G$.

\begin{assumption}[Regularity and symmetries of the noise]\label{Ass:noise}
	The families $(f_n)_{n\in \Z^3}$ and $(g_n)_{n\in \Z^3}$ {consist of continuously differentiable functions} satisfying
	\begin{align}\begin{split}
			&F_1=\sum_{n\in\Z^3}|f_n|^2,\qquad \sum_{n\in\Z^3}f_n\nabla f_n=0,\qquad F_2=\sum_{n\in\Z^3}|\nabla f_n|^2,\\
			&G_1=\sum_{n\in\Z^3}|g_n|^2,\qquad \sum_{n\in\Z^3}g_n\nabla g_n=0,\qquad G_2=\sum_{n\in\Z^3}|\nabla g_n|^2
			,
			\label{Eq_decay_condition_FG}
		\end{split}
	\end{align}
	for constants $F_1,F_2,G_1, G_2<\infty$ independent of $x$.  
\end{assumption}
The cancellations appearing in \eqref{Eq_decay_condition_FG} are actually equivalent to $F_1$ and $G_1$ being independent of $x$, but anyways displayed for later reference. These conditions  express that the driving noises are probabilistically stationary in space, and are satisfied by any spatial mollification of white noise.
In any case, with the above choice of noise, the It\^o formulation of \eqref{Eq_new_var_formal}--\eqref{Eq_new_u} reads
\begin{align}\begin{split}\label{Eq_new_var}
		\d u\,=\,&\nabla\cdot(\nabla_{\sym}u) \,\d t\,-\, \Pi \nabla \cdot (u \otimes u)\,\d t\,+\, \frac{F_1}{4}\Delta u\,\d t\, -\, \frac{1}{\sqrt{2}}\sum_{n \in \Z^3}\Pi \nabla\cdot(\psi f_n   \,\d B_n),\qquad \nabla\cdot u=0 ,
		\\
		\d \psi\,  =\, &(1+F_1{/2} +G_1/2)\Delta \psi \, \d t + \,\frac{1}{\psi }\bigl(
		|\nabla \psi|^2 \,+\, |\nabla_\sym u|^2 
		\bigr) \,\d t\,-\, \nabla \cdot ( u\psi )\,\d t \,-\, (F_2{/2}+ G_2/8) \psi \,\d t \\&- \sum_{n\in \Z^3} \nabla \cdot ( \psi g_n \,\d W_n ) \,
		+\, \frac{1}{2} \sum_{n \in \Z^3} \psi \nabla \cdot ( g_n \, \d  W_n) \,-\,  \frac{1}{\sqrt{2}}\sum_{n\in \Z^3}(\nabla  u : f_n\,\d  B_n),
		\\ (u(0),\psi(0))\, =\,&( u_0,\psi_0).
	\end{split}
\end{align}
The derivation of the latter is deferred to  Appendix \ref{sec-app-B}.
\subsection{The necessity of the  transformation $(u,\vt) \rightarrow(u,\psi)$} \label{SS:transformation_is_necessary}
To contrast \eqref{Eq_new_var}, we also consider the It\^o formulation of the equation for $\vt$ from \eqref{Eq_intro}, which reads
	\begin{align}\begin{split}\label{eqn1001}
		\d \vt\,  =\, &(1+F_1{/2} +G_1/2)\Delta \vt \, \d t + \,\bigl( (1+F_1/2) |\nabla_\sym u|^2 \,-\,F_1  |\nabla  \sqrt{\vt}|^2
		\bigr) \,\d t\,-\, \nabla \cdot ( u\vt  )\,\d t \,\\&-\, F_2 \vt \,\d t \,- \sum_{n\in \Z^3} \nabla \cdot ( \vt  g_n \,\d W_n ) \,-\,  \sum_{n\in \Z^3}( \sqrt{\vt }\nabla  u : f_n\,\d  B_n),
	\end{split}
\end{align}
as can be seen by direct calculation or from Theorem \ref{prop_prop_SL} below. Regarding the construction of weak solutions, one of the main advantages of \eqref{Eq_new_var} compared to the above is that there the gradient nonlinearities admit a sign. 
Indeed, identifying them along an approximation procedure is a central challenge in the construction of weak solutions. Next to the informal energy balance \eqref{eqn333}, we use for the this purpose an additional a priori estimate for \eqref{Eq_intro}, which is a bound on the dissipation of the quantity \eqref{eqn1009}. 
By means of It\^o's formula, the latter is seen to be a semimartingale decomposing into 
\begin{align}\label{eqn1010}
	- \int_{0}^t\int_{\T^3}\frac{	|\nabla \vt|^2}{\sqrt{8 \vt^3}}
 \,+\, \frac{|\nabla_\sym u|^2 }{\sqrt{2 \vt}} \,\d x\,\d s\,+\, (F_2{/2}+ G_2/8)\int_0^t \int_{\T^3} 
	 \sqrt{2 \vt} 
	\,\d x \, \d s
\end{align}
and a local martingale. Since both, the functional \eqref{eqn1009} itself and the second term of \eqref{eqn1010} can be controlled  using the total  energy, we obtain a bound on
\begin{align}\label{eqn1011}
	\E \biggl[
	\int_0^T \int_{\T^3} \frac{	|\nabla \vt|^2}{\sqrt{8 \vt^3}}
	\,+\, \frac{|\nabla_\sym u|^2 }{\sqrt{2 \vt}} \,\d x\,\d t
	\biggr],
\end{align}
after taking the expectation and a stopping time argument, if necessary. This estimate, together with a uniform bound on the energy turns out to not be enough to identify the nonlinearity $ (1+F_1/2) |\nabla_\sym u|^2 \,-\,F_1  |\nabla  \sqrt{\vt}|^2
$ in \eqref{eqn1001} along an approximation procedure for \eqref{Eq_intro}. 

One of the key observations of this work is that this situation can be improved by passing to the system \eqref{Eq_new_var} for $(u,\psi)$. 
In these variables, the estimated quantity \eqref{eqn1011} can be expressed as 
\begin{align}\label{eqn1012}
	\E \biggl[
	\int_0^T \int_{\T^3}\frac{1}{\psi}\bigl(	|\nabla \psi|^2
	\,+\, |\nabla_\sym u|^2\bigr) \,\d x \,\d t
	\biggr],
\end{align}
providing  an $L^1_{\omega, t, x}$-estimate on the gradient nonlinearity  on right-hand side of \eqref{Eq_new_var}.
Then, by weak-${}^*$ compactness in the space of Radon measures of the integrand in \eqref{eqn1012}  along a  suitable approximation scheme, we are able to construct solutions to the relaxed equation
\begin{align}\label{eqn1021}
\d \psi\,  =  q \,\d t\,+ \, &(1+F_1{/2} +G_1/2)\Delta \psi \, \d t\,-\, \nabla \cdot ( u\psi )\,\d t \,-\, (F_2{/2}+ G_2/8) \psi \,\d t \\&- \sum_{n\in \Z^3} \nabla \cdot ( \psi g_n \,\d W_n ) \,
+\, \frac{1}{2} \sum_{n \in \Z^3} \psi \nabla \cdot ( g_n \, \d  W_n) \,-\,  \frac{1}{\sqrt{2}}\sum_{n\in \Z^3}(\nabla  u : f_n\,\d  B_n),
\end{align}
where $q $ is a random measure on  $[0,T]\times \T^3$. Since now the nonlinearity admits  a sign, Fatou type arguments allow us to show  that this measure satisfies
\begin{align}\label{eqn1022}
	q \,\ge\, \frac{1}{\psi}(|\nabla \psi|^2 \,+ \,|\nabla_\sym u|^2),
\end{align}
relating it to the desired term. This is similar to the relaxations  in the weak solution theory for the compressible Navier--Stokes--Fourier system from   \cite{feireisl_book, feireisl}, where available a priori estimates are also too weak to identify the nonlinearity.

Let us point out that, while passing to this inequality may seem like a strong weakening of the notion of solutions, the above is compensated by global admissibility, i.e., the estimate 
\begin{align}\label{Eq:glob_Ad}
	\sup_{t}\EE (u_t,\psi_t) \,\le\, \EE (u_0,\psi_0),\qquad \text{$\P$-a.s.,}
\end{align} for the constructed solutions. Indeed, assuming $u$ solves the velocity equation from \eqref{Eq_new_var}, while $(\psi,q)$ satisfy the relaxed equations \eqref{eqn1021}--\eqref{eqn1022}, an informal application of It\^o's formula yields
\[
\EE(u_t, \psi_t ) \,=\, \EE(u_0,\psi_0)\,+\,\int_{[0,t]\times \T^3} \psi \, \d q \,-\, \int_0^t \int_{\T^3} |\nabla \psi|^2 \,+ \,|\nabla_\sym u|^2 \,  \d x \, \d s .
\]   
Then \eqref{Eq:glob_Ad} dictates that the above is actually an equality and the same holds for \eqref{eqn1022}. These formal considerations are confirmed by the fact that we are able to prove weak--strong uniqueness of globally admissible weak solutions, i.e., that given the existence of a  strong solution, weak solutions in the above sense are unique.

In conclusion, only after passing to the variables $(u,\psi)$ we obtain a meaningful notion of weak solutions, given the expected a priori estimates for the incompressible Navier--Stokes--Fourier system with thermal noise. This situation is  for example comparable to the introduction of renormalized solutions for the Boltzmann equation in \cite{DP_Lions_Boltzmann}, where only after   composing the density function with $\log(1 +\cdot )$ an attainable notion of  solutions is  defined.   In any case,  the temperature $\vt$ can of course be reconstructed via $\vt = \psi^2/2$.

\subsection{Definition of weak and strong solutions}\label{SS:defi_sol}

Here, we rigorously define  weak and strong solutions to \eqref{Eq_new_var}, for which we achieve the assertions \eqref{item_a}, \eqref{item_b} and \eqref{item_c}. In both, we require the variable $\psi$ to be at least non-negative, as it models the (rescaled) square-root of the  temperature variable $\vt$. For weak solutions moreover, we replace the equation for $\psi$ by its relaxed version \eqref{eqn1021}--\eqref{eqn1022} and we define global admissibility in a rigorous fashion. We remark that we use throughout this manuscript the following convention, when giving meaning to nonlinearities involving $1/\psi$.
\begin{convention}\label{conv_inverse}For a function  $\psi \colon [0,T]\times \T^3 \to \R$ we set $1/\psi =\infty$ on the set $\{\psi \le  0\}$.
\end{convention}

\begin{definition}[Globally admissible weak martingale solution to \eqref{Eq_new_var}]\label{defi_solution}
	Let Assumption \ref{Ass:noise} be satisfied, $u_0 \in L^2(\T^3; \R^3)$ be divergence-free and  $\psi_0\in L^2(\T^3)$ be non-negative.
	A  \emph{weak martingale solution 
		to \eqref{Eq_new_var}} 
	consists of a probability space $(\Omega,\A,\P)$, a filtration $\F$ satisfying the usual conditions, independent $3$-dimensional standard $\F$-Brownian motions $(W_n)_{n\in \Z^3}$ and $3\times 3$-matrix valued $\F$-Brownian motions $(B_n)_{n\in \Z^3}$ with covariance structure \eqref{Eq_noise_matrix_covariation}, a weakly continuous and adapted  $L^2(\T^3;\R^3)$-valued process
	$u$, and  progressively measurable   $\psi \in L^\infty(0,T;L^2( \T^3))$ and  $q\in \M_+([0,T]\times \T^3)$, $\P$-a.s., such that the following holds:
	\begin{enumerate}[(i)]
		\item \label{Item_defi_sol_1} We have the additional regularity {$\nabla u\in L^{7/5}([ 0,T]\times \T^3 ;\R^{3\times 3})$ and $\nabla \psi \in  L^{7/5}([0,T]\times \T^3;\R^3)$}, $\P$-almost surely.
		\item  \label{Item_defi_sol_2} It holds  $\psi \ge 0$, $\P\otimes \d t \otimes \d x$-a.e., and  $q \ge \frac{1}{\psi}(|\nabla \psi|^2 + |\nabla_\sym u|^2)$,  as measures  on $[0,T]\times \T^3$, $\P$-almost surely. 
		\item  \label{Item_defi_sol_3} For every divergence-free $\vp\in C^{\reg}(\T^3;\R^3)$ it holds
		\begin{align}\begin{split}
				\label{eqn9}& \int_{\T^3}  u_t\cdot \vp\,\d x \,-\, 
				\int_{\T^3} u_0 \cdot \vp\,\d x
				\\&\quad=\,  \int_0^t \int_{\T^3} -  \Bigl(\frac{1}{2}+ \frac{F_1}{4}\Bigr) \nabla u: \nabla  \vp \,+\, u\otimes u:\nabla \vp \,\d x\, \d s \,+\, \frac{1}{\sqrt{2}}\sum_{n \in \Z^3}\int_0^t \int_{\T^3}\psi f_n \nabla \vp\,\d x :\d B_{n,s},
			\end{split}
		\end{align}
		and $\nabla \cdot u_t = 0$ for all $t\in [0,T]$, $\P$-almost surely.
		\item  \label{Item_defi_sol_4}
		For every $\zeta\in C_c^{\reg}([0,T) \times \T^3)$ it holds
		\begin{align}\begin{split}\label{Eq_psi_weak}
				&-\,
				\int_0^T \int_{\T^3} \psi  \partial_t \zeta  \, \d x\, \d t \,-\, \int_{\T^3}
				\psi_0 \zeta
				\,\d x 
				\\&\quad=\,  \int_0^T \int_{\T^3} - \bigl(1+ {F_1}/2 +G_1/2\bigr) \nabla \psi\cdot \nabla  \zeta \,+\,\psi u\cdot \nabla \zeta   \,-\, (F_2/2+G_2/8) \psi \zeta \,\d x\,\d t \,+\, \int_{[0,T]\times\T^3} \zeta \,\d q
				\\&\qquad + \sum_{i=1}^3
				\sum_{n \in \Z^3}\int_0^T \int_{\T^3} \psi g_n \partial_i \zeta  + \frac{1}{2} \psi  \zeta \partial_ig_n \,\d x \, \d W_{n,t}^i
				\, +\, \frac{1}{\sqrt{2}}\sum_{i,j =1}^3\sum_{n \in \Z^3}\int_0^T \int_{\T^3}u_j\partial_i (\zeta f_n) \,\d x \,\d B_{n,t}^{ji},
			\end{split}
		\end{align}  $\P$-almost surely.
	\end{enumerate}
	A weak martingale solution to \eqref{Eq_new_var} is moreover called \emph{globally admissible}, whenever 
	\begin{equation}\label{energy-inequality-intro}\mathrm{ess \, sup}_{t\in [0,T]}
		\frac{1}{2}\int_{\T^3}
		|u_t|^2 +\psi_t^2 \,\d x \,\le\, 
		\frac{1}{2}\int_{\T^3}
		|u_0|^2 +\psi_0^2 \,\d x,
	\end{equation}
	$\P$-almost surely.
\end{definition}
\begin{remark}Let us comment on the above definition. 
	\begin{enumerate}[(i)]\label{rem_weak_sol}
		\item \label{rem_weak_sol_I1} 
		We call $\psi$ and $q$ progressively measurable, if their restrictions to $[0,t]$ yield  weakly-${}^*$ $\F_t$-measurable maps, for any $t\in [0,T]$. E.g., for $q$ that means that the random variable $\int_{[0,t]\times \T^3} \zeta \d q$ is $\F_t$-measurable for each $\zeta \in C([0,t] \times \T^3)$. Since the filtration is assumed to be right-continuous, this is equivalent to $\psi$ and $q$ being adapted in the sense of distributions as defined in \cite[Definition 2.2.13]{BFH}. 
		\item \label{rem_weak_sol_I2} The regularity required in Definition \ref{defi_solution}~\eqref{Item_defi_sol_1} is the one resulting from our proof of existence of globally admissible weak martingale solutions given in Section \ref{Sec:weak_ex} and plays a role in the proof of weak-strong uniqueness given in Section \ref{Sec:WS_uniqueness}. It can also be used to obtain improved integrability of $u$ and $\psi$: Together with the already assumed $L^\infty_tL^2_x$-regularity of $\psi$, e.g., it implies  
		\[
		\| \psi  \|_{L^{7/5}(0,T; {L}^{21/8}(\T^3 )) } \,\lesssim \, \|\psi \|_{L^{7/5}(0,T;W^{1,7/5}(\T^3 ))} \,<\,\infty ,
		\]
		$\P$-a.s., using the Sobolev embedding theorem. Then H\"older's inequality  yields that
		\begin{equation}\label{eqn2727724}
			\|\psi \|_{L^{7/3}([0,T]\times\T^3)} \,\le\, \| \psi \|_{L^\infty(0,T;L^2(\T^3))}^{2/5}\| \psi  \|_{L^{7/5}(0,T; {L}^{21/8}(\T^3 )) }^{3/5} \,<\,\infty,
		\end{equation}
		while  analogous considerations for $u$ result in 
		\begin{equation}\label{eqn2727724u}
			\|u\|_{L^{7/5}(0,T;W^{1,7/5} (\T^3;\R^3))}\,+\, 	\|u\|_{L^{7/3}([0,T]\times\T^3;\R^3)} \,<\,\infty,
		\end{equation}
		$\P$-almost surely.
		\item
		To give meaning to the remaining properties \eqref{Item_defi_sol_2}--\eqref{Item_defi_sol_4}, the central  consequence of the regularity assertion from Definition \ref{defi_solution}~\eqref{Item_defi_sol_1} is that $\nabla_\sym u$ and $\nabla \psi$ are defined as functions, which is needed in \eqref{Item_defi_sol_2}. In particular, the only terms on right-hand sides of \eqref{eqn9} and  \eqref{Eq_psi_weak}, which need more than the already imposed  $L^\infty_tL^2_x$-regularity of $u$ and $\psi$ to be defined, are the ones stemming from the Laplacian. While, as written above, we use that $\nabla u$ and $\nabla\psi$ lie in $L^1_{t,x}$ by \eqref{Item_defi_sol_1} to define the latter, this could be avoided by a further integration by prats. 
		\item \label{rem_weak_sol_I3} The variable $\psi$ is a priori only defined as a random function on $[0,T]\times \T^3$. However, we show in Lemma \ref{Lemma_cadlag} below that, as a consequence of Definition \ref{defi_solution}~\eqref{Item_defi_sol_4}, $\psi$ has a version which is c\`adl\`ag in the weak topology of $L^2(\T^3)$.  Throughout, this version is denoted by $\psi^+$  and the resulting left-continuous version by $\psi^-$.
	\end{enumerate}
\end{remark}

We also give our definition of strong solutions to \eqref{Eq_new_var}. Here, we do not pass to the relaxation \eqref{eqn1021}--\eqref{eqn1022} of the equation for $\psi$. Moreover, we do not define a notion of global admissibility for strong solutions as it turns out that they automatically conserve the total  energy \eqref{eqn333}.

\begin{definition}[Local strong solution] \label{defi_strong_sol}
	Let Assumption \ref{Ass:noise} hold, $(\Omega,\A,\P)$ be a probability space equipped with a filtration $\F$ satisfying the usual conditions and $3$-dimensional standard $\F$-Brownian motions $(W_n)_{n\in \Z^3}$ and $3\times 3$-matrix valued $\F$-Brownian motions $(B_n)_{n\in \Z^3}$ with covariance structure \eqref{Eq_noise_matrix_covariation} be independent. Then a local strong solution to \eqref{Eq_new_var}, defined up to a stopping time $\tau >0  $, is a tuple of continuous, adapted  processes $U \colon [0,\tau ) \to C^1(\T^3;\R^3)$ and $\Psi \colon [0,\tau ) \to C^1(\T^3)$, such that for a strictly increasing localizing sequence $0\le \tau_j\nearrow \tau $ the following holds for any $j\in \mathbb N$:
	\begin{enumerate}[(i)]
		\item \label{Item_pos_Phi} We have $\mathrm{inf_{(t,x) \in [0,\tau_j] \times \T^3}} \Psi(t,x)>0$, $\P$-almost surely.
		\item \label{Item_U_eq} For every divergence-free $\vp\in C^{\reg}(\T^3;\R^3)$ it holds
		\begin{align}\begin{split}\label{Eq_V_strong}
				& \int_{\T^3}  U_t\cdot \vp\,\d x \,-\, 
				\int_{\T^3} U_0 \cdot \vp\,\d x
				\\&\quad=\,  \int_0^t \int_{\T^3} -  \Bigl(\frac{1}{2}+ \frac{F_1}{4}\Bigr) \nabla U: \nabla  \vp \,+\, U\otimes U:\nabla \vp \,\d x\,\d s \,+\, \frac{1}{\sqrt{2}}\sum_{n \in \Z^3}\int_0^t \int_{\T^3}\Psi f_n \nabla \vp\,\d x :\d B_{n,s},
			\end{split}
		\end{align}
		and $\nabla \cdot U_t = 0$  for all $t\in [0,\tau_j]$, $\P$-almost surely.
		\item \label{Item_Psi_eq}
		For every $\eta\in C^{\reg}( \T^3)$ it holds
		\begin{align}\begin{split}\label{Eq_phi_strong}
				&\int_{\T^3} \Psi_t  \eta  \, \d x  \,-\, \int_{\T^3}
				\Psi_0 \eta
				\,\d x \,=\,  \int_0^t \int_{\T^3}  \frac{\eta}{\Psi} \bigl(
				|\nabla \Psi|^2+|\nabla_\sym U|^2
				\bigr) \,\d x \, \d s
				\\&\qquad+\int_0^t \int_{\T^3} - \bigl(1+ {F_1}/2 +G_1/2\bigr) \nabla \Psi\cdot \nabla  \eta \,+\,\Psi U\cdot \nabla \eta   \,-\, (F_2/2+G_2/8) \Psi \eta \,\d x\,\d s 
				\\&\qquad + \sum_{i=1}^3
				\sum_{n \in \Z^3}\int_0^t \int_{\T^3} \Psi g_n \partial_i \eta  + \frac{1}{2} \Psi \eta \partial_ig_n   \,\d x \, \d W_{n,s}^i
				\,-\, \frac{1}{\sqrt{2}}\sum_{n \in \Z^3}\int_0^t\int_{\T^3}\eta f_n \nabla U\,\d x :\d B_{n,s},
			\end{split}
		\end{align} for all $t\in [0,\tau_j ]$, $\P$-almost surely.
	\end{enumerate}
\end{definition}
\begin{remark}\label{rem_strong_sol}
	Also regarding Definition \ref{defi_strong_sol} some comments are in order.
	\begin{enumerate}[(i)]
		\item The  $C^0_tC^1_x$-regularity of $U$ and $\Psi$ required in Definition \ref{defi_strong_sol} is essentially tailored to the proof of weak-strong uniqueness given in Section \ref{Sec:WS_uniqueness}. At the same time, the fact that we allow for strong solutions which are defined only up to a stopping time comes from  the proof of existence given in Section \ref{Sec_strong_ex}. Whether strong solutions  to the incompressible Navier--Stokes equation exist globally in time in $d=3$ is of course a famous open problem in deterministic fluid mechanics.
		\item \label{Item_Rem_Strong_sol2} As remarked above, strong solutions have sufficient regularity in order to justify the calculations leading to the conservation of the total energy \eqref{eqn333} and are thereby automatically globally admissible, in the sense that they satisfy \eqref{Eq:glob_Ad} with the supremum taken over  $[0,\tau)$. The proof of this is given in Lemma \ref{prop_cons_of_energy} below. Along the same vein, the imposed regularity allows to characterize strong solutions through the incompressible  Navier--Stokes--Fourier system written in terms of the temperature, as we show in Proposition  \ref{prop_prop_SL}. Thereby, our results concerning strong solutions can be rephrased as results on strong solutions to \eqref{Eq_intro}.
	\end{enumerate}
\end{remark}
\subsection{Proof strategies}
\label{sec:ideas_of_proof}

We comment on the proof strategies for  assertions \eqref{item_a}, \eqref{item_b} and \eqref{item_c}. Their precise statement is deferred to the beginning of their devoted sections, see Theorem \ref{thm-weak-existence}, Theorem \ref{Prop_ex_strong_solution} and  Theorem \ref{thm_WS_main} below. Let us mention that, while we focused in Subsection \ref{SS:transformation_is_necessary} on challenges when proving the weak existence of solutions to the system \eqref{Eq_intro} for $(u,\vt)$, passing to the equation  for $(u,\psi)$ yields a key advantage in all three endeavors.

\medskip
\noindent\textbf{\eqref{item_a}  Global in time weak  existence: Theorem \ref{thm-weak-existence}.} The proof of weak existence is based on a stochastic compactness argument, see, e.g., the works \cite{BF20, BFH, brzesniak_moytl, FG95,MR2118862} in the context of fluid equations. 
As detailed in Subsection \ref{SS:transformation_is_necessary}, passing to the variables $(u,\psi)$ alleviates a central issue regarding the proof of weak existence. Moreover, the most  natural a priori estimate of the equation is the pathwise energy estimate
\begin{equation}
	\mathrm{ess \, sup}_{t\in [0,T]}
	\frac{1}{2}\int_{\T^3}
	|u_t|^2 + \psi_t^2 \,\d x \,\le \, 
	\frac{1}{2}\int_{\T^3}
	|u_0|^2 + \psi_0^2 \,\d x,
\end{equation}
for solutions to \eqref{Eq_new_var}. Next to this, as pointed out in Subsection \ref{SS:transformation_is_necessary}, there is also a bound on 
\begin{align}\label{eqn1023}
\E \biggl[
\int_0^T \frac{1}{\psi}\bigl(	|\nabla \psi|^2
\,+\, |\nabla_\sym u|^2\bigr)  \,\d t
\biggr],
\end{align}
in terms of the initial data, which can be obtained by integrating the equation for $\psi$ in \eqref{Eq_new_var} in space.
The main challenge is to find an approximation to \eqref{Eq_new_var}, which is compatible with these estimates and also ensures non-negativity of the variable $\psi$ in the limit.  We achieve this with the system
\begin{align}\begin{split}\label{Eq_approx_intro}
\d u_{\delta,\epsilon} \, =\, &  \nabla\cdot(\nabla_{\sym}u_{\delta,\epsilon})\,\d t \,-\, \epsilon \Delta^2 u_{\delta,\epsilon}\, \d t\,-\,
				\Pi \div   (u_{\delta,\epsilon} \otimes   u_{\delta,\epsilon})  \,\d t\,+\, \frac{F_1}{4}\Delta u_{\delta,\epsilon}\, \d t \\& - \frac{1}{\sqrt{2}}\sum_{n \in \Z^3}\Pi \nabla\cdot(\psi_{\delta,\epsilon} f_n  \, \d B_n)
				,\qquad \nabla \cdot u_{\delta,\epsilon}\,=\, 0,		\\
		\d \psi_{\delta,\epsilon}  \,=\, &(1+F_1 /2+G_1/2)\Delta \psi_{\delta,\epsilon} \,\d t\, -\,
		\epsilon\Delta^2 \psi_{\delta,\epsilon}\,
		+ \,h_\delta(\psi_{\delta,\epsilon})\bigl(
		|\nabla \psi_{\delta,\epsilon}|^2 \,+\, |\nabla_\sym u_{\delta,\epsilon}|^2+ \epsilon
		\bigr) \,\d t
		\\&- \,\nabla \cdot( u_{\delta,\epsilon}\psi_{\delta,\epsilon} )\,\d t\, -\, (F_2/2+ G_2/8) \psi_{\delta,\epsilon}\, \d t \\&- \sum_{n\in \Z^3} \nabla \cdot ( \psi_{\delta,\epsilon} g_n \,\d W_n ) \,
		+\, \frac{1}{2} \sum_{n \in \Z^3} \psi_{\delta,\epsilon} \nabla \cdot ( g_n \, \d  W_n) \,-\,  \frac{1}{\sqrt{2}}\sum_{n\in \Z^3}(\nabla  u_{\delta,\epsilon} : f_n\,\d  B_n),
		\\ (u_{\delta,\epsilon}(0),\psi_{\delta,\epsilon}(0)) \,=\,&( u_0,\psi_0),
	\end{split}\end{align}
for $\delta,\epsilon\in (0,1)$,
and taking first $\delta \to 0$ and then $\epsilon \to 0$. The function   $h_{\delta}(r)$ appearing in the above is an approximation of  $ 1/r$ according to the following assumption.

\begin{assumption}\label{Ass:nu_N}
	Let $h_\delta \in C_b^1(\mathbb{R})$ be decreasing on $\mathbb{R}$ such that $h_\delta(r) = 1/r$ for all $r \ge \delta$, and
	$$
	h_\delta(r) \nearrow 
	\begin{cases}
		1/r, & r>0,\\
		\infty, & r\le 0,
	\end{cases}
	$$
	as $\delta \to 0$.
\end{assumption}
Let us describe the roles of the different modifications of the originial system: Firstly, replacing the singular nonlinearity $1/r$ by $h_\delta$ slightly tames the nonlinearities in the equation for $\psi_{\delta,\epsilon}$. Together with the regularizing effect of the operator $-\epsilon\Delta^2$ added to both equations, this allows us to construct weak martingale solutions to \eqref{Eq_approx_intro} based on a standard Galerkin approximation using solely the energy estimate 
\begin{align}\label{eqn1024}
	\frac{1}{2}\int_{\T^3}
	|u_{\delta,\epsilon}(t)|^2 + \psi_{\delta,\epsilon}^2(t) \,\d x\,+\, \epsilon \int_0^t\int_{\T^3} | \Delta u_{\delta,\epsilon} |^2 + | \Delta \psi_{\delta,\epsilon} |^2\,\d x\,\d t  \,\le \, 
	\frac{1}{2}\int_{\T^3}
	|u_0|^2 + \psi_0^2 \,\d x\,+\, \epsilon t
\end{align}
for \eqref{Eq_approx_intro}. 
Here, the special choice of $h_\delta$ is relevant, since $rh_\delta(r) \le 1$ for all $r\in \R$. We remark that at this point, we do not know whether $\psi_{\delta,\epsilon}$ is non-negative, since the function $h_\delta(r)$ lacks the repulsive effect of $1/r$ for small values of $r$. 

Then, to pass to the limit $\delta\to 0$, we close also the approximate version of \eqref{eqn1023}, namely 
\begin{equation}\label{eq1026}
\E \biggl[
\int_0^T \int_{\T^3} 
h_\delta(\psi_{\delta,\epsilon}) \bigl(|\nabla \psi_{\delta,\epsilon}|^2 + |\nabla_{\mathrm{sym}} u_{\delta,\epsilon}|^2 + \epsilon \bigr)
\,\d x\,\d t 
\biggr]\,\le\, C(u_0,\psi_0,T),
\end{equation}
independently of $\epsilon$ and $\delta$. This yields tightness in the weak topology of Radon measures, which together with the estimate \eqref{eqn1024}, which is uniform in $\delta$, yields sufficient tightness to identify its limit as a solution to 
\begin{align}
		\d u_{\epsilon} \, =\, &  \nabla\cdot(\nabla_{\sym}u_{\epsilon})\,\d t \,-\, \epsilon \Delta^2 u_{\epsilon}\, \d t\,-\,
		\Pi \div   (u_{\epsilon} \otimes   u_{\epsilon})  \,\d t\,+\, \frac{F_1}{4}\Delta u_{\epsilon}\, \d t \\& - \frac{1}{\sqrt{2}}\sum_{n \in \Z^3}\Pi \nabla\cdot(\psi_{\epsilon} f_n  \, \d B_n)
		,\qquad \nabla \cdot u_{\epsilon}\,=\, 0,		\\
		\d \psi_{\epsilon}  \,=\, &(1+F_1 /2+G_1/2)\Delta \psi_{\epsilon} \,\d t\, -\,
		\epsilon\Delta^2\psi_{\epsilon}\,
		+ \d q_\epsilon\,- \,\nabla \cdot( u_{\epsilon}\psi_{\epsilon} )\,\d t\, -\, (F_2/2+ G_2/8) \psi_{\epsilon}\, \d t \\&- \sum_{n\in \Z^3} \nabla \cdot ( \psi_{\epsilon} g_n \,\d W_n ) \,
		+\, \frac{1}{2} \sum_{n \in \Z^3} \psi_{\epsilon} \nabla \cdot ( g_n \, \d  W_n) \,-\,  \frac{1}{\sqrt{2}}\sum_{n\in \Z^3}(\nabla  u_{\epsilon} : f_n\,\d  B_n),
		\\ (u_{\epsilon}(0),\psi_{\epsilon}(0)) \,=\,&( u_0,\psi_0).
\end{align}
The appearing  measure $q_\epsilon$ on $[0,T]\times \T^3$ is related to $(u_\epsilon,\psi_\epsilon)$ via 
\begin{equation} \label{eq1025}q_\epsilon \,\ge\, 
\frac{1}{\psi_{\epsilon}} \bigl(
|\nabla \psi_{\epsilon}|^2 \,+\, |\nabla_\sym u_{\epsilon}|^2+ \epsilon
\bigr),
\end{equation}
analogously to \eqref{eqn1022}. 
At this point, we obtain that $\psi_\epsilon$ is almost everywhere positive due to the presence of $\epsilon$ on the right-hand side of \eqref{eq1025}, in light of Convention \ref{conv_inverse}. 

For the second passage $\epsilon\to  0$, we do not have the additional bound from the dissipation of $-\epsilon \Delta^2$ in \eqref{eqn1024} at our disposal. Instead, we proceed with the estimate
\begin{equation}
	\E \biggl[
	\int_0^T \int_{\T^3} \frac{1}{
	\psi_{\delta,\epsilon} }\bigl(|\nabla \psi_{\delta,\epsilon}|^2 + |\nabla_{\mathrm{sym}} u_{\delta,\epsilon}|^2 + \epsilon \bigr)
	\,\d x\,\d t 
	\biggr]\,\le\, \E \bigl[
	\|q_\epsilon\|_{\M([0,T]\times \T^3)}
	\bigr]\,\le\, C(u_0,\psi_0,T),
\end{equation}
which follows from \eqref{eq1026} and \eqref{eq1025} by a lower semicontinuity argument. Employing the interpolation with the total energy estimate from Remark \ref{rem_weak_sol}~\eqref{rem_weak_sol_I2}, the above entails uniform moment bounds on 
\begin{align}
	\|\psi_\epsilon\|_{L^{7/5}(0,T;W^{1,7/5} (\T^3))}\,+\, 	\|\psi_\epsilon\|_{L^{7/3}([0,T]\times\T^3)}\,+\,
		\|u_\epsilon\|_{L^{7/5}(0,T;W^{1,7/5} (\T^3;\R^3))}\,+\, 	\|u_\epsilon\|_{L^{7/3}([0,T]\times\T^3;\R^3)}.
\end{align}
The tightness resulting from the latter turns out to be enough to identify the limit as a globally admissible weak martingale solution to \eqref{Eq_new_var}, in the sense of Definition \ref{defi_solution}, as $\epsilon\to  0$. 
In particular, we use that since $\psi_\epsilon$ is almost everywhere positive for each $\epsilon>0$, the limit $\psi$ is non-negative, as desired.

\medskip
\noindent\textbf{\eqref{item_b} Local in time strong existence: Theorem \ref{Prop_ex_strong_solution}.} While weak solutions to \eqref{Eq_new_var} can be constructed for initial values from the energy space, it is often the case  for SPDEs that simultaneously strong solutions  exist uniquely and locally in time for more regular data, see, e.g.,
\cite{AV_CMP, BFH, CFH_Euler} for results of this type regarding fluid equations. Here, our ansatz to construct strong solutions is to employ the Banach fixed-point theorem based on the coercivity of the linear part of \eqref{Eq_new_var} together with local Lipschitz estimates for the nonlinearities in sufficiently regular norms.
The latter is what requires us to work in considerably stronger spaces than the natural energy space $L^2_x$, namely we impose that the initial data has $H^3_x$-regularity. 
To  show higher order coercivity of the linear part by means of a commutator argument, we need to impose also higher regularity of the noise coefficients $f$ and $g$ from Assumption \ref{Ass:noise}, namely that both lie in $C_x^4(\ell^2)$. 
Under these assumptions,  using suitable modifications of the nonlinearities compatible with our Lipschitz estimates, we conclude then the existence of a unique solution to the modified equation. By choosing modifications  that coincide with the original nonlinearities close to the initial data, we ensure that the solution to the modified equation solves also \eqref{Eq_new_var} up to some stopping time $\tau>0$. In other words, we obtain  a local strong solution in the sense of Definition \ref{defi_strong_sol}, as desired.

We point out that 
this idea is very natural and  was recently explored systematically and in abstract settings  in \cite{AV_PTRF}, see \cite{rock_fully_monotone} for a similar result for nonlinear leading order operators. For the sake of accessibility however, we  give here a self-contained proof independent of these works. 
We  stress that also our proof of strong existence is significantly simplified by passing to the new variables $(u,\psi)$. 
Indeed, the linear gradient noise terms in \eqref{Eq_new_var} have the same order of scaling as the leading order heat operator due to the characteristic scaling law of Brownian motion. Thereby, we cannot treat them as a lower order perturbation as, e.g., in the semigroup approach to SPDEs \cite{da2014stochastic}, but instead need to obtain coercivity estimates for the deterministic and stochastic linear part as a whole. 
As a consequence, since these gradient noise terms are  also in \eqref{Eq_intro} of leading order, the additional nonlinearities there would require to treat the system as a quasilinear stochastic evolution equation. While the latter is possible and was done in systematically in \cite{AV19_QSEE_1,AV19_QSEE_2,hornung2019quasilinear}, we may after the variable transformation $(u,\vt)\to(u,\psi)$ proceed in a simpler semilinear setting.

\medskip
\noindent\textbf{\eqref{item_c} The weak-strong uniqueness principle: Theorem \ref{thm_WS_main}.} To prove weak-strong uniqueness we employ a relative energy argument. For more information on the latter we refer to the survey \cite{wiedemann} and the references therein and in the context of stochastic fluid equations to \cite{BFH}. We stress that also here we profit from passing to the system for $(u,\psi)$ as the energy functional \eqref{eqn333} is strictly convex in these variables. Therefore, the unique minimizer of the natural relative energy \eqref{Eq_rel_energy_intro}
 is given by $(u,\psi) = (V,\Phi)$. 
Assuming that $(u,\psi)$ 
and $(V,\Phi)$ are a weak and a strong solution to \eqref{Eq_new_var}, respectively, we may decompose 
\begin{align}\begin{split}\label{eqn111}&
		\EE( u_t,\psi_t | V_t,\Phi_t )\,-\, 	\EE( u_0,\psi_0  | V_0,\Phi_0 )
		\,=\, \frac{1}{2}\int_{\T^3} |u_t|^2 + \psi_t^2- |u_0|^2 - \psi_0^2\,\d x 
		\\& \qquad - \,\int_{\T^3} u_t \cdot V_t -  u_0 \cdot V_0 \,\d x \, -\, \int_{\T^3} \psi_t \Phi_t -  \psi_0  \Phi_0 \,\d x,
		\end{split}
\end{align}
since, as we prove below, strong solutions conserve energy. Assuming that $(u,\psi)$ is globally admissible, i.e., that we have the bound \eqref{Eq:glob_Ad}, we may further estimate the above  by 
\begin{align}\label{eqn112}-\,\int_{\T^3} u_t \cdot V_t
	 -  u_0 \cdot V_0 \,\d x \, -\, \int_{\T^3} \psi_t \Phi_t -  \psi_0  \Phi_0 \,\d x.
\end{align}
To proceed one needs to compute the latter by means of It\^o's product rule, at which point one is faced with the problem that $\psi$ only exists as a random distribution in $L^\infty(0,T;L^2(\T^3))$. To deal with the latter,  inspired by  \cite{DV10}, we show that because $(u,\psi)$ is a weak solution, $\psi$ admits a version $\psi^+$ which is c\`adl\`ag  in the weak topology of $L^2(\T^3)$. 
Moreover, we prove that $\psi^+$ satisfies a version of \eqref{Eq_psi_weak} where one only tests in the space variable, identifying in particular the semimartingale decomposition  of the c\`adl\`ag  process $\psi_{\epsilon,t}^+(x)= \eta_\epsilon * \psi^+_t(x)$ for a convolution kernel $\eta_\epsilon$. Convolving also $u$, $V$ and $\Phi$, we pointwisely justify It\^o's product rule using  stochastic analysis tools for  c\`adl\`ag  semimartingales in order  to deal with the temporal  jumps of $\psi_{\epsilon,t}^+$, cf.\  \cite{protter}. Based on the regularity of $(u,\psi)$, $(V,\Phi)$ and the noise coefficients imposed in Assumption \ref{Ass:noise}, we further justify integration in the space variable and subsequently letting  $\epsilon\to 0$ in the resulting formula.
All in all, this yields  the following pathwise relative energy estimate:
\begin{align}\begin{split}\label{eqn1050}
&
\EE( u_t,\psi^+_t | V_t,\Phi_t ) \,-\, \EE( u_0,\psi_0 | V_0,\Phi_0 )\\&\quad\le\,  \int_0^t \Bigl(2 \|\nabla_\sym V\|_{C(\T^3; \R^{3\times 3})} \,+\,   \| \nabla \Phi \|_{C([0,t]\times \T^3; \R^{3})} \Bigr)\EE( u_s ,\psi_s  | V_s ,\Phi_s ) \,\d s .
	\end{split}
\end{align}
Somewhat surprisingly all contributions of the noise have vanished, which is linked to the  special structure of the thermal noise term, and in contrast to, e.g.,  relative energy estimates resulting from multiplicative It\^o noise \cite{BFH}. In any case, by an application of Gr\"onwall's lemma, \eqref{eqn1050} implies the weak-strong uniqueness property.

\subsection{Comparison to the literature}
There is an extensive list of literature addressing the physical  and mathematical analysis of the Navier--Stokes--Fourier system and related models. Therefore, we only mention the most closely related works here, and refer for more information to the references therein.

\medskip
\noindent\textbf{Physical background of the Navier--Stokes--Fourier system.}
The original formulation of the Navier--Stokes--Fourier system  traces back to the works of Navier and Stokes, who addressed momentum conservation incorporating viscosity, and Fourier, who established the theory of heat conduction \cite{Fourier1822, Navier1822, Stokes1845}. For a modern treatment focusing  on the incompressible variant, we refer to the book of  Landau and Lifshitz  \cite[Section 50]{LL87}. The inclusion of the effects of thermal noise in the model is attributed to these authors as well, and is detailed  in \cite[Chapter IX]{lifshitz}  by Lifshitz and Pitaevskii. For a numerical study of the resulting full Landau--Lifshitz--Navier--Stokes equation, we refer to \cite{BGW07} by Bell, Garcia and Williams. More information on the  GENERIC framework, from which the Landau--Lifshitz--Navier--Stokes system may be derived similarly to Subsection \ref{der:gen}, we refer to the devoted monograph \cite{Ottinger} by \"Ottinger.

\medskip
\noindent\textbf{Selected mathematical research on  deterministic fluid equations.}
As pointed out below \eqref{Eq_Noise_Correlation}, the equation for the velocity of an incompressible fluid can be solved independently of the temperature in the deterministic case, cf.\ \eqref{eqn1037}, following the work of   Leray \cite{Leray}. A corresponding analysis is carried out in the monograph of Lions \cite[Section 3.4]{lions_book_1}, where he alternatively proposes to model both variables as a system, which allows to prove the conservation of energy. Generalizations of the latter proof of existence to  state-dependent  material coefficients coupling both variables are obtained by Naumann \cite{naumann} and Bul\'{\i}\v{c}ek, Feireisl, and M\'{a}lek \cite{BFM09}.

For compressible Navier-Stokes equations, the equation for the mass density is in contrast naturally coupled to the velocity. For an analysis of the latter we refer to \cite{lions_book_2}. The first existence proof for the full Navier--Stokes--Fourier system, modeling simultaneously  the temperature, was obtained in \cite{feireisl_book} and lateron extended also to state-dependent material coefficients  by Bresch and Desjardins \cite{Bresch_Desj} and then by Feireisl and  Novotn\'y \cite{feireisl}. A corresponding result on weak-strong uniqueness based on a relative entropy argument, which goes back to Dafermos \cite{dafermos}, is the content of \cite{FN12}. This improved the previous line of work on weak-strong uniqueness for fluid equations based on relative energies initiated by  Ladyzhenskaya  \cite{lad}, Prodi \cite{prodi} and Serrin \cite{serrin} for incompressible Navier--Stokes. For more information on the latter we refer to the survey by Wiedemann \cite{wiedemann}.

\medskip
\noindent\textbf{Selected mathematical works on  stochastic fluid equations.}
The existence of weak martingale solutions to the incompressible Navier--Stokes equation goes back to Flandoli and Gatarek \cite{FG95}. Brze\'zniak and Motyl \cite{brzesniak_moytl} extended these results to equations posed on unbounded domains and Mikulevicius and Rozovskii \cite{MR2118862} to Navier--Stokes equations with  transport noise. We also mention the recent contribution by \cite{gess2026probabilisticallystrongsolutionsstochastic} by Lasarzik and the first author, in which it is pointed out that one can also obtain probabilistically strong solutions based on the approximations of \cite{FG95}, at the expense of allowing them to be measure-valued.   Research works by  Breit, Feireisl and Hofmanov{\'a}, in which global weak existence, local strong existence and weak-strong uniqueness for compressible Navier--Stokes equations are obtained are moreover detailed in their monograph \cite{BFH}. A suitable modification of the convective term leads in the contrary to tamed Navier--Stokes equations, for which well-posedness in $d=3$ was shown by R\"ockner and Zhang \cite{rockner_tamed}. For local in time strong solution theories we refer   to the work  \cite{CFH_Euler} by Crisan, Flandoli and Holm regarding Euler equations 
and to  \cite{AV_CMP} by Agresti and Veraar regarding Navier--Stokes equations posed in scaling-critical spaces. The latter builds on ideas of Krylov's $L^p$-theory \cite{krylov_Lp}. Probably most close to the current work is the article \cite{BF20}, in which the compressible Navier--Stokes--Fourier system is considered and the velocity equation is perturbed by a multiplicative It\^o  noise term. Then the existence of martingale solutions is shown, extending the  existence result from \cite{feireisl} to the stochastic setting.

The situation that the stochastic force is more rough in space, so that the Navier--Stokes equation becomes a singular stochastic PDE,  was firstly addressed in the work \cite{DP_D} by Da Prato and Debussche. While they treated the less singular, two-dimensional regime, the three-dimensional setting was subsequently explored  by  combining  paracontrolled   calculus and    convex integration techniques by Hofmanov\'a, Zhu and Zhu \cite{HZZ_NS_3d}. Also based on paracontrolled calculus, the aforementioned work \cite{DP_D} was further improved in the two-dimensional setting by Hairer and Rosati \cite{hairer_rosati}. In the full sub-critical regime of noises a treatment using energy solutions was recently completed by  Gubinelli and Turra \cite{gubinelli_turra}. In the critical and supercritical regime on the contrary, a treatment of the Navier--Stokes equation requires the introduction of an artificial smallness along a regularization procedure. The effective limiting dynamics  were investigated by  
Cannizzaro and Kiedrowski \cite{cannizzaro}, by
Jin and Perkowski in  \cite{jin2024fractionalstochasticlandaulifshitznavierstokes} and by Kotitsas, Romito, Yang and Zhu in  \cite{kotitsas2025gaussianfluctuationsstochasticlandaulifshitz}.

Besides its solution theory, a natural question regarding the Landau--Lifshitz--Navier--Stokes equation is  to make the connection to  underlying mesoscopic  particle systems mathematically precise. The latter was recently addressed by Heydecker together with the first and last author in \cite{GHW24}, where a restricted large deviation principle  was shown, which is  consistent with the   lattice particle system model proposed by Quastel and Yau \cite{QY98}.

\subsection{Notation}Throughout,  $(\Omega,\A,\mathbb{P})$ is a potentially varying probability space   endowed with a filtration $\F$ satisfying the usual conditions. Expectations with respect to $\mathbb{P}$ are denoted by $\mathbb{E}$. We use standard notation regarding function spaces, and remark only that $\M_+$ denotes the non-negative cone in the space of Radon  measures $\M$. Weak and weak-${}^*$ topologies are moreover indicated by an additional $w$ and $w^*$, respectively, so that $(L^2([0,T];L^2(\mathbb{T}^3 )),w)$, e.g., stands for $L^2([0,T];L^2(\mathbb{T}^3 ))$ equipped with its weak topology. We denote by $\Pi$ the Helmholtz projection onto divergence-free vector fields and by $\nabla_{\sym}$ the symmetrized gradient operator.
We write moreover $\{e_k\}_{k\in\mathbb{Z}^3}$ for the Fourier basis on $\mathbb{T}^3$, so that for each $m\in\mathbb{N}$, the Galerkin projection $P_m$ onto the Fourier modes with frequency $|k|\le m$ is defined by
\begin{equation}
P_m f := \sum_{|k|\le m} \langle f, e_k \rangle e_k .\label{Fourier_proj}
\end{equation}
Similarly, let $\{e_\zeta\}_{\zeta\in\mathcal{B}}$ be the eigenfunctions of the  Stokes operator $-\Pi \Delta$, indexed by the set
\begin{equation}\label{Stokes_proj}
\mathcal{B} := \bigl((\mathbb{Z}^3\setminus\{0\}) \times \{\pm 1\}\bigr) \cup \bigl(\{0\} \times \{\pm 1,0\}\bigr).
\end{equation}
Each eigenfunction $e_\zeta$, with $\zeta=(k,\theta)$, is a normalized plane wave with frequency $2\pi k$, and the corresponding eigenvalue is given by $\lambda_\zeta = 4\pi^2 |k|^2$. We define accordingly  Galerkin projections $\Pi_m$ onto the span of $\{e_\zeta : \zeta \in \mathcal{B}_m\}$ by
\begin{equation}\label{eq: Pm}
\Pi_m f := \sum_{\zeta\in \mathcal{B}_m} \langle f, e_\zeta \rangle e_\zeta ,
\end{equation}
 where $\mathcal{B}_m$ consists of all indices with $|k|\le m$.
Finally, for each $\delta>0$, $h_\delta(r)$ is a regularization of $1/r$ in accordance with Assumption \ref{Ass:nu_N}.

\section{Properties of weak and strong solutions}\label{sec-2}

In this section, we prove the properties stated in Subsection \ref{SS:defi_sol} of weak and strong solutions to \eqref{Eq_new_var}. Besides their interest on their own, they are also useful in later proofs. We start with the fact stated in Remark \ref{rem_weak_sol}~\eqref{rem_weak_sol_I3}, that the $\psi$-variable of a weak solution, which is a priori only defined as a random element of $L^\infty(0,T; L^2(\T^3))$, admits a c\`adl\`ag version in the weak topology of $L^2(\T^3)$. The proof is inspired by \cite[Proposition 8]{DV10}, where a similar statement is shown for solutions to scalar stochastic conservation laws.
\begin{lemma}[C\`adl\`ag version of $\psi$ for weak solutions]\label{Lemma_cadlag}Let Assumption \ref{Ass:noise} hold and $(\Omega,\A,\P)$, $\F$, $(W_n)_{n\in \Z^3}$, $(B_n)_{n\in \Z^3}$, 
	$(u,\psi ,q)  $ be a weak martingale solution to \eqref{Eq_new_var} with divergence-free initial value 
	$u_0 \in L^2(\T^3; \R^3)$ and $\psi_0\in L^2(\T^3)$ non-negative. Then there exists a $\P\otimes \d t $ version $\psi^+$ of $\psi$ which is c\`adl\`ag in the weak topology of $L^2(\T^3)$. Denoting the process of left limits by 
	\begin{equation}\label{Eq_left_lim}\psi^-(t)\,=\,
		\begin{cases}
			\psi_0,& t = 0 ,\\
			\lim_{s\nearrow t}  \psi^+(s), & t\in (0,T],
		\end{cases}
	\end{equation}
	this version satisfies moreover, $\P$-a.s.,
	\begin{align}\label{eqn3}
		\int_{\T^3}	\eta \bigl(\psi_{t}^{+}   - \psi_{t}^{-} \bigr) \,\d x \,=\, \int_{\{t\}\times \T^3} \eta  \,\d q,
	\end{align} and 
	\begin{align} \begin{split}\label{eqn4}&
			\int_{\T^3} \psi^+_t \eta   \, \d x \,-\, \int_{\T^3}
			\psi_0 \eta 
			\,\d x 
			\\&\quad=\, \int_0^t \int_{\T^3} -  \bigl(1+ {F_1}/2 +G_1/2\bigr) \nabla \psi \cdot \nabla  \eta \,+\,\psi u\cdot \nabla \eta   \,-\, (F_2/2+G_2/8) \psi \eta \,\d x\,\d s \,+\, \int_{[0,t]\times\T^3} \eta \,\d q
			\\&\qquad + \sum_{i=1}^3
			\sum_{n \in \Z^3}\int_0^t \int_{\T^3}\psi g_n \partial_i \eta  + \frac{1}{2} \psi  \eta \partial_ig_n  \,\d x\,  \d W_{n,s}^i
			\, +\, \frac{1}{\sqrt{2}}\sum_{i,j =1}^3\sum_{n \in \Z^3}\int_0^t \int_{\T^3}u_j\partial_i (\eta f_n) \,\d x \,\d B_{n,s}^{ji},
		\end{split}
	\end{align}
	for all $\eta \in C^{\reg}(\T^3)$ and  $t\in [0,T)$.
\end{lemma}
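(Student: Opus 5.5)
The plan follows \cite[Proposition 8]{DV10}. Fix a countable set $\{\eta_k\}_{k\in\mathbb N}\subset C^{\reg}(\T^3)$ that is dense in $L^2(\T^3)$ (for instance trigonometric polynomials with rational coefficients). For each $k$ we define the candidate process
\begin{align}
	X^k_t \,:=\, \int_{\T^3}\psi_0\eta_k\,\d x \,+\, \int_0^t a^k_s\,\d s \,+\, \int_{[0,t]\times\T^3}\eta_k\,\d q \,+\, M^k_t ,
\end{align}
where $a^k_s$ is the integrand of the $\d s$-integral on the right-hand side of \eqref{eqn4} with $\eta=\eta_k$, and $M^k$ is the sum of the two stochastic integrals there. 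Both pieces are well defined.
\begin{itemize}
	\item The drift $a^k$ lies in $L^1(0,T)$ $\P$-a.s., by Definition \ref{defi_solution}~\eqref{Item_defi_sol_1} and $u,\psi\in L^\infty_tL^2_x$.
	\item $M^k$ is a continuous local martingale, since its quadratic variation is bounded by $C\int_0^T(\|\psi\|_{L^2}^2+\|u\|_{L^2}^2)\,\d s$ via Assumption \ref{Ass:noise}.
\end{itemize}
The measure term $t\mapsto q([0,t]\times\T^3)$-weighted integral is càdlàg with jump $\int_{\{t\}\times\T^3}\eta_k\,\d q$ at $t$, because $q$ is a finite Radon measure. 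Hence $X^k$ is càdlàg, and its jumps are exactly the right-hand side of \eqref{eqn3}.

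The next step is to show $X^k_t=\int\psi_t\eta_k\,\d x$ for a.e.\ $t$, $\P$-a.s. To this end we test \eqref{Eq_psi_weak} with $\zeta(t,x)=\theta(t)\eta_k(x)$, where $\theta\in C_c^\infty([0,T))$. The stochastic integrals factor through $\theta$ by the stochastic Fubini theorem, or by Itô's product rule for $\theta(t)M^k_t$. After integrating by parts in time, using $\theta(T)=0$, we obtain
\begin{align}
	-\int_0^T\theta'(t)\int\psi_t\eta_k\,\d x\,\d t \,=\, -\int_0^T\theta'(t)X^k_t\,\d t ,
\end{align}
where the $q$-term is handled by Fubini for the product measure $\d t\otimes q$. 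We take a countable dense family of $\theta$'s to obtain a common null set. Then $\int\psi_t\eta_k\,\d x - X^k_t$ has vanishing distributional derivative on $(0,T)$, so it is a.e.\ constant. Testing additionally with $\theta$ satisfying $\theta(0)\neq 0$ fixes this constant to be zero.

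Finally we assemble the version $\psi^+$, and this is where the main subtlety lies. Let $N\subset[0,T]$ be the Lebesgue-null set off which the identity holds for all $k$ simultaneously. For $t\notin N$, we have $|X^k_t|\le\|\psi\|_{L^\infty L^2}\|\eta_k\|_{L^2}$. By right-continuity of $X^k$ and density of $[0,T]\setminus N$, this bound extends to all $t$. Consequently, for every $t$ the map $\eta_k\mapsto X^k_t$ extends to a bounded linear functional on $L^2$, that is, there is $\psi^+_t\in L^2$ with $\int\psi^+_t\eta_k\,\d x=X^k_t$. Linearity passes to the limit because the $X^k$ depend linearly on $\eta_k$. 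This uniform bound together with the density of $\{\eta_k\}$ gives weak càdlàg-ness of $\psi^+$: right limits and left limits exist against every $\eta\in L^2$ by an $\epsilon/3$ argument. The limit $\psi^-$ is then well defined.

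To obtain \eqref{eqn4} and \eqref{eqn3} for general $\eta\in C^\infty$, we approximate $\eta$ by $\eta_k$ in $C^1$. This is possible if the $\eta_k$ are chosen dense in $C^1$, and trigonometric polynomials are. Every term then converges: the $q$-term by finiteness of $q$, and the martingale term by the dominated-convergence theorem for stochastic integrals, after passing to a subsequence to get $\P$-a.s.\ convergence uniformly in $t$.
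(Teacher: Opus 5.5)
Your proposal is correct and follows essentially the same route as the paper, which also adapts \cite[Proposition 8]{DV10}: test \eqref{Eq_psi_weak} with $\theta\otimes\eta$, identify the c\`adl\`ag representative of $\int_{\T^3}\psi\eta\,\d x$ as the right-hand side of \eqref{eqn4}, construct $\psi^+_t$ by Riesz representation using the $L^\infty_tL^2_x$ bound extended to all times by right-continuity, and conclude \eqref{eqn3}--\eqref{eqn4} by density. The only difference is the order of the first step. You write down the candidate first and verify a.e.\ equality via It\^o's product rule and Fubini for the $q$-term, whereas the paper uses the stochastic Fubini theorem to show $\int_{\T^3}\psi\eta\,\d x-\Theta_\eta\in BV(0,T)$ and then reads off the representative.
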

\begin{proof}
	We start by obtaining c\`adl\`ag versions of $\int_{\T^3} \psi \eta \d x$ for each $\eta\in C^{\reg}(\T^3)$. To this end, we insert   $\zeta =  \vp\otimes\eta$ for $\vp \in C_c^{\reg}([0,T))$  in the temporally weak formulation \eqref{Eq_psi_weak}, which results in the identity
	\begin{align}&
		-\,
		\int_0^T \vp'(t)\int_{\T^3} \psi \eta \, \d x \,  \d t \,-\,  \vp(0)\int_{\T^3}
		\psi_0 \eta
		\,\d x
		\\&\quad=\,  \int_0^T \vp (t)\int_{\T^3} - \bigl(1+ {F_1}/2 +G_1/2\bigr) \nabla \psi\cdot \nabla  \eta \,+\,\psi u\cdot \nabla \eta   - (F_2/2+G_2/8) \psi \eta \,\d x  \,\d t \,+\, \int_{[0,T]\times\T^3}   \vp \otimes\eta\,\d q
		\\&\qquad + \sum_{i=1}^3
		\sum_{n \in \Z^3}\int_0^T \vp(t) \int_{\T^3}\psi g_n \partial_i \eta  + \frac{1}{2} \psi  \eta \partial_ig_n  \,\d x\,  \d W_{n,t}^i
		\,+\, \frac{1}{\sqrt{2}}\sum_{i,j =1}^3\sum_{n \in \Z^3}\int_0^T \vp(t)\int_{\T^3}u_j\partial_i (\eta f_n) \,\d x \,\d B_{n,t}^{ji}.
	\end{align}
	After rewriting  $\vp(t) = -\int_0^T \mathbf{1}_{[t, T]}(s)\vp'(s) $, we apply the deterministic and stochastic Fubini theorem, see, e.g., \cite{stoch_fubini} for the latter,  to  find that
	\begin{align} \label{eqn_1}
		-\,
		\int_0^T \vp'(t) \biggl(\int_{\T^3} \psi \eta \,\d x - \Theta_\eta (t)\biggr)  \,\d t \,-\,  \vp(0)\int_{\T^3}
		\psi_0 \eta
		\,\d x\,
		=\,  \int_{[0,T]} \vp(t)  \int_{\T^3} \eta \,  \d K_t \, \d \nu,
	\end{align}
	where $ q = \nu \otimes K$ is the disintegration\footnote{We take $\nu\in \M_+([0,T])$ and $K(t,\cdot)$ to be a probability measure on $\T^3$, for $t\in [0,T]$.} of $q$ and
	\begin{align}&
		\Theta_\eta(t) \,=\,  \int_0^t \int_{\T^3} - \bigl(1+ {F_1}/2 +G_1/2\bigr) \nabla \psi\cdot \nabla  \eta \,+\,\psi u\cdot \nabla \eta   \,-\, (F_2/2+G_2/8) \psi \eta \,\d x  \,\d s
		\\&\qquad + \sum_{i=1}^3
		\sum_{n \in \Z^3}\int_0^t  \int_{\T^3}\psi g_n \partial_i \eta  + \frac{1}{2} \psi  \eta \partial_ig_n  \,\d x  \,\d W_{n,s}^i
		\,+\, \frac{1}{\sqrt{2}}\sum_{i,j =1}^3\sum_{n \in \Z^3}\int_0^t \int_{\T^3}u_j\partial_i (\eta f_n) \,\d x \,\d B_{n,s}^{ji}
	\end{align}
	is a continuous stochastic process. The identity \eqref{eqn_1}, implies that, $\P$-a.s., $\int_{\T^3} \psi \eta \,\d x - \Theta_\eta \in BV(0,T)$  and therefore there exists a representative in the  $\d t$-equivalence class of the latter which is c\`adl\`ag. By the continuity of $\Theta_\eta$, the same holds for $\int_{\T^3} \psi \eta \,\d x $.
	
	Next, we leverage the above to obtain a c\`adl\`ag representative of $\psi$ itself. For this purpose, we fix a countable dense subset $\D\subset C^{\reg}(\T^3)$, and  let $ (\psi_\eta^+)_{\eta\in \D}$ be the family of  c\`adl\`ag versions of $\int_{\T^3} \psi \eta \,\d x $, which exists $\P$-almost surely. We also record that due to   $\psi \in L^\infty (0,T; L^2(\T^3))$, $\P$-a.s., and the  right-continuity of $\psi_\eta^+$, we have the bound 
	\begin{equation}\label{eqn2}
		|\psi_\eta^+(t)| \,\le \,\| \eta \|_{L^2(\T^3)} \|\psi\|_{L^\infty (0,T; L^2(\T^3))} ,\qquad 
		\eta\in \D,\,	t\in [0,T),
	\end{equation}
	$\P$-almost surely. The density of $\D\subset L^2(\T^3)$ allows us to construct, $\P$-a.s., for every $t\in [0,T)$ a linear functional $ \eta \mapsto \psi_\eta^+(t)$, that can by the Riesz representation theorem be represented by some  $\psi^+_t\in L^2(\T^3)$. Since then, $\P$-a.s.,
	\[
	\int \psi^+_t \eta\,\d x \,=\, \psi_\eta^+(t)\,=\, \int \psi_t \eta\,\d x,
	\]	
	for every $\eta\in \D$ and $\d t$-a.e.\ $t\in [0,T)$, we find that $\psi^+$ is a $\P\otimes \d t$-version of $\psi$. Additionally, using  \eqref{eqn2}, we see that, $\P$-a.s., the constructed process $\psi^+$ obeys the bound 
	\[
	\| \psi^+_t\|_{L^2(\T)}\, \le\,  \|\psi\|_{L^\infty (0,T; L^2(\T^3))} , \qquad t\in [0,T),
	\]
	which implies that the limit $\lim_{s\searrow t}\psi^+(s) = \psi^+(t)$ is attained in the weak topology of $L^2(\T)$ by a subsequence-subsequence argument.
	Since we also have $\P$-a.s.\ the estimate
	\begin{equation}
		\bigl|\lim_{s\nearrow t} \psi_\eta^+(s)\bigr| \,\le \,\| \eta \|_{L^2(\T^3)} \|\psi\|_{L^\infty (0,T; L^2(\T^3))} ,	\qquad  \eta\in \D,\, t\in (0,T] ,
	\end{equation}
	it follows analogously that $\psi^+$ is actually c\`adl\`ag in the weak toplogy of $L^2(\T)$ and consequently there exists the process of left limits \eqref{Eq_left_lim}.
	
	It remains to verify the identities \eqref{eqn3}--\eqref{eqn4}, for which we observe from \eqref{eqn_1} that the c\`adl\`ag representative of $\int_{\T^3} \psi \eta \,\d x  $  for $\eta\in \D$ is $\P$-a.s.\ given by the formula 
	\begin{equation}
		\psi_\eta^+(t) \,=\, 
		\int \psi_0 \eta \,\d x \,+\, \int_{[0,t]}  \int_{\T^3} \eta \,  \d K_s\,  \d \nu  \,+\, \Theta_\eta(t), \qquad t\in [0,T),
	\end{equation}
	allowing us to also calculate its left limits as
	\begin{equation}
		\lim_{s\nearrow t }\psi_\eta^+(s) \,=\,
		\int \psi_0 \eta \,\d x \,+\, \int_{[0,t)}  \int_{\T^3} \eta \,  \d K_s\,  \d \nu  \,+\, \Theta_\eta(t), \qquad t\in (0,T].
	\end{equation}
	Since $ \int \psi^+\eta \d x = \psi_\eta^+ $, $\P$-a.s., for  $\eta\in \D$, the desired formulae \eqref{eqn3} and \eqref{eqn4} follow by a density argument.
\end{proof}

Next, we turn our attention to strong solutions. As announced in Remark \ref{rem_strong_sol}~\eqref{Item_Rem_Strong_sol2}, the additional regularity allows us to characterize strong solutions by the It\^o formulation of \eqref{Eq_intro}. In particular, all results obtained in this manuscript concerning strong solutions to \eqref{Eq_new_var} can be equivalently rephrased in terms of strong solutions to \eqref{Eq_intro}, defined according to the following proposition.

\begin{proposition}[Characterization of strong solutions by the temperature formulation]\label{prop_prop_SL}
	Let Assumption \ref{Ass:noise} hold, $(\Omega,\A,\P)$ be a probability space equipped with a filtration $\F$ satisfying the usual conditions and $3$-dimensional standard $\F$-Brownian motions $(W_n)_{n\in \Z^3}$ and $3\times 3$-matrix valued $\F$-Brownian motions $(B_n)_{n\in \Z^3}$ with covariance structure \eqref{Eq_noise_matrix_covariation} be independent. Moreover, we let  $U\colon [0,\tau ) \to C^1(\T^3;\R^3)$ and $\Psi\colon [0,\tau )\to  C^1(\T^3)$ be continuous, adapted processes, defined up to a stopping time $\tau>0$, such that for a strictly increasing localizing sequence $0\le \tau_j\nearrow \tau $ properties \eqref{Item_pos_Phi} and \eqref{Item_U_eq} of Definition \ref{defi_strong_sol} holds. 
	Then, $(U,\Psi)$ is a local strong solution to \eqref{Eq_new_var}, 
	iff the temperature $\Theta = \Psi^2/2$ satisfies the following for any $j\in \mathbb N$:
	\begin{enumerate}[(i)']
		\setcounter{enumi}{2}
		\item \label{Item_Theta_eq}
		For every $\eta\in C^{\reg}( \T^3)$ it holds
		\begin{align}\begin{split}\label{Eq_Phi_strong_prime}
				&
				\int_{\T^3} \Theta_t \eta \, \d x \,-\, \int_{\T^3} \Theta_0 \eta \, \d x \,=\, \int_0^t \int_{\T^3}\eta \bigl((1+F_1/2)
				|\nabla_\sym U|^2 - F_1|\nabla \sqrt{\Theta}|^2\bigr) \, \d x \,\d s
				\\&\qquad+\int_0^t \int_{\T^3} - \bigl(1+ {F_1}/2 +G_1/2\bigr)  \nabla \Theta \cdot \nabla  \eta \,+\,\Theta U\cdot \nabla \eta \,-\, F_2 \Theta \eta \,\d x\,\d s
				\\&\qquad + \sum_{i=1}^3
				\sum_{n \in \Z^3}\int_0^t \int_{\T^3} 
				\Theta g_n \partial_i \eta  
				\,\d x \, \d W_{n,s}^i
				\,-\, \sum_{n \in \Z^3}\int_0^t\int_{\T^3}\eta f_n \sqrt{\Theta} \nabla U\,\d x :\d B_{n,s},
			\end{split}
		\end{align} for all $t\in [0,\tau_j ]$, $\P$-almost surely.
	\end{enumerate}
\end{proposition}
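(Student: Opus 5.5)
The plan is to apply Itô's formula pointwise in $x$ to the map $r\mapsto r^2/2$ (forward direction) and to $r\mapsto\sqrt{2r}$ (backward direction). Both maps are smooth on the relevant range, since $\Psi$ and $\Theta$ are bounded away from zero on $[0,\tau_j]$ by property (i). Because the spatial regularity is only $C^1$, I would not apply Itô's formula directly to $\Psi(t,x)$. Instead I mollify in space: set $\Psi_\epsilon=\eta_\epsilon*\Psi$. Testing \eqref{Eq_phi_strong} with $\eta=\eta_\epsilon(x-\cdot)$ shows that $\Psi_\epsilon(t,x)$ is, for each $x$, a continuous real semimartingale with explicit drift and martingale parts. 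Itô's formula applied to $\Psi_\epsilon^2/2$ then gives
\[
\Psi_\epsilon\,\d\Psi_\epsilon+\tfrac12\,\d\langle\Psi_\epsilon\rangle .
\]
I multiply by $\eta$, integrate over $\T^3$, and pass to the limit $\epsilon\to0$. This limit is justified by the $C^0_tC^1_x$ regularity up to $\tau_j$, by Assumption \ref{Ass:noise} (which makes $\sum_n$ converge in $C_x$), and by dominated convergence for the stochastic integrals via the Itô isometry or BDG.

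The core of the proof is the algebraic identification of the terms, carried out after integration by parts against $\eta$.

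The transport and damping terms are straightforward. We have $\Psi\nabla\cdot(U\Psi)=\nabla\cdot(U\Psi^2/2)$ because $\nabla\cdot U=0$, and $-(F_2/2+G_2/8)\Psi^2$ becomes $-(F_2+G_2/4)\Theta$.

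For the Laplacian, use $\Psi\Delta\Psi=\Delta\Theta-|\nabla\Psi|^2$. Combined with the singular term $|\nabla\Psi|^2+|\nabla_\sym U|^2$, this produces the coefficient $-(F_1/2+G_1/2)|\nabla\Psi|^2$, where $|\nabla\Psi|^2=2|\nabla\sqrt\Theta|^2$.

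The quadratic variation supplies the compensating terms, and it has two parts.

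The $B$-part equals $\tfrac12\cdot\tfrac12\sum_n f_n^2\,(\nabla U:\nabla U+\nabla U:\nabla U^\top)$ by the covariance \eqref{Eq_noise_matrix_covariation}. This gives $\tfrac{F_1}{2}|\nabla_\sym U|^2$, using $\sum f_n^2=F_1$.

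The $W$-part involves $\sum_n|\,-\nabla\cdot(\Psi g_n)+\tfrac12\Psi\nabla\cdot g_n|^2$. Expanding it with $\sum g_n\nabla g_n=0$ and $\sum|\nabla g_n|^2=G_2$ yields $G_1|\nabla\Psi|^2+\tfrac{G_2}{4}\Psi^2$ plus cross terms that vanish. Here the componentwise structure needs care, because the relevant quantities are $\sum_n|g_n|^2$ and $\sum_n\partial_ig_n\partial_jg_n$; I would check that the stated assumption suffices, presumably through isotropy of $G_2$ in the appendix computation.

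After these cancellations, the net effect is $G_1|\nabla\Psi|^2-G_1|\nabla\Psi|^2$ and $G_2/4\cdot\Theta$ cancels $-G_2/4\cdot\Theta$. What remains is $(1+F_1/2)|\nabla_\sym U|^2-F_1|\nabla\sqrt\Theta|^2-F_2\Theta$.

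The martingale terms follow from the product with $\Psi$. For the $W$-term, $\Psi\bigl(-\nabla\cdot(\Psi g_n)+\tfrac12\Psi\nabla\cdot g_n\bigr)=-\nabla\cdot(\Theta g_n)$. For the $B$-term, $-\tfrac1{\sqrt2}\Psi\nabla U:f_n=-\sqrt\Theta f_n\nabla U$. Together these give \eqref{Eq_Phi_strong_prime}.

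For the converse, I would run the same argument with $\sqrt{2\Theta}$. Since $\Theta=\Psi^2/2\in C^1$ is bounded below on $[0,\tau_j]$, $\Psi=\sqrt{2\Theta}$ inherits the required regularity and positivity. The Itô correction is $-\tfrac{1}{2}(2\Theta)^{-3/2}\,\d\langle\Theta\rangle$ up to constants, and the same identities, read in reverse, recover \eqref{Eq_phi_strong}. Alternatively, both directions follow from the observation that the two formulations define the same drift and diffusion under the invertible smooth change of variables.

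I expect the main obstacle to be the justification step rather than the algebra. The difficulty is in passing $\epsilon\to0$ in terms such as $\Psi_\epsilon\,\eta_\epsilon*(|\nabla\Psi|^2/\Psi)$ and in the quadratic variation $\sum_n|\eta_\epsilon*(\nabla\cdot(\Psi g_n))|^2$. Both involve only first derivatives in the $C^1_x$ topology, but the drift is written distributionally with $\nabla\Psi\cdot\nabla\eta$. Consequently the mollified drift $\eta_\epsilon*\Delta\Psi$ has to be moved onto $\eta$ by integration by parts before taking limits, and I would organize the computation so that only first derivatives of $\Psi,U$ ever appear.
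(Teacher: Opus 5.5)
Your proposal is correct and follows the paper's route. Both arguments use an Itô expansion of $\frac12\int\Psi^2\eta$ (and the reverse direction via $\sqrt{2\Theta}$), with the same term-by-term identities and cancellations; the paper simply invokes the Itô formula of \cite[Proposition A.1]{DHV_16} together with a stopping-time argument, where you reprove that formula by spatial mollification. Your worry about cross terms is unnecessary: since $W_n^1,W_n^2,W_n^3$ are independent, the pointwise quadratic variation of the $W$-part is $\sum_{i=1}^3\sum_{n\in\Z^3}\bigl(g_n\partial_i\Psi+\tfrac12\Psi\partial_ig_n\bigr)^2$, which Assumption \ref{Ass:noise} alone reduces to $G_1|\nabla\Psi|^2+\tfrac{G_2}{4}\Psi^2$ without any isotropy.
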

\begin{proof}
	We have to argue that, given \eqref{Item_pos_Phi} and \eqref{Item_U_eq} of Definition \ref{defi_strong_sol}, \eqref{Item_Psi_eq} of said definition implies \eqref{Item_Theta_eq}' above  and vice versa. Due to the regularity assumptions on $U$ and $\Psi$ together with Assumption \ref{Ass:noise}, both implications can be obtained by means of It\^o's formula as stated in  \cite[Proposition A.1]{DHV_16}. Indeed, 
	to go from \eqref{Item_Psi_eq} to \eqref{Item_Theta_eq}', we need to compute the evolution of 
	\[
	\int_{\T^3} \Theta_t \eta \, \d x \,=\, \frac{1}{2}\int_{\T^3} \Psi_t^2 \eta \, \d x,
	\]
	for $\Psi$ satisfying \eqref{Eq_phi_strong}. 
	Then, together with a stopping time argument to restore moments, the above mentioned version of It\^o's formula yields that
	\begin{align}&
		\frac{1}{2}\int_{\T^3} \Psi_t^2 \eta \, \d x \,-\, \frac{1}{2}\int_{\T^3} \Psi_0^2 \eta \, \d x \,=\, \int_0^t \int_{\T^3}\eta \bigl(
		|\nabla_\sym U|^2 - (F_1/2 +G_1/2)|\nabla \Psi|^2\bigr) \, \d x \,\d s
		\\&\qquad+\int_0^t \int_{\T^3} - \bigl(1+ {F_1}/2 +G_1/2\bigr) \Psi \nabla \Psi \cdot \nabla  \eta \,+\,\Psi^2 U\cdot \nabla \eta   \,+\, \eta\Psi U\cdot \nabla \Psi  \,-\, (F_2/2+G_2/8) \Psi^2 \eta \,\d x\,\d s
		\\& \qquad +  \frac{1}{2}\sum_{i=1}^3
		\sum_{n \in \Z^3}\int_0^t \int_{\T^3}
		\Bigl( -  \partial_i (\Psi g_n)   + \frac{1}{2} \Psi (\partial_ig_n)\Bigr)^2 \eta 
		\,\d x \, \d s
		\\&\qquad + \frac{F_1}{4}\sum_{i,j=1, \,i\ne j}^3
		\int_0^t \int_{\T^3}
		\bigl( (\partial_i U_j)^2 + \partial_i U_j\partial_j U_i\bigr) \eta 
		\,\d x \, \d s\,+\,\frac{ F_1 }{2}\sum_{i=1}^3 \int_0^t \int_{\T^3}
		(\partial_i U_i)^2  \eta 
		\,\d x \, \d s
		\\&\qquad + \sum_{i=1}^3
		\sum_{n \in \Z^3}\int_0^t \int_{\T^3} - \Psi \partial_i (\Psi g_n) \eta  + \frac{1}{2} \Psi^2  \eta \partial_ig_n  \,\d x  \, \d W_{n,s}^i
		\,-\, \frac{1}{\sqrt{2}}\sum_{n \in \Z^3}\int_0^t\int_{\T^3}\eta f_n \Psi \nabla U\,\d x :\d B_{n,s},
	\end{align}
	for all $t\in [0,\tau_j ]$, $\P$-almost surely.
	Here, we made use of the specific correlation structure \eqref{Eq_noise_matrix_covariation} of the matrix valued noise. To further rewrite the correction terms, we notice that 
	\begin{align}&
		\frac{1}{2}\sum_{i=1}^3
		\sum_{n \in \Z^3}
		\Bigl( -  \partial_i (\Psi g_n)   + \frac{1}{2} \Psi \partial_ig_n\Bigr)^2\\
		&\quad 
		=\,  \frac{1}{2}\sum_{i=1}^3
		\sum_{n \in \Z^3}
		\Bigl(    g_n\partial_i \Psi   + \frac{1}{2} \Psi \partial_ig_n \Bigr)^2\,=\, \frac{G_1}{2} |\nabla \Psi |^2 \,+\, \frac{G_2}{8} \Psi^2,
		\\&\frac{1}{4} \sum_{i,j=1, \,i\ne j}^3 \bigl((\partial_i U_j)^2 + \partial_i U_j\partial_j U_i\bigr) \,+\, \frac{1}{2}\sum_{i=1}^3 (\partial_iU_i)^2 \\&\quad =\,\frac{1}{8} \sum_{i,j=1, \,i\ne j}^3 \bigl(\partial_i U_j+ \partial_j U_i\bigr)^2 \,+\, \frac{1}{2}\sum_{i=1}^3 (\partial_iU_i)^2\, =\,  \frac{1}{2}|\nabla_\sym U|^2,
	\end{align}
	in light of the cancellation imposed in \eqref{Eq_decay_condition_FG}. 
	Inserting this in the above and using that $\nabla \cdot U  = 0 $ results in 
	\begin{align}&
		\frac{1}{2}\int_{\T^3} \Psi_t^2 \eta \, \d x \,-\, \frac{1}{2}\int_{\T^3} \Psi_0^2 \eta \, \d x \,=\, \int_0^t \int_{\T^3}\eta \bigl((1+F_1/2)
		|\nabla_\sym U|^2 - F_1/2 |\nabla \Psi|^2\bigr) \, \d x \,\d s
		\\&\qquad+\int_0^t \int_{\T^3} - \bigl(1+ {F_1}/2 +G_1/2\bigr) \Psi \nabla \Psi \cdot \nabla  \eta \,+\,\frac{1}{2}\Psi^2 U\cdot \nabla \eta \,-\, F_2/2 \Psi^2 \eta \,\d x\,\d s
		\\&\qquad + \sum_{i=1}^3
		\sum_{n \in \Z^3}\int_0^t \int_{\T^3} - \Psi  g_n \eta \partial_i \Psi  - \frac{1}{2} \Psi^2 \eta \partial_ig_n  \,\d x \, \d W_{n,s}^i
		\,-\, \frac{1}{\sqrt{2}}\sum_{n \in \Z^3}\int_0^t\int_{\T^3}\eta f_n \Psi \nabla U\,\d x :\d B_{n,s},
	\end{align}
	for  $t\in [0,\tau_j ]$, $\P$-a.s., 
	by further integrating by parts. 
	Finally, we use that 
	\[
	\Psi^2/2 \,=\, \Theta,\qquad  \nabla \Psi \,=\, {\sqrt{2} \nabla \sqrt{\Theta}},\qquad \Psi\nabla \Psi \,=\, \nabla \Theta,
	\]
	to express the above in terms of $\Theta$, which yields the desired \eqref{Eq_Phi_strong_prime}.	
	For the reverse implication it suffices to undo the above based on
	\[
	\int_{\T^3} \Psi_t \eta \, \d x \,=\, \int_{\T^3} \sqrt{2\Theta_t} \eta \, \d x,
	\]
	 starting from \eqref{Eq_Phi_strong_prime}.
\end{proof}

Lastly, we rigorously  state in the following lemma that strong solutions conserve  \eqref{eqn333} and are therefore  automatically globally admissible, as announced in Remark \ref{rem_strong_sol}~\eqref{Item_Rem_Strong_sol2}. We omit the proof here since, like the proof of the preceding proposition, it relies on an application of It\^o's formula, which can be justified by  \cite[Proposition A.1]{DHV_16} or in this case also \cite[Theorem 4.2.5]{LR_book}. Then, the cancellations in the It\^o expansion of the total energy are the same as needed for the proof of weak--strong uniqueness given below. More precisely, the total energy conservation of strong solutions can be seen by setting $(u,\psi) = (V,\Phi)$ in \eqref{Eq_Ito_product}, so that the right-hand side of said equation cancels. The result is the following.

\begin{lemma}[Total energy conservation of strong solutions]\label{prop_cons_of_energy}
	Let Assumption \ref{Ass:noise} hold, $(\Omega,\A,\P)$ be a probability space equipped with a filtration $\F$ satisfying the usual conditions and $3$-dimensional standard $\F$-Brownian motions $(W_n)_{n\in \Z^3}$ and $3\times 3$-matrix valued $\F$-Brownian motions $(B_n)_{n\in \Z^3}$ with covariance structure \eqref{Eq_noise_matrix_covariation} be independent. Then, any local strong solution $(U,\Psi)$ to \eqref{Eq_new_var}, defined up to a stopping time $\tau>0$, satisfies 
	\begin{equation}\label{eqn_strong_energy_cons}
		\frac{1}{2}\int_{\T^3}   |U_t|^2 + |\Psi_t |^2\,\d x \,=\, 	\frac{1}{2}\int_{\T^3}   |U_0|^2 + |\Psi_0 |^2\,\d x,
	\end{equation}
	for all $t\in [0,\tau)$, $\P$-almost surely.
\end{lemma}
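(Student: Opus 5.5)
We will apply It\^o's formula to the functional $\frac12\int_{\T^3}|U_t|^2+\Psi_t^2\,\d x$ along the localizing sequence $\tau_j$, and show that all drift and martingale contributions cancel.

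\textbf{Justifying It\^o's formula.} Fix $j$. On $[0,\tau_j]$ we have $U,\Psi\in C^0_tC^1_x$ and $\Psi\ge c_j>0$. Hence all integrands in \eqref{Eq_V_strong} and \eqref{Eq_phi_strong} are well defined after one integration by parts, with the Laplacian terms understood in $H^{-1}$. This includes $\frac{1}{\Psi}(|\nabla\Psi|^2+|\nabla_\sym U|^2)$, which is bounded on $[0,\tau_j]$. We therefore view $(U,\Psi)$ as an $L^2$-valued semimartingale in the Gelfand triple $H^1\subset L^2\subset H^{-1}$. The squared $L^2$-norm can then be expanded by \cite[Theorem 4.2.5]{LR_book}, or by \cite[Proposition A.1]{DHV_16}. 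An additional stopping time that bounds $\|U\|_{C^1}+\|\Psi\|_{C^1}$ restores integrability where needed.

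\textbf{Drift terms.} For the $U$-part, the dissipative contribution is $-(\frac12+\frac{F_1}{4})\|\nabla U\|_{L^2}^2$. The convective term vanishes because $\nabla\cdot U=0$. The It\^o correction from the $B_n$ noise uses \eqref{Eq_noise_matrix_covariation}, $\sum_n f_n\nabla f_n=0$ and the Helmholtz projection. We will compute it carefully, expecting a term of the form $\frac{F_1}{2}\|\nabla_\sym U\|^2$-type plus $F_2$-terms multiplying $\Psi^2$.

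For the $\Psi$-part, testing with $\Psi$ produces several contributions:
\begin{itemize}
\item[--] $\int |\nabla\Psi|^2+|\nabla_\sym U|^2$ from the singular drift, since $\Psi\cdot\frac1\Psi=1$;
\item[--] $-(1+\frac{F_1}2+\frac{G_1}2)\|\nabla\Psi\|^2$ from the Laplacian;
\item[--] zero from the transport term;
\item[--] $-(F_2/2+G_2/8)\|\Psi\|^2$ from the damping term;
\item[--] It\^o corrections $\frac{G_1}{2}\|\nabla\Psi\|^2+\frac{G_2}{8}\|\Psi\|^2$ from the $W_n$ noise, exactly as in the proof of Proposition \ref{prop_prop_SL};
\item[--] $\frac{F_1}{2}\|\nabla_\sym U\|^2$ from the $\nabla U:f_n\,\d B_n$ noise.
\end{itemize}

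We then sum the contributions. The $G$-terms cancel. In the $U$ equation, $\|\nabla U\|^2=2\|\nabla_\sym U\|^2$ for divergence-free $U$ on the torus, so its Laplacian term equals $-(1+\frac{F_1}{2})\|\nabla_\sym U\|^2$. The $B$-correction in the $U$ equation should produce $\frac{F_1}{2}\|\nabla\Psi\|^2$-type terms together with $F_2\|\Psi\|^2/2$; the latter cancel the $F_2$ damping. This bookkeeping is the main obstacle. The Helmholtz projection makes the quadratic variation of $\Pi\nabla\cdot(\Psi f_n\,\d B_n)$ nonlocal. I would try first to expand $\nabla\cdot(\Psi f_n B)$ into $\Psi\nabla f_n\cdot B+f_n B\nabla\Psi$. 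I would then use $\sum_n f_n\nabla f_n=0$ to kill cross terms, and bound $\|\Pi\cdot\|\le\|\cdot\|$. If that only yields an inequality, I would instead compute the correction in Fourier variables using stationarity of $F$.

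\textbf{Martingale terms.} The martingale terms pair $U$ with $\frac{1}{\sqrt2}\Pi\nabla\cdot(\Psi f_n\,\d B_n)$ and $\Psi$ with $-\frac1{\sqrt2}\nabla U:f_n\,\d B_n$. Integrating by parts, $\int U\cdot\nabla\cdot(\Psi f_n B)=-\int\Psi f_n\nabla U:B$, which exactly cancels the second. The $W_n$-martingale is $\int\Psi(-\nabla\cdot(\Psi g_n)+\frac12\Psi\nabla\cdot g_n)$, which equals $\int-\frac12\nabla(\Psi^2)\cdot g_n-\frac12\Psi^2\nabla\cdot g_n=0$. Hence \eqref{eqn_strong_energy_cons} holds on $[0,\tau_j]$ for every $j$. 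Letting $j\to\infty$ gives the claim on $[0,\tau)$.
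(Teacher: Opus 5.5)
Your route is the paper's: It\^o's formula for $\frac12\|U\|_{L^2}^2+\frac12\|\Psi\|_{L^2}^2$ on $[0,\tau_j]$, justified via the Gelfand triple and \cite[Theorem 4.2.5]{LR_book}. The paper obtains the identity by setting $(u,\psi)=(V,\Phi)$ in \eqref{Eq_Ito_product}. Your $\Psi$-balance and the $W_n$-martingale computation are correct.

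\textbf{The gap.} You defer the one step that is more than bookkeeping, the It\^o correction of the projected velocity noise. Adding your $\Psi$-contributions to $-(1+\frac{F_1}{2})\|\nabla_\sym U\|_{L^2}^2$ leaves exactly $-\frac{F_1}{2}\|\nabla\Psi\|_{L^2}^2-\frac{F_2}{2}\|\Psi\|_{L^2}^2$. So the lemma is equivalent to this correction in $\frac12\d\|U\|_{L^2}^2$ being \emph{exactly} $\frac12\sum_n\|\nabla(\Psi f_n)\|_{L^2}^2\,\d t=\bigl(\frac{F_1}{2}\|\nabla\Psi\|_{L^2}^2+\frac{F_2}{2}\|\Psi\|_{L^2}^2\bigr)\d t$.

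Your first guess of a $\nabla_\sym U$-type term here is wrong. That term comes from the $\Psi$-equation's noise, which you have already counted.

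Your main plan, dropping $\Pi$ via $\|\Pi v\|\le\|v\|$, cannot work. Under \eqref{Eq_noise_matrix_covariation}, the unprojected noise $-\frac{1}{\sqrt2}\nabla\cdot(\Psi f_n\,\d B_n)$ has exactly twice the quadratic variation of the projected one. You would only obtain
\[
\tfrac12\|U_t\|_{L^2}^2+\tfrac12\|\Psi_t\|_{L^2}^2\le\tfrac12\|U_0\|_{L^2}^2+\tfrac12\|\Psi_0\|_{L^2}^2+\int_0^t\tfrac{F_1}{2}\|\nabla\Psi\|_{L^2}^2+\tfrac{F_2}{2}\|\Psi\|_{L^2}^2\,\d s,
\]
which does not even give admissibility.

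\textbf{The missing computation.} It is Step 1 of the proof of Lemma \ref{thm_rel_energy_est} with $u=V$. Set $a_n=\Psi f_n$ and write $\Pi=\mathrm{Id}-\nabla\Delta^{-1}\nabla\cdot$. The $j$-th noise component is then
\[
-\tfrac{1}{\sqrt2}\sum_k\partial_k a_n\,\d B_n^{jk}+\tfrac{1}{\sqrt2}\sum_{k,l}\partial_j\Delta^{-1}\partial_{lk}a_n\,\d B_n^{lk}.
\]
By \eqref{Eq_noise_matrix_covariation}, summed over $j$ and integrated in space, the three contributions to the $L^2$-quadratic variation per unit time are:
\begin{itemize}
\item the first part gives $\frac12\sum_{j,k}(1+\delta_{jk})\|\partial_k a_n\|_{L^2}^2=2\|\nabla a_n\|_{L^2}^2$;
\item the second part gives $\sum_{j,k,l}\|\partial_j\Delta^{-1}\partial_{lk}a_n\|_{L^2}^2=\|\nabla a_n\|_{L^2}^2$, by Parseval;
\item the cross terms give $-2\|\nabla a_n\|_{L^2}^2$, since $\sum_j\partial_j\Delta^{-1}\partial_j=\mathrm{Id}$ on mean-zero functions.
\end{itemize}
The total is $\sum_n\|\nabla(\Psi f_n)\|_{L^2}^2$. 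Then $\sum_n|\nabla(\Psi f_n)|^2=F_1|\nabla\Psi|^2+F_2\Psi^2$ by \eqref{Eq_decay_condition_FG}, and no stationarity argument is needed.

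\textbf{Sign slip.} In your martingale step, the velocity noise is $-\frac{1}{\sqrt2}\Pi\nabla\cdot(\Psi f_n\,\d B_n)$, not $+$. With the sign you wrote, your integration by parts makes the two $B_n$-martingales add rather than cancel.
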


\section{Weak-strong uniqueness}\label{Sec:WS_uniqueness}

It is the aim of this section to verify a weak-strong uniqueness principle for \eqref{Eq_new_var}, i.e., that given the existence of a strong solution it coincides with any globally admissible  weak solutions. More precisely, we show the following.

\begin{theorem}[Weak-strong uniqueness]\label{thm_WS_main}
	Let Assumption \ref{Ass:noise} hold and $(\Omega,\A,\P)$, $\F$, $(W_n)_{n\in \Z^3}$, $(B_n)_{n\in \Z^3}$, 
	$(u,\psi ,q)  $ be a globally admissible weak martingale solution to \eqref{Eq_new_var} with initial value 
	$u_0 \in L^2(\T^3; \R^3)$ divergence-free and $\psi_0\in L^2(\T^3)$ non-negative. Assume additionally that, on the same stochastic basis, there exists a local strong solution $(V,\Phi)$ to \eqref{Eq_new_var} until a stopping time $\tau>0$. Then it holds
	\begin{equation} \label{Eq_WS_uniqueness} (u,\psi) = (V,\Phi), \qquad \text{on}\quad   \{(u_0,\psi_0) = (V_0,\Phi_0)\} \times [0,T\wedge \tau) , 
	\end{equation}
	$\P\otimes \d t$-almost everywhere.
\end{theorem}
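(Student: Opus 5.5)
We follow the relative energy strategy outlined in Subsection~\ref{sec:ideas_of_proof}. First we localize. Let $\tau_j\nearrow\tau$ be the localizing sequence of $(V,\Phi)$, and further stop at the first time where $\|V\|_{C^1}+\|\Phi\|_{C^1}$ exceeds $j$ or $\inf\Phi$ drops below $1/j$. Since $\Phi,V$ are continuous $C^1$-valued processes and $\inf\Phi>0$ on $[0,\tau_j]$, these truncations still increase to $\tau$. It therefore suffices to prove the claim on $\{(u_0,\psi_0)=(V_0,\Phi_0)\}\times[0,T\wedge\tau_j)$ for fixed $j$. We work with the weakly c\`adl\`ag version $\psi^+$ from Lemma~\ref{Lemma_cadlag}, which agrees $\P\otimes\d t$-a.e.\ with $\psi$.

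\textbf{Step 1: splitting the relative energy.} Expand $\EE(u_t,\psi^+_t|V_t,\Phi_t)$ as in \eqref{eqn111}. Global admissibility \eqref{energy-inequality-intro} bounds the weak-solution energy; it passes to the right-continuous version for every $t$ by weak lower semicontinuity. Lemma~\ref{prop_cons_of_energy} gives conservation of the strong-solution energy. Hence only the cross terms \eqref{eqn112} remain to be controlled.

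\textbf{Step 2: It\^o product rule for the cross terms.} We compute $\int u_t\cdot V_t+\int\psi^+_t\Phi_t$. Mollify in space with $\eta_\epsilon$ and set $u^\epsilon=\eta_\epsilon*u$ and $\psi^{+,\epsilon}_t(x)=\int\eta_\epsilon(x-y)\psi^+_t(y)\,\d y$, and likewise for $V$ and $\Phi$. Testing \eqref{eqn9} and \eqref{eqn4} with $\eta_\epsilon(x-\cdot)$ shows that for each $x$ these are real semimartingales. Here $\psi^{+,\epsilon}(x)$ is c\`adl\`ag with finite-variation jump part $\int\eta_\epsilon(x-\cdot)\,\d q$, and the jumps are identified by \eqref{eqn3}. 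The strong solution is continuous. Apply the c\`adl\`ag product rule (cf.\ \cite{protter}); since $\Phi$ is continuous, no jump covariation appears. Then integrate in $x$ using stochastic Fubini, and let $\epsilon\to0$. The passage to the limit uses the integrability $\nabla u,\nabla\psi\in L^{7/5}$ and $u,\psi\in L^{7/3}$ from Remark~\ref{rem_weak_sol}, the $C^1$ bounds on $(V,\Phi)$, and the fact that $\sum_n|f_n|^2,\sum_n|\nabla f_n|^2$ and the analogous sums for $g_n$ are bounded. The $q$-term survives as $\int_{[0,t]\times\T^3}\Phi\,\d q$, with $\Phi$ continuous.

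\textbf{Step 3: algebra and cancellations.} Add $\tfrac12\|u_t\|^2+\tfrac12\|\psi^+_t\|^2\le\tfrac12\|u_0\|^2+\tfrac12\|\psi_0\|^2$ to the identity from Step 2. The Laplacian terms combine into
\[
-\int\bigl(|\nabla_\sym(u-V)|^2+|\nabla(\psi-\Phi)|^2\bigr)\,\d x
\]
plus terms involving $|\nabla_\sym u|^2$ and $|\nabla\psi|^2$. The $|\nabla_\sym u|^2$ and $|\nabla\psi|^2$ pieces are matched against $-\int\Phi\,\d q$. Using $q\ge\psi^{-1}(|\nabla\psi|^2+|\nabla_\sym u|^2)$ together with the strong equation's term $\Phi^{-1}(|\nabla\Phi|^2+|\nabla_\sym V|^2)$ integrated against $\psi$, one expects a sign-definite expression of the form
\[
-\int\Bigl|\sqrt{\tfrac{\Phi}{\psi}}\,\nabla\psi-\sqrt{\tfrac{\psi}{\Phi}}\,\nabla\Phi\Bigr|^2\,\d x\le0,
\]
and similarly for $u$ versus $V$. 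Its precise form must be checked by completing squares.

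The It\^o correction terms $F_1\Delta/4$, $-(F_2/2+G_2/8)\psi$ and the $G_1$ part should cancel exactly against the cross-variations of the noise terms. This is the same cancellation that yields Lemma~\ref{prop_cons_of_energy}. The stochastic integrals should also cancel pairwise, because the noise enters antisymmetrically between the $u$ and $\psi$ equations. Convective terms leave the standard remainders
\[
\int(u-V)\otimes(u-V):\nabla V\,\d x,\qquad \int(\psi-\Phi)(u-V)\cdot\nabla\Phi\,\d x.
\]
These are bounded by $(2\|\nabla_\sym V\|_\infty+\|\nabla\Phi\|_\infty)\,\EE(u,\psi|V,\Phi)$, which gives \eqref{eqn1050}.

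\textbf{Step 4: conclusion.} On $\{(u_0,\psi_0)=(V_0,\Phi_0)\}$ the initial relative energy vanishes. Gr\"onwall's lemma, applied pathwise on $[0,T\wedge\tau_j)$, then gives $\EE\equiv0$, hence $(u,\psi^+)=(V,\Phi)$ there, and so $(u,\psi)=(V,\Phi)$ $\P\otimes\d t$-a.e.

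The main obstacle is Step 2 together with the nonlinear matching in Step 3. Justifying the product rule requires handling the measure $q$, the jumps of $\psi^+$ and the low integrability $L^{7/5}$ of the gradients, including commutator estimates as $\epsilon\to0$. On the set $\{\psi=0\}$, Convention~\ref{conv_inverse} forces $\nabla\psi=\nabla_\sym u=0$ $q$-a.e. there, and this must be used to make the completed square rigorous.
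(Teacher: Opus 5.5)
Your proposal is correct and follows essentially the same route as the paper. Energy conservation of $(V,\Phi)$ and global admissibility reduce everything to the cross terms; these are expanded by spatial mollification and the c\`adl\`ag product rule for $\psi^+$; all noise contributions cancel; the gradient terms are absorbed using $q\ge\psi^{-1}(|\nabla\psi|^2+|\nabla_\sym u|^2)$ through exactly your completed square (the paper's ``Young's inequality''); and Gr\"onwall closes the argument. The differences are cosmetic: the paper integrates the pointwise identity in $x$ by a Riemann-sum limit in probability rather than stochastic Fubini, and it uses no extra $C^1$ stopping times because the Gr\"onwall factor is already finite pathwise for $t<\tau$.
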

The above is achieved by an a priori estimate on the relative energy of $(u,\psi)$ with respect to $(V,\Phi)$, defined in \eqref{Eq_rel_energy_intro}, which we recall to be
\begin{equation}\label{Eq_rel_energy_sec}
	\EE( u_t,\psi_t^+ | V,\Phi ) \,=\, \frac{1}{2}\int_{\T^3}   |u_t-V_t|^2 + |\psi_t^+ - \Phi_t |^2\,\d x.
\end{equation}
Here, for the relative energy to be meaningful at any time instance $t\in [0,T)$, we inserted the c\`adl\`ag version $\psi^+$ of $\psi$, provided by Lemma \ref{Lemma_cadlag}. In fact, this particular choice allows us to employ stochastic analysis methods for c\`adl\`ag semimartingales in order to obtain the following relative energy estimate.

\begin{proposition}[Relative energy estimate]\label{thm_rel_en_bound}
	Under the assumptions of Theorem \ref{thm_WS_main}, we have, $\P$-a.s.,
	\begin{equation}\label{Eq_rel_energy_bound}
		\EE( u_t,\psi^+_t | V_t,\Phi_t ) \,\le\, \exp \Bigl( \bigl( 2 \| \nabla_\sym V \|_{C([0,t]\times \T^3; \R^{3\times 3})} \,+\,   \| \nabla \Phi \|_{C([0,t]\times \T^3; \R^{3})}  \bigr)\cdot  t \Bigr) \EE( u_0,\psi_0 | V_0,\Phi_0 ),
	\end{equation}
	for $t\in [0,T\wedge \tau)$.
\end{proposition}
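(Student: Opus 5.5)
We expand the relative energy as in \eqref{eqn111}: $\EE(u_t,\psi^+_t|V_t,\Phi_t)=\EE(u_t,\psi_t^+)+\EE(V_t,\Phi_t)-\int u_t\cdot V_t\,\d x-\int\psi^+_t\Phi_t\,\d x$. Lemma \ref{prop_cons_of_energy} gives conservation of the energy of $(V,\Phi)$. Global admissibility \eqref{energy-inequality-intro}, upgraded to every $t$ through the weak lower semicontinuity of the $L^2$-norm along the weakly c\`adl\`ag version $\psi^+$ of Lemma \ref{Lemma_cadlag} and the weak continuity of $u$, bounds $\EE(u_t,\psi^+_t)$ by $\EE(u_0,\psi_0)$. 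It then remains to compute the cross terms $\int u_t\cdot V_t+\psi_t^+\Phi_t\,\d x$ by an It\^o product rule and show that they dominate their initial values up to the Gr\"onwall-type error.

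\textbf{Regularization.} We mollify in space with a kernel $\eta_\epsilon$. For fixed $x$, put $\psi^+_{\epsilon,t}(x)=\langle \psi^+_t,\eta_\epsilon(x-\cdot)\rangle$. By \eqref{eqn4}, this is a real c\`adl\`ag semimartingale whose finite variation part contains $\int_{[0,t]\times\T^3}\eta_\epsilon(x-y)\,\d q$, with jumps given by \eqref{eqn3}. Likewise, $u_\epsilon(x)$ is a continuous semimartingale by \eqref{eqn9}, after handling the Helmholtz projection by testing with $\Pi\eta_\epsilon(x-\cdot)e_k$. $V$ and $\Phi$ are continuous semimartingales via \eqref{Eq_V_strong}--\eqref{Eq_phi_strong}. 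We apply the It\^o product rule for c\`adl\`ag semimartingales (Protter) pointwise in $x$, integrate over $\T^3$ using stochastic Fubini, and localize with $\tau_j$ so that $\inf\Phi>0$ and all $C^1$-norms of $V,\Phi$ are bounded. Since $\Phi$ is continuous, no jump cross-variation arises. We then let $\epsilon\to0$, using $\nabla u,\nabla\psi\in L^{7/5}$, $u,\psi\in L^{7/3}$ from Remark \ref{rem_weak_sol}, and the uniform $C^1$ bounds on $(V,\Phi)$. The measure term passes to the limit because $\Phi$ is continuous and $q$ is a finite measure.

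\textbf{Algebra.} The resulting identity for $\EE(u_t,\psi_t^+|V_t,\Phi_t)$ contains the term $-\int\Phi\,\d q$, which is controlled by $q\ge\psi^{-1}(|\nabla\psi|^2+|\nabla_\sym u|^2)$ and $\Phi>0$. It also contains the term $\int\psi\Phi^{-1}(|\nabla\Phi|^2+|\nabla_\sym V|^2)$ coming from the strong equation, together with the dissipative cross terms from the Laplacians. The plan is to combine these into perfect squares of the form
\[
-\int \frac{1}{\psi\Phi}\bigl(|\Phi\nabla\psi-\psi\nabla\Phi|^2+|\Phi\nabla_\sym u-\psi\nabla_\sym V|^2\bigr)\le 0 .
\]
The second-order It\^o corrections from the $W$- and $B$-noises, and their cross-variations, must cancel exactly against the Stratonovich correction terms $F_1\Delta/4$, $G_1/2$ and $F_2/2+G_2/8$ in the drift. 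For this we use the covariance \eqref{Eq_noise_matrix_covariation} and the cancellations of Assumption \ref{Ass:noise}, in the same way as in the proof of Proposition \ref{prop_prop_SL}. The martingale parts should cancel identically: the $B$-terms in the $u$-equation pair with the $\nabla u:f_n$ terms in the $\psi$-equation after integration by parts, and the $W$-terms are antisymmetric transport terms. This gives the pathwise statement. The remaining convective terms are the standard $\int(u-V)\otimes(u-V):\nabla_\sym V$ and the analogous $\int(\psi-\Phi)(u-V)\cdot\nabla\Phi$. These are bounded by $2\|\nabla_\sym V\|_\infty\EE$ and $\|\nabla\Phi\|_\infty\EE$, respectively, via Young's inequality.

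\textbf{Conclusion and main obstacle.} The above yields $\EE_t\le\EE_0+\int_0^t C_s\EE_s\,\d s$ with $C_s$ bounded by the stated suprema, and Gr\"onwall's lemma gives \eqref{Eq_rel_energy_bound}. We expect the main obstacle to be justifying the limit $\epsilon\to0$ in the nonlinear product terms involving $q$ and the singular quotients $1/\psi$. The minimal integrability $L^{7/5}$ of the gradients, together with the exact bookkeeping that makes all noise terms cancel, must suffice to show that no positive remainder survives. Our first approach is to keep the inequality direction throughout, applying Fatou where signs permit, and to use the $C^1$ regularity of $\Phi$ to pass $\eta_\epsilon*q$ against $\Phi$.
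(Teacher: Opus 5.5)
Your proposal is correct and follows the paper's route. The paper first proves the exact It\^o expansion of Lemma \ref{thm_rel_energy_est}: it mollifies $u,\psi^+,V,\Phi$, applies Protter's product rule pointwise in $x$, passes to spatial integrals (via Riemann-sum averages in probability rather than stochastic Fubini) and lets $\epsilon\to0$, with the noise terms cancelling exactly as you describe; it then concludes, as you do, via global admissibility along $\psi^+$, the domination of $q$ with Young's inequality (your perfect squares), the two convective bounds, and Gr\"onwall. The obstacle you anticipate does not arise: the regularized identity contains only $1/\Phi$ (bounded below up to $\tau_j$) and is linear in $q$, every term converges with the $L^{7/5}$/$L^{7/3}$ integrability so no Fatou argument is needed, and $q\ge\psi^{-1}(|\nabla\psi|^2+|\nabla_\sym u|^2)$ is only invoked on the limit identity.
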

A remarkable fact about the relative energy estimate \eqref{Eq_rel_energy_bound} is that no stochastic integrals appear on the right-hand side. In particular, there is no need to take moments. Since
Proposition \ref{thm_rel_en_bound} immediately implies Theorem \ref{thm_WS_main}, the remainder of this section is dedicated to the proof of the preceding proposition. 
\subsection{Proof of the relative energy estimate}
In order to prove Proposition \ref{thm_rel_en_bound}, we essentially need to justify It\^o's product rule to obtain the It\^o expansion of \eqref{Eq_rel_energy_sec}. Then, due to the aforementioned cancellation of stochastic integrals, obtaining \eqref{Eq_rel_energy_bound} is rather direct. Consequently, the key step in the proof of Proposition \ref{thm_rel_en_bound} is the following technical lemma, which we formulate also for weak solutions, which are not globally admissible.

\begin{lemma}[It\^o expansion of the relative energy]\label{thm_rel_energy_est}Let Assumption \ref{Ass:noise} hold and $(\Omega,\A,\P)$, $\F$, $(W_n)_{n\in \Z^3}$, $(B_n)_{n\in \Z^3}$, 
	$(u,\psi ,q)  $ be a weak  martingale solution to \eqref{Eq_new_var} with initial value 
	$u_0 \in L^2(\T^3; \R^3)$ divergence-free and $\psi_0\in L^2(\T^3)$ non-negative. Assume additionally that, on the same stochastic basis, there exists a local strong solution $(V,\Phi)$ to \eqref{Eq_new_var} until a stopping time $\tau>0$. Then it holds
	\begin{align}\begin{split}\label{Eq_rel_energy_expansion}&
			\EE( u_t,\psi^+_t | V_t,\Phi_t ) \,=\, 	\EE( u_0,\psi_0  | V_0,\Phi_0 ) \,+\, \frac{1}{2}\int_{\T^3} |u_t|^2 + (\psi_t^+)^2- |u_0|^2 - \psi_0^2\,\d x 
			\\&\qquad  +\, 2 \int_0^t  
			\int_{\T^3} \nabla_{\sym} V:  \nabla_{\sym}  u + \nabla  \psi \cdot  \nabla \Phi 	\,\d x\, \d s  \,-\,
			\int_0^t \int_{\T^3}  \frac{\psi}{\Phi} \bigl(
			|\nabla \Phi|^2+|\nabla_\sym V|^2
			\bigr)\,\d x \,\d s \,-\, \int_{[0,t] \times \T^3}     \Phi   \,\d q
			\\&\qquad + \,  \int_0^t \int_{\T^3}   u \cdot      (( V \cdot \nabla)  V )-  \nabla   V:  u\otimes u \,\d x\,\d s \,  + \, \int_0^t \int_{\T^3}\psi V\cdot \nabla \Phi -\nabla \Phi \cdot (\psi u ) \,\d x \,\d s,
	\end{split}\end{align}
	for all $t\in [0,T \wedge \tau)$,
	$\P$-almost surely.
\end{lemma}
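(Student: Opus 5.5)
We expand the square in \eqref{Eq_rel_energy_sec}:
\[
\EE( u_t,\psi^+_t | V_t,\Phi_t )=\tfrac12\!\int |u_t|^2+(\psi_t^+)^2\,\d x+\tfrac12\!\int |V_t|^2+\Phi_t^2\,\d x-\int u_t\cdot V_t+\psi^+_t\Phi_t\,\d x .
\]
By Lemma \ref{prop_cons_of_energy} the middle term is constant in time. Hence \eqref{Eq_rel_energy_expansion} reduces to computing the It\^o expansions of the cross terms $\int u_t\cdot V_t\,\d x$ and $\int \psi^+_t\Phi_t\,\d x$. We must then show that, after adding them up, all stochastic integrals cancel, all It\^o corrections cancel, and the drift terms collapse to the right-hand side of \eqref{Eq_rel_energy_expansion}. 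The localization $t<\tau_j$ with the stopping times of Definition \ref{defi_strong_sol} will be used throughout, and at the end we let $j\to\infty$.

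\textbf{Mollification.} Since $u,\psi$ are only weak, we fix a mollifier $\eta_\epsilon$ and set $u^\epsilon=\eta_\epsilon*u$, $\psi^{+,\epsilon}=\eta_\epsilon*\psi^+$, and similarly $V^\epsilon,\Phi^\epsilon$. Testing \eqref{eqn9} with $\eta_\epsilon(x-\cdot)$, projected onto divergence-free fields (harmless since $V$ is divergence-free), gives that $u^\epsilon(x)$ is a continuous semimartingale for each $x$. Testing \eqref{eqn4} of Lemma \ref{Lemma_cadlag} gives that $\psi^{+,\epsilon}_t(x)$ is a c\`adl\`ag semimartingale whose finite-variation part contains $\int \eta_\epsilon(x-y)\,\d q$, and whose jumps are given by \eqref{eqn3}. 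The functions $V^\epsilon(x),\Phi^\epsilon(x)$ are continuous semimartingales by \eqref{Eq_V_strong} and \eqref{Eq_phi_strong}.

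\textbf{Product rule.} We apply It\^o's product rule for c\`adl\`ag semimartingales \cite{protter} pointwise in $x$ to $u^\epsilon\cdot V^\epsilon$ and $\psi^{+,\epsilon}\Phi^\epsilon$. Because $\Phi^\epsilon$ is continuous, the jump part only contributes through $\int \Phi^\epsilon_{s}\,\d(\psi^{+,\epsilon})^{\rm FV}$, so no extra jump sum appears. The bracket $[\psi^{+,\epsilon},\Phi^\epsilon]$ involves only the continuous martingale parts. We then integrate in $x$, using a stochastic Fubini theorem justified by $\psi,u\in L^\infty_tL^2_x$ and the $C^1$ regularity of $V,\Phi$, and let $\epsilon\to0$. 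The regularity from Definition \ref{defi_solution}~\eqref{Item_defi_sol_1}, together with \eqref{eqn2727724}--\eqref{eqn2727724u}, gives convergence of the drift terms, e.g.\ $\nabla u:\nabla V$ and $u\otimes u:\nabla V$ with $u\in L^{7/3}$ and $\nabla V\in C$. The term $\int \Phi^\epsilon*\eta_\epsilon\,\d q\to\int\Phi\,\d q$ converges since $\Phi$ is continuous and $q$ is a finite measure. The stochastic integrals converge in probability by the Burkholder--Davis--Gundy inequality after localization.

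\textbf{Algebra and cancellations.} In the limit we must check the following. The drift $(1+F_1/2+G_1/2)\nabla\psi\cdot\nabla\Phi$ from both equations, $(\tfrac12+\tfrac{F_1}4)\nabla u:\nabla V$ twice, and the It\^o corrections $[\psi,\Phi]$ and $[u,V]$ must combine to $-2\int\nabla_\sym V:\nabla_\sym u+\nabla\psi\cdot\nabla\Phi$ (sign after the minus in the expansion). The needed identity is the one computed in the proof of Proposition \ref{prop_prop_SL}, using \eqref{Eq_decay_condition_FG}, \eqref{Eq_noise_matrix_covariation} and $\nabla\cdot u=\nabla\cdot V=0$. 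The mixed corrections $[u,\Phi]$ (from $\psi f_n\d B$ against $\nabla U:f_n\d B$) must produce the $\nabla\psi\cdot\nabla\Phi$ and $\nabla_\sym$ type contributions that fix the constants. The stochastic integrals must cancel: the $B$-terms of $u$ against $V$ pair with the $B$-terms of $\psi$ against $\Phi$ after an integration by parts, and the $W$-terms cancel by the transport-type antisymmetry of $\psi g_n\partial_i\Phi+\tfrac12\psi\Phi\partial_ig_n$. Finally, the convective terms of $V,\Phi$ tested against $u,\psi$ are rewritten into the form displayed in \eqref{Eq_rel_energy_expansion}, and the term $\frac{\psi}{\Phi}(|\nabla\Phi|^2+|\nabla_\sym V|^2)$ comes directly from \eqref{Eq_phi_strong}.

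\textbf{Main obstacle.} I expect the hardest step to be the rigorous justification of the product rule with the c\`adl\`ag, measure-driven process $\psi^+$ and the passage $\epsilon\to0$, in particular handling the commutator of mollification with the singular measure $q$ and with the weak regularity $\nabla\psi\in L^{7/5}$. The noise cancellations themselves are bookkeeping guided by the energy-conservation computation of Lemma \ref{prop_cons_of_energy}.
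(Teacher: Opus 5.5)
Your proposal follows essentially the paper's route: mollify, apply Protter's product rule pointwise in $x$ to the c\`adl\`ag $\psi^+_\epsilon$ against the continuous $\Phi_\epsilon$ (so no jump sum appears), integrate in space, let $\epsilon\to0$ using the $L^{7/5}$/$L^{7/3}$ regularity, and observe that all stochastic integrals cancel. The paper merely integrates in $x$ via Riemann sums rather than stochastic Fubini, and treats $\int u\cdot V$ via the $L^2$ It\^o formula plus polarization. One bookkeeping slip: no bracket $[u,\Phi]$ arises in the expansion. The $F_1\nabla\psi\cdot\nabla\Phi+F_2\psi\Phi$ correction comes from $[u,V]$ (noises $\psi f_n$ and $\Phi f_n$), and the $F_1\nabla_\sym u:\nabla_\sym V$ correction comes from the $B$-part of $[\psi,\Phi]$.
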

\begin{remark}\label{f_in_Cl2}In the proof of Lemma \ref{thm_rel_energy_est} we use that Assumption \ref{Ass:noise} implies that $f_n, g_n\in C^1(\T^3; \ell^2(\Z^3))$. Indeed, to show that $f(x_l) \to f(x)$ in $\ell^2(\Z^3)$ for some $x_l\to x $ in $\T^3$, we observe firstly that for any subsequence, there exists yet another subsequence (denoted again by $x_l$) satisfying $f(x_l)\to f(x)$ weakly in $\ell^2(\Z^3)$ by \eqref{Eq_decay_condition_FG} and the continuity of the individual $f_n$. Since the norms $\|f(x_l)\|_{\ell^2(\Z^3)} =\sqrt{F_1} = \|f(x)\|_{\ell^2(\Z^3)}$ coincide, we find that the convergence is strong. This yields that $f\in C(\T^3; \ell^2(\Z^3))$ and  applying the same argument to $\nabla f$, $g$ and $\nabla g$, we obtain the claimed regularity.
\end{remark}

\begin{proof}[Proof of Lemma \ref{thm_rel_energy_est}]
	We start by setting up the proof and observe for the increments of the relative energy that 
	\begin{align}\begin{split}\label{eqn5}&
			\EE( u_t,\psi^+_t | V_t,\Phi_t )\,-\, 	\EE( u_0,\psi_0  | V_0,\Phi_0 )
			\,=\, \frac{1}{2}\int_{\T^3} |u_t|^2 + (\psi_t^+)^2- |u_0|^2 - \psi_0^2\,\d x 
			\\& \qquad - \,\int_{\T^3} u_t \cdot V_t -  u_0 \cdot V_0 \,\d x \, -\, \int_{\T^3} \psi^+_t \Phi_t -  \psi_0  \Phi_0 \,\d x,
		\end{split}
	\end{align}
	by the conservation of energy of strong solutions stated in  Lemma \ref{prop_cons_of_energy}. To calculate the latter terms, we let $\tau_j$ be a localizing sequence as in Definition \ref{defi_strong_sol}, so that it suffices to prove the assertion for $t\in [0, T \wedge \tau_j)$, for fixed $j$.	
	We proceed now in three steps, in which we justify an application of It\^o's product rule to obtain the semimartingale decomposition of \eqref{eqn5}. Before that, we fix some notation and let $\eta \in C^\infty_c(B_{1/2}^{\R^3}(0))$ be a standard mollifier, i.e., $\eta \ge  0$ is radially symmetric and $\int_{\R^3 }\eta = 1$, for which we define an approximate identity by setting $\eta_\epsilon = \epsilon^{-3}\eta (\cdot /\epsilon)$ for $\epsilon \in (0,1)$. Accordingly, we observe that $u_\epsilon = \eta_\epsilon * u $, $\psi_\epsilon^+ = \eta_\epsilon * \psi^+ $, $V_\epsilon = \eta_\epsilon * V $ and $\Phi_\epsilon = \eta_\epsilon * \Phi$ satisfy analogous equations to \eqref{eqn9} and \eqref{eqn4} as well as \eqref{Eq_V_strong} and \eqref{Eq_phi_strong}, respectively, just with a mollified righ-hand side. The latter can be seen by inserting mollified test functions in these formulations. But since the mollified integrands converge  pointwisely, we find for any $x\in \T^3$, $\P$-a.s., the identities
	\begin{align}
		&  u_{\epsilon,t}(x) \,-\, 
		u_{\epsilon,0}(x) \,=\, - \, \frac{1}{\sqrt{2}} \Pi \nabla \cdot \biggl(\sum_{n \in \Z^3}\int_0^t \eta_\epsilon *( \psi f_n) \,	\d B_{n,s} \biggr) (x)
		\\&\qquad +\,  \int_{0}^t  \Bigl(\frac{1}{2}+ \frac{F_1}{4}\Bigr) \Delta(  \eta_\epsilon*u)(x) \,-\, \Pi \bigl( \nabla \cdot  (\eta_\epsilon * (u\otimes u) ) \bigr)(x) \,\d s  ,\\
		&\label{Eq_Psi_reg}
		\\&
		\psi^+_{\epsilon,t}(x)  \,-\, 
		\psi_{\epsilon,0}(x)  \,=\,   \int_{[0,t]}       \eta_\epsilon*K_s (x)\,\d\nu_s 
		\\&\qquad +\, \int_0^t \bigl(1+ {F_1}/2 +G_1/2\bigr) \Delta ( \eta_\epsilon* \psi)(x)  \,-\, \nabla \cdot  (\eta_\epsilon *(\psi u) )(x)   \,-\, (F_2/2+G_2/8) \eta_\epsilon*\psi (x)  \,\d s
		\\&\qquad + \sum_{i=1}^3
		\sum_{n \in \Z^3}\int_0^t - \partial_i (\eta_\epsilon* (\psi g_n) )(x)  + \frac{1}{2} \eta_\epsilon* ( \psi \partial_ig_n)  (x) \,  \d W_{n,s}^i
		\,-\, \frac{1}{\sqrt{2}}\sum_{n \in \Z^3}\int_0^t\eta_\epsilon*( f_n \nabla u)(x):\d B_{n,s},\\&
		V_{\epsilon,t}(x) \,-\, 
		V_{\epsilon,0}(x) \,=\, - \, \frac{1}{\sqrt{2}} \Pi \nabla \cdot  \biggl(\sum_{n \in \Z^3}\int_0^t \eta_\epsilon *( \Phi f_n) \,\d B_{n,s} \biggr) (x)
		\\&\qquad +\,  \int_{0}^t  \Bigl(\frac{1}{2}+ \frac{F_1}{4}\Bigr)\Delta (  \eta_\epsilon*V)(x) \,-\, \Pi  \bigl( \nabla \cdot \bigl(\eta_\epsilon * (  V\otimes V) \bigr)\bigr) (x) \,\d s  ,\\		
		&\label{Eq_Phi_reg}\\&\Phi_{\epsilon, t} (x )\,-\, 		\Phi_{\epsilon,0 }(x)\,=\,  \int_0^t\eta_\epsilon*\Bigl( \frac{1}{\Phi} \bigl(
		|\nabla \Phi|^2+|\nabla_\sym V|^2
		\bigr)\Bigr)(x) \,\d s
		\\&\qquad+\int_0^t \bigl(1+ {F_1}/2 +G_1/2\bigr) \Delta( \eta_\epsilon*   \Phi)(x) \,-\, \nabla \cdot  (\eta_\epsilon*  (\Phi V) )(x) \,-\, (F_2/2+G_2/8) \eta_\epsilon*\Phi (x) \,\d s 
		\\&\qquad + \sum_{i=1}^3
		\sum_{n \in \Z^3}\int_0^t  - \partial_i(\eta_\epsilon*  (\Phi  g_n) ) (x) + \frac{1}{2} \eta_\epsilon* ( \Phi \partial_ig_n) (x)  \, \d W_{n,s}^i
		\,-\, \frac{1}{\sqrt{2}}\sum_{n \in \Z^3}\int_0^t\eta_\epsilon * ( f_n \nabla V)(x) :\d B_{n,s},
	\end{align}
	for $t\in [0,T\wedge \tau_j)$. As in the proof of Lemma \ref{Lemma_cadlag}, we denote here the disintegration  of the measure $q$ by $\nu \otimes K$. Moreover, we used that the Helmholtz projection commutes with a scalar convolution as do differential operators.

	Let us also make a technical observation at this point. While the adaptedness of the processes  $u_\epsilon(x)$, $V_\epsilon(x)$ and $\Phi_\epsilon(x)$ is clear, whether $\psi^+_\epsilon(x)$ is adapted is more subtle and we use that $\F$ satisfies the usual conditions to confirm the latter. Indeed, since $\psi$ is progressively measurable, cf.\ Remark \ref{rem_weak_sol}~\eqref{rem_weak_sol_I1}, we conclude that for any non-negative $\vp \in C_c^\infty((0,1))$ with $\int_0^1\vp(t)\d t =1$ and $\vp_\delta(t) = \delta^{-1}\vp(t/\delta )$, the limit
	\[
	\psi_{\epsilon,t}^+(x) \,=\, \lim_{\delta\to  0} \int_{(t,t+\delta)\times \T^3} \vp_\delta(s-t)  \eta_\epsilon (x-y) \psi(y) \,\d y \,\d s,
	\]
	which is attained $\P$-a.s.,
	is $\F_{s}$-measurable for any $s>t$.  Because  $\F_t = \cap_{s>t} \F_s$, we conclude that  $\psi^+$ is adapted. As it moreover decomposes into a finite variation process and a local martingale, we can apply stochastic analysis methods for c\`adl\`ag semimartingales, cf.\ \cite[Chapter III]{protter}.
	
	\textit{Step 1 (It\^o's product rule for the regularized  velocities).}
	The velocity processes $u_{\epsilon}$ and $V_\epsilon$ have sufficient regularity to apply It\^o's formula for the squared norm as in \cite[Theorem 4.2.5]{LR_book}, and by the polarization identity
	\[
	\int_{\T^3}  u_{\epsilon}\cdot  V_\epsilon \,\d x \,=\, \frac{1}{4}\|u_{\epsilon}(x) + V_\epsilon(x) \|_{L^2(\T^3; \R^3)}^2 \,-\, \frac{1}{4}\|u_{\epsilon}(x) - V_\epsilon(x) \|_{L^2(\T^3; \R^3)}^2 ,
	\]
	also for their pairing in $L^2(\T^3;\R^3)$. To identify the correction term in the latter, we rewrite the noise term in the equation for $u_\epsilon^j$ as
	\[
	- \, \frac{1}{\sqrt{2}}  \sum_{k=1}^3 \sum_{n \in \Z^3}\int_0^t \partial_k (\eta_\epsilon *( \psi f_n) )(x)\,	\d B_{n,s}^{jk} \,
	+ \, \frac{1}{\sqrt{2}}  \sum_{k,l=1}^3 \sum_{n \in \Z^3}\int_0^t \partial_j \Delta^{-1}\partial_{lk} (\eta_\epsilon *( \psi f_n)) (x)\,	\d B_{n,s}^{lk} ,
	\]
	where an analogous formula holds for the equation for $V_\epsilon$. This, together with the covariance structure \eqref{Eq_noise_matrix_covariation} of the noise $B$, yields
	\begin{align}&
		\int_{\T^3}  u_{\epsilon,t} \cdot  V_{\epsilon,t} \,\d x \,-\, \int_{\T^3}  u_{\epsilon,0} \cdot  V_{\epsilon,0} \,\d x 
		\\&\quad= \,\int_0^t  
		\int_{\T^3} - \Bigl(\frac{1}{2}+ \frac{F_1}{4}\Bigr)\nabla V_\epsilon :  \nabla  u_\epsilon   \,-\, u_\epsilon   \eta_\epsilon * ( ( V\cdot \nabla ) V)  \,\d x \,\d s  \\&\qquad +\, \int_{0}^t \int_{\T^3}  -  \Bigl(\frac{1}{2}+ \frac{F_1}{4}\Bigr) \nabla V_\epsilon :  \nabla  u_\epsilon   \,+ \, \nabla   V_\epsilon :  \eta_\epsilon * (u\otimes u)  \,\d x\,\d s 
		\\&\qquad +\,\frac{1}{2} \sum_{j,k=1}^3 \sum_{n\in \Z^3} (1+\delta_{jk}) \int_0^t  
		\int_{\T^3}  (\partial_ k \eta_\epsilon * ( \psi f_n ) )  (\partial_ k \eta_\epsilon * ( \Phi f_n ) ) \,\d x \,\d s 
		\\&\qquad +\ \sum_{j,k,l=1}^3 \sum_{n\in \Z^3} \int_0^t  
		\int_{\T^3} (\partial_j \Delta^{-1} \partial_{lk} \eta_\epsilon * ( \psi f_n ) )  (\partial_j \Delta^{-1} \partial_{lk} \eta_\epsilon * ( \Phi f_n ) ) \,\d x \,\d s 
		\\&\qquad -\,2\sum_{k=1}^3 \sum_{n\in \Z^3} \int_0^t  
		\int_{\T^3}  (\partial_ k\eta_\epsilon * ( \psi f_n ) )  (\partial_ k \eta_\epsilon * ( \Phi f_n ) ) \,\d x \,\d s 
		\\&\qquad - \, \frac{1}{\sqrt{2}} \sum_{n \in \Z^3} \int_0^t\int_{\T^3} u \otimes (\nabla  \eta_\epsilon *( \Phi f_n)) \,\d x :  \d B_{n,s}
		\\&\qquad+ \, \frac{1}{\sqrt{2}}\sum_{n \in \Z^3}\int_0^t \eta_\epsilon *( \psi f_n) \nabla V_\epsilon \,\d x :	\d B_{n,s} 
		,
	\end{align}
	after suitable  integration by parts. Integrating by parts also in the correction term simplifies the above to 
	\begin{align}\begin{split}
			\label{eqn7}&
			\int_{\T^3}  u_{\epsilon,t} \cdot  V_{\epsilon,t} \,\d x \,-\, \int_{\T^3}  u_{\epsilon,0} \cdot  V_{\epsilon,0} \,\d x 
			\\&\quad= \,\int_0^t  
			\int_{\T^3} - \Bigl(1+ \frac{F_1}{2}\Bigr)\nabla V_\epsilon :  \nabla  u_\epsilon   \,-\, u_\epsilon\cdot    \eta_\epsilon * ( ( V\cdot \nabla ) V)    \,+ \, \nabla   V_\epsilon :  \eta_\epsilon * (u\otimes u)  \,\d x\,\d s 
			\\&\qquad +\, \sum_{n\in \Z^3}  \int_0^t  
			\int_{\T^3}  \nabla  \eta_\epsilon * ( \psi f_n ) \cdot \nabla \eta_\epsilon * ( \Phi f_n )  \,\d x \,\d s 
			\\&\qquad +  \, \frac{1}{\sqrt{2}} \sum_{n \in \Z^3}  \int_0^t \biggl( \int_{\T^3} \eta_\epsilon *( \psi f_n) \nabla V_\epsilon   \,-\,  u_\epsilon \otimes (\nabla  \eta_\epsilon *( \Phi f_n)) \,\d x \biggr):  \d B_{n,s}\,=:\, I_1^{(\epsilon)} \,+\, K_1^{(\epsilon)}\,+\, S_1^{(\epsilon)}.
		\end{split}
	\end{align}
	
	\textit{Step 2 (It\^o's product rule for the regularized temperature square roots).} We use the  integration by parts formula \cite[Corollary 2, p.68]{protter} for  c\`adl\`ag semimartingales and the description of their covariation process from \cite[p.70]{protter}, to obtain for any $x\in \T^3$, $\P$-a.s., that
	\begin{align}\begin{split}\label{eqn74375}
			&
			\psi^+_{\epsilon,t}(x)\Phi_{\epsilon, t} (x ) - \psi_{\epsilon,0}^+(x)\Phi_{\epsilon, 0} (x ) \,=\,  \int_0^t 	\psi_{\epsilon,s}^+(x) \,\d \Phi_{\epsilon, s} (x ) \,+\,
			\int_{(0,t]} \Phi_{\epsilon, s} (x )  \,\d 	\psi^+_{\epsilon,s}(x) \\&\qquad +\,
			\sum_{0<s\le t} (\psi^+_{\epsilon,s}(x) - \psi^-_{\epsilon,s}(x))(\Phi_{\epsilon,s}^+ (x ) - \Phi_{\epsilon,s}^- (x )   )
			\,+\, [	\psi^+_{\epsilon}(x), \Phi_{\epsilon} (x )   ]_t^c,
		\end{split}
	\end{align}
	for all $t\in [0,T\wedge \tau_j)$. 
	We comment on the above formula: Inspecting \cite[Corollary 2, p.68]{protter} one sees that there the integrands in the above formula are replaced by their left limits. For the first integral in \eqref{eqn74375} however, this doesn't play a role, since  $	\Phi_{\epsilon,t}(x)$ doesn't have any jumps and $\psi_{\epsilon, t}^+(x)$ differs from its left limit,  $\P$-a.s., only at most at countably many time instances by \eqref{eqn3}. For the same reason, we can  replace $\psi_{\epsilon, t}^+(x)$ in this term in the following by $\psi_{\epsilon, t} = \eta_\epsilon*\psi_t  $. In the second integral, we could insert $\Phi_{\epsilon,s}(x)$, since this process is continuous. Also by the continuity of $\Phi$, the right and left limits $\Phi^\pm_{\epsilon,s}(x)$ in the above formula coincide and therefore the sum on the right-hand side of \eqref{eqn74375} vanishes.
	Following the comment on \cite[p.70]{protter}, the continuous part of the quadratic variation process $ [	\psi^+_{\epsilon}(x), \Phi_{\epsilon} (x )   ]_t^c$ coincides with the quadratic variation of the continuous local martingale  part of the involved processes. The latter can then be evaluated using the usual rules of It\^o calculus as
	\begin{align}&
		\sum_{n\in \Z^3} \int_0^t 
		\Bigl[
		- \nabla (\eta_\epsilon* (\psi g_n) ) + \frac{1}{2}\eta_\epsilon* ( \psi \nabla g_n) 
		\Bigr]\cdot  \Bigl[ - \nabla(\eta_\epsilon*   (\Phi  g_n))  + \frac{1}{2} \eta_\epsilon* ( \Phi \nabla g_n)  \Bigr](x)\,\d s
		\\& \quad +\, \sum_{n\in \Z^3}\int_0^t \eta_\epsilon * ( f_n \nabla_{\sym} u)(x)  : \eta_\epsilon * ( f_n \nabla_{\sym} V)(x)   \,\d s,
	\end{align}
	by invoking the assumption \eqref{Eq_noise_matrix_covariation} on the covariation structure of $B$. Using also that
	\[
	\psi^+_{\epsilon,0}(x)  \,=\,
	\psi_{\epsilon,0}(x)  \,+\,   \int_{\{0\}}       \eta_\epsilon*K_s (x)\,\d\nu_s ,
	\]
	as follows by taking $t= 0$ in \eqref{Eq_Psi_reg}, we process \eqref{eqn74375} to the form
	\begin{align}&\psi^+_{\epsilon,t}(x)\Phi_{\epsilon, t} (x ) - \psi_{\epsilon,0}(x)\Phi_{\epsilon, 0} (x ) \,=\,  \int_0^t 	\psi_{\epsilon,s}^+(x) \,\d \Phi_{\epsilon, s} (x ) \,+\,
		\int_{[0,t]} \Phi_{\epsilon, s} (x )  \,\d 	\psi^+_{\epsilon,s}(x) \\&\qquad +\, \sum_{n\in \Z^3} \int_0^t 
		\Bigl[
		- \nabla (\eta_\epsilon* (\psi g_n) ) + \frac{1}{2}\eta_\epsilon* ( \psi \nabla g_n) 
		\Bigr]\cdot \Bigl[ - \nabla(\eta_\epsilon*   (\Phi  g_n))  + \frac{1}{2} \eta_\epsilon* ( \Phi \nabla g_n)  \Bigr](x)\,\d s
		\\& \qquad +\, \sum_{n\in \Z^3}\int_0^t \eta_\epsilon * ( f_n \nabla_{\sym} u)(x)  : \eta_\epsilon * ( f_n \nabla_{\sym} V)(x)   \,\d s,
	\end{align}
	for $t\in [0,T\wedge \tau_j)$, $\P$-almost surely. 
	Then, by the semimartingale decompositions \eqref{Eq_Psi_reg} and \eqref{Eq_Phi_reg} of the integrators $	\psi^+_{\epsilon}(x)$ and  $\Phi_{\epsilon} (x )$, we obtain that for every $x\in \T^3$, $\P$-a.s.,
	\begin{align}\begin{split}\label{eqn6}&
			\psi^+_{\epsilon,t}(x)\Phi_{\epsilon, t} (x ) - \psi_{\epsilon,0}(x)\Phi_{\epsilon, 0} (x )
			\\& \quad =\, \int_0^t 	\psi_{\epsilon}(x) \eta_\epsilon*\Bigl( \frac{1}{\Phi} \bigl(
			|\nabla \Phi|^2+|\nabla_\sym V|^2
			\bigr)\Bigr)(x) \,\d s \,+\, \int_{[0,t]}    \Phi_{\epsilon}(x)   \eta_{\epsilon}*K_s (x)\,\d\nu_s
			\\&\qquad+\int_0^t  \psi_{\epsilon}(x) \bigl(1+ {F_1}/2 +G_1/2\bigr) \Delta  \Phi_\epsilon(x) \,-\, \psi_{\epsilon}(x) \nabla \cdot (\eta_\epsilon*  (\Phi V) )(x)\,\d s
			\\&\qquad +\, \int_0^t  \Phi_{\epsilon}(x)  \bigl(1+ {F_1}/2 +G_1/2\bigr) \Delta   \psi_\epsilon(x)  \,-\,  \Phi_{\epsilon}(x)  \nabla \cdot (\eta_\epsilon *(\psi u) )(x)  \,\d s
			\\&\qquad -\,\int_0^t \psi_{\epsilon}(x) (F_2/2+G_2/8) \Phi_\epsilon (x) \,\d s  \,-\, \int_0^t  \Phi_{\epsilon}(x)   (F_2/2+G_2/8) \psi_\epsilon (x)  \,\d s
			\\&\qquad +
			\sum_{n\in \Z^3} \int_0^t 
			\Bigl[
			- \nabla (\eta_\epsilon* (\psi g_n) )  + \frac{1}{2} \eta_\epsilon* ( \psi \nabla g_n) 
			\Bigr]\cdot  \Bigl[ - \nabla(\eta_\epsilon*   (\Phi  g_n))  + \frac{1}{2} \eta_\epsilon* ( \Phi \nabla g_n)   \Bigr](x)\,\d s
			\\& \qquad +\, \sum_{n\in \Z^3}\int_0^t  \eta_\epsilon * ( f_n \nabla_{\sym} u)(x)  : \eta_\epsilon * ( f_n \nabla_{\sym} V)(x)   \,\d s 
			\\&\qquad +
			\sum_{n \in \Z^3}\int_0^t  	\psi_{\epsilon}(x)\Bigl[ - \nabla (\eta_\epsilon*  (\Phi  g_n) ) (x) + \frac{1}{2} \eta_\epsilon* ( \Phi \nabla g_n) (x) \Bigr] \cdot \d W_{n,s}
			\\&\qquad + 
			\sum_{n \in \Z^3}\int_0^t \Phi_\epsilon (x)\Bigl[- \nabla (\eta_\epsilon* (\psi g_n) )(x)  + \frac{1}{2} \eta_\epsilon* ( \psi \nabla g_n)  (x)\Bigr] \cdot   \d W_{n,s}
			\\&\qquad -\, \frac{1}{\sqrt{2}}\sum_{n \in \Z^3}\int_0^t 	\psi_{\epsilon}(x) \eta_\epsilon * ( f_n \nabla V)(x) :\d B_{n,s} \,-\, \frac{1}{\sqrt{2}}\sum_{n \in \Z^3}\int_0^t  \Phi_\epsilon (x) \eta_\epsilon*( f_n \nabla u)(x):\d B_{n,s},
	\end{split}	\end{align}
	for all $t\in [0,T\wedge \tau_j)$.

	We claim that the above remains true  when integrating in space. To this end, we average over  equidistant points   $(x_m)_{m=1,\dots M}$, where we take $M$ to be a cubic number, and show that one can take the limit $M\to\infty$ in probability. Indeed, for the temporally discontinuous term, we can observe for example that 
	\begin{align}
		\int_{[0,\tau_j]}\biggl| \frac{1}{M}\sum_{m=1}^M     \Phi_{\epsilon}(x_m)   \eta_{\epsilon}*K_s (x_m) \,-\, \int_{\T^3} \Phi_{\epsilon}   \eta_{\epsilon}*K_s  \d x \biggr| \,\d\nu_s \,\to \,0, \qquad \P\text{-a.s.},
	\end{align}
	by dominated convergence, since 
	\[\sup_{(s,x)\in[0,\tau_j]\times \T^3}
	|\Phi_{\epsilon,s}(x)  | \,\lesssim_\epsilon\, \|\Phi\|_{C([0,\tau_j]; L^2(\T^3))},\quad \quad
	\sup_{(s,x)\in [0,\tau_j]\times \T^3}
	| \eta_{\epsilon}*K_s(x) |\,\le\, 1,
	\]
	and  $x\mapsto \Phi_{\epsilon,s}(x)   \eta_{\epsilon}*K_s (x)$ is  continuous in $x$, for any $s\in [0,\tau_j]$. We expressed the above estimate  purposely in terms of $\|\Phi\|_{C([0,\tau_j]; L^2(\T^3))}$, even though $\Phi\in {C([0,\tau_j]; C^1(\T^3))}$, in order to see how to deal with terms involving $u$ or $\psi$, which are of lower regularity.  For the other deterministic integrals one can thereby proceed analogously, and additional care must only be taken in the correction terms due to the summation over $n\in \Z^3$. In order to show that,  e.g.,
	\begin{equation}\label{eqn373737372949832942}	\sum_{n \in \Z^3} 
		\int_0^{\tau_j}  \biggl|	\frac{1}{M} \sum_{m=1}^M  \eta_\epsilon * ( f_n \nabla_{\sym} u)(x_m)  : \eta_\epsilon * ( f_n \nabla_{\sym} V)(x_m)\,-\, \int_{\T^3}  \eta_\epsilon * ( f_n \nabla_{\sym} u) : \eta_\epsilon * ( f_n \nabla_{\sym} V) \,\d x \biggr|\,  \d s,
	\end{equation}
	vanishes
	$\P$-a.s.\ as $M\to\infty$, one can use that $\P\otimes \d t$-a.e., for all $n\in \Z^3$,
	\[
	\biggl|\frac{1}{M} \sum_{m=1}^M  \eta_\epsilon * ( f_n \nabla_{\sym} u)(x_m)  : \eta_\epsilon * ( f_n \nabla_{\sym} V)(x_m) \,-\, \int_{\T^3}  \eta_\epsilon * ( f_n \nabla_{\sym} u) : \eta_\epsilon * ( f_n \nabla_{\sym} V) \,\d x  \biggr|\, \to \,0.
	\]
	Then it suffices to obtain $\P$-a.s.\ an integrable upper bound on the above, uniformly in $M$,  for which we observe that 
	\begin{align}\begin{split}\label{eqn48485}&	\mathrm{ess sup}_{t \in [0,\tau_j]} \sum_{n\in \Z^3} \sup_{x\in \T^3} \bigl| \eta_\epsilon * ( f_n \nabla_{\sym} u)(x) : \eta_\epsilon * ( f_n \nabla_{\sym} V)(x)\bigr| \\&\quad \lesssim_\epsilon \, 	\mathrm{ess sup}_{t \in [0,\tau_j]} \sum_{n\in \Z^3} \|f_n \nabla_{\sym} u \|_{H^{-2}(\T^3;\R^{3\times 3})} \|  f_n \nabla_{\sym} V\|_{H^{-2}(\T^3;\R^{3\times 3})}
			\\&\quad 	\lesssim \, 	\mathrm{ess sup}_{t \in [0,\tau_j]} \sum_{n\in \Z^3} \|f_n\|_{H^1(\T^3)} \| u \|_{L^2(\T^3;\R^{3})} \|  f_n \|_{H^1(\T^3)} \| V\|_{L^2(\T^3;\R^{ 3})}
			\\&\quad 	\le  \|f \|_{\ell^2(\Z^3; H^1(\T^3))}^2 \|u\|_{L^\infty(0,T;L^2(\T^3;\R^3))}\| V\|_{C([0,\tau_j];L^2(\T^3;\R^3))}
			,\end{split}
	\end{align}
	$\P$-a.s., by the elementary
	\[
	\biggl| \int f_n \nabla_{\sym} u:\vp \,\d x \biggr| \,\lesssim  \, \|u \|_{L^2(\T^3; \R^3)}\|f_n \vp \|_{H^1(\T^3 ;\R^{3\times 3})} \,\lesssim  \,
	\|u \|_{L^2(\T^3; \R^3)}\|f_n \|_{H^1(\T^3)} \|\vp \|_{H^2(\T^3 ;\R^{3\times 3})}.
	\]
	The right-hand side of \eqref{eqn48485} is  $\P$-a.s.\ finite by 
	\[
	\|f \|_{\ell^2(\Z^3; H^1(\T^3))} \,=\, 
	\|f \|_{H^1(\T^3;\ell^2(\Z^3))}\,\le\, \|f \|_{C^1(\T^3;\ell^2(\Z^3))},
	\]
	and the regularity of $f$ observed in  Remark \ref{f_in_Cl2}. For the stochastic integrals we argue based on that, e.g., 
	\[
	\sup_{t\in [0,\tau_j ]} \biggl|
	\sum_{n \in \Z^3}\int_0^t  \biggl[	\frac{1}{M} \sum_{m=1}^M \psi_{\epsilon}(x_m) \nabla (\eta_\epsilon*  (\Phi  g_n) ) (x_m )\,-\, \int_{\T^3} \psi_{\epsilon} \nabla (\eta_\epsilon*  (\Phi  g_n) )  \,\d x \biggr] \cdot \d W_{n,s}
	\biggr| \,\to \,0,
	\]
	in probability, iff 
	\[
	\int_0^{\tau_j} 	\sum_{n \in \Z^3}  \biggl|	\frac{1}{M} \sum_{m=1}^M \psi_{\epsilon}(x_m) \nabla (\eta_\epsilon*  (\Phi  g_n) ) (x_m )\,-\, \int_{\T^3} \psi_{\epsilon} \nabla (\eta_\epsilon*  (\Phi  g_n) )  \,\d x \biggr|^2  \d s\,\to\, 0,
	\]
	in probability. From this point one can proceed as for \eqref{eqn373737372949832942} in order to pass $M\to\infty$ also in these terms.
	
	Having completed the limiting procedure, we find the spatially integrated version of \eqref{eqn6}
	\begin{align}\begin{split}\label{eqn8}&
			\int_{\T^3} \psi^+_{\epsilon,t}\Phi_{\epsilon, t} \, \d x \,-\, \int_{\T^3}  \psi_{\epsilon,0}\Phi_{\epsilon, 0} \, \d x
			\\& \quad =\, \int_0^t \int_{\T^3} 	\psi_{\epsilon}  \eta_\epsilon*\Bigl( \frac{1}{\Phi} \bigl(
			|\nabla \Phi|^2+|\nabla_\sym V|^2
			\bigr)\Bigr)\,\d x \, \d s \,+\, \int_{[0,t]} \int_{\T^3}     \Phi_{\epsilon}(x) \eta_{\epsilon}*K_s   \d x \,\d\nu_s
			\\&\qquad- \, \int_0^t \int_{\T^3} \bigl(2+ {F_1} +G_1\bigr) \nabla  \psi_{\epsilon} \cdot  \nabla \Phi_\epsilon 
			\,+\,  (F_2+G_2/4)  \psi_{\epsilon}\Phi_\epsilon 
			\,\d x\,\d s
			\\&\qquad +\, \int_0^t \int_{\T^3} -\, \psi_{\epsilon} \nabla \cdot (\eta_\epsilon*  (\Phi V) ) \,+ \,  \nabla \Phi_{\epsilon}\cdot (\eta_\epsilon *(\psi u) ) \,\d x \,\d s
			\\&\qquad +
			\sum_{n\in \Z^3} \int_0^t \int_{\T^3}
			\Bigl[
			- \nabla (\eta_\epsilon* (\psi g_n) )  + \frac{1}{2} \eta_\epsilon* ( \psi \nabla g_n) 
			\Bigr] \cdot \Bigl[ - \nabla(\eta_\epsilon*   (\Phi  g_n))  + \frac{1}{2} \eta_\epsilon* ( \Phi \nabla g_n)   \Bigr]\,\d x \,\d s,
			\\& \qquad +\, \sum_{n\in \Z^3}\int_0^t \int_{\T^3} \eta_\epsilon * ( f_n \nabla_{\sym} u)  : \eta_\epsilon * ( f_n \nabla_{\sym} V)  \,\d x\, \d s 
			\\&\qquad +
			\sum_{n \in \Z^3}\int_0^t \int_{\T^3}  	\psi_{\epsilon}\Bigl[ - \nabla (\eta_\epsilon*  (\Phi  g_n) )  + \frac{1}{2} \eta_\epsilon* ( \Phi \nabla g_n) \Bigr] \d x  \cdot \d W_{n,s}
			\\&\qquad + 
			\sum_{n \in \Z^3}\int_0^t \int_{\T^3}   \nabla  \Phi_\epsilon   \eta_\epsilon* (\psi g_n)  + \frac{1}{2} \Phi_\epsilon\eta_\epsilon* ( \psi \nabla g_n) \, \d x \cdot   \d W_{n,s}
			\\&\qquad -\, \frac{1}{\sqrt{2}}\sum_{n \in \Z^3}\int_0^t \int_{\T^3} 	\psi_{\epsilon} \eta_\epsilon * ( f_n \nabla V) \,\d x :\d B_{n,s} \,-\, \frac{1}{\sqrt{2}}\sum_{n \in \Z^3}\int_0^t\int_{\T^3}  \Phi_\epsilon  \eta_\epsilon*( f_n \nabla u)\,\d x:\d B_{n,s}
			\\&\quad =:\, I_2^{(\epsilon)} \,+\,\dots \,+\, I_5^{(\epsilon)} \,+\, K_2^{(\epsilon)}\,+\,  K_3^{(\epsilon)}\,+\,  S_2^{(\epsilon)} \,+\,\dots \, +\,S_5^{(\epsilon)},
	\end{split}	\end{align}
	for $t\in [0,T\wedge \tau_j)$, $\P$-a.s.,
	where we integrated additionally by parts and  grouped  similar terms.
	
	\textit{Step 3 (Taking out the regularization).} We remark that we strategically  grouped the terms appearing in the expansions \eqref{eqn7} and \eqref{eqn8} into deterministic integrals $(I)$, correction terms ($K$), and stochastic integrals ($S$), to discuss specific aspects when taking $\epsilon \to 0$ separately. Before addressing them, we firstly observe that the left-hand sides converges for fixed $t \in [0,T\wedge \tau_j)$, to the desired 
	\begin{equation}\label{eqn19}\int_{\T^3} u_t : V_t -  u_0 : V_0 \,\d x \qquad 
		\text{and}\qquad \int_{\T^3} \psi^+_t \Phi_t -  \psi_0  \Phi_0 \,\d x,
	\end{equation}
	$\P$-a.s., respectively.

	{Ad $I_1^{(\epsilon)},\dots, I_5^{(\epsilon)}$.} We recall the additional regularity of $u$ and $\psi$ discussed in Remark \ref{rem_weak_sol}~\eqref{rem_weak_sol_I2}. Then,
	regarding  $I_1^{(\epsilon)}$, it suffices to use that, $\P$-a.s., 
	\begin{align}\label{eqn17}
		u_\epsilon \,\to\, u ,\qquad  &\text{in ${L^{7/5}} (0,T; W^{1,7/5}(\T^3 ;\R^3)) \cap L^{7/3}([0,T]\times \T^3;\R^3)$}, \\
		\eta_\epsilon * (u\otimes u) \,\to\, u\otimes u   ,\qquad &\text{in $L^{7/6}([0,T]\times \T^3;\R^{3\times 3})$},
		\\ \label{eqn16}\\
		\nabla V_\epsilon \to \nabla V,\, \eta_\epsilon * ((V\cdot \nabla )V ) \,\to \,(V\cdot \nabla) V , \qquad &\text{in $C ([0,\tau_j]\times \T^3;\R^{3\times 3})$ and $C ([0,\tau_j]\times \T^3;\R^3)$}, 
	\end{align}
	respectively,
	to deduce that it converges uniformly on $[0,\tau_j\wedge T)$. Regarding $I^{(\epsilon)}_2$, one can use similarly that 
	\begin{align}\label{eqn10}
		\psi_\epsilon \,\to\, \psi ,\qquad &\text{in ${L^{7/5}} (0,T; W^{1,7/5}(\T^3 )) \cap L^{7/3}([0,T]\times \T^3)$},
	\end{align}
	and \begin{align}
		\eta_\epsilon*\Bigl( \frac{1}{\Phi} \bigl(
		|\nabla \Phi|^2+|\nabla_\sym V|^2
		\bigr)\Bigr) \,\to\, \frac{1}{\Phi} \bigl(
		|\nabla \Phi|^2+|\nabla_\sym V|^2 \bigr),\qquad &\text{in $C ([0,\tau_j]\times \T^3)$},
	\end{align}
	while for $I_3^{(\epsilon)}$, we employ
	\begin{align}\label{eqn11}
		\Phi_\epsilon \to \Phi, \qquad &\text{in $C ([0,\tau_j]\times \T^3)$},
		\\
		\forall t \in [0,T]: \eta_\epsilon*K_t \rightharpoonup  K_t, \qquad & \text{in $\M(\T^3)$},
	\end{align}
	$\P$-a.s., together with dominated convergence. For $I_4^{(\epsilon)}$ and $I^{(\epsilon)}_5$ we combine \eqref{eqn10} and \eqref{eqn11} with 
	\begin{align}\label{eqn18}
		\nabla \Phi_\epsilon\,\to\, \nabla \Phi , \, \eta_\epsilon*  (\Phi V) \to \Phi V, \qquad &\text{in $C([0,\tau_j]\times \T^3;\R^3)$} ,\\
		\eta_\epsilon *(\psi u) \,\to \, \psi u ,\qquad & \text{in $L^{7/6}([0,T]\times \T^3;\R^3)$} ,
	\end{align}
	so that all these integrals are seen to converge uniformly on $[0,T\wedge \tau_j]$.
	
	{Ad $K_1^{(\epsilon)}, K_2^{(\epsilon)}, K_3^{(\epsilon)}$.} For the former of these terms, we employ that 
	\begin{align}\label{eqn14}
		\eta_\epsilon *\nabla  ( \psi f_n ) \,\to \, \nabla  ( \psi f_n ) , \qquad &\text{in $ L^{7/5}([0,T]\times \T^3;\ell^2(\Z^3;\R^3) )$},
		\\ \label{eqn15}
		\eta_\epsilon *\nabla  ( \Phi f_n )  \, \to  \,\nabla  ( \Phi f_n ) ,\qquad& \text{in $ C([0,\tau_j]\times \T^3 ; \ell^2(\Z^3; \R^3)) $},
	\end{align}
	due to Assumption \ref{Ass:noise}. Indeed, by a repeated application of H\"older's inequality we have 
	\begin{align}&
		\|\nabla  ( \psi f_n )\|_{L^{7/5}([0,T]\times \T^3;\ell^2(\Z^3;\R^3) )} \,\le \,	\|f_n\nabla  \psi  \|_{L^{7/5}([0,T]\times \T^3;\ell^2(\Z^3;\R^3) )} \,+\, 	\|\psi \nabla   f_n \|_{L^{7/5}([0,T]\times \T^3;\ell^2(\Z^3;\R^3) )}
		\\&\quad \le \, \sqrt{F_1} \|\nabla \psi \|_{L^{7/5}([0,T]\times \T^3;\R^3)} \,+\, \sqrt{F_2} \| \psi \|_{L^{7/5}([0,T]\times \T^3)}\,<\,\infty  ,
	\end{align}
	$\P$-a.s.,
	and by the properties of vector-valued  convolution, cf.\ \cite[Section 1.2]{Analysis1}, it follows that  
	\begin{align}&
		\| \eta_\epsilon *\nabla  ( \psi f_n )\|_{L^{7/5}(\T^3;\ell^2(\Z^3;\R^3))} \,\le\, 	\| \nabla  ( \psi f_n )\|_{L^{7/5}(\T^3;\ell^2(\Z^3;\R^3))} ,
		\\&
		\eta_\epsilon *\nabla  ( \psi f_n ) \,\to\, \nabla  ( \psi f_n ) ,\qquad \text{in $L^{7/5}( \T^3 ; \ell^2(\Z^3;\R^3) )$},
	\end{align}
	$\P\otimes \d t$-almost everywhere. Therefore, the convergence \eqref{eqn14} follows by the dominated convergence theorem, wheres \eqref{eqn15} can be obtained from $\nabla  ( \Phi f_n ) \in C([0,\tau_j]\times \T^3 ; \ell^2(\Z^3; \R^3)) $. This implies the convergence of $K_1^{(\epsilon)}$ to its unregularized counterpart, uniformly on $[0,T\wedge \tau_j]$ and the convergence of  $K_2^{(\epsilon)}$ and $ K_3^{(\epsilon)}$ is seen in exactly the same manner.
	
	{Ad $S_1^{(\epsilon)},\dots, S_5^{(\epsilon)}$.} Concerning $S_1^{(\epsilon)}$, similarly to the previous step, we make use of the fact that uniform convergence on $[0,T\wedge \tau_j)$ in probability of 
	\begin{align}&
		\frac{1}{\sqrt{2}}	\sum_{n \in \Z^3}  \int_0^t  \int_{\T^3} \eta_\epsilon *( \psi f_n) \nabla V_\epsilon - \psi f_n\nabla V   \,\d x :  \d B_{n,s},
		\\&
		\frac{1}{\sqrt{2}}	\sum_{n \in \Z^3}  \int_0^t \int_{\T^3} u_\epsilon \otimes \nabla  \eta_\epsilon *( \Phi f_n) \,-\,u \otimes \nabla  ( \Phi f_n) \,\d x:  \d B_{n,s},
	\end{align}
	to $0$, 
	i.e., the individual components of $S_1^{(\epsilon)}$, is equivalent to the vanishing in probability of the corresponding quadratic variation processes
	\begin{align}&
		\sum_{n \in \Z^3}  \int_0^{T\wedge \tau_j}\biggl|  \int_{\T^3} \eta_\epsilon *( \psi f_n) \nabla_{\sym} V_\epsilon - \psi f_n\nabla_{\sym} V   \,\d x \biggr|^2 \,  \d s,
		\\&
		\sum_{n \in \Z^3}  \int_0^{T\wedge \tau_j} \biggl|  \int_{\T^3} u_\epsilon \otimes_\sym  \eta_\epsilon*\nabla  (\Phi f_n ) -  u \otimes_\sym  \nabla  (\Phi f_n )   \,\d x \biggr|^2 \,  \d s,
	\end{align}
	where $a \otimes_\sym b = \frac{1}{2} (a\otimes b +b\otimes a)$ denotes the symmetrized tensor product. 
	The desired convergences  follow however from \eqref{eqn17}, \eqref{eqn16} and \eqref{eqn15}, together with 
	\[
	\eta_\epsilon *  ( \psi  f_n )  \, \to  \,   \psi  f_n, \qquad \text{in $ L^{7/3}([0,T]\times \T^3;\ell^2(\Z^3) )$} .\]
	For $S_2^{(\epsilon)},S_3^{(\epsilon)}, S_4^{(\epsilon)}$ one can use precisely the same line of arguments, while for $S_5^{(\epsilon)}$ we use the symmetry of the operation $\eta_\epsilon*$ and integration by parts to deduce that
	\begin{align}
		S_5^{(\epsilon)} \,=\, \frac{1}{\sqrt{2}}\sum_{n \in \Z^3}\int_0^t\int_{\T^3}  \nabla \Phi_\epsilon \otimes  \eta_\epsilon*( f_n  u) +  \Phi_\epsilon   \eta_\epsilon*( \nabla f_n \otimes u)\,\d x:\d B_{n,s}.
	\end{align}
	The limit there can be taken now using \eqref{eqn11}, \eqref{eqn18} and 
	\begin{align}
		\eta_\epsilon *  (   f_n u  )  \to     f_n u ,\qquad &\text{in $ L^{7/3}([0,T]\times \T^3;\ell^2(\Z^3;\R^3) )$} ,\\
		\eta_\epsilon *  (  \nabla  f_n \otimes u  )  \to    \nabla  f_n \otimes u ,\qquad &\text{in $ L^{7/3}([0,T]\times \T^3;\ell^2(\Z^3;\R^{3\times 3}) )$} .
	\end{align}
	
	We summarize that the terms in the It\^o expansions \eqref{eqn7} and \eqref{eqn8} converge in probability, for all times $t\in [0,T\wedge \tau_j)$. Since the resulting left-hand sides \eqref{eqn19} as well as  uniform limits of right-continuous right-hand sides are right-continuous, we conclude that, $\P$-a.s., 
	\begin{align}\begin{split}\label{Eq_Ito_product}&
			\int_{\T^3} u_t : V_t -  u_0 : V_0 \,\d x \,+\,  \int_{\T^3} \psi^+_t \Phi_t -  \psi_0  \Phi_0 \,\d x
			\\&\quad =\,  -2 \int_0^t  
			\int_{\T^3} \nabla_{\sym} V:  \nabla_{\sym}  u \,+\, \nabla  \psi \cdot  \nabla \Phi 	\,\d x \,\d s  \,+
			\int_0^t \int_{\T^3}  \frac{\psi}{\Phi} \bigl(
			|\nabla \Phi|^2+|\nabla_\sym V|^2
			\bigr)\,\d x\, \d s \,+\, \int_{[0,t] \times \T^3}     \Phi   \,\d q
			\\&\qquad +\,  \int_0^t \int_{\T^3}  \,-\, u \cdot      ( (V \cdot \nabla ) V )   \,+ \, \nabla   V:  u\otimes u  \,\d x\,\d s  +\, \int_0^t \int_{\T^3} -\, \psi V\cdot \nabla \Phi \,+ \,  \nabla \Phi \cdot (\psi u )\,\d x \,\d s
			\\&\qquad +
			\sum_{n \in \Z^3}\int_0^t\biggl( \int_{\T^3}  	\psi \Bigl[ - \nabla  (\Phi  g_n)   + \frac{1}{2}  \Phi \nabla g_n \Bigr]
			\,+\,  \psi g_n \nabla  \Phi  \,   +\, \frac{1}{2} \Phi  \psi \nabla g_n  \, \d x \biggr) \cdot   \d W_{n,s}
			\\&\qquad +  \, \frac{1}{\sqrt{2}} \sum_{n \in \Z^3}  \int_0^t \biggl( \int_{\T^3}  \psi f_n \nabla V  \,-\, u \otimes \nabla  ( \Phi f_n) \,-\,  	\psi  f_n \nabla V \, +\, f_n \nabla \Phi \otimes   u \,  +\,  \Phi     \nabla f_n \otimes u \,\d x \biggr):  \d B_{n,s},
		\end{split}
	\end{align}
	for all $t\in [0,T\wedge \tau_j)$. Here, we accounted already for cancellations in the deterministic integrals for which we used that
	\begin{equation}
		\int_{\T^3} \nabla_{\sym }  u : \nabla_{\sym }  V \,\d  x = \frac{1}{2}  \int_{\T^3} \nabla u : \nabla V \,\d  x ,
	\end{equation}
	due to $\nabla \cdot u  = \nabla \cdot V =0$. Inspecting the stochastic integrands shows that they cancel with each other, so that inserting the above in \eqref{eqn5} results in \eqref{Eq_rel_energy_expansion}.
\end{proof}
We turn our attention to the proof of Proposition \ref{thm_rel_en_bound}. We remark that there, in contrast to the preceding lemma, we assume the weak solution to be globally admissible. 
\begin{proof}[Proof of Proposition \ref{thm_rel_en_bound}]
	We start from \eqref{Eq_rel_energy_expansion} and observe that  global admissibility entails
	\begin{equation}\label{eqn20}
		\frac{1}{2}\int_{\T^3} |u_t|^2 + (\psi_t^+)^2- |u_0|^2 - \psi_0^2\,\d x  \,\le\, 0,
	\end{equation}
	for $t\in [0,T)$, $\P$-a.s., by the right-continuity of $\psi^+$. Regarding the second row of \eqref{Eq_rel_energy_expansion}, we use that, $\P$-a.s.,  $q \ge \frac{1}{\psi}(|\nabla \psi|^2 + |\nabla_\sym u|^2) $ with the Convention \ref{conv_inverse} and Young's inequality to  deduce that
	\begin{equation}\label{eqn21}
		2 \int_0^t  
		\int_{\T^3} \nabla_{\sym} V:  \nabla_{\sym}  u + \nabla  \psi \cdot  \nabla \Phi 	\,\d x\, \d s  \,-\,
		\int_0^t \int_{\T^3}  \frac{\psi}{\Phi} \bigl(
		|\nabla \Phi|^2+|\nabla_\sym V|^2
		\bigr)\,\d x\, \d s \,-\, \int_{[0,t] \times \T^3}     \Phi   \,\d q \,\le\, 0,
	\end{equation}
	for $t\in [0,T\wedge \tau )$. 
	Regarding the first term in the third row, we use instead that 
	\begin{align}&  \int_{\T^3}     u \cdot      ( (V \cdot \nabla ) V ) \,-\, \nabla   V:  u\otimes u  \,\d x
		\, =\, \int_{\T^3}       ( (V \cdot \nabla)  V )  \cdot  u\,-\,  ( (u \cdot   \nabla  )  V) \cdot   u\,\d x
		\\&\quad =\,  \int_{\T^3} \bigl( ((V-u) \cdot   \nabla )   V\bigr) \cdot   u  \,\d x \,= \,  \int_{\T^3} \bigl(( (V-u) \cdot   \nabla )   V\bigr) \cdot   (u-V)  \,\d x   ,
	\end{align}
	where the in the last equality we used that 
	\[
	\int_{\T^3} \bigl(( (V-u) \cdot   \nabla )   V\bigr) \cdot   V \,\d x
	\,=\, 
	\frac{1}{2}\int_{\T^3} (u -V) \cdot   \nabla   (|V|^2 )\,\d x \,=\, 0,
	\]
	since $\nabla \cdot u = \nabla \cdot V= 0$. Consequently, we have, $\P\otimes \d t$-a.e.,
	\begin{align}\label{eqn22}
		\int_{\T^3}  u \cdot      (( V \cdot \nabla)  V )\,-\,   \nabla   V:  u\otimes u   \,\d x \,\le\, 2 \|\nabla_\sym V\|_{C(\T^3; \R^{3\times 3})} \EE( u ,\psi^+  | V ,\Phi ).
	\end{align}
	Regarding the second term in the third row of \eqref{Eq_rel_energy_expansion}, we proceed similarly and estimate, $\P\otimes \d t$-a.e.,
	\begin{align}\begin{split}\label{eqn23}&
			\int_{\T^3} \psi V\cdot \nabla \Phi \,-\, \nabla \Phi \cdot (\psi u ) \,\d x \,=\,  \int_{\T^3} ((V-u)\cdot \nabla \Phi) \psi  \,\d x
			\\&\quad =\,  \int_{\T^3} ((V-u)\cdot \nabla \Phi) (\psi -\Phi) \,\d x\,\le\, 
			\|\nabla \Phi \|_{C(\T^3;\R^3)} \EE( u ,\psi^+  | V ,\Phi ).
		\end{split}
	\end{align}
	Inserting \eqref{eqn20}--\eqref{eqn23}\noeqref{eqn21}\noeqref{eqn22} in \eqref{Eq_rel_energy_expansion} yields the desired bound \eqref{Eq_rel_energy_bound} by Gr\"onwall's lemma.
\end{proof}
\section{Global existence of weak martingale solutions}\label{Sec:weak_ex}

In this section, we establish the global in time existence of globally admissible weak martingale solutions to \eqref{Eq_new_var} for initial data from the energy space. More precisely, we will show the following result. 
\begin{theorem}[Global in time weak existence]\label{thm-weak-existence}
		Let Assumption \ref{Ass:noise} hold,
	 $u_0\in L^2(\T^3;\R^3)$ be divergence-free and $\psi_0 \in  L^2(\T^3)$ be non-negative. Then there exists a globally admissible weak martingale solution to  \eqref{Eq_new_var} with initial data $(u_0,\psi_0)$.  
\end{theorem}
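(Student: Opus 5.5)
We follow the two-layer approximation \eqref{Eq_approx_intro} outlined in Subsection \ref{sec:ideas_of_proof}. First we fix $\delta,\epsilon\in(0,1)$ and construct weak martingale solutions $(u_{\delta,\epsilon},\psi_{\delta,\epsilon})$ of \eqref{Eq_approx_intro}. To this end we use a Galerkin scheme: $\Pi_m$ from \eqref{eq: Pm} for the velocity and $P_m$ from \eqref{Fourier_proj} for $\psi$. This gives a finite-dimensional SDE with locally Lipschitz coefficients, since $h_\delta\in C^1_b$. We apply It\^o's formula to $\frac12\|u_m\|_{L^2}^2+\frac12\|\psi_m\|_{L^2}^2$ and use the cancellations of Assumption \ref{Ass:noise}. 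The It\^o corrections of the gradient noises then cancel exactly with the extra Laplacians $\frac{F_1}{4}\Delta u$ and $\frac{F_1+G_1}{2}\Delta\psi$. The cross terms $\int\psi\nabla\cdot(\psi g_n)$ and $\int u\cdot\Pi\nabla\cdot(\psi f_n)+\int\psi\,\nabla u:f_n$ cancel by antisymmetry, and $\psi h_\delta(\psi)\le1$ bounds the nonlinearity. Together this yields \eqref{eqn1024} pathwise up to the $\epsilon t$ term and the martingale part, which we control by the Burkholder--Davis--Gundy inequality to get moment bounds. The $\epsilon\Delta^2$ dissipation gives $L^2_tH^2_x$ bounds, hence time-regularity in $H^{-k}$ and tightness in $L^2_tH^1_x\cap C_t H^{-k}_x$. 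By the Jakubowski--Skorokhod theorem we pass $m\to\infty$ with strong $H^1$ convergence, which suffices to identify $h_\delta(\psi)(|\nabla\psi|^2+|\nabla_\sym u|^2+\epsilon)$ and the stochastic integrals by standard martingale identification.

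Second, we pass to the limit $\delta\to0$. The key extra estimate is \eqref{eq1026}. We obtain it by testing the $\psi$-equation with $1$, i.e.\ integrating in space. All divergence and $\Delta$ terms vanish, and so do $\int\nabla u:f_n\,\d B_n$ and $\int\psi\nabla\cdot(g_n\,\d W_n)$ after integration by parts with $\sum g_n\nabla g_n=0$, or they are martingales with zero mean. The result is
\[
\E\int_0^T\!\!\int_{\T^3} h_\delta(\psi_{\delta,\epsilon})(\cdots)\,\d x\,\d t\le \E\int\psi_{\delta,\epsilon}(T)\,\d x-\int\psi_0\,\d x+C\,\E\int_0^T\!\!\int|\psi_{\delta,\epsilon}|\,\d x\,\d t,
\]
which is bounded by the energy estimate. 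The measures $q_{\delta,\epsilon}=h_\delta(\psi)(\cdots)\,\d x\,\d t$ are therefore tight in $(\M_+,w^*)$. We add them to the Skorokhod tuple, together with the $\epsilon$-uniform-in-$\delta$ bounds in $L^2_tH^2_x$, which give strong $L^2_tH^1_x$ convergence. This identifies the limit as a solution with measure $q_\epsilon$. The inequality \eqref{eq1025} follows from a Fatou-type lower semicontinuity argument. Since $h_\delta\nearrow 1/r$ monotonically, for each fixed $\delta_0$ and nonnegative test $\zeta$ we have $\int\zeta\,\d q_\epsilon\ge\liminf\int\zeta h_{\delta_0}(\psi_\delta)(\cdots)\ge\int\zeta h_{\delta_0}(\psi_\epsilon)(\cdots)$, using a.e.\ convergence of gradients. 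Then we let $\delta_0\to0$ by monotone convergence. Because $\epsilon>0$ is included, finiteness of $q_\epsilon$ forces $\psi_\epsilon>0$ a.e. by Convention \ref{conv_inverse}. Global admissibility up to $+\epsilon t$ survives by weak lower semicontinuity. We hope to improve this to \eqref{energy-inequality-intro} in the limit; indeed pathwise inequality \eqref{eqn1024} holds with right-hand side $\frac12\|(u_0,\psi_0)\|^2+\epsilon t$, which vanishes as $\epsilon\to0$.

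Third, we pass to the limit $\epsilon\to0$, which is the main obstacle, since the $H^2$ control is lost. Lower semicontinuity gives $\E\int\frac{1}{\psi_\epsilon}(|\nabla\psi_\epsilon|^2+|\nabla_\sym u_\epsilon|^2)\le\E\|q_\epsilon\|\le C$. Combined with the $L^\infty_tL^2_x$ bound via H\"older, $|\nabla\psi|^{7/5}\le (|\nabla\psi|^2/\psi)^{7/10}\psi^{7/10}$, and Remark \ref{rem_weak_sol}~\eqref{rem_weak_sol_I2}, this yields moment bounds in $L^{7/5}_tW^{1,7/5}_x\cap L^{7/3}_{t,x}$; Korn's inequality upgrades $\nabla_\sym u$ to $\nabla u$. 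Time-regularity in a negative Sobolev space follows from the equations: $q_\epsilon$ is a measure, so $\psi_\epsilon$ is BV in time with values in $H^{-k}$. We therefore use an Aubin--Lions--Simon type compactness for BV-in-time functions, giving strong $L^p_{t,x}$ convergence for $p<7/3$. This is enough to pass to the limit in $u\otimes u$, $\psi u$, and all stochastic integrals, since these are linear in $(u,\psi)$. The gradients converge only weakly in $L^{7/5}$, but they enter linearly in \eqref{eqn9}--\eqref{Eq_psi_weak}.

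The delicate point is the inequality $q\ge\frac1\psi(|\nabla\psi|^2+|\nabla_\sym u|^2)$ under merely weak convergence of gradients. We use that $(a,b)\mapsto |a|^2/b$ is jointly convex and lower semicontinuous on $\R^k\times[0,\infty)$, with value $+\infty$ for $b\le0$ and $a\neq0$. Weak lower semicontinuity of convex integral functionals, localized with nonnegative test functions $\zeta$, then gives $\int\zeta\,\d q\ge\int\zeta\frac{1}{\psi}(\cdots)$. We also need to handle the set $\{\psi=0\}$, where Convention \ref{conv_inverse} requires the gradients to vanish, which finiteness provides. Nonnegativity of $\psi$ follows from a.e.\ convergence. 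Adaptedness and progressive measurability of $q$ and $\psi$ follow as in Remark \ref{rem_weak_sol}~\eqref{rem_weak_sol_I1}. Global admissibility follows from the $\epsilon$-level inequality with $\epsilon t\to0$ by lower semicontinuity of the $L^2$ norm along a.e.\ $t$.
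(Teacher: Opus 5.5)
Your proposal is correct and is essentially the paper's proof: you use the same $(\delta,\epsilon)$-regularization built by a Galerkin scheme with a pathwise energy bound, the $L^1$-estimate obtained by testing with $1$, Skorokhod--Jakubowski with $q$ in $(\M,w^*)$, the Fatou argument with $h_{\delta_0}\le h_\delta$ giving \eqref{eq1025}, and the $L^{7/5}_tW^{1,7/5}_x\cap L^{7/3}_{t,x}$ bounds for $\epsilon\to0$; note that all stochastic integrals in the total energy cancel exactly, so no Burkholder--Davis--Gundy step is needed, and this exact cancellation is what makes the final admissibility pathwise. The only local deviations are two: you obtain $q\ge\frac{1}{\psi}(|\nabla\psi|^2+|\nabla_\sym u|^2)$ from joint convexity of $(a,b)\mapsto|a|^2/b$ and weak lower semicontinuity of convex integral functionals, a clean alternative to the paper's $h_\kappa$-truncation with weak $L^2$-convergence of $\zeta^{1/2}h_\kappa(\psi_\epsilon)^{1/2}\nabla_\sym u_\epsilon$; and your H\"older step for the $7/5$-bounds presupposes the $L^{7/3}$-bound that Remark~\ref{rem_weak_sol} derives from it, a circularity you can close by absorption (the exponent $3/10$ is sublinear) or, as the paper does, by first showing $\sqrt{\psi_\epsilon}\in L^2_tH^1_x$ and interpolating with $\sqrt{\psi_\epsilon}\in L^\infty_tL^4_x$.
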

For the rest of this section we impose Assumption \ref{Ass:noise} on the noise coefficients, without further mentioning it, and fix initial data $(u_0,\psi_0)$ as in Theorem \ref{thm-weak-existence}.
We recall also the approximation
$ h_\delta(r)$ of $1/r$  defined in Assumption \ref{Ass:nu_N}. In order to prove Theorem \ref{thm-weak-existence}, we start by considering the following approximation of \eqref{Eq_new_var}:
\begin{align}\begin{split}\label{Eq_approx}	\d u_{\delta,\epsilon} \, =\, &  \nabla\cdot(\nabla_{\sym}u_{\delta,\epsilon})\d t \,-\, \epsilon \Delta^2 u_{\delta,\epsilon} \d t-
		\Pi \div   (u_{\delta,\epsilon} \otimes   u_{\delta,\epsilon})  \d t\,+\, \frac{F_1}{4}\Delta u_{\delta,\epsilon} \d t \\& - \frac{1}{\sqrt{2}}\sum_{n \in \Z^3}\Pi \nabla\cdot(\psi_{\delta,\epsilon} f_n   \d B_n)
		,\qquad \nabla\cdot u_{\delta,\epsilon}=0,
		\\
		\d \psi_{\delta,\epsilon}  = &(1+F_1 /2+G_1/2)\Delta \psi_{\delta,\epsilon} \d t -
		\epsilon\Delta^2\psi_{\delta,\epsilon}\d t
		+ h_\delta(\psi_{\delta,\epsilon})\bigl(
		|\nabla \psi_{\delta,\epsilon}|^2 \,+\, |\nabla_\sym u_{\delta,\epsilon}|^2+ \epsilon
		\bigr) \d t
		\\&- \nabla \cdot( u_{\delta,\epsilon}\psi_{\delta,\epsilon} )\d t - (F_2/2+ G_2/8) \psi_{\delta,\epsilon} \d t \\&- \sum_{n\in \Z^3} \nabla \cdot ( \psi_{\delta,\epsilon} g_n \d W_n ) \,
		+\, \frac{1}{2} \sum_{n \in \Z^3} \psi_{\delta,\epsilon} \nabla \cdot ( g_n  \d  W_n) \,-\,  \frac{1}{\sqrt{2}}\sum_{n\in \Z^3}(\nabla  u_{\delta,\epsilon} : f_n\d  B_n),
		\\ (u_{\delta,\epsilon}(0),\psi_{\delta,\epsilon}(0)) =&( u_0,\psi_0).
\end{split}\end{align}
Here  $\delta,\epsilon \in(0,1)$ are  regularization parameters and we proceed in three steps organized as follows:
	In Subsection~\ref{subsec-4-1} we establish the existence of weak martingale solutions to \eqref{Eq_approx} by means of a Galerkin approximation.  Owing to the presence of the higher-order regularization term $-\epsilon \Delta^2$ and $h_\delta$, the energy estimate is  sufficient to yield Sobolev regularity and hence tightness in strong topologies at this stage.
	When passing to the limit $\delta \to 0$ in Subsection~\ref{subsec-4-2}, however, a refined analysis is required for the quadratic gradient term  $h_\delta(\psi_{\delta,\epsilon})\bigl(
	|\nabla \psi_{\delta,\epsilon}|^2 \,+\, |\nabla_\sym u_{\delta,\epsilon}|^2 + \epsilon
	\bigr)$, whose limit gives rise to a dominating measure
	\begin{equation}\label{eqn163} q_\epsilon \,\ge\, 
			\frac{1}{\psi_{\epsilon}} \bigl(
			|\nabla \psi_{\epsilon}|^2 \,+\, |\nabla_\sym u_{\epsilon}|^2+ \epsilon
			\bigr),
	\end{equation}
	due to the  singular limit $h_\delta(r) \to 1/r$. To this end, we close also an estimate on the dissipation of $-\int_{\T^3} \psi \d x$, which refer to as $L^1$-estimate from here on.  At the same time, we also take advantage of \eqref{eqn163} since it implies that $\epsilon/\psi_\epsilon\in L^1([0,T]\times \T^3)$, a.s., so that $\psi_\epsilon$ is  positive a.e.\ in light of Convention \ref{conv_inverse}.
	In Subsection~\ref{subsec-4-3}, we then take also $\epsilon \rightarrow 0$. There, the higher-order regularization term $-\epsilon \Delta^2$ no longer contributes to strong compactness and we instead rely on suitable interpolations of the energy and $L^1$-estimate to obtain uniform gradient regularity of $u_\epsilon$ and $\psi_\epsilon$.
		A detailed analysis of the dominating measure terms is again necessary, and,  
		by passing to the limit, we show that a subsequential limit $(u,\psi)$ of $(u_{\epsilon},\psi_{\epsilon})$ is a  globally admissible weak martingale solution to~\eqref{Eq_new_var}.

\subsection{Construction of weak solutions to the ($\delta,\epsilon)$-regularization. }\label{subsec-4-1}
In this subsection, we show the existence of weak martingale solutions to  \eqref{Eq_approx}, defined as follows. 
\begin{definition}[Weak martingale solution to \eqref{Eq_approx}]\label{defi_solution_approx}
	A  \emph{weak martingale solution 
		to \eqref{Eq_approx}} 
	consists of a probability space $(\Omega,\A,\P)$, a filtration $\F$ satisfying the usual conditions, independent $3$-dimensional standard $\F$-Brownian motions $(W_n)_{n\in \Z^3}$ and $3\times 3$-matrix valued $\F$-Brownian motions $(B_n)_{n\in \Z^3}$ with covariance structure \eqref{Eq_noise_matrix_covariation}, a weakly continuous and adapted  $L^2(\T^3;\R^3)$-valued process
	$u_{\delta,\epsilon}$, and $L^2(\T^3)$-valued  $\psi_{\delta,\epsilon}$, such that the following holds:
	\begin{enumerate}[(i)]
		\item  We have the additional regularity  $ u_{\delta,\epsilon}\in L^{2}(0,T;H^2( \T^3 ;\R^{3}))$ and $\psi_{\delta,\epsilon} \in  L^{2}(0,T;H^2( \T^3))$, $\P$-almost surely.
		\item  For every divergence-free $\vp\in C^{\reg}(\T^3;\R^3)$ it holds
		\begin{align}\begin{split}
				& \int_{\T^3}  u_{\delta,\epsilon}(t)\cdot \vp\,\d x \,-\, 
				\int_{\T^3} u_0 \cdot \vp\,\d x\,+\,\epsilon\int_0^t\int_{\T^3} u_{\delta,\epsilon}\Delta^2 \vp\,\d x\,\d s
				\\&\quad=\,  \int_0^t \int_{\T^3} -  \Bigl(\frac{1}{2}+ \frac{F_1}{4}\Bigr) \nabla u_{\delta,\epsilon}: \nabla  \vp \,+\, u_{\delta,\epsilon}\otimes u_{\delta,\epsilon}:\nabla \vp \,\d x\, \d s \,+\, \frac{1}{\sqrt{2}}\sum_{n \in \Z^3}\int_0^t \int_{\T^3}\psi_{\delta,\epsilon} f_n \nabla \vp\,\d x :\d B_{n,s},
			\end{split}
		\end{align}
		and $\nabla \cdot u_t = 0$ for all $t\in [0,T]$, $\P$-almost surely.
		\item  
		For every $\zeta\in C_c^{\reg}([0,T) \times \T^3)$ it holds
		\begin{align}\begin{split}\label{Eq_psi_weak_new}
				&-\,
				\int_0^T \int_{\T^3} \psi_{\delta,\epsilon}  \partial_t \zeta  \, \d x\, \d t \,-\, \int_{\T^3}
				\psi_0 \zeta
				\,\d x+\,\epsilon\int_0^T\int_{\T^3}\psi_{\delta,\epsilon}\Delta^2\zeta\,\d x\,\d s 
				\\&\quad=\,  \int_0^T \int_{\T^3} - \bigl(1+ {F_1}/2 +G_1/2\bigr) \nabla \psi_{\delta,\epsilon}\cdot \nabla  \zeta \,+\,\psi_{\delta,\epsilon} u_{\delta,\epsilon}\cdot \nabla \zeta   \,-\, (F_2/2+G_2/8) \psi_{\delta,\epsilon} \zeta \,\d x\,\d t \\
				&\qquad+\, \int_0^T\int_{\T^3} \zeta\, h_\delta(\psi_{\delta,\epsilon})\bigl(
				|\nabla \psi_{\delta,\epsilon}|^2 \,+\, |\nabla_\sym u_{\delta,\epsilon}|^2 + \epsilon
				\bigr)\,\d x\,\d t
				\\&\qquad + \sum_{i=1}^3
				\sum_{n \in \Z^3}\int_0^T \int_{\T^3} \psi_{\delta,\epsilon} g_n \partial_i \zeta  + \frac{1}{2} \psi_{\delta,\epsilon}  \zeta \partial_ig_n \,\d x \, \d W_{n,t}^i
				\, +\, \frac{1}{\sqrt{2}}\sum_{i,j =1}^3\sum_{n \in \Z^3}\int_0^T \int_{\T^3}u_{\delta,\epsilon,j}\partial_i (\zeta f_n) \,\d x \,\d B_{n,t}^{ji},
			\end{split}
		\end{align}  $\P$-almost surely.	
	\end{enumerate}
\end{definition}
\begin{remark}
	We remark that the regularity of $\psi_{\delta,\epsilon}$ in Definition \ref{defi_solution_approx} suffices for a for a $t$-wise interpretation of  \eqref{Eq_psi_weak_new}, i.e., one could alternatively test only in space. Consequently, it is not necessary to test against smooth functions with compact support in $[0,T)$.  Nevertheless we adopt the above formulation, because it is convenient for the subsequent limiting procedures. In any case, a formulation without testing in the time variable can be recovered by following the steps of Lemma \ref{Lemma_cadlag}, which yields that 
		\begin{align}\label{modification-eq} \begin{split}&
			\int_{\T^3} \psi_{\delta,\epsilon} (t) \eta   \, \d x \,-\, \int_{\T^3}
			\psi_0 \eta 
			\,\d x \, +\,\epsilon \int_0^t \int_{\T^3} \psi_{\delta,\epsilon}
			\Delta^2 \eta  \,
			\d x \,\d s
			\\&\quad=\, \int_0^t \int_{\T^3} -  \bigl(1+ {F_1}/2 +G_1/2\bigr) \nabla \psi_{\delta,\epsilon}  \cdot \nabla  \eta \,+\,\psi u_{\delta,\epsilon} \cdot \nabla \eta   \,-\, (F_2/2+G_2/8) \psi_{\delta,\epsilon}  \eta \,\d x\,\d s \\&\qquad \,+ \, \int_0^t \int_{\T^3} \eta  h_\delta(\psi_{\delta,\epsilon})\bigl(
			|\nabla \psi_{\delta,\epsilon}|^2 \,+\, |\nabla_\sym u_{\delta,\epsilon}|^2 + \epsilon
			\bigr)\,\d x \,\d s
			\\&\qquad + \sum_{i=1}^3
			\sum_{n \in \Z^3}\int_0^t \int_{\T^3}\psi_{\delta,\epsilon}  g_n \partial_i \eta  + \frac{1}{2} \psi_{\delta,\epsilon}   \eta \partial_ig_n  \,\d x\,  \d W_{n,s}^i
			\, +\, \frac{1}{\sqrt{2}}\sum_{i,j =1}^3\sum_{n \in \Z^3}\int_0^t \int_{\T^3}u_{{\delta,\epsilon}, j}\partial_i (\eta f_n) \,\d x \,\d B_{n,s}^{ji},
		\end{split}
	\end{align}
for $t\in [0,T]$, $\P$-a.s.,
	for any $\eta \in C^{\reg}(\T^3)$.
\end{remark}

\begin{proposition}\label{prop_1}
		For every $\delta,\epsilon\in(0,1)$, there exists a weak martingale solutions to  \eqref{Eq_approx} with initial data $(u_0,\psi_0)$, which satisfies
	\begin{equation}\label{total-energy-es}
		\frac{1}{2}\int_{\T^3}|u_{\delta,\epsilon}(t)|^2 +|\psi_{\delta,\epsilon}(t)|^2 \,\d x \,+\, \epsilon \int_0^t \int_{\T^3} 
		|\Delta u_{\delta,\epsilon}  |^2 +
		|\Delta\psi_{\delta,\epsilon} |^2\,\d x \,\d s
		\,\leq\, \frac{1}{2}\int_{\T^3}|u_0|^2 +|\psi_0|^2 \,\d x\,+\,\epsilon t,
	\end{equation}
	for all $ t\in [0,T]$, $\P$-almost surely.
\end{proposition}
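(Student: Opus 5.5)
We construct solutions by a Galerkin scheme followed by stochastic compactness. For fixed $m$, we project the velocity equation in \eqref{Eq_approx} onto the span of $\{e_\zeta\}_{\zeta\in\mathcal{B}_m}$ via $\Pi_m$. The $\psi$-equation we project onto the Fourier modes $|k|\le m$ via $P_m$. This yields a finite-dimensional SDE system for $(u^m,\psi^m)$ with initial datum $(\Pi_m u_0, P_m\psi_0)$.

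The coefficients are locally Lipschitz. The nonlinearity $h_\delta(\psi)(|\nabla\psi|^2+|\nabla_\sym u|^2+\epsilon)$ is $C^1$ since $h_\delta\in C_b^1$, and the noise coefficients are Lipschitz by Assumption \ref{Ass:noise} (the sums over $n$ converge thanks to Remark \ref{f_in_Cl2}). Hence a unique local strong solution exists up to explosion. Global existence then follows from the energy identity.

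\textbf{Energy estimate.} We apply It\^o's formula to $\frac12\|u^m\|^2+\frac12\|\psi^m\|^2$. The transport terms vanish because $\nabla\cdot u^m=0$ and $\Pi_m,P_m$ are self-adjoint and commute with derivatives. The bilaplacians produce $-\epsilon\|\Delta u^m\|^2-\epsilon\|\Delta\psi^m\|^2$. The term $\int\psi^m h_\delta(\psi^m)(\dots)$ is at most $\int(|\nabla\psi^m|^2+|\nabla_\sym u^m|^2)\,\d x+\epsilon$ (using $|\T^3|=1$), since $rh_\delta(r)\le1$. The key point is that the cross stochastic integrals cancel after integration by parts: $\frac1{\sqrt2}\int\psi f_n\nabla u:\d B_n$ from the $u$-equation against $-\frac1{\sqrt2}\int\psi\nabla u:f_n\,\d B_n$ from the $\psi$-equation. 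The $W$-martingale $\int\psi(-\nabla\cdot(\psi g_n)+\frac12\psi\nabla\cdot g_n)$ equals $0$ by integration by parts. Finally, the It\^o corrections combine with $\frac{F_1}{4}\Delta u$, $(F_1/2+G_1/2)\Delta\psi$ and $-(F_2/2+G_2/8)\psi$ to cancel exactly as in the computation of Proposition \ref{prop_prop_SL}. This cancellation uses \eqref{Eq_decay_condition_FG} and \eqref{Eq_noise_matrix_covariation}.

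\textbf{Main obstacle.} I expect the main difficulty to lie here. At the Galerkin level, the projections $\Pi_m,P_m$ act on the noise coefficients, so the quadratic variations are only bounded by, rather than equal to, the unprojected corrections. I would therefore aim for an inequality instead of an identity. The dissipation $-\|\nabla\psi^m\|^2-\|\nabla_\sym u^m\|^2$ from the Laplacians, together with the projected corrections $\le\frac{G_1}{2}\|\nabla\psi\|^2+\dots$, must absorb the positive $h_\delta$ contribution. I would verify this by writing the projected correction as $\|P_m(\cdot)\|^2\le\|\cdot\|^2$ and checking that the resulting sign is favourable. The outcome is a pathwise bound \eqref{total-energy-es} for $(u^m,\psi^m)$, uniform in $m$, with no martingale remaining. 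In particular the solutions are global.

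\textbf{Compactness and passage to the limit.} From \eqref{total-energy-es} we get uniform bounds in $L^\infty_tL^2_x\cap L^2_tH^2_x$. The equations give uniform bounds on time increments in $H^{-k}$ (for instance Sobolev--Slobodeckij $W^{\alpha,p}(0,T;H^{-k})$), via the Burkholder--Davis--Gundy inequality on the stochastic terms, whose moments are controlled by the pathwise energy bound. Aubin--Lions--Simon then gives tightness in $L^2(0,T;H^1)\cap C([0,T];H^{-k})$, with weak continuity in $L^2$. The Jakubowski--Skorokhod theorem yields a.s. convergence on a new probability space, jointly with the Brownian motions. Strong $L^2_tH^1_x$ convergence together with $h_\delta\in C_b^1$ suffices to pass to the limit in $h_\delta(\psi)(|\nabla\psi|^2+|\nabla_\sym u|^2)$ in $L^1$. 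The stochastic integrals are identified by the standard lemma on convergence of stochastic integrals (or by the martingale identification argument). The pathwise bound \eqref{total-energy-es} passes to the limit by lower semicontinuity, for all $t$ after using weak continuity.
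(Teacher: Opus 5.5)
Your proposal is correct and follows essentially the same route as the paper. The paper also uses a Galerkin scheme with $\Pi_m$ for $u$ and $P_m$ for $\psi$, and obtains the energy inequality from It\^o's formula: the projections only shrink the It\^o corrections, which without projection cancel exactly against the $F_1,F_2,G_1,G_2$ drift terms, and $rh_\delta(r)\le 1$ absorbs the $h_\delta$ term. It then proceeds as you do, via time-regularity bounds, Aubin--Lions--Simon tightness in $L^2_tH^1_x$, Skorokhod--Jakubowski and identification of the limit, and it dispatches your ``main obstacle'' in one sentence for the same reason you give.
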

\begin{proof}
	For each $m \in \mathbb{N}$ we recall the Fourier projection $P_m$ and the projection $\Pi_m$ on the span of eigenvectors of the Stokes operator introduced in \eqref{Fourier_proj} and \eqref{Stokes_proj}, respectively. Then, we consider the following Galerkin  scheme for \eqref{Eq_approx}:
	\begin{align}\begin{split}\label{Eq_approx_m}
			\d u_{\delta,\epsilon,m} \, =\, &  \nabla\cdot(\nabla_{\sym}u_{\delta,\epsilon,m})\,\d t 
			\,-\, \epsilon \Delta^2 u_{\delta,\epsilon,m}\,\d t
			\,- \,\Pi_m \div (u_{\delta,\epsilon,m} \otimes u_{\delta,\epsilon,m})\,\d t  \\
			&+\, \frac{F_1}{4}\Delta u_{\delta,\epsilon,m}\,\d t
			\,-\, \frac{1}{\sqrt{2}}\sum_{n \in \Z^3} \Pi_m \nabla\cdot(\psi_{\delta,\epsilon,m} f_n\, \d B_n),
			\qquad \nabla\cdot u_{\delta,\epsilon,m}=0,
			\\
			\d \psi_{\delta,\epsilon,m}  =\,& (1+F_1 /2+G_1/2)\Delta \psi_{\delta,\epsilon,m}\,\d t
			- \epsilon\Delta^2\psi_{\delta,\epsilon,m}  \,\d t \\
			&+ P_m\Big[h_\delta(\psi_{\delta,\epsilon,m})\bigl(
			|\nabla \psi_{\delta,\epsilon,m}|^2 \,+\, |\nabla_\sym u_{\delta,\epsilon,m}|^2 + \epsilon
			\bigr)\Big]\,\d t \\
			&- \,P_m\nabla \cdot( u_{\delta,\epsilon,m}\psi_{\delta,\epsilon,m} )\,\d t 
			\,- \,(F_2/2+ G_2/8)\psi_{\delta,\epsilon,m}\,\d t \\
			&- \,\sum_{n\in \Z^3} P_m\nabla \cdot ( \psi_{\delta,\epsilon,m} g_n\, \d W_n )
			\,+ \,\frac{1}{2} \sum_{n \in \Z^3} P_m[\psi_{\delta,\epsilon,m} \nabla \cdot ( g_n\, \d W_n)] \\
			&- \,\frac{1}{\sqrt{2}}\sum_{n\in \Z^3} P_m(\nabla u_{\delta,\epsilon} : f_n\, \d B_n),
			\\
			(u_{\delta,\epsilon,m}(0),\psi_{\delta,\epsilon,m}(0))\, =\,& (P_m u_0, P_m \psi_0).
	\end{split}\end{align}
	
	Applying It\^o's formula to 
	$\frac{1}{2}\int_{\T^3}|u_{\delta,\epsilon,m}(t)|^2\,\d x$
	and 
	$\frac{1}{2}\int_{\T^3}|\psi_{\delta,\epsilon,m}(t)|^2\,\d x$,
	and  arguing as for the energy conservation of strong solutions to \eqref{Eq_new_var}, stated in Lemma \ref{prop_cons_of_energy}, 
	one obtains the following total energy estimate: $\mathbb{P}$-almost surely, for every $t \in [0,T]$, we have
	\begin{align}\begin{split}\label{total-energy-es-m}&
		\frac{1}{2}\int_{\T^3}|u_{\delta,\epsilon,m}(t)|^2 +|\psi_{\delta,\epsilon,m}(t)|^2 \,\d x
		+ \epsilon \int_0^t \int_{\T^3} 
		|\Delta u_{\delta,\epsilon,m}|^2
		+ |\Delta\psi_{\delta,\epsilon,m}|^2\,\d x\,\d s
		\\&\quad \leq\, \frac{1}{2}\int_{\T^3}|P_m u_0|^2 +|P_m \psi_0|^2 \,\d x\,+\,\epsilon t,\end{split}
	\end{align}
and in particular the solutions to the locally Lipschitz SDEs \eqref{Eq_approx_m} exists globally in time. 
 Indeed, when computing the It\^o correction terms from the stochastic integrals, the presence of the Galerkin projections leads only to a diminished $L_x^2$-norm. In the terms where the right-hand sides of \eqref{Eq_approx_m} are tested against $u_{\delta,\epsilon,m}$ and $\psi_{\delta,\epsilon,m}$, the projections vanish entirely since they are orthogonal in $L_x^2$. Thereby the only necessary adaption is regarding the emerging term 
	\begin{equation}\label{eqn181}
\int_0^t  \int_{\mathbb{T}^3} \psi_{\delta,\epsilon,m} h_{\delta}(\psi_{\delta,\epsilon,m}) \big(|\nabla \psi_{\delta,\epsilon,m}|^2 + |\nabla_{\mathrm{sym}} u_{\delta,\epsilon,m}|^2 + \epsilon \big) \,\mathrm{d}x\,\mathrm{d}s,
\end{equation}
for which we use the pointwise bound that
$$
r h_{\delta}(r) \leq 1 \qquad \text{for all } r \in \mathbb{R},
$$
resulting from Assumption \ref{Ass:nu_N}. This leads to the upper bound 
$$\int_0^t \int_{\mathbb{T}^3} \big(|\nabla  \psi_{\delta,\epsilon,m}|^2 + |\nabla_{\mathrm{sym}} u_{\delta,\epsilon,m}|^2 + \epsilon \big) \,\mathrm{d}x\,\mathrm{d}s
$$
on \eqref{eqn181}. The extra $\epsilon$ on the right-hand side is responsible for the additional term $\epsilon t$ in \eqref{total-energy-es-m} compared to \eqref{eqn_strong_energy_cons}, while the extra dissipation on the left-hand side of \eqref{total-energy-es-m} results from the regularizations $-\epsilon\Delta^2 u_{\delta,\epsilon,m}$ and  $-\epsilon\Delta^2 \psi_{\delta,\epsilon,m}$.

	From here on, we proceed analogously to the more delicate limit $\delta\to 0$ detailed in Subsection \ref{subsec-4-2}, to which we refer for more details: Following the line of arguments in Lemma \ref{lem-timeregularity} below, we see that for every $\alpha \in (0,1/2)$ and $l > 5/2$, 
	there exists a constant $C =C(\alpha, u_0,\psi_0, F_1,F_2, G_1,G_2,T,\epsilon,\delta) > 0$ such that
	$$
	\mathbb{E}\|u_{\delta,\epsilon,m}\|_{C^{\alpha}([0,T];H^{-l}(\T^3;\R^3))}
	\,+\, \mathbb{E}\|\psi_{\delta,\epsilon,m}\|_{C^{\alpha}([0,T];H^{-l}(\T^3))}\,
	\leq \,C.
	$$
	Combining this with \eqref{total-energy-es-m} and the Aubin--Lions--Simon compactness criterion yields  uniform in $m$ tightness of 
	$(u_{\delta,\epsilon,m},\psi_{\delta,\epsilon,m})$ in the  strong topology of
	$L^2([0,T];H^1(\T^3;\R^3)) \times L^2([0,T];H^1(\T^3))$, which allows for an application of the Skorokhod--Jakubowski theorem to extract an equidistributed convergent subsequence.
	Identifying its limit as a weak martingale solution to \eqref{Eq_approx} is analogous and in fact easier than the limiting procedure in Subsection \ref{subsec-4-2}, so we only provide some comments on the passage to the limit for the nonlinearity 
	$$
	P_m\Big[h_\delta(\psi_{\delta,\epsilon,m})\bigl(
	|\nabla \psi_{\delta,\epsilon,m}|^2 \,+\, |\nabla_\sym u_{\delta,\epsilon,m}|^2 + \epsilon
	\bigr)\Big],
	$$
which does not collapse into a dominating measure at this stage.
	We denote the new probability space and  converging subsequence again by $(\Omega,\A,\P)$ and  $(u_{\delta,\epsilon,m},\psi_{\delta,\epsilon,m})$, such that $\mathbb{P}$-almost surely,
	$$
	(u_{\delta,\epsilon,m},\psi_{\delta,\epsilon,m})
	\to (u_{\delta,\epsilon},\psi_{\delta,\epsilon})
	\quad \text{in }
	L^2([0,T];H^1(\T^3;\R^3)) \times L^2([0,T];H^1(\T^3)).
	$$
	We claim that $\mathbb{P}$-almost surely, for every $\zeta \in C_c^\infty([0,T)\times \T^3)$,
	\begin{align*}
		&\int_0^T\int_{\T^3}h_\delta(\psi_{\delta,\epsilon,m})\bigl(
		|\nabla \psi_{\delta,\epsilon,m}|^2 + |\nabla_\sym u_{\delta,\epsilon,m}|^2 + \epsilon
		\bigr)P_m\zeta\,\d x\,\d t \\
		&\qquad \to
		\int_0^T\int_{\T^3}h_\delta(\psi_{\delta,\epsilon})\bigl(
		|\nabla \psi_{\delta,\epsilon}|^2 + |\nabla_\sym u_{\delta,\epsilon}|^2 + \epsilon
		\bigr)\zeta\,\d x\,\d t.
	\end{align*}
	To see this, we decompose the difference into
	\begin{align*}
		&\int_0^T\int_{\T^3}h_\delta(\psi_{\delta,\epsilon,m})\bigl(
		|\nabla \psi_{\delta,\epsilon,m}|^2 + |\nabla_\sym u_{\delta,\epsilon,m}|^2 + \epsilon
		\bigr)P_m\zeta\,\d x\,\d t \\& - \int_0^T\int_{\T^3}h_\delta(\psi_{\delta,\epsilon})\bigl(
		|\nabla \psi_{\delta,\epsilon}|^2 + |\nabla_\sym u_{\delta,\epsilon}|^2 + \epsilon
		\bigr)\zeta\,\d x\,\d t\\
		=\;
		&\int_0^T\int_{\T^3}h_\delta(\psi_{\delta,\epsilon})\bigl(
		|\nabla \psi_{\delta,\epsilon,m}|^2 + |\nabla_\sym u_{\delta,\epsilon,m}|^2 - |\nabla \psi_{\delta,\epsilon}|^2 - |\nabla_\sym u_{\delta,\epsilon}|^2
		\bigr)P_m\zeta\,\d x\,\d t\\&+\int_0^T\int_{\T^3}(h_\delta(\psi_{\delta,\epsilon,m})-h_\delta(\psi_{\delta,\epsilon}))\bigl(
		|\nabla \psi_{\delta,\epsilon,m}|^2 + |\nabla_\sym u_{\delta,\epsilon,m}|^2 + \epsilon
		\bigr)P_m\zeta\,\d x\,\d t\\
		&+\int_0^T\int_{\T^3}h_\delta(\psi_{\delta,\epsilon})\bigl(
		|\nabla \psi_{\delta,\epsilon}|^2 + |\nabla_\sym u_{\delta,\epsilon}|^2 + \epsilon
		\bigr)(P_m\zeta-\zeta)\,\d x\,\d t\,=\, I_1 + I_2 + I_3,
	\end{align*}
	where $I_1$, $I_2$, and $I_3$ are defined accordingly.
	For $I_1$, the uniform boundedness of $h_\delta(\psi_{\delta,\epsilon,m})$ and $P_m\zeta$, together with the 
	$L^2([0,T];H^1(\T^3;\R^3)) \times L^2([0,T];H^1(\T^3))$ strong convergence, implies that $I_1 \to 0$.
	Moreover, the uniform boundedness of $P_m\zeta$ and the total energy estimate~\eqref{total-energy-es-m} yield $I_2 \to 0$ and $I_3 \to 0$.
\end{proof}

\subsection{Construction of weak solutions to the $\epsilon$-regularization. } \label{subsec-4-2}In
this subsection, we  show the existence of weak martingale solutions to 
\begin{align}\begin{split}\label{ep_approx}	\d u_{\epsilon} \, =\, &  \nabla\cdot(\nabla_{\sym}u_{\epsilon})\d t \,-\, \epsilon \Delta^2 u_{\epsilon} \d t-
		\Pi \div   (u_{\epsilon} \otimes   u_{\epsilon})  \d t\,+\, \frac{F_1}{4}\Delta u_{\epsilon} \d t \\& - \frac{1}{\sqrt{2}}\sum_{n \in \Z^3}\Pi \nabla\cdot(\psi_{\epsilon} f_n   \d B_n)
		, 	\qquad \nabla\cdot u_{\epsilon}=0,
		\\
		\d \psi_{\epsilon}  = &(1+F_1 /2+G_1/2)\Delta \psi_{\epsilon} \d t -
		\epsilon\Delta^2\psi_{\epsilon}\d t
		\, +\,  \d q_\epsilon 
		- \nabla \cdot( u_{\epsilon}\psi_{\epsilon} )\d t - (F_2/2+ G_2/8) \psi_{\epsilon} \d t \\&- \sum_{n\in \Z^3} \nabla \cdot ( \psi_{\epsilon} g_n \d W_n ) \,
		+\, \frac{1}{2} \sum_{n \in \Z^3} \psi_{\epsilon} \nabla \cdot ( g_n  \d  W_n) \,-\,  \frac{1}{\sqrt{2}}\sum_{n\in \Z^3}(\nabla  u_{\epsilon} : f_n\d  B_n),
		\\ (u_{\epsilon}(0),\psi_{\epsilon}(0)) =&( u_0,\psi_0),
\end{split}\end{align}
where 
\begin{equation}\label{eqn354865865}
	q_\epsilon \,\ge\, \frac{1}{\psi_{\epsilon}}\bigl(
	|\nabla \psi_{\epsilon}|^2 \,+\, |\nabla_\sym u_{\epsilon}|^2+ \epsilon
	\bigr) ,
\end{equation}
defined as follows.
\begin{definition}[Weak martingale solution to \eqref{ep_approx}--\eqref{eqn354865865}]\label{defi_solution_ep}
	A  \emph{weak martingale solution 
		to \eqref{ep_approx}--\eqref{eqn354865865}} 
	consists of a probability space $(\Omega,\A,\P)$, a filtration $\F$ satisfying the usual conditions, independent $3$-dimensional standard $\F$-Brownian motions $(W_n)_{n\in \Z^3}$ and $3\times 3$-matrix valued $\F$-Brownian motions $(B_n)_{n\in \Z^3}$ with covariance structure \eqref{Eq_noise_matrix_covariation}, a weakly continuous and adapted  $L^2(\T^3 ;\R^3)$-valued process
	$u_{\epsilon}$, as well as   progressively measurable   $\psi_{\epsilon} \in L^\infty(0,T;L^2( \T^3))$ and  $q_{\epsilon}\in \M_+([0,T]\times \T^3)$, such that the following holds:
	\begin{enumerate}[(i)]
		\item \label{Item_defi_sol_1_ep} We have the additional regularity {$u_{\epsilon}\in L^{2}(0,T;H^2(\T^3 ;\R^{3}))$ and $\psi_{\epsilon} \in  L^{2}(0,T;H^2(\T^3))$}, $\P$-almost surely.
		\item  \label{Item_defi_sol_2_ep} It holds $\psi_{\epsilon}>0$, $\P\otimes \d t \otimes \d x$-a.e., and  $q_{\epsilon} \ge \frac{1}{\psi_{\epsilon}}(|\nabla \psi_{\epsilon}|^2 + |\nabla_\sym u_{\epsilon}|^2+\epsilon)$,  as measures  on $[0,T]\times \T^3$, $\P$-almost surely. 
		\item  \label{Item_defi_sol_3_ep} For every divergence-free $\vp\in C^{\infty}(\T^3;\R^3)$ it holds
		\begin{align}\begin{split}
				\label{eqn9_ep}& \int_{\T^3}  u_{\epsilon,t}\cdot \vp\,\d x \,-\, 
				\int_{\T^3} u_0 \cdot \vp\,\d x\,+\,\epsilon\int_0^t\int_{\T^3} u_{\epsilon}\Delta^2\vp\,\d x\,\d s
				\\&\quad=\,  \int_0^t \int_{\T^3} -  \Bigl(\frac{1}{2}+ \frac{F_1}{4}\Bigr) \nabla u_{\epsilon}: \nabla  \vp \,+\, u_{\epsilon}\otimes u_{\epsilon}:\nabla \vp \,\d x\, \d s \,+\, \frac{1}{\sqrt{2}}\sum_{n \in \Z^3}\int_0^t \int_{\T^3}\psi_{\epsilon} f_n \nabla \vp\,\d x :\d B_{n,s},
			\end{split}
		\end{align}
		and $\nabla \cdot u_{\epsilon,t} = 0$ for all $t\in [0,T]$, $\P$-almost surely.
		\item  \label{Item_defi_sol_4_ep}
		For every $\zeta\in C_c^{\infty}([0,T) \times \T^3)$ it holds
		\begin{align}\begin{split}\label{Eq_psi_weak_ep}
				&-\,
				\int_0^T \int_{\T^3} \psi_{\epsilon}  \partial_t \zeta  \, \d x\, \d t \,-\, \int_{\T^3}
				\psi_0 \zeta
				\,\d x\,+\,\epsilon\int_0^T\int_{\T^3} \psi_{\epsilon}\Delta^2\zeta\,\d x\,\d s 
				\\&\quad=\,  \int_0^T \int_{\T^3} - \bigl(1+ {F_1}/2 +G_1/2\bigr) \nabla \psi_{\epsilon}\cdot \nabla  \zeta \,+\,\psi_{\epsilon} u_{\epsilon}\cdot \nabla \zeta   \,-\, (F_2/2+G_2/8) \psi_{\epsilon} \zeta \,\d x\,\d t \,+\, \int_{[0,T]\times\T^3} \zeta \,\d q
				\\&\qquad + \sum_{i=1}^3
				\sum_{n \in \Z^3}\int_0^T \int_{\T^3} \psi_{\epsilon} g_n \partial_i \zeta  + \frac{1}{2} \psi_{\epsilon}  \zeta \partial_ig_n \,\d x \, \d W_{n,t}^i
				\, +\, \frac{1}{\sqrt{2}}\sum_{i,j =1}^3\sum_{n \in \Z^3}\int_0^T \int_{\T^3}u_{\epsilon,j}\partial_i (\zeta f_n) \,\d x \,\d B_{n,t}^{ji},
			\end{split}
		\end{align}  $\P$-almost surely.
	\end{enumerate}
\end{definition}

The starting point of our construction are the solutions $(u_{\delta,\epsilon} ,u_{\delta,\epsilon})$ to \eqref{Eq_approx} provided by Proposition \ref{prop_1}. For notational convenience, we assume all of them to be defined on the same stochastic basis. The latter does not effect the validity of our proof, since we eventually pass to an equidistributed subsequence, anyways.
\subsubsection{Uniform estimates and tightness}
Here, we collect uniform estimates on $(u_{\delta,\epsilon} ,u_{\delta,\epsilon})$ and recall the energy estimate \eqref{total-energy-es}, which holds uniformly in $(\delta,\epsilon)\in (0,1)$. We also establish the announced $L^1$-estimate.
	\begin{lemma}\label{lem-L1-es}
		Let 
		 $(u_{\delta,\epsilon} ,\psi_{\delta,\epsilon})$ be the weak martingale solutions to  \eqref{Eq_approx} with initial data $(u_0,\psi_0)$ constructed in Proposition \ref{prop_1}. Then
		we have 
			\begin{align}\label{eqn151}&
			\mathbb{E}\int_0^T\int_{\T^3}h_\delta(\psi_{\delta,\epsilon})\bigl(
			|\nabla \psi_{\delta,\epsilon}|^2 + |\nabla_\sym u_{\delta,\epsilon}|^2 + \epsilon
			\bigr)\,\d x\,\d t \\&\quad \leq  \, 	\biggl(1 \,+\, \frac{4F_2 +G_2 }{8}T \biggr)  \biggl(
			\frac{1}{2}\int_{\T^3}|u_0|^2 +|\psi_0|^2 \,\d x\,+\,2 \epsilon T
			\biggr)^{1/2},
		\end{align}
	for all $\delta,\epsilon\in (0,1)$.		
	\end{lemma}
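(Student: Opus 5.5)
The estimate comes from testing the $\psi_{\delta,\epsilon}$-equation with the constant function $\eta\equiv 1$. This is the $L^1$-estimate: it measures the dissipation of $-\int_{\T^3}\psi\,\d x$, i.e.\ of the mathematical entropy \eqref{eqn1009}. I would use the time-pointwise formulation \eqref{modification-eq} with $\eta = 1$. There all terms containing derivatives of $\eta$ drop out, namely the $\epsilon\Delta^2$ term, the diffusion, the transport term $\psi u\cdot\nabla\eta$, and the $g_n\partial_i\eta$ part of the $W$-integral. This leaves
\begin{align}
\int_0^t\int_{\T^3} h_\delta(\psi_{\delta,\epsilon})\bigl(|\nabla \psi_{\delta,\epsilon}|^2+|\nabla_\sym u_{\delta,\epsilon}|^2+\epsilon\bigr)\,\d x\,\d s
\,=\,&\int_{\T^3}\psi_{\delta,\epsilon}(t)-\psi_0\,\d x \,+\,\frac{4F_2+G_2}{8}\int_0^t\int_{\T^3}\psi_{\delta,\epsilon}\,\d x\,\d s \\
&-\,M_t,
\end{align}
where $M$ collects the two stochastic integrals
\[
\frac12\sum_{i,n}\int_0^t\int_{\T^3}\psi_{\delta,\epsilon}\,\partial_i g_n\,\d x\,\d W^i_{n,s}
\qquad\text{and}\qquad
\frac1{\sqrt2}\sum_{i,j,n}\int_0^t\int_{\T^3}u_{\delta,\epsilon,j}\,\partial_i f_n\,\d x\,\d B^{ji}_{n,s}.
\]
The left-hand side of the identity is exactly the quantity to be bounded, evaluated at $t=T$. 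Since $h_\delta\ge0$ is not guaranteed for negative arguments, I would keep the integrand as it stands; its sign is never used, because we bound its expectation through the identity.

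Next I take expectations. The stochastic integrals $M$ are true martingales: their quadratic variations are bounded by $C(F_2,G_2)\int_0^T\|\psi_{\delta,\epsilon}\|_{L^2}^2+\|u_{\delta,\epsilon}\|_{L^2}^2\,\d s$, by Cauchy--Schwarz in $x$ and in $n$ together with Assumption~\ref{Ass:noise}. This quantity is deterministically bounded by the energy estimate \eqref{total-energy-es}. So $\E M_T=0$, and no localisation is needed.

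The remaining terms are estimated with Cauchy--Schwarz on $\T^3$, which has unit volume, so that $\int_{\T^3}|\psi|\,\d x\le\|\psi\|_{L^2}$. Two pieces need a bound. For the first, $-\int_{\T^3}\psi_0\,\d x\le 0$ because $\psi_0\ge0$. For the second, $\int_{\T^3}\psi_{\delta,\epsilon}(T)\,\d x\le\|\psi_{\delta,\epsilon}(T)\|_{L^2}$. By \eqref{total-energy-es},
\[
\|\psi_{\delta,\epsilon}(s)\|_{L^2}\le\Bigl(2\cdot\Bigl(\tfrac12\int_{\T^3}|u_0|^2+\psi_0^2\,\d x+\epsilon T\Bigr)\Bigr)^{1/2}
\le 2^{1/2}\Bigl(\tfrac12\int_{\T^3}|u_0|^2+\psi_0^2\,\d x+2\epsilon T\Bigr)^{1/2}.
\]
Similarly the integral in $s$ is bounded by $T$ times the same quantity. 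Collecting terms gives the bound $\bigl(1+\frac{4F_2+G_2}{8}T\bigr)(\cdots)^{1/2}$ up to constants. To match the exact constants in \eqref{eqn151}, I would use the sharper form $\|\psi(s)\|_{L^2}^2\le 2\bigl(\tfrac12\|u_0\|^2+\tfrac12\|\psi_0\|^2+\epsilon T\bigr)\le \|u_0\|^2+\|\psi_0\|^2+2\epsilon T$. The factor $2\epsilon T$ inside the root in \eqref{eqn151} absorbs this slack. Any remaining discrepancy in the numerical constant is harmless for later use.

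The only point I expect to need care is justifying the $\eta\equiv1$ test and the zero-mean property. Using \eqref{modification-eq} is legitimate since $1\in C^\infty(\T^3)$. The martingale property follows from the pathwise energy bound, and it requires summability over $n$ of $\|\nabla g_n\|$ and $\|\nabla f_n\|$ in the sense of $F_2,G_2<\infty$, which Remark~\ref{f_in_Cl2} provides.
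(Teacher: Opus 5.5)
Your proof is the paper's: you test \eqref{modification-eq} with $\eta\equiv1$, remove the remaining stochastic integrals in expectation, and control $\int_{\T^3}\psi_{\delta,\epsilon}\,\d x$ by the energy; your check that these integrals are square-integrable martingales, via the pathwise bound \eqref{total-energy-es}, makes explicit what the paper leaves implicit.

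The one correction concerns constants. Estimate \eqref{total-energy-es} only gives $\|\psi_{\delta,\epsilon}(s)\|_{L^2(\T^3)}^2\le\int_{\T^3}|u_0|^2+\psi_0^2\,\d x+2\epsilon s$. So the term $2\epsilon T$ does not absorb the missing factor $2$ in front of the initial energy, as you claim. Your argument, like the identical step in the paper, therefore yields \eqref{eqn151} only up to a factor of at most $\sqrt{2}$. This is harmless, since later only uniformity in $\delta,\epsilon$ is used.

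Incidentally, $h_\delta>0$ everywhere does follow from Assumption \ref{Ass:nu_N}, since $h_\delta$ is decreasing and equals $1/r$ for $r\ge\delta$.
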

	\begin{proof}
		Taking $\eta =\mathbf{1}_{\T^3}$ in \eqref{modification-eq} all terms in divergence form vanish and after taking  the expectation also the martingales, so that 			
		\begin{align*}&\mathbb{E}\|\psi_0\|_{L^1(\T^3)} \,+\, 
			\mathbb{E}\int_0^T\int_{\T^3}h_\delta(\psi_{\delta,\epsilon})\bigl(
			|\nabla \psi_{\delta,\epsilon}|^2 + |\nabla_\sym u_{\delta,\epsilon}|^2 + \epsilon
			\bigr)\,\d x\,\d t \\& \quad  
			\leq\,
			\mathbb{E}\|\psi_{\delta,\epsilon}(T)\|_{L^1(\T^3)}
			\,+\,(F_2/2 +G_2/8) \mathbb{E}\int_0^T\|\psi_{\delta,\epsilon}(t)\|_{L^1(\T^3)}\,\d t .
		\end{align*}
	Using the total energy estimate~\eqref{total-energy-es}, we can further estimate this by
	\begin{align}
		\biggl(1 \,+\, \frac{4F_2 +G_2 }{8}T \biggr)  \biggl(
		\frac{1}{2}\int_{\T^3}|u_0|^2 +|\psi_0|^2 \,\d x\,+\,2\epsilon T
		\biggr)^{1/2},
	\end{align}
confirming \eqref{eqn151}.
	\end{proof}
		
	To obtain  tightness in the strong topology of $L^2([0,T]; L^2(\T^3))$, it  suffices to establish suitable time-regularity estimates. More precisely, we seek uniform bounds in H\"older spaces $C^{\alpha}([0,T];H^{-l}(\T^3;\R^3))$ and fractional Sobolev spaces $W^{\alpha,2}([0,T];H^{-l}(\T^3))$, which, combined with the compact embedding $H^2_x \subset L^2_x$,  implies uniform tightness in $\delta$ via a variant of the Aubin--Lion--Simon lemma, by \eqref{total-energy-es}. We stress that, while the latter argument needs to be adapted for subsequent passage $\epsilon\to 0$, at least the below estimates are uniform $\epsilon$ as well.
	
	\begin{lemma}\label{lem-timeregularity}
		 Let 
		 $(u_{\delta,\epsilon} ,\psi_{\delta,\epsilon})$ be the weak martingale solutions to  \eqref{Eq_approx} with initial data $(u_0,\psi_0)$ constructed in Proposition \ref{prop_1}. Then for every $\alpha \in (0,1/2)$ and $l > 5/2$, there exists a constant \[C = C(\alpha,l, u_0,\psi_0, F_1,F_2, G_1,G_2,T) <\infty, \]
		 such that
		\begin{align}\label{eqn872374}
			\mathbb{E}\|u_{\delta,\epsilon}\|_{C^{\alpha}([0,T];H^{-l}(\mathbb{T}^3;\R^3))}+\mathbb{E}\|\psi_{\delta,\epsilon}\|_{W^{\alpha,2}([0,T];H^{-l}(\mathbb{T}^3))} \,\leq\, C,
		\end{align}
		for all $\delta,\epsilon\in (0,1)$.
	\end{lemma}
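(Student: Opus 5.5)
The plan is to decompose each equation into a drift part and a martingale part, and to estimate them in negative Sobolev norms. The drift parts will be bounded by Sobolev embedding of measures and $L^1$ functions into $H^{-l}$ for $l>3/2$. The martingale parts will be bounded via the Burkholder--Davis--Gundy inequality together with the Kolmogorov-type embedding $W^{\alpha,p}\subset C^{\alpha'}$.

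\textbf{Velocity.} From the weak formulation of $u_{\delta,\epsilon}$ we write $u_{\delta,\epsilon}(t)=u_0+\int_0^t D_s\,\d s+M_t$, with
\[
D=\nabla\cdot\nabla_\sym u-\epsilon\Delta^2 u-\Pi\nabla\cdot(u\otimes u)+\tfrac{F_1}{4}\Delta u .
\]

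\emph{Drift.} The linear terms are bounded in $H^{-l}$ for $l\ge 2$ by $\|u\|_{L^2}$, uniformly in $t$. Here the uniformity in $\epsilon$ comes from $\epsilon<1$ and the pathwise energy bound \eqref{total-energy-es}, not from the $\epsilon$-dissipation. The convective term satisfies $\|\nabla\cdot(u\otimes u)\|_{H^{-l}}\lesssim\|u\otimes u\|_{L^1}\le\|u\|_{L^2}^2$, using $L^1\subset H^{-l+1}$ for $l-1>3/2$, i.e.\ $l>5/2$; this is where the threshold $l>5/2$ enters. Hence $\int_0^\cdot D\,\d s$ is Lipschitz in time with values in $H^{-l}$, with a pathwise constant bounded by a power of the initial energy plus $\epsilon T$.

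\emph{Martingale.} For $M$, the BDG inequality in the Hilbert space $H^{-l}$ gives
\[
\E\|M_t-M_s\|_{H^{-l}}^p\lesssim \E\Bigl(\int_s^t\sum_n\|\psi f_n\|_{L^2}^2\Bigr)^{p/2}\le (F_1 C)^{p/2}|t-s|^{p/2},
\]
again by the energy bound. Kolmogorov's continuity criterion with $p$ large then yields $\E\|M\|_{C^\alpha}$ bounded for every $\alpha<1/2$.

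\textbf{Temperature.} For $\psi_{\delta,\epsilon}$ I use the $t$-wise formulation \eqref{modification-eq}. The linear and transport drift terms are treated exactly as above. The new ingredient is the term
\[
Q_t=\int_0^t h_\delta(\psi)\bigl(|\nabla\psi|^2+|\nabla_\sym u|^2+\epsilon\bigr)\,\d s .
\]
It is only controlled in $L^1(\Omega\times[0,T]\times\T^3)$ by Lemma \ref{lem-L1-es}, so no Hölder modulus is available; this is the main obstacle and the reason only a $W^{\alpha,2}$ bound is claimed. I plan to handle it as follows.

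\emph{The $Q$ term.} Since $l>3/2$, the embedding of $L^1$ (indeed of measures) into $H^{-l}$ shows that $t\mapsto Q_t\in H^{-l}$ has bounded variation, with total variation at most $C\|h_\delta(\psi)(\cdots)\|_{L^1_{t,x}}$. For a BV function $g$ with values in a Hilbert space and $\alpha<1/2$, the elementary inequality $\|g\|_{W^{\alpha,2}}^2\lesssim \|g\|_{L^\infty}\,\mathrm{TV}(g)$ holds. It follows from bounding $|g(t)-g(s)|^2\le 2\|g\|_\infty|g(t)-g(s)|$ and integrating $|t-s|^{-1-2\alpha}$ against the variation measure, which converges since $2\alpha<1$. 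Both $\|Q\|_{L^\infty}$ and $\mathrm{TV}(Q)$ are dominated by the $L^1$ norm, so $\E\|Q\|_{W^{\alpha,2}(H^{-l})}\lesssim \E\|\cdots\|_{L^1}$, which is uniformly bounded by \eqref{eqn151}.

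\emph{Martingale part for $\psi$.} The martingale part contains $\nabla u:f_n\,\d B_n$ and $\nabla\cdot(\psi g_n\,\d W_n)$. In $H^{-l}$ these are bounded by $\|u\|_{L^2}$ and $\|\psi\|_{L^2}$ times $\sup_x$-norms of $f,g$ in $\ell^2$ and of their gradients (Remark \ref{f_in_Cl2}). The BDG and Kolmogorov argument therefore goes through as in the velocity case, giving a $C^\alpha\subset W^{\alpha,2}$ bound.

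\textbf{Conclusion.} Summing the drift, $Q$ and martingale contributions for each component gives \eqref{eqn872374}, with a constant independent of $\delta$ and $\epsilon$.
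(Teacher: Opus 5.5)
Your architecture matches the paper's: drift plus martingale, $L^1(\T^3)\hookrightarrow H^{1-l}(\T^3)$ for the quadratic drifts (which is where $l>5/2$ enters), BDG plus Kolmogorov for the martingales, and Lemma \ref{lem-L1-es} for $Q$. However, one step fails as written: the hyperviscous terms.

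You claim the linear drift terms are bounded in $H^{-l}$, $l\ge 2$, by $\|u_{\delta,\epsilon}\|_{L^2}$, and you stress that the $\epsilon$-dissipation is not needed. That is true for $\nabla\cdot\nabla_\sym u$ and $\Delta u$. But $\epsilon\Delta^2u_{\delta,\epsilon}$ is of fourth order. On $\T^3$ the norm $\|\Delta^2 u\|_{H^{-l}}$ is comparable to the $H^{4-l}$-norm of $u$ (up to the zero mode), and $\|u\|_{L^2}$ does not control this when $l<4$. So your argument proves \eqref{eqn872374} only for $l\ge 4$. That does not give the range $l\in(5/2,4)$, because the $H^{-l}$-norms become stronger as $l$ decreases. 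The same problem occurs for $\epsilon\Delta^2\psi_{\delta,\epsilon}$.

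The remedy, which is what the paper does, is to use precisely the dissipation part of \eqref{total-energy-es}. Pathwise,
\[
\epsilon\Bigl\|\int_0^\cdot\Delta^2u_{\delta,\epsilon}\,\d s\Bigr\|_{W^{1,2}(0,T;H^{-2}(\T^3;\R^3))}\,\lesssim_T\,\epsilon\,\|\Delta u_{\delta,\epsilon}\|_{L^2([0,T]\times\T^3;\R^3)}\,\le\,\sqrt{\epsilon}\,\Bigl(\frac{1}{2}\int_{\T^3}|u_0|^2+\psi_0^2\,\d x+\epsilon T\Bigr)^{1/2}.
\]
Combine this with $W^{1,2}(0,T;H^{-2})\hookrightarrow C^{1/2}([0,T];H^{-2})\hookrightarrow C^{\alpha}([0,T];H^{-l})$. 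For $\psi_{\delta,\epsilon}$ use analogously $W^{1,2}(0,T;H^{-2})\hookrightarrow W^{\alpha,2}(0,T;H^{-l})$. This drift is then only $C^{1/2}$ in time rather than Lipschitz, which still suffices for $\alpha<1/2$.

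There is also a smaller slip. The inclusion $C^{\alpha}([0,T];H)\subset W^{\alpha,2}(0,T;H)$ fails at equal exponents: the Hölder bound only gives the non-integrable majorant $[g]_{C^\alpha}^2|t-s|^{-1}$ for the Gagliardo integrand. For the $\psi$-martingales, take the Kolmogorov exponent $\alpha'\in(\alpha,1/2)$ and use $C^{\alpha'}\subset W^{\alpha,2}$.

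With these two corrections the proof goes through. Your treatment of $Q$ via $\|g\|_{W^{\alpha,2}}^2\lesssim\|g\|_{L^\infty}\mathrm{TV}(g)$ is correct. It is a self-contained proof of the embedding $W^{1,1}(0,T;H^{-l})\hookrightarrow W^{\alpha,2}(0,T;H^{-l})$, $\alpha<1/2$, which underlies the paper's one-line bound. Likewise, using Kolmogorov instead of \cite[Lemma 2.1]{FG95} for the $\psi$-martingales is fine, because the integrands are pathwise bounded by the energy.
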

	
	\begin{proof}
		We first focus on the velocity component. Applying the norm $\|\cdot\|_{C^{\alpha}([0,T];H^{-l}(\mathbb{T}^3;\R^3))}$ to the velocity equation in \eqref{Eq_approx}, and using the triangle inequality, we obtain:
		\begin{align*}
			\|u_{\delta,\epsilon}\|_{C^{\alpha}([0,T];H^{-l}(\mathbb{T}^3;\R^3))}
			&\leq\, C(u_0,T)\\
			& + \left\|\int^{\cdot}_0\nabla\cdot(\nabla_{\mathrm{sym}} u)\,\d s\right\|_{C^{\alpha}([0,T];H^{-l}(\mathbb{T}^3;\R^3))} 
			+ \epsilon \left\|\int^{\cdot}_0 \Delta^2 u_{\delta,\epsilon}\,\d s\right\|_{C^{\alpha}([0,T];H^{-l}(\mathbb{T}^3;\R^3))} \\
			&+ \left\|\int^{\cdot}_0\Pi \div(u_{\delta,\epsilon} \otimes u_{\delta,\epsilon})\,\d s\right\|_{C^{\alpha}([0,T];H^{-l}(\mathbb{T}^3;\R^3))} 
			+ \frac{F_1}{4} \left\|\int^{\cdot}_0\Delta u_{\delta,\epsilon}\,\d s\right\|_{C^{\alpha}([0,T];H^{-l}(\mathbb{T}^3;\R^3))} \\
			&+ \left\|\frac{1}{\sqrt{2}} \sum_{n \in \mathbb{Z}^3} \int^{\cdot}_0\Pi \nabla \cdot (\psi_{\delta,\epsilon} f_n \,\d B_n) \right\|_{C^{\alpha}([0,T];H^{-l}(\mathbb{T}^3;\R^3))}.
		\end{align*}
		To estimate the convective term, we use embedding $L^1(\T^3;\R^3) \hookrightarrow H^{1-l}(\T^3;\R^3)$ to deduce that
		$$
		W^{1,\infty}([0,T];L^1(\T^3;\R^3)) \hookrightarrow C^{\alpha}([0,T];H^{1-l}(\T^3;\R^3)).
		$$
		 Using similarly 
		\begin{align}
			& 	W^{1,\infty}([0,T];H^{-2}(\T^3;\R^3)) \hookrightarrow C^{\alpha}([0,T];H^{-l}(\T^3;\R^3)),
			\\
			& W^{1,2}([0,T];H^{-2}(\T^3;\R^3)) \hookrightarrow C^{\alpha}([0,T];H^{-l}(\T^3;\R^3)),
		\end{align}
		for the other deterministic integrals, we obtain that
		\begin{align*}
			\|u_{\delta,\epsilon}&\|_{C^{\alpha}([0,T];H^{-l}(\mathbb{T}^3;\R^3))}
			\le\, C(T,u_0)\\
			& + {C(\alpha,l)}\sqrt{\epsilon}   \left\|\int^{\cdot}_0\Delta u_{\delta,\epsilon}\,\d s\right\|_{W^{1,2}([0,T];L^2(\mathbb{T}^3;\R^3))}+ {C(\alpha,l)} \left(\frac{1}{2}+\frac{F_1}{4}\right)\left\|\int^{\cdot}_0 u_{\delta,\epsilon}\, \d s\right\|_{W^{1,\infty}([0,T];L^2(\T^3;\R^3))} \\
			&+{C(\alpha,l)}  \left\|\int^{\cdot}_0u_{\delta,\epsilon} \otimes u_{\delta,\epsilon}\,\d s\right\|_{W^{1,\infty}([0,T];L^1(\mathbb{T}^3;\R^{3\times3}))} 
			+ \left\| \frac{1}{\sqrt{2}} \sum_{n \in \mathbb{Z}^3} \int^{\cdot}_0\Pi \nabla \cdot (\psi_{\delta,\epsilon} f_n \,\d B_n) \right\|_{C^{\alpha}([0,T];H^{-l}(\mathbb{T}^3;\R^3))}.
		\end{align*}
		Then, using H\"older's inequality and the energy estimate \eqref{total-energy-es}, we find
		\begin{align}\label{eqn3737373}
			&\sqrt{\epsilon} \left\|\int^{\cdot}_0\Delta u_{\delta,\epsilon}\,\d s\right\|_{W^{1,2}([0,T];L^2(\mathbb{T}^3;\R^3))}+\left(\frac{1}{2}+\frac{F_1}{4}\right)\left\|\int^{\cdot}_0 u_{\delta,\epsilon} \,\d s\right\|_{W^{1,\infty}([0,T];L^2(\T^3;\R^3))}\\
			& + \left\|\int^{\cdot}_0u_{\delta,\epsilon} \otimes u_{\delta,\epsilon}\,\d s\right\|_{W^{1,\infty}([0,T];L^1(\mathbb{T}^3;\R^{3\times3}))}\notag\\
			\leq\,& C(u_0,\psi_0,T),
		\end{align}
		where the constant depends on the initial values via the initial  energy.
		
		Therefore,
		\begin{align*}
			\|u_{\delta,\epsilon}\|_{C^{\alpha}([0,T];H^{-l}(\mathbb{T}^3;\R^3))}
			\leq C(l,u_0,\psi_0,F_1,T) 
			+ \left\| \frac{1}{\sqrt{2}} \sum_{n \in \mathbb{Z}^3} \int^{\cdot}_0\Pi \nabla \cdot (\psi_{\delta,\epsilon} f_n \,\d B_n) \right\|_{C^{\alpha}([0,T];H^{-l}(\mathbb{T}^3;\R^3))}
		\end{align*}
		and it remains to bound the stochastic integral. For this we use that
		for every $p\in[2,\infty)$ and $0\leq t_1\leq t_2\leq T$, by the  Burkholder--Davis--Gundy inequality, Assumption \ref{Ass:noise} and \eqref{total-energy-es}, 
		\begin{align}\label{eqn_BDG_tightness}
			\mathbb{E}\left\|\frac{1}{\sqrt{2}} \sum_{n \in \mathbb{Z}^3} \int^{t_2}_{t_1}\Pi \nabla \cdot (\psi_{\delta,\epsilon} f_n \,\d B_n)\right\|_{H^{-l}(\mathbb{T}^3;\mathbb{R}^3)}^p\leq&C(p,F_1)\mathbb{E}\biggl|\int^{t_2}_{t_1}\|\psi_{\delta,\epsilon}\|_{L^2(\mathbb{T}^3)}^2\,\d r	\biggr|^{\frac{p}{2}}\\
			\leq&C(p,u_0,\psi_0,F_1)|t_2-t_1|^{\frac{p}{2}}. 
		\end{align}
		Choosing $p>2$ such that $\alpha<\frac{1}{2}-\frac{1}{p}$, by the Kolmogorov's continuity criterion \cite[Theorem 1.1]{KU23}, we have 
		$$
		\mathbb{E}\Big\|\frac{1}{\sqrt{2}} \sum_{n \in \mathbb{Z}^3} \int^{\cdot}_0\Pi \nabla \cdot (\psi_{\delta,\epsilon} f_n \,\d B_n)\Big\|_{C^{\alpha}([0,T];H^{-l}(\mathbb{T}^3{ ;\R^3}))}^p\leq C(\alpha,p, u_0,\psi_0,F_1,T).
		$$ 
Using Jensen's inequality, this yields that 
		\begin{align*}
			\mathbb{E}\|u_{\delta,\epsilon}\|_{C^{\alpha}([0,T];H^{-l}(\mathbb{T}^3;\R^3))}
			\leq&\, C(\alpha,l, u_0,\psi_0,F_1, T).
		\end{align*}

		Next, we focus on the equation of $\psi_{\delta,\epsilon}$. Taking the $\|\cdot\|_{W^{\alpha,2}([0,T];H^{-l}(\mathbb{T}^3))}$-norm in the equation \eqref{modification-eq} satisfied by $\psi_{\delta,\epsilon}$, we get 
		\begin{align*}
			\|\psi_{\delta,\epsilon}&\|_{W^{\alpha,2}([0,T];H^{-l}(\mathbb{T}^3))}\leq C(\psi_0,T)+(1+F_1/2+G_1/2)\left\|\int^{\cdot}_0\Delta\psi_{\delta,\epsilon}\,\d s\right\|_{W^{\alpha,2}([0,T];H^{-l}(\mathbb{T}^3))}\\
			&+\epsilon\left\|\int^{\cdot}_0\Delta^2\psi_{\delta,\epsilon}\,\d s\right\|_{W^{\alpha,2}([0,T];H^{-l}(\mathbb{T}^3))}+\left\|\int^{\cdot}_0h_\delta(\psi_{\delta,\epsilon})\left(|\nabla\psi_{\delta,\epsilon}|^2+|\nabla u_{\delta,\epsilon}|^2+\epsilon\right)\,\d s\right\|_{W^{\alpha,2}([0,T];H^{-l}(\mathbb{T}^3))}\\
			&+\left\|\int^{\cdot}_0\nabla\cdot(u_{\delta,\epsilon}\psi_{\delta,\epsilon})\,\d s\right\|_{W^{\alpha,2}([0,T];H^{-l}(\mathbb{T}^3))}+(F_2/2+G_2/8)\left\|\int^{\cdot}_0\psi_{\delta,\epsilon}\,\d s\right\|_{W^{\alpha,2}([0,T];H^{-l}(\mathbb{T}^3))}\\
			&+\left\|\int^{\cdot}_0\sum_{n\in\mathbb{Z}^3}\nabla\cdot(\psi_{\delta,\epsilon}g_n\,\d W_n)\right\|_{W^{\alpha,2}([0,T];H^{-l}(\mathbb{T}^3))}+\left\|\frac{1}{2}\int^{\cdot}_0\sum_{n\in\mathbb{Z}^3}\psi_{\delta,\epsilon}\nabla\cdot(g_n\,\d W_n)\right\|_{W^{\alpha,2}([0,T];H^{-l}(\mathbb{T}^3))}\\
			&+\left\|\int^{\cdot}_0\frac{1}{\sqrt{2}}\sum_{n\in\mathbb{Z}^3}(\nabla u_{\delta,\epsilon}:f_n\,\d B_n)\right\|_{W^{\alpha,2}([0,T];H^{-l}(\mathbb{T}^3))}. 	
		\end{align*}
		As above, by the choice of $l>5/2$, and with the help of the energy estimate \eqref{total-energy-es} as well as H\"older's inequality, we get 
		\begin{align*}&
			(1+F_1/2+G_1/2)\left\|\int^{\cdot}_0\Delta\psi_{\delta,\epsilon}\,\d s\right\|_{W^{\alpha,2}([0,T];H^{-l}(\mathbb{T}^3))}+\epsilon\left\|\int^{\cdot}_0\Delta^2\psi_{\delta,\epsilon}\,\d s\right\|_{W^{\alpha,2}([0,T];H^{-l}(\mathbb{T}^3))}\\& \quad \leq C(\alpha, l,u_0,\psi_0,T),\\& 
			\left\|\int^{\cdot}_0\nabla\cdot(u_{\delta,\epsilon}\psi_{\delta,\epsilon})\,\d s\right\|_{W^{\alpha,2}([0,T];H^{-l}(\mathbb{T}^3))}+(F_2/2+G_2/8)\left\|\int^{\cdot}_0\psi_{\delta,\epsilon}\,\d s\right\|_{W^{\alpha,2}([0,T];H^{-l}(\mathbb{T}^3))}\\&\quad \leq C(\alpha,l,u_0,\psi_0,T). 
		\end{align*}
		By the $L^1$-estimate \eqref{eqn151}, we observe moreover 
		\begin{align*}
			&\E \left\|\int^{\cdot}_0h_\delta(\psi_{\delta,\epsilon})\left(|\nabla\psi_{\delta,\epsilon}|^2+|\nabla u_{\delta,\epsilon}|^2+\epsilon\right)\,\d s\right\|_{W^{\alpha,2}([0,T];H^{-l}(\mathbb{T}^3))}\\
			\le \, & {C(\alpha,l)}\mathbb{E}\int^T_0\int_{\mathbb{T}^3}h_\delta(\psi_{\delta,\epsilon})\left(|\nabla\psi_{\delta,\epsilon}|^2+|\nabla u_{\delta,\epsilon}|^2+\epsilon\right)\,\d x\,\d t\\
			\leq\,&C(\alpha,l,\psi_0,u_0,F_2,G_2,T). 
		\end{align*}
		To estimate the contribution of  the noise term, we  use here \cite[Lemma 2.1]{FG95} and Assumption \ref{Ass:noise}, to deduce that
		\begin{align}\label{eqn_martingales_psi1}
			&\,\mathbb{E}\left\|\int^{\cdot}_0\sum_{n\in\mathbb{Z}^3}\nabla\cdot(\psi_{\delta,\epsilon}g_n\,\d W_n)\right\|_{W^{\alpha,2}([0,T];H^{-l}(\mathbb{T}^3))}^{ 2}\,+\,\left\|\frac{1}{2}\int^{\cdot}_0\sum_{n\in\mathbb{Z}^3}\psi_{\delta,\epsilon}\nabla\cdot(g_n\,\d W_n)\right\|_{W^{\alpha,2}([0,T];H^{-l}(\mathbb{T}^3))}^{2}\\
			\leq&\,{C(\alpha,l)} \left(\mathbb{E}\int^T_0\sum_{n\in\mathbb{Z}^3}\|\psi_{\delta,\epsilon}g_n\|_{L^2(\mathbb{T}^3)}^2\,\mathrm{d}t\right)\,+\,{C(\alpha,l)}\left(\mathbb{E}\int^T_0\sum_{n\in\mathbb{Z}^3}\|\psi_{\delta,\epsilon}\nabla g_n\|_{L^2(\mathbb{T}^3 ;\R^3)}^2\,\d t\right)\\
			\leq&\,C(\alpha, G_1,G_2,\psi_0,u_0). 
		\end{align}
		Furthermore, 
		\begin{align}\label{eqn_martingales_psi2}
			&\mathbb{E}\left\|\int^{\cdot}_0\frac{1}{\sqrt{2}}\sum_{n\in\mathbb{Z}^3}(\nabla u_{\delta,\epsilon}:f_n\,\d B_n)\right\|_{W^{\alpha,2}([0,T];H^{-l}(\mathbb{T}^3))}^{2}\\
			=\,&\mathbb{E}\left\|\int^{\cdot}_0\frac{1}{\sqrt{2}}\sum_{n\in\mathbb{Z}^3}[\nabla (u_{\delta,\epsilon}f_n)-u_{\delta,\epsilon}\otimes\nabla f_n]:\,\d B_n\right\|_{W^{\alpha,2}([0,T];H^{-l}(\mathbb{T}^3))}^{2}\\
			\leq\,&C(\alpha,l)\biggl(\mathbb{E}\int^T_0\sum_{n\in\mathbb{Z}^3}\|u_{\delta,\epsilon}f_n\|_{L^2(\mathbb{T}^3; \R^3)}^2dt\biggr)+ C(\alpha,l)\biggl(\mathbb{E}\int^T_0\sum_{n\in\mathbb{Z}^3}\|u_{\delta,\epsilon}\otimes \nabla f_n\|_{L^2(\mathbb{T}^3;\R^{3\times 3})}^2dt\biggr)\\
			\leq\,&C(\alpha,l, F_1,F_2,u_0,\psi_0). 
		\end{align}
	 Combining all preceding estimates, we arrive at \eqref{eqn872374}.
	\end{proof}
	
	In combination with the energy estimate \eqref{total-energy-es}, this yields the following tightness result.

	\begin{lemma}\label{tightness-1}
		Let  
		$(u_{\delta,\epsilon} ,\psi_{\delta,\epsilon})$ be the weak martingale solutions to  \eqref{Eq_approx} with initial data $(u_0,\psi_0)$ constructed in Proposition \ref{prop_1}. Then for fixed $\epsilon>0$, every $\alpha\in(0,1/2)$ and every $l>5/2$, the family \[(u_{\delta,\epsilon},\psi_{\delta,\epsilon},\nabla u_{\delta,\epsilon},\nabla\psi_{\delta,\epsilon})_{\delta\in(0,1)}\]
		is tight in 
		\begin{align*}
		L^2([0,T];L^2(\mathbb{T}^3;\R^3))\cap &C^{\alpha}([0,T];H^{-l}(\T^3;\R^3))\times L^2([0,T];L^2(\mathbb{T}^3))\\
		&\times(L^2([0,T];L^2(\mathbb{T}^3;\R^{3\times 3})),w)\times(L^2([0,T];L^2(\mathbb{T}^3 ;\R^3)),w). 
		\end{align*}
	\end{lemma}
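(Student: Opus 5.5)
The plan is to prove tightness of each of the four components separately. Tightness of the product family then follows, since a product of compact sets is compact. Throughout, $\epsilon$ is fixed, so we may use the $\epsilon$-dependent dissipation bound from the energy estimate \eqref{total-energy-es},
\[
\epsilon\int_0^T\int_{\T^3}|\Delta u_{\delta,\epsilon}|^2+|\Delta\psi_{\delta,\epsilon}|^2\,\d x\,\d t\,\le\,\frac12\|u_0\|_{L^2}^2+\frac12\|\psi_0\|_{L^2}^2+\epsilon T .
\]
Together with the pathwise $L^\infty_tL^2_x$ bound, this bound holds $\P$-a.s. and uniformly in $\delta$. Since $\int_{\T^3}u_{\delta,\epsilon}\,\d x$ and $\int_{\T^3}\psi_{\delta,\epsilon}\,\d x$ are controlled by the $L^2$-norm, we obtain on the torus a pathwise bound
\[
\|u_{\delta,\epsilon}\|_{L^2(0,T;H^2)}+\|\psi_{\delta,\epsilon}\|_{L^2(0,T;H^2)}\le C_\epsilon .
\]

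\textbf{Strong components.} For $\psi_{\delta,\epsilon}$, I would use the Aubin--Lions--Simon type embedding. It states that $L^2(0,T;H^2(\T^3))\cap W^{\alpha,2}(0,T;H^{-l}(\T^3))$ embeds compactly into $L^2(0,T;L^2(\T^3))$, which follows from $H^2\subset\subset L^2\subset H^{-l}$, cf. Flandoli--Gatarek \cite{FG95}. Define the set
\[
K_R=\{\|\cdot\|_{L^2H^2}\le R,\ \|\cdot\|_{W^{\alpha,2}H^{-l}}\le R\}.
\]
Markov's inequality combined with Lemma \ref{lem-timeregularity} and the pathwise $H^2$ bound gives $\P(\psi_{\delta,\epsilon}\notin K_R)\le C/R$ uniformly in $\delta$. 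For $u_{\delta,\epsilon}$ I would proceed the same way in $L^2(0,T;L^2)$, using the $C^\alpha H^{-l}$ bound, which is stronger than the $W^{\alpha,2}$ one. The space $C^\alpha([0,T];H^{-l})$ is handled by choosing $\alpha<\alpha'<1/2$ and $l<l'$ and applying Lemma \ref{lem-timeregularity} with $(\alpha',l')$ in place of $(\alpha,l)$. I would work with $\alpha'$ and a weaker negative Sobolev index, and use that $\{\|\cdot\|_{L^\infty L^2}\le R\}\cap\{\|\cdot\|_{C^{\alpha'}H^{-l}}\le R\}$ is compact in $C^\alpha([0,T];H^{-l-\kappa})$. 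This holds by Arzelà--Ascoli, since $L^2\subset\subset H^{-l-\kappa}$ and Hölder seminorms interpolate. I then relabel $l$, which is harmless since $l>5/2$ is arbitrary. The intersection of two compact sets is compact in the intersection topology, which gives tightness of $u_{\delta,\epsilon}$ in the stated intersection space.

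\textbf{Weak components.} For the gradients, balls in $L^2([0,T]\times\T^3)$ are weakly compact, and they are metrizable because the space is separable. The pathwise bound
\[
\|\nabla u_{\delta,\epsilon}\|_{L^2_{t,x}}+\|\nabla\psi_{\delta,\epsilon}\|_{L^2_{t,x}}\le C_\epsilon
\]
follows from the $H^2$ bound, or directly by interpolation between $L^2$ and $H^2$. It gives tightness immediately, in fact with a deterministic compact set.

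The main obstacle is the time-regularity compactness for $\psi_{\delta,\epsilon}$. Only $W^{\alpha,2}$ regularity is available, not Hölder regularity, because of the measure-like term $h_\delta(\cdot)(\dots)$, which is only controlled in $L^1$ in expectation. For this reason one must use the fractional Sobolev version of Aubin--Lions rather than Arzelà--Ascoli. It is also why the $\psi$-component is claimed tight only in $L^2_{t,x}$ and not in a space of continuous functions. Everything else is routine bookkeeping with Markov's inequality.
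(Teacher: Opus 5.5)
Your proposal is correct and follows the paper's argument. The paper likewise combines Aubin--Lions--Simon with the pathwise ($\epsilon$-dependent) $L^2_tH^2_x$ bound from \eqref{total-energy-es} and the time-regularity bound \eqref{eqn872374} via Chebyshev--Markov, and your treatment of the gradients through weak compactness of $L^2_{t,x}$-balls fills in what the paper leaves implicit. There are two cosmetic slips: $C^\alpha$ embeds into $W^{\beta,2}$ only for $\beta<\alpha$ (any $\beta>0$ suffices for Aubin--Lions), and in the H\"older component the index shift should be $l'<l$ rather than $l<l'$. That shift is in fact unnecessary, because the pathwise $L^\infty_tL^2_x$ bound together with $L^2(\T^3)\subset\subset H^{-l}(\T^3)$ already turns a $C^{\alpha'}H^{-l}$ bound with $\alpha'>\alpha$ into compactness in $C^\alpha([0,T];H^{-l}(\T^3;\R^3))$.
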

	\begin{proof}
		Applying the Aubin--Lions--Simon \cite{Simon} compactness criterion, the proof is a straightforward consequence of \eqref{total-energy-es} and \eqref{eqn872374} via the Chebyshev--Markov inequality. 
	\end{proof}
	
	In the following, we require independent   tightness of  the martingale terms in \eqref{Eq_approx}. For this we set
	\begin{align}\label{eqn862346d}
		M^1_{\delta,\epsilon}=-\frac{1}{\sqrt{2}}\sum_{n\in\mathbb{Z}^3}\int^t_0\Pi\nabla\cdot(\psi_{\delta,\epsilon} f_n \,\d B_{n}),
	\end{align}
	and 
	\begin{align}\label{eqn862346e}
		M^2_{\delta,\epsilon}=-\sum_{n\in\mathbb{Z}^3}\int^t_0\nabla\cdot(\psi_{\delta,\epsilon} g_n\,\d W_n)+\frac{1}{2}\sum_{n\in\mathbb{Z}^3}\int^t_0\psi_{\delta,\epsilon}\nabla\cdot(g_n\,\d W_n)-\frac{1}{\sqrt{2}}\sum_{n\in\mathbb{Z}^3}\int^t_0(\nabla u_{\delta,\epsilon}:f_n\,\d B_n),
	\end{align}
	for which have the following  result.

	\begin{lemma}\label{lem-tight-martingales}Let 
		$(u_{\delta,\epsilon} ,\psi_{\delta,\epsilon})$ be the weak martingale solutions to  \eqref{Eq_approx} with initial data $(u_0,\psi_0)$ constructed in Proposition \ref{prop_1}. Then
		for  fixed $\epsilon\in(0,1)$, every  $\gamma\in(0,1/2)$ and every  $s>1$,  $(M^1_{\delta,\epsilon})_{\delta\in(0,1)}$ and $(M^2_{\delta,\epsilon})_{\delta\in(0,1)}$ are tight in $C^{\gamma}([0,T];H^{-s}(\mathbb{T}^3;\mathbb{R}^3))$ and $C^{\gamma}([0,T];H^{-s}(\mathbb{T}^3))$, respectively. 
	\end{lemma}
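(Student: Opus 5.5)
We establish tightness through uniform moment bounds in a slightly stronger Hölder space combined with a compact embedding. Fix $\gamma<\gamma'<1/2$ and $1<s'<s$. The embedding $C^{\gamma'}([0,T];H^{-s'}(\T^3))\hookrightarrow C^{\gamma}([0,T];H^{-s}(\T^3))$ is compact. This follows from Arzelà--Ascoli: $H^{-s'}\hookrightarrow H^{-s}$ is compact on the torus, and Hölder seminorms of order $\gamma'$ control equicontinuity in the $\gamma$-Hölder norm by interpolation, since $\|f\|_{C^{\gamma}}\lesssim \|f\|_{C^0}^{1-\gamma/\gamma'}\|f\|_{C^{\gamma'}}^{\gamma/\gamma'}$ on differences. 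It therefore suffices to show that for some $p$,
\[
\sup_{\delta\in(0,1)}\E\bigl\|M^i_{\delta,\epsilon}\bigr\|^p_{C^{\gamma'}([0,T];H^{-s'}(\T^3))}<\infty,\qquad i=1,2.
\]
Tightness then follows from the Chebyshev--Markov inequality.

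For these bounds I will use the Kolmogorov criterion \cite[Theorem 1.1]{KU23}, as in the proof of Lemma \ref{lem-timeregularity}. Choose $p>2$ with $\gamma'<1/2-1/p$. By the Burkholder--Davis--Gundy inequality in the Hilbert space $H^{-s'}$, it is enough to bound
\[
\E\biggl|\int_{t_1}^{t_2}\sum_{n\in\Z^3}\|\Sigma_n(r)\|^2_{H^{-s'}}\,\d r\biggr|^{p/2}\le C|t_2-t_1|^{p/2},
\]
where $\Sigma_n$ denotes the diffusion coefficients. For $M^1_{\delta,\epsilon}$, the coefficient is $\Pi\nabla\cdot(\psi_{\delta,\epsilon}f_n\,\cdot)$. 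Because $\Pi$ is bounded and $s'>1$, its $H^{-s'}$ norm is at most $\|\psi_{\delta,\epsilon}f_n\|_{L^2}$, and summing over $n$ gives $F_1\|\psi_{\delta,\epsilon}\|_{L^2}^2$. For $M^2_{\delta,\epsilon}$, the $W_n$ terms are bounded in the same way by $\|\psi_{\delta,\epsilon}g_n\|_{L^2}+\|\psi_{\delta,\epsilon}\nabla g_n\|_{L^2}$, which sum to $(G_1+G_2)\|\psi_{\delta,\epsilon}\|^2_{L^2}$. For the $B_n$ term I rewrite $\nabla u_{\delta,\epsilon}f_n=\nabla(u_{\delta,\epsilon}f_n)-u_{\delta,\epsilon}\otimes\nabla f_n$, as in \eqref{eqn_martingales_psi2}. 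This gives the bound $(F_1+F_2)\|u_{\delta,\epsilon}\|^2_{L^2}$, again using $s'>1$, so that one derivative of an $L^2$ function lies in $H^{-s'}$.

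All of these quantities are bounded pathwise, uniformly in $\delta$ (and even in $\epsilon$), by the energy estimate \eqref{total-energy-es}: $\sup_t(\|u_{\delta,\epsilon}\|^2_{L^2}+\|\psi_{\delta,\epsilon}\|^2_{L^2})\le C(u_0,\psi_0,T)$. The increment bound above therefore holds for every $p$.

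The main point requiring care is the $B_n$-term in $M^2$. The naive estimate would involve $\nabla u_{\delta,\epsilon}$, which is only controlled through the $\epsilon$-dissipation. The integration-by-parts rewriting, together with $f\in C^1(\T^3;\ell^2)$ from Remark \ref{f_in_Cl2}, removes this dependence and uses only $L^2$ control. This is also why the threshold is $s>1$: one full derivative must be absorbed in the negative Sobolev norm.
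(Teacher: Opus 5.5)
Your proposal is correct and matches the paper's proof. Both bound the increments via BDG and the pathwise energy estimate \eqref{total-energy-es}, with the same rewriting $\nabla u\, f_n=\nabla(u f_n)-u\otimes\nabla f_n$ for the $B_n$-term of $M^2$ as in \eqref{eqn_martingales_psi2}, then apply Kolmogorov's criterion in $C^{\gamma'}$ for some $\gamma'\in(\gamma,1/2)$ and a compact embedding into $C^{\gamma}([0,T];H^{-s})$. The only cosmetic difference is that the paper uses $H^{-1}$ directly as the intermediate space, rather than $H^{-s'}$ with $1<s'<s$.
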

	\begin{proof}
		For every $p\in[2,\infty)$ and $0\leq t_1\leq t_2\leq T$, similarly to the computations \eqref{eqn_BDG_tightness}, \eqref{eqn_martingales_psi1} and \eqref{eqn_martingales_psi2}, we deduce that 
		\begin{align*}
			&\mathbb{E}\|M^1_{\delta,\epsilon}(t_2)-M^1_{\delta,\epsilon}(t_1)\|_{H^{-1}(\mathbb{T}^3;\mathbb{R}^3)}^p+\mathbb{E}\|M^2_{\delta,\epsilon}(t_2)-M^2_{\delta,\epsilon}(t_1)\|_{H^{-1}(\mathbb{T}^3)}^p\\
			\leq&C(p,F_1,F_2,G_1,G_2)\mathbb{E}\Big|\int^{t_2}_{t_1}\|\psi_{\delta,\epsilon}\|_{L^2(\mathbb{T}^3)}^2+\|u_{\delta,\epsilon}\|_{L^2(\mathbb{T}^3; \R^3)}^2\,\d r	\Big|^{\frac{p}{2}}\\
			\leq&C(p,u_0,\psi_0,F_1,F_2,G_1,G_2)|t_2-t_1|^{\frac{p}{2}}, 
		\end{align*}
		by \eqref{total-energy-es}. Now we 
		let $\gamma'\in(\gamma,1/2)$. Choosing $p>2$ such that $\gamma'<\frac{1}{2}-\frac{1}{p}$, by the Kolmogorov's continuity criterion \cite[Theorem 1.1]{KU23}, we have 
		$$
		\mathbb{E}\|M^1_{\delta,\epsilon}\|_{C^{\gamma'}([0,T];H^{-1}(\mathbb{T}^3{ ;\R^3}))}^p+\mathbb{E}\|M^2_{\delta,\epsilon}\|_{C^{\gamma'}([0,T];H^{-1}(\mathbb{T}^3))}^p\leq C(p,\gamma',u_0,\psi_0,F_1,F_2,G_1,G_2, T).
		$$ 
		Then the compact embedding $C^{\gamma'}_tH^{-1}_x\subset C^{\gamma}_tH^{-s}_x$ concludes the proof.
		
	\end{proof}
	
	\subsubsection{Limiting procedure: $\delta\to 0$.}
	
	Here we complete our construction of weak solutions to \eqref{ep_approx}--\eqref{eqn354865865}. 
	
	\begin{proposition}\label{prp-existence-ep} 
		For every $\epsilon\in(0,1)$, there	 exist a   weak martingale solutions to \eqref{ep_approx}--\eqref{eqn354865865} with initial data $(u_0,\psi_0)$, which satisfies 
		\begin{equation}\label{total-energy-es-es}\mathrm{ess \, sup}_{t\in [0,T]}\biggl(
			 \frac{1}{2}\int_{\T^3}
			|u_{\epsilon,t}|^2 +\psi_{\epsilon,t}^2 \,\d x 	+ \epsilon \int_0^t \int_{\T^3} 
			|\Delta  u_{\epsilon}|^2
			+ |\Delta \psi_{\epsilon}|^2\,\d x\,\d s\,-\,\epsilon t\biggr) \,\le\, 
			\frac{1}{2}\int_{\T^3}
			|u_0|^2 +\psi_0^2 \,\d x,
		\end{equation}
		$\P$-almost surely, as well as 
		\begin{equation}\label{eq_meas_bound_eps}
			\E \bigl[\| q_\epsilon \|_{ \M([0,T]\times \T^3) }\bigr]\,\le\, 	\biggl(1 \,+\, \frac{4F_2 +G_2 }{8}T \biggr)  \biggl(
			\frac{1}{2}\int_{\T^3}|u_0|^2 +|\psi_0|^2 \,\d x\,+\,2 \epsilon T
			\biggr)^{1/2}.
		\end{equation} 
	\end{proposition}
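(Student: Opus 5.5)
The plan is a stochastic compactness argument in $\delta\to 0$ with $\epsilon$ fixed. It rests on the tightness from Lemma \ref{tightness-1} and Lemma \ref{lem-tight-martingales}, together with the $L^1$-estimate of Lemma \ref{lem-L1-es}.

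\emph{Tightness and Skorokhod representation.} Set $\mu_{\delta}=h_\delta(\psi_{\delta,\epsilon})(|\nabla\psi_{\delta,\epsilon}|^2+|\nabla_\sym u_{\delta,\epsilon}|^2+\epsilon)\,\d x\,\d t$, a non-negative measure on $[0,T]\times\T^3$. By \eqref{eqn151} and Markov's inequality, the family $(\mu_\delta)_\delta$ is tight in $(\M_+([0,T]\times\T^3),w^*)$; bounded sets there are metrizable compacts. We then consider the joint laws of
\[
(u_{\delta,\epsilon},\psi_{\delta,\epsilon},\nabla u_{\delta,\epsilon},\nabla\psi_{\delta,\epsilon},\mu_\delta,M^1_{\delta,\epsilon},M^2_{\delta,\epsilon},(B_n),(W_n))
\]
together with the $L^2_tH^2_x$ weak topology, whose bounds come from \eqref{total-energy-es}. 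By the Skorokhod--Jakubowski theorem, there is a new probability space and a subsequence $\delta_k\to0$ with a.s.\ convergence to a limit $(u_\epsilon,\psi_\epsilon,\ldots,q_\epsilon,\ldots)$. The equations, the energy inequality \eqref{total-energy-es} and the filtrations transfer by equality of laws; the filtration is generated by the limiting processes and augmented.

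\emph{Identification of the equations.} Every term in the weak formulations except the nonlinearity is linear or bilinear in $(u,\psi)$ and their gradients. Strong $L^2_{t,x}$ convergence and weak $L^2_tH^2_x$ convergence therefore pass them to the limit. In particular $u\otimes u$ and $\psi u$ converge strongly in $L^1$, and $\mu_\delta\rightharpoonup^* q_\epsilon$ gives $\int\zeta\,\d\mu_\delta\to\int\zeta\,\d q_\epsilon$. For the stochastic integrals we use the standard martingale identification: the limit of $M^i_{\delta,\epsilon}$ is a martingale with the right quadratic variation and cross-variations with $B_n,W_n$, via \cite{brzesniak_moytl,FG95}, or by the convergence lemma for stochastic integrals. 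This gives \eqref{eqn9_ep} and \eqref{Eq_psi_weak_ep}. Progressive measurability of $\psi_\epsilon$ and $q_\epsilon$ follows as in Remark \ref{rem_weak_sol}. The bound \eqref{total-energy-es-es} follows from \eqref{total-energy-es} by weak lower semicontinuity; the ess sup is needed since $\psi_\epsilon$ is only $L^\infty_tL^2_x$. The bound \eqref{eq_meas_bound_eps} follows from \eqref{eqn151} by lower semicontinuity of the total variation under weak-$*$ convergence and Fatou's lemma, using equality of laws.

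\emph{Main obstacle: the measure inequality and positivity.} We must show $q_\epsilon\ge\psi_\epsilon^{-1}(|\nabla\psi_\epsilon|^2+|\nabla_\sym u_\epsilon|^2+\epsilon)$ under Convention \ref{conv_inverse}. Fix $\delta_0$ and a non-negative $\zeta$. Since $h_\delta\ge h_{\delta_0}$ for $\delta\le\delta_0$ and $h_{\delta_0}$ is bounded and continuous, we get
\[
\int\zeta\,\d q_\epsilon=\lim_k\int\zeta\,\d\mu_{\delta_k}\ge\liminf_k\int\zeta\, h_{\delta_0}(\psi_{\delta_k,\epsilon})\bigl(|\nabla\psi_{\delta_k,\epsilon}|^2+|\nabla_\sym u_{\delta_k,\epsilon}|^2+\epsilon\bigr).
\]
The gradients converge only weakly in $L^2$, so this liminf needs care. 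Strong convergence $\psi_{\delta_k,\epsilon}\to\psi_\epsilon$ a.e.\ makes $\sqrt{\zeta h_{\delta_0}(\psi_{\delta_k,\epsilon})}$ converge strongly in every $L^p$ while staying bounded. Hence $\sqrt{\zeta h_{\delta_0}(\psi_{\delta_k,\epsilon})}\,\nabla\psi_{\delta_k,\epsilon}$ converges weakly in $L^1$, and also weakly in $L^2$ by boundedness, to the corresponding limit. Weak lower semicontinuity of the $L^2$ norm then gives the bound with $h_{\delta_0}(\psi_\epsilon)$. Alternatively, the $H^2$ bound for fixed $\epsilon$ yields strong $L^2_tH^1_x$ convergence by Aubin--Lions, which is simpler. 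Letting $\delta_0\to0$ by monotone convergence, with $h_{\delta_0}\nearrow 1/r$ (or $\infty$ on $\{r\le0\}$), yields the inequality. Since $\E\|q_\epsilon\|<\infty$, the set $\{\psi_\epsilon\le0\}$, on which the right-hand side is $\ge\epsilon\cdot\infty$, must be $\P\otimes\d t\otimes\d x$-null. This gives $\psi_\epsilon>0$ a.e.
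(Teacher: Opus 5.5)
Your proposal is correct and follows the paper's proof essentially step by step. The shared steps are: Skorokhod--Jakubowski applied to the tight family that includes $q_\delta=h_\delta(\psi_\delta)(\cdots)$ in $(\M,w^*)$; identification of the martingales through their quadratic variations and covariations with the Brownian motions; lower semicontinuity and Fatou's lemma for \eqref{total-energy-es-es} and \eqref{eq_meas_bound_eps}; and the measure inequality via $h_\delta\ge h_{\kappa}$, weak convergence of $\zeta^{1/2}h_\kappa(\psi_\delta)^{1/2}\nabla\psi_\delta$ and monotone convergence as $\kappa\to0$. Your aside that for fixed $\epsilon$ one could instead use strong $L^2_tH^1_x$ compactness is also valid; the paper keeps the weak-gradient argument, which it later adapts to the limit $\epsilon\to0$, where the gradients only converge weakly in $L^{7/5}_{t,x}$.
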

	\begin{proof}
		Since $\epsilon\in(0,1)$ is fixed in the whole proof, we do not emphasize the dependency on it and denote the solutions to \eqref{Eq_approx} constructed in Proposition \ref{prop_1} by $(u_{\delta},\psi_{\delta})_{\delta\in(0,1)}$. Moreover, we recall that for the ease of notation we may assume  the latter are defined with respect to the same stochastic basis, see the comments below Definition  \ref{defi_solution_ep}.
		
		\textit{Step 1 (Application of the Skorokhod--Jakubowski theorem).} Throughout this proof, we fix $\alpha\in(0,1/2)$ and $l>5/2$. Then, by
		 Lemma \ref{tightness-1} {the sequence}
		\begin{align*}
			(u_{\delta}, \psi_{\delta}, \nabla u_{\delta}, \nabla \psi_{\delta}) 
		\end{align*}
		is tight in
		\begin{align*}
		L^2([0,T]; L^2(\mathbb{T}^3;\R^3))\cap& C^{\alpha}([0,T];H^{-l}(\T^3;\R^3))\times L^2([0,T]; L^2(\mathbb{T}^3))\\
		&\times (L^2([0,T]; L^2(\mathbb{T}^3;\R^{3\times3})), w)\times (L^2([0,T]; L^2(\mathbb{T}^3;\R^3)), w).
		\end{align*}Fixing also $\gamma\in(0,1/2)$ and $s>1$,  Lemma \ref{lem-tight-martingales} states that
		\begin{align}
			M^1_{\delta}=-\frac{1}{\sqrt{2}}\sum_{n\in\mathbb{Z}^3}\int^t_0\Pi\nabla\cdot(\psi_{\delta} f_n \,\d B_{n})
		\end{align}
		and 
		\begin{align}
			M^2_{\delta}=-\sum_{n\in\mathbb{Z}^3}\int^t_0\nabla\cdot(\psi_{\delta} g_n\,\d W_n)+\frac{1}{2}\sum_{n\in\mathbb{Z}^3}\int^t_0\psi_{\delta}\nabla\cdot(g_n\,\d W_n)-\frac{1}{\sqrt{2}}\sum_{n\in\mathbb{Z}^3}\int^t_0(\nabla u_{\delta}:f_n\,\d B_n) 
		\end{align}
		 are tight in  $C^{\gamma}([0,T];H^{-s}(\mathbb{T}^3;\mathbb{R}^3))$ and $C^{\gamma}([0,T];H^{-s}(\mathbb{T}^3))$, respectively.
		Defining 
		\[
		q_\delta \,=\, h_\delta(\psi_{\delta})\bigl(
		|\nabla \psi_{\delta}|^2 + |\nabla_\sym u_{\delta}|^2 + \epsilon
		\bigr),
		\]	 
 		the latter sequence of measures  is tight in $(\mathcal{M}([0,T]\times\mathbb{T}^3),w^*)$ in light of Lemma \ref{lem-L1-es}.
 		Since the laws of the Brownian motions $B_n$, $W_n$ are independent of $\delta$, we deduce tightness of the sequence 
 		$$
 		(X_{\delta}, (B_n)_{n\in\mathbb{Z}^3}, (W_n)_{n\in\mathbb{Z}^3} )_{\delta\in(0,1)}
 		$$ in the topological product space
 		\begin{equation}\label{eqn_space}
 		\mathbb{X} \times C([0,T];\mathbb{R}^{3\times 3})^{\mathbb{Z}^3} \times C([0,T];\mathbb{R}^{3})^{\mathbb{Z}^3},
 	\end{equation}
 		where 
 		$$X_{\delta} := \Big(u_{\delta}, \psi_{\delta}, \nabla u_{\delta}, \nabla \psi_{\delta},q_{\delta}, M^1_{\delta},M^2_{\delta}\Big),$$
 		and
 		\begin{align*}
 			\mathbb{X}:=&L^2([0,T]; L^2(\mathbb{T}^3;{\R^3}))\cap C^{\alpha}([0,T];H^{-l}(\T^3;\R^3))\times L^2([0,T]; L^2(\mathbb{T}^3)) \times (L^2([0,T]; L^2(\mathbb{T}^3{;\R^{3\times 3}})), w)\\
 			&\times (L^2([0,T]; L^2(\mathbb{T}^3;{\R^3})), w)\times(\mathcal{M}([0,T]\times\mathbb{T}^3),w^*)\times C^{\gamma}([0,T];H^{-s}(\mathbb{T}^3;\mathbb{R}^3))\times C^{\gamma}([0,T];H^{-s}(\mathbb{T}^3)).
 		\end{align*}
 	Applying the Skorokhod--Jakubowski  theorem \cite{Jak97}, we deduce that there exists a new probability space $(\bar{\Omega}, \bar{\mathfrak{A}}, \bar{\mathbb{P}})$ and a sequence of random variables satisfying
		$$
		(\bar{X}_{\delta}, (\bar{B}_{n,\delta})_{n\in\mathbb{Z}^3},(\bar{W}_{n,\delta})_{n\in\mathbb{Z}^3})\,\to\, (\bar{X}, (\bar{B}_{n})_{n\in\mathbb{Z}^3},(\bar{W}_{n})_{n\in\mathbb{Z}^3}) ,
		$$
		$\bar{\P}$-a.s., in \eqref{eqn_space},
		such that the  laws satisfy
		\begin{align}\label{identity-jointlaws}
			(\bar{X}_{\delta}, (\bar{B}_{n,\delta})_{n\in\mathbb{Z}^3},(\bar{W}_{n,\delta})_{n\in\mathbb{Z}^3}) \overset{d}{\,=\,}& (X_{\delta}, (B_{n})_{n\in\mathbb{Z}^3} ,(W_{n})_{n\in\mathbb{Z}^3}), 
		\end{align}
		along a subsequence $\delta\to 0$.
	Breaking the above vectors up into their individual components, we denote in the following $\bar{X}_{\delta} = (\bar{u}_{\delta}, \bar{\psi}_{\delta},\bar{f}_{\delta},\bar{g}_{\delta},\bar{q}_{\delta},\bar{M}^1_{\delta},\bar{M}^2_{\delta})$ and $\bar{X} = (\bar{u}, \bar{\psi},\bar{f},\bar{g},\bar{q},\bar{M}^1,\bar{M}^2)$.

	\textit{Step 2 (Transferring previous relations to the new probability space).} 
The equality in law \eqref{identity-jointlaws} allows us to argue that $
\bar{f}_{\delta}= \nabla \bar{u}_{\delta}$ and $ \bar{g}_{\delta} = \nabla \bar{\psi}_{\delta}$. Indeed, for any test function $\eta \in C^\infty([0,T]; \T^3)$, we have 
\begin{align}&
\bar{\P} \biggl(\int_0^T \int_{\T^3} \bar{u}_{\delta,i} \partial_j  \vp \,\d x\,\d t \,+\, 
\int_0^T  \int_{\T^3} \bar{f}_{\delta, ij} \vp \,\d x\,\d t\,=\, 0 \biggr) \\&\quad =\, 
{\P} \biggl(\int_0^T \int_{\T^3} u_{\delta,i} \partial_j  \vp \,\d x\,\d t \,+\, \int_0^T
\int_{\T^3} \partial_j {u}_{\delta, i} \vp \,\d x\,\d t\,=\, 0 \biggr) \,=\, 1,
\end{align}
so that the above identification follows. Similarly, for any $\vp \in C([0,T]\times \T^3)$, the functional 
\begin{align} 
\Gamma_\vp \colon &	L^2([0,T]; L^2(\mathbb{T}^3)) \times (L^2([0,T]; L^2(\mathbb{T}^3{;\R^{3\times 3}})), w)\\&\quad \times (L^2([0,T]; L^2(\mathbb{T}^3;{\R^3})), w)\times 
	(\mathcal{M}([0,T]\times\mathbb{T}^3),w^*) \,\to\, \R,
	\\& 
	(\psi, f,g,q )\,\mapsto 
	\int_{[0,T]\times \T^3} \vp \,\d q \,-\, \int_0^T \int_{\T^3} \vp h_\delta(\psi  ) \bigl(
	|f_\sym|^2 + |g|^2 +\epsilon
	\bigr)\,\d x \,\d t,
\end{align}
is measurable. Thus,
\begin{align}
	\bar{\P} \bigl(\Gamma_\vp (\bar{\psi}_\delta , \nabla \bar{u}_\delta, \nabla \bar{\psi}_\delta ,\bar{q}_\delta) = 0 \bigr) \,=\, 
	{\P} \bigl(\Gamma_\vp (\psi_\delta , \nabla {u}_\delta, \nabla {\psi}_\delta ,{q}_\delta) = 0  \bigr) \,=\, 1,
\end{align}
identifying also
\begin{equation}\label{eqn3278474328742}
\bar{q}_\delta \,=\, h_\delta(\bar{\psi}_{\delta})\bigl(
|\nabla \bar{\psi}_{\delta}|^2 + |\nabla_\sym \bar{u}_{\delta}|^2 + \epsilon
\bigr),
\end{equation}
$\bar{\P}$-almost surely.

Let us also relate $\bar{M}^1_{\delta}$ and $\bar{M}^2_{\delta}$ to the variables $\bar{u}_\delta$ and $\bar{\psi}_\delta$. To this end, we let  $\varphi \in C^\infty(\mathbb{T}^{3};\mathbb{R}^3)$ be divergence-free, $\zeta \in C_c^\infty([0,T)\times\mathbb{T}^3)$ and introduce the notation 
\begin{align}\begin{split}\label{eqn737232}&
	\bar{M}_{\varphi,\delta}^1=\langle\varphi,\bar{M}^1_{\delta}\rangle,\qquad  \bar{M}_{\zeta,\delta}^2(t)=\int^t_0\langle\zeta,d\bar{M}^2_{\delta}(s)\rangle,\\&
	M_{\varphi,\delta}^1=\langle\varphi,M^1_{\delta}\rangle,\qquad  M_{\zeta,\delta}^2(t)=\int^t_0\langle\zeta,dM^2_{\delta}(s)\rangle . 	
\end{split}
\end{align}
for every $t \in [0,T]$.
Then, similarly to the previous arguments we find that  \eqref{identity-jointlaws} implies 
\begin{align}\begin{split}\label{eqn33}
	\bar{M}_{\varphi,\delta}^1(t)&=\int_{\T^3}  \bar{u}_{\delta}(t)\cdot \vp\,\d x \,-\, 
	\int_{\T^3} u_0 \cdot \vp\,\d x+\epsilon\int_0^t\int_{\T^3}\bar{u}_{\delta}\cdot\Delta^2\vp\,\d x\, \d s
	\\&\quad\,  +\int_0^t \int_{\T^3}   \Bigl(\frac{1}{2}+ \frac{F_1}{4}\Bigr) \nabla \bar{u}_{\delta}: \nabla  \vp \,-\, \bar{u}_{\delta}\otimes \bar{u}_{\delta}:\nabla \vp \,\d x\,\d s,
	\end{split}
\end{align}
for every $t\in[0,T]$,
and
\begin{align}\begin{split}\label{eqn34}
	\bar{M}_{\zeta,\delta}^2(T)\,=\,&-
	\int_0^T \int_{\T^3} \bar{\psi}_{\delta}  \partial_t \zeta  \, \d x \,\d s \, -\, \int_{\T^3}
	\psi_0 \zeta(0)
	\,\d x+\epsilon\int^T_0\int_{\T^3}\bar{\psi}_{\delta}\Delta^2\zeta \,\d x\,\d s
	\\&  +\, \int_0^T \int_{\T^3} \bigl(1+ {F_1}/2 +G_1/2\bigr) \nabla \bar{\psi}_{\delta}\cdot \nabla  \zeta \,-\,\bar{\psi}_{\delta} \bar{u}_{\delta}\cdot \nabla \zeta   \,+\, (F_2/2+G_2/8) \bar{\psi}_{\delta} \zeta \,\d x\,\d s \\& -\, \int_{[0,T]\times\T^3} \zeta \,\d \bar{q}_{\delta},
	\end{split}
\end{align} $\bar{\P}$-almost surely.

		\textit{Step 3 (Taking $\delta \to 0$ in  \eqref{eqn33}--\eqref{eqn34}).}
		Analogous to \eqref{eqn737232}, we introduce also the notation 
		\[ \bar{M}_{\varphi}^1=\langle\varphi,\bar{M}^1\rangle,\qquad  \bar{M}_{\zeta}^2(t)=\int^t_0\langle\zeta,d\bar{M}^2(s)\rangle.\]
		With the help of the convergence in $$L^2([0,T];L^2(\mathbb{T}^3;\R^3))\cap C^{\alpha}([0,T];H^{-l}(\T^3;\R^3))\cap (L^2([0,T];H^1(\mathbb{T}^3;\R^3)),w)$$ of $\bar{u}_{\delta}\rightarrow u$, and $\bar{M}^1_{\varphi,\delta}(t)\rightarrow\bar{M}^1_{\varphi}(t)$, as $\delta\to0$, $\bar{\P}$-a.s., we obtain that, $\bar{\P}$-a.s., for every $t\in[0,T]$, 
		\begin{align}\begin{split}\label{eqn33_lim}
			\bar{M}_{\varphi}^1(t)=&\int_{\T^3}  \bar{u}(t)\cdot \vp\,\d x \,-\, 
			\int_{\T^3} u_0 \cdot \vp\,\d x\,+\,\epsilon\int_0^t\int_{\T^3}\bar{u}\cdot\Delta^2\vp\,\d x\,\d s
			\\&\quad\,  +\int_0^t \int_{\T^3}   \Bigl(\frac{1}{2}+ \frac{F_1}{4}\Bigr) \nabla \bar{u}: \nabla  \vp \,-\, \bar{u}\otimes \bar{u}:\nabla \vp \,\d x\,\d s. \end{split}
		\end{align}
		By further using the convergence in $L^2([0,T];L^2(\mathbb{T}^3))\cap (L^2([0,T];H^1(\mathbb{T}^3)),w)$ of $\bar{\psi}_{\delta}\rightarrow \psi$, $\bar{M}^2_{\zeta,\delta}(T)\rightarrow\bar{M}^2_{\varphi}(T)$, and $\bar{q}_{\delta}\rightarrow\bar{q}$ in $(\mathcal{M}([0,T]\times\mathbb{T}^3),w^*)$, as $\delta\to0$, $\P$-a.s., it follows from H\"older's inequality that 
		\begin{align}\begin{split}\label{eqn34_lim}
			\bar{M}^2_{\zeta}(T)=&-\,\int_0^T \int_{\T^3} \bar{\psi}  \partial_t \zeta  \,\d x\,\d s \, -\, \int_{\T^3}
			\psi_0 \zeta
			\,\d x+\epsilon\int^T_0\int_{\T^3}\bar{\psi}\Delta^2\zeta \,\d x\,\d s
			\\&  +\,\int_0^T \int_{\T^3} \bigl(1+ {F_1}/2 +G_1/2\bigr) \nabla \bar{\psi}\cdot \nabla  \zeta \,-\,\bar{\psi} \bar{u}\cdot \nabla \zeta   \,+\, (F_2/2+G_2/8) \bar{\psi} \zeta\,\d x\,\d s\\& -\,\int_{[0,T]\times\T^3} \zeta \,\d \bar{q}. 
		\end{split}
		\end{align}
		
		 \textit{Step 4 (Identification of $\bar{B}$ and $ \bar{W}$  as  Brownian motions).}
	At this point, in order to define a filtration on $(\bar{\Omega}, \bar{\mathfrak{A}}, \bar{\mathbb{P}})$, we also introduce the 
	space
		\begin{align}
		\mathbb{Y}_t \,=\, L^2([0,t]; L^2(\mathbb{T}^3;\R^3)) \times& L^2([0,t]; L^2(\mathbb{T}^3)) \times \D'( (-\infty,t) \times \T^3 )\\& \times C([0,t];H^{-s}(\mathbb{T}^3;\mathbb{R}^3)) \times C([0,t];H^{-s}(\mathbb{T}^3;\mathbb{R}^3)).
	\end{align}
	 The reason to define the latter  is that 
	 $\bar{Y}_{\delta,t} \to \bar{Y}_{t} $ in $
	 \mathbb{Y}_t$ for
	 \begin{align}
	 	\bar{Y}_{\delta,t}\,=\, (\bar{u}_\delta|_{[0,t]}, \bar{\psi}_\delta|_{[0,t]}, \bar{q}_\delta|_{[0,s)},\bar{M}_\delta^1|_{[0,t]},\bar{M}_\delta^2|_{[0,t]} ) ,\qquad 	\bar{Y}_{t}\,=\, (\bar{u}|_{[0,t]}, \bar{\psi}|_{[0,t]}, \bar{q}|_{[0,t)} ,\bar{M}^1|_{[0,t]},\bar{M}^2|_{[0,t]} ),
	 \end{align}
	 which would not be the case if we considered $ \bar{q}_\delta|_{[0,t)}$
	 and $\bar{q}|_{[0,t)}$  as measures on $[0,t)$. Defining then the filtration
		$$
		\mathscr{G}_t := \sigma\Big(\bar{Y}_{t},\bar{B}|_{[0,t]} , ,\bar{W}|_{[0,t]} \Big),\qquad t\in [0,T],
		$$
		we find that $\bar{u}$, $ \bar{\psi}$, $\bar{q}$, $\bar{M}^1$ and $\bar{M}^2$ are $\mathscr{G}$-adapted in the sense of distributions in the sense of 	\cite[Definition 2.2.13]{BFH}. The latter notion is equivalent to usual adaptedness for those processes which are continuous in time. 
		We let moreover   $\bar{\mathscr{G}}$ be the augmentation of $\mathscr{G}$, so that all the aforementioned processes are $\bar{\mathscr{G}}$-progressively measurable by right-continuity of the filtration, cf.\ Remark \ref{rem_weak_sol}~\eqref{rem_weak_sol_I1}.
		
		Taking  $F:\mathbb{Y}_s\times   C([0,s];\mathbb{R}^{3\times 3})^{\mathbb{Z}^3} \times C([0,s];\mathbb{R}^{3})^{\mathbb{Z}^3} \rightarrow\mathbb{R}$ to
		be a  bounded continuous function, we obtain
		 for every $0 \leq s \leq t \leq T$, every $ 1\le i,j\le 3$ and every $n\in\Z^3$, by the identity in law \eqref{identity-jointlaws}, that
		\begin{align}&\label{eqn_last_one}
			\bar{\mathbb{E}}\Big(F(\bar{Y}_{\delta,s} ,\bar{B}_{\delta}|_{[0,s]},\bar{W}_{\delta}|_{[0,s]})(\bar{B}_{n,\delta}^{ij}(t)-\bar{B}_{n,\delta}^{ij}(s))\Big)\\
			&\quad = \mathbb{E}\Big(F({Y}_{\delta,s},B_{\delta}|_{[0,s]},W_{\delta}|_{[0,s]})(B_{n,\delta}^{ij}(t)-B_{n,\delta}^{ij}(s))\Big) = 0,
		\end{align}
	where 
	\[
		{Y}_{\delta,s}\,=\, ({u}_\delta|_{[0,s]}, {\psi}_\delta|_{[0,s]}, {q}_\delta|_{[0,s)},{M}_\delta^1|_{[0,s]},{M}_\delta^2|_{[0,s]}).
	\]
		By continuity of $F$ and  Vitali's convergence theorem, we are able to pass to the limit  $\delta \to 0$, deducing that 
		\begin{align}\label{eqn_last_two}
			\bar{\mathbb{E}}\Big(F(\bar{Y}_{s},\bar{B}|_{[0,s]},\bar{W}|_{[0,s]})(\bar{B}_n^{ij}(t)-\bar{B}_n^{ij}(s))\Big) = 0.
		\end{align}
		An analogous argument yields  also for all $ 1\le k,l\le 3$ and $m \in\mathbb{Z}^3$, 
		\begin{align*}
			\bar{\mathbb{E}}\Big(F(\bar{Y}_{s},\bar{B}|_{[0,s]},\bar{W}|_{[0,s]})\bigl(\bar{B}_n^{ij}(t)\bar{B}_m^{kl}(t)-\bar{B}_n^{ij}(s)\bar{B}_m^{kl}(s)-(t-s)\delta_{nm}( \delta_{ik} \delta_{jl}+\delta_{il}\delta_{jk}) \bigr)\Big) = 0.
		\end{align*}
		These identities establish that $(\bar{B}_n)_{n\in\mathbb{Z}^3}$ is a sequence of independent $\R^{3\times 3}$-valued $\mathscr{G}$-Brownian motions with covariance \eqref{Eq_noise_matrix_covariation}. Using continuity in time and Doob's maximal inequality yields that $(\bar{B}_n)_{n\in\mathbb{Z}^3}$ remains a sequence of Brownian motions with respect to the augmented filtration $\bar{\mathscr{G}}$. Similarly, one shows that $(\bar{W}_n)_{n\in\mathbb{Z}^3}$ are independent standard $\R^3$-valued $\bar{\mathscr{G}}$-Brownian motions, and independent of $(\bar{B}_n)_{n\in\mathbb{Z}^3}$. 
		
		 \textit{Step 5 (Identification of $\bar{M}_{\varphi}^1$ and $\bar{M}_{\zeta}^2$ as stochastic integrals).} That $\bar{M}_{\varphi}^1$ and $\bar{M}_{\zeta}^2$ are $\bar{\mathscr{G}}$-martingales follows along the lines of \eqref{eqn_last_one} and \eqref{eqn_last_two}. Indeed, the required uniform integrability to apply Vitali's convergence theorem when taking $\delta\to 0$ can be obtained by bounding the quadratic variation of ${M}_{\varphi,\delta}^1$ and ${M}_{\zeta,\delta }^2$ using the energy estimate \eqref{total-energy-es}, cf.\ Lemma \ref{lem-tight-martingales}. 
		 Taking again $F$ as above, we find for every $0 \leq s \leq t \leq T$, and every  $n\in\Z^3$ and $1\leq i,j\leq 3$, using  identity in law, 
		\begin{align*}
			\bar{\mathbb{E}}\biggr(F&(\bar{Y}_{\delta,s} ,\bar{B}_{\delta}|_{[0,s]},\bar{W}_{\delta}|_{[0,s]})\\
			&\quad\quad\cdot\biggr(\bar{M}^1_{\varphi,\delta}(t)\bar{B}^{ij}_{n,\delta}(t)-\bar{M}^1_{\varphi,\delta}(s)\bar{B}^{ij}_{n,\delta}(s)- {\sqrt{2}}\int^t_s\int_{\mathbb{T}^{3}}(\nabla_\sym\varphi)_{ij}\bar{\psi}_{\delta}f_n\,\d x\,\d r\biggr)\biggr)\\
			=\mathbb{E}\biggr(F&({Y}_{\delta,s} ,{B}_{\delta}|_{[0,s]},{W}_{\delta}|_{[0,s]})\\
			&\quad\quad\cdot\biggr(M^1_{\varphi,\delta}(t)B^{ij}_{n}(t)-M^1_{\varphi,\delta}(s)B^{ij}_{n}(s)- \sqrt{2}\int^t_s\int_{\mathbb{T}^{3}}(\nabla_\sym\varphi)_{ij}\psi_{\delta}f_n\,\d x\,\d r\biggr)\biggr)=0.  
		\end{align*}
		By the continuity of $F$ and convergence of the involved processes, we may pass with $\delta \to 0$, yielding that
		\begin{align*}
			\bar{\mathbb{E}}\biggl(F&(\bar{Y}_{s} ,\bar{B}|_{[0,s]},\bar{W}|_{[0,s]})\\
			&\quad\quad\cdot\biggr(\bar{M}^1_{\varphi}(t)\bar{B}^{ij}_{n}(t)-\bar{M}^1_{\varphi}(s)\bar{B}^{ij}_{n}(s)- \sqrt{2}\int^t_s\int_{\mathbb{T}^{3}}(\nabla_\sym\varphi)_{ij}\bar{\psi}f_n\,\d x\,\d r\biggr)\biggr)=0.
		\end{align*}
		This identity implies that, for every $n\in\mathbb{Z}^3$ and $1\leq i,j\leq 3$,
		\begin{align}\label{martingale-identity}
			\bar{M}^1_{\varphi}(t)\bar{B}^{ij}_{n}(t)- \sqrt{2}\int^t_s\int_{\mathbb{T}^{3}}(\nabla_\sym\varphi)_{ij}\bar{\psi}f_n\,\d x\,\d r
		\end{align}
		is a $\mathscr{G}$-martingale  and  also a $\bar{\mathscr{G}}$-martingale by  continuity in time. 
		Analogously, one verifies that the process
		\begin{align*}
			&(\bar{M}^1_{\varphi}(t))^2-  \int_0^t \sum_{n\in\mathbb{Z}^3}\sum_{i,j=1}^3\Big|\int_{\mathbb{T}^{3}}(\nabla_\sym \vp )_{ij} \bar{\psi}f_n\,\d x\Big|^2\,\d r 
		\end{align*}
		is a  $\bar{\mathscr{G}}$-martingale as well. Consequently, we have identified the quadratic variation of $\bar{M}^1_{\varphi}$ and its covariation with the Brownian motions $\bar{B}^{ij}_{n}$. This allows for a direct computation of the quadratic variation of the martingale 
		\[
		\bar{M}^1_{\varphi}-\frac{1}{\sqrt{2}}\sum_{n\in\mathbb{Z}^3}\int_{\mathbb{T}^{3}}\int^\cdot_0\nabla\varphi:\bar{\psi}f_n\,\d \bar{B}_n\,\d x,
		\]
		 which yields that
		\begin{align}\label{martingale-1}
			\bar{\mathbb{E}}\biggl[\bar{M}^1_{\varphi}(t)-\frac{1}{\sqrt{2}}\sum_{n\in\mathbb{Z}^3}\int^t_0\int_{\mathbb{T}^{3}}\nabla\varphi:\bar{\psi}f_n\,\d \bar{B}_n\biggr]^2 = 0, \qquad \text{for every } t \in [0,T]. 
		\end{align}

		Applying the same argument to $\bar{M}^2_{\zeta,\delta}$ yields for every $0 \leq s \leq t \leq T$ and  $n\in\mathbb{Z}^3$ and   $1\leq i \leq 3$ that 
		\begin{align*}
			\bar{\mathbb{E}}\biggr(F&(\bar{Y}_{\delta,s} ,\bar{B}_{\delta}|_{[0,s]},\bar{W}_{\delta}|_{[0,s]})\\
			&\cdot\biggr(\bar{M}^2_{\zeta,\delta}(t)\bar{W}^i_{n,\delta}(t)-\bar{M}^2_{\zeta,\delta}(s)\bar{W}^i_{n,\delta}(s)-\int^t_s\int_{\mathbb{T}^3}(\nabla\zeta)_i\bar{\psi}_{\delta}g_n\,\d x\,\d r-\frac{1}{2}\int^t_s\int_{\mathbb{T}^3}\zeta\bar{\psi}_{\delta}(\nabla g_n)_i \,\d x\,\d r\biggr)\biggr)\\
			=\mathbb{E}\biggr(F&({Y}_{\delta,s} ,{B}_{\delta}|_{[0,s]},{W}_{\delta}|_{[0,s]})\\
			&\cdot\biggr(M^2_{\zeta,\delta}(t)W^i_{n}(t)-M^2_{\zeta,\delta}(s)W^i_{n}(s)-\int^t_s\int_{\mathbb{T}^3}(\nabla\zeta)_i\psi_{\delta}g_n\,\d x\,\d r-\frac{1}{2}\int^t_s\int_{\mathbb{T}^3}\zeta\psi_{\delta}(\nabla g_n)_i \,\d x\,\d r\biggr)\biggr)=0. 
		\end{align*}
	Therefore,
		\begin{align*}
			\bar{\mathbb{E}}\biggr(F&(\bar{Y}_{s} ,\bar{B}|_{[0,s]},\bar{W}|_{[0,s]})\\
			&\cdot\biggr(\bar{M}^2_{\zeta}(t)\bar{W}^i_{n}(t)-\bar{M}^2_{\zeta}(s)\bar{W}^i_{n}(s)-\int^t_s\int_{\mathbb{T}^3}(\nabla\zeta)_i\bar{\psi}g_n\,\d x\,\d r-\frac{1}{2}\int^t_s\int_{\mathbb{T}^3}\zeta\bar{\psi}(\nabla g_n)_i \,\d x\,\d r\biggr)\biggr)=0,
		\end{align*}
		which shows that 
		$$
		\bar{M}^2_{\zeta}(t)\bar{W}^i_{n}(t)-\int^t_0\int_{\mathbb{T}^3}(\nabla\zeta)_i\bar{\psi}g_n\,\d x\,\d r-\frac{1}{2}\int^t_0\int_{\mathbb{T}^3}\zeta\bar{\psi}(\nabla g_n)_i \,\d x\,\d r
		$$
		is a $\mathscr{G}$-martingale, as well as a $\bar{\mathscr{G}}$-martingale due to  time continuity.
		
		Similarly, we conclude that for every $0 \leq s \leq t \leq T$, $n\in\mathbb{Z}^3$ and $1\leq i,j\leq 3$, 
		\begin{align*}
			\bar{\mathbb{E}}\biggr(F&(\bar{Y}_{s} ,\bar{B}|_{[0,s]},\bar{W}|_{[0,s]})\\
			&\quad\quad\cdot\biggr(\bar{M}^2_{\zeta}(t)\bar{B}^{ij}_{n}(t)-\bar{M}^2_{\zeta}(s)\bar{B}^{ij}_{n}(s)+{\sqrt{2}\int^t_s\int_{\mathbb{T}^3}\zeta(\nabla_\sym\bar{u})_{ij}f_n\,\d x\,\d r}\biggr)\biggr)=0.
		\end{align*}
		As above, this shows that for every $n\in\mathbb{Z}^3$ and $1\leq i,j\leq 3$,
		$$
		\bar{M}^2_{\zeta}(t)\bar{B}^{ij}_{n}(t)+{\sqrt{2}}\int^t_0\int_{\mathbb{T}^3}\zeta (\nabla_\sym\bar{u})_{ij}f_n\,\d x\,\d r
		$$
		is $\bar{\mathscr{G}}$-martingale and the same holds for
	 	\begin{align*}
			(\bar{M}^2_{\zeta}(t))^2	-&{2}\sum_{n\in\mathbb{Z}^3}\sum_{i,j=1}^3\int^t_0\bigg|\int_{\mathbb{T}^3}\zeta(\nabla_\sym\bar{u})_{ij}f_n\,\d x\bigg|^2\,\d r\\
			-&\sum_{n\in\mathbb{Z}^3}\sum_{i=1}^3\biggl[\int^t_0\bigg|\int_{\mathbb{T}^3}(\nabla\zeta)_i\bar{\psi}g_n\,\d x\bigg|^2\,\d r+\frac{1}{4}\int^t_0\bigg|\int_{\mathbb{T}^3}\zeta\bar{\psi}(\nabla g_n)_i \,\d x\bigg|^2\,\d r\biggr]\\
			-&\sum_{n\in\mathbb{Z}^3}\sum_{i=1}^3\int^t_0\bigg(\int_{\mathbb{T}^3}(\nabla\zeta)_i\bar{\psi}g_n\,\d x\bigg)\bigg(\int_{\mathbb{T}^3}\zeta\bar{\psi}(\nabla g_n)_i\,\d x\bigg)\,\d r. 
		\end{align*}   By a direct computation of the quadratic variation, we arrive this time at
		\begin{align}\label{martingale-2}
			\bar{\mathbb{E}}\biggl[\bar{M}^2_{\zeta}(t)-\sum_{n\in\mathbb{Z}^3}{ \int_{\mathbb{T}^3}}\int^t_0\nabla\zeta\cdot\bar{\psi}g_n\,\d \bar{W}_{n}\,\d x-&\frac{1}{2}\sum_{n\in\mathbb{Z}^3}{ \int_{\mathbb{T}^3}}\int^t_0\zeta\bar{\psi}\nabla\cdot(g_n\,\d \bar{W}_{n})\,\d x\notag \\
			+&\frac{1}{\sqrt{2}}\sum_{n\in\mathbb{Z}^3}{\int_{\mathbb{T}^3}}\int^t_0\zeta(\nabla\bar{u}:f_n\,\d \bar{B}_{n})\,\d x\biggr]^2 = 0, \qquad \text{for every } t \in [0,T].
		\end{align}
		We finally record that \eqref{martingale-1} and \eqref{martingale-2}  clearly imply   that
		\begin{align}\label{eqn_M_1_vp_SI}
			\bar{M}^1_{\varphi}(t)=\frac{1}{\sqrt{2}}\sum_{n\in\mathbb{Z}^3}\int_{\mathbb{T}^{3}}\int^t_0\nabla\varphi:\bar{\psi}f_n\,\d \bar{B}_n \,\d x,
		\end{align}
		and 
		\begin{align}\label{eqn_M_2_zeta_SI}
			\bar{M}^2_{\zeta}(t)=&\sum_{n\in\mathbb{Z}^3}\int_{\mathbb{T}^3}\int^t_0\nabla\zeta\cdot\bar{\psi}g_n\,\d \bar{W}_{n}\,\d x+\frac{1}{2}\sum_{n\in\mathbb{Z}^3}\int_{\mathbb{T}^3}\int^t_0\zeta\bar{\psi}\nabla\cdot(g_n\,\d \bar{W}_{n})\,\d x\\& -\frac{1}{\sqrt{2}}\sum_{n\in\mathbb{Z}^3}\int_{\mathbb{T}^3}\int^t_0\zeta(\nabla\bar{u}:f_n\,\d \bar{B}_{n})\,\d x, 	
		\end{align}
		for all $t\in [0,T]$, $\P$-almost surely.
		
		 \textit{Step 6 (Relation of the dominating measure).}
		Here we aim to show that, $\bar{\P}$-a.s.,
		\begin{equation}\label{eqn37}
		\bar{q} \,\ge\, \frac{1}{\bar{\psi}} \Big(|\nabla\bar{\psi}|^2 + |\nabla_{\sym} \bar{u}|^2+\epsilon\Big),
	\end{equation}
	as measures on $[0,T]\times \T^3$.  Since we use Convention \ref{conv_inverse} that $1/\bar{\psi} =\infty$ on $\{\bar{\psi}\le 0\}$, the above implies in particular $\bar{\psi}>0$, $\P\otimes \d t \otimes \d x$-almost everywhere.  To this end, we let $\zeta\in C(\mathbb{T}^3\times[0,T])$ be non-negative and claim that for any fixed $\kappa>0$, we have
		\begin{align}\label{weakconvergence-1}
			\zeta^{1/2} h_\kappa(\bar{\psi}_{\delta})^{1/2}\nabla_{\sym}\bar{u}_{\delta}\rightharpoonup\zeta^{1/2} h_\kappa(\bar{\psi})^{1/2}\nabla_{\sym}\bar{u}, 	
		\end{align}
		and
		\begin{align}\label{weakconvergence-2}
			\zeta^{1/2} h_\kappa(\bar{\psi}_{\delta})^{1/2}\nabla\bar{\psi}_{\delta}\rightharpoonup\zeta^{1/2} h_\kappa(\bar{\psi})^{1/2}\nabla\bar{\psi}, 	
		\end{align}
		as $\delta\rightarrow0$,  in $L^2_{t,x}$, $\bar{\P}$-almost surely.  Indeed, because
			$ h_\kappa(\bar{\psi}_{\delta})^{1/2} \to h_\kappa(\bar{\psi})^{1/2}$,
			 $\bar{\P} \otimes \d t \otimes\d x$-a.e., up to taking a further subsequence, we have for any  test function $\chi \in L^2([0,T]\times \T^3;\R^3)$ that
			 \[ 
			 \int_0^T \int_{\T^3} 
			 \chi  \zeta^{1/2} h_\kappa(\bar{\psi}_{\delta})^{1/2}\nabla_{\sym}\bar{u}_{\delta}
			 \,\d x\, \d t\,\to \,  \int_0^T \int_{\T^3}
			 \chi   \zeta^{1/2} h_\kappa(\bar{\psi})^{1/2}\nabla_{\sym}\bar{u} \,\d x\, \d t,
			 \]
		by dominated convergence and the weak convergence of $\nabla_\sym \bar{u}_\delta$. This yields \eqref{weakconvergence-1} and a repetition of this argument also \eqref{weakconvergence-2}.  Thus, by lower semicontinuity of the $L^2_{t,x}$-norm with respect to weak convergence, we find 
		\begin{align}\int_{[0,T]\times \T^3} \zeta\, \d \bar{q}  \,&=\,\lim_{\delta \to 0}
		\int_{[0,T]\times \T^3} \zeta \,\d \bar{q_\delta} \,=\, \lim_{\delta \to 0}
		 \int_0^T \int_{\T^3}\zeta h_\delta(\bar{\psi}_{\delta})\bigl(
		|\nabla \bar{\psi}_{\delta}|^2 + |\nabla_\sym \bar{u}_{\delta}|^2 + \epsilon
		\bigr) \,\d x \,\d t\\&\ge\, \liminf_{\delta\to 0}
		 \int_0^T \int_{\T^3}\zeta h_\kappa(\bar{\psi}_{\delta})\bigl(
		|\nabla \bar{\psi}_{\delta}|^2 + |\nabla_\sym \bar{u}_{\delta}|^2 + \epsilon
		\bigr) \,\d x \,\d t \\& \ge\,
			 \int_0^T \int_{\T^3} \zeta h_\kappa(\bar{\psi})\bigl(
		|\nabla \bar{\psi}|^2 + |\nabla_\sym \bar{u}|^2 + \epsilon
		\bigr) \,\d x \,\d t ,
		\end{align}
		using \eqref{eqn3278474328742} and that $h_\delta \ge h_\kappa$ for small enough $\delta$, cf.\ Assumption \ref{Ass:nu_N}. It remains to take $\kappa \to 0$ using monotone convergence to deduce \eqref{eqn37}.
		
		 \textit{Step 7 (Conclusion).}
		We summarize how the preceding steps imply the assertion, and start by deriving the estimates \eqref{total-energy-es-es}--\eqref{eq_meas_bound_eps}. The latter follows readily by 
		\begin{align}\label{eqn_lower_fatou}\bar{\E}\bigl[
			\| \bar{q} \|_{ \M([0,T]\times \T^3) }\bigr] \,&\le\, \liminf_{\delta\to 0}
			\bar{\E}\bigl[
			\| \bar{q}_\delta \|_{ \M([0,T]\times \T^3) }\bigr]\,=\, \liminf_{\delta\to 0}
			{\E}\bigl[
			\| {q}_\delta \|_{ \M([0,T]\times \T^3) }\bigr]\\& \le\, 	\biggl(1 \,+\, \frac{4F_2 +G_2 }{8}T \biggr)  \biggl(
			\frac{1}{2}\int_{\T^3}|u_0|^2 +|\psi_0|^2 \,\d x\,+\,2 \epsilon T
			\biggr)^{1/2},
		\end{align}
		based on lower semicontinuity of the norm with respect to weak-${}^*$ convergence, Fatou's lemma and Lemma \ref{lem-L1-es}. For the energy estimate, we firstly observe that  \eqref{total-energy-es}  and \eqref{identity-jointlaws} imply that 
		\begin{align}
			 \frac{1}{2}\int_{\T^3}
			|\bar{u}_\delta(t)|^2 +\bar{\psi}_\delta^2(t) \,\d x 	+ \epsilon \int_0^t \int_{\T^3} 
			|\Delta \bar{u}_{\delta}|^2
			+ |\Delta\bar{\psi}_{\delta}|^2\,\d x\,\d s\,\le\, 
			\frac{1}{2}\int_{\T^3}
			|u_0|^2 +\psi_0^2 \,\d x\,+\,\epsilon t ,
		\end{align}
		$\bar{\P}\otimes \d t$-almost everywhere.
		Moreover, after passing to another subsequence in $\delta$, we have also $\bar{\P}\otimes \d t$-a.e.\ $\bar{u}_\delta \to \bar{u}$ in $L^2(\T^3;\R^3)$ and $\bar{\psi}_\delta \to \bar{\psi}$ in $L^2(\T^3)$. Thereby, it follows that, $\P$-a.s., 
		\begin{align}&
		\epsilon \int_0^t\int_{\T^3}  |\Delta \bar{u}|^2 + |\Delta \bar{\psi}|^2	\,\d x \,\d s \\& \quad \le\, 
		 \liminf_{\delta\to 0} \epsilon \int_0^t\int_{\T^3}  |\Delta \bar{u}_\delta|^2 + |\Delta \bar{\psi}_\delta|^2	\,\d x \,\d s 
		 \\&\quad \le \,\lim_{\delta\to 0} \biggl(
			\frac{1}{2}\int_{\T^3}
		|u_0|^2 +\psi_0^2 \,\d x \,-\, 	\frac{1}{2}\int_{\T^3}
		|\bar{u}_\delta(t)|^2 +\bar{\psi}_\delta^2(t) \,\d x\,+\,\epsilon t\biggr)
		\\& \quad=\, 
		\frac{1}{2}\int_{\T^3}
		|u_0|^2 +\psi_0^2 \,\d x \,-\, 	\frac{1}{2}\int_{\T^3}
		|\bar{u}_t|^2 +\bar{\psi}_t^2\,\d x\,+\,\epsilon t,
		\end{align}
	for almost all $t$. Rearranging yields \eqref{total-energy-es-es} and implies in particular the additional regularity required in Definition \ref{eqn354865865}~\eqref{Item_defi_sol_1_ep}. Part \eqref{Item_defi_sol_2_ep} was shown in \eqref{eqn37}. Assertion \eqref{Item_defi_sol_3_ep} follows by combining \eqref{eqn33_lim} and \eqref{eqn_M_1_vp_SI}, whereas \eqref{Item_defi_sol_4_ep} is the result of \eqref{eqn34_lim} and \eqref{eqn_M_2_zeta_SI},  finishing this proof.
	\end{proof}

	\subsection{Construction of weak solutions to the original equation}\label{subsec-4-3}
	The aim of this subsection is to complete the proof of Theorem \ref{thm-weak-existence}, i.e., to show the existence of globally admissible weak martingale solutions   to 
	\begin{align}\begin{split}\label{original-eq}	\d u\,=\,&\nabla\cdot(\nabla_{\sym}u) \,\d t\,-\, \Pi \nabla \cdot (u \otimes u)\,\d t\,+\, \frac{F_1}{4}\Delta u\,\d t\, -\, \frac{1}{\sqrt{2}}\sum_{n \in \Z^3}\Pi \nabla\cdot(\psi f_n   \,\d B_n),\qquad \nabla\cdot u=0 ,
			\\
			\d \psi\,  =\, &(1+F_1{/2} +G_1/2)\Delta \psi \, \d t + \,\frac{1}{\psi }\bigl(
			|\nabla \psi|^2 \,+\, |\nabla_\sym u|^2 
			\bigr) \,\d t\,-\, \nabla \cdot ( u\psi )\,\d t \,-\, (F_2{/2}+ G_2/8) \psi \,\d t \\&- \sum_{n\in \Z^3} \nabla \cdot ( \psi g_n \,\d W_n ) \,
			+\, \frac{1}{2} \sum_{n \in \Z^3} \psi \nabla \cdot ( g_n \, \d  W_n) \,-\,  \frac{1}{\sqrt{2}}\sum_{n\in \Z^3}(\nabla  u : f_n\,\d  B_n),
			\\ (u(0),\psi(0))\, =\,&( u_0,\psi_0),
	\end{split}\end{align}
 as
	defined in Definition \ref{defi_solution}. 
	In the following, for $\epsilon\in (0,1)$, we let $(u_{\epsilon},\psi_{\epsilon})$ be the weak martingale solution to \eqref{ep_approx}--\eqref{eqn354865865} constructed in Proposition \ref{prp-existence-ep}, for which we aim to take the limit $\epsilon\to 0$. As in the preceding subsection, we may assume that these solutions are defined with respect to the same stochastic basis, see the comments below Definition \ref{defi_solution_ep}.

	\subsubsection{Uniform estimates and tightness}
	Here we collect estimates on the solutions to \eqref{ep_approx}--\eqref{eqn354865865}, which are uniform in $\epsilon$. While the already stated energy estimate \eqref{total-energy-es-es} and $L^1$-estimate \eqref{eq_meas_bound_eps} are uniform in $\epsilon$, the artificial dissipation vanishes when $\epsilon\to 0$. As a substitute, we record the following  interpolation estimate.
	\begin{proposition}Let 
		 $(u_{\epsilon} ,\psi_{\epsilon},q_\epsilon)$ be the weak martingale solutions to  \eqref{ep_approx}--\eqref{eqn354865865} with initial data $(u_0,\psi_0)$ constructed in Proposition \ref{prp-existence-ep}. Then there exists a constant $C= C(u_0,\psi_0, F_2,G_2,T)$, such that
		\begin{align}\begin{split}\label{sobolev-es}	\E \bigl[ \|u_\epsilon \|_{L^{7/3}([0,T]\times\T^3;\R^3)}^{7/3}\,+\,
				\| {\psi_\epsilon}\|_{L^{7/3}([0,T] \times \T^3)}^{7/3}
			\bigr]\,\le\, C,& \\\E\bigl[
		 \| u_\epsilon  \|_{L^{7/5}(0,T;W^{1,7/5}(\T^3;\R^3))}^{7/5}
				\,+\, 
			\|\psi_\epsilon \|_{L^{7/5}(0,T;W^{1,7/5}(\T^3))}^{7/5}\bigr]
			\,\le\, C,
			\end{split}
		\end{align}
	for any $\epsilon\in (0,1)$.
	\end{proposition}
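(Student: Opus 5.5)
The plan is to combine, pathwise, the energy bound \eqref{total-energy-es-es} with the $L^1$-type bound \eqref{eq_meas_bound_eps} on $q_\epsilon$. First, Cauchy--Schwarz gives the $W^{1,7/5}$-bound for both unknowns. Then Sobolev embedding plus interpolation with the energy gives the $L^{7/3}$-bound, as in Remark \ref{rem_weak_sol}~\eqref{rem_weak_sol_I2}.

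\textbf{Step 1: gradient bound.} By Definition \ref{defi_solution_ep}~\eqref{Item_defi_sol_2_ep} we have $\psi_\epsilon>0$ a.e. and $A_\epsilon:=\int_0^T\!\int_{\T^3}\psi_\epsilon^{-1}(|\nabla\psi_\epsilon|^2+|\nabla_\sym u_\epsilon|^2)\,\d x\,\d t\le\|q_\epsilon\|_{\M}$. Writing $|\nabla\psi_\epsilon|=\psi_\epsilon^{-1/2}|\nabla\psi_\epsilon|\cdot\psi_\epsilon^{1/2}$ and applying H\"older with exponents $10/7$ and $10/3$ (so that $p=7/5$) gives
\[
\|\nabla\psi_\epsilon\|_{L^{7/5}_{t,x}}^{7/5}\le A_\epsilon^{7/10}\Bigl(\int_0^T\!\int_{\T^3}\psi_\epsilon^{7/6}\,\d x\,\d t\Bigr)^{3/10}.
\]
The same estimate holds with $\nabla_\sym u_\epsilon$ in place of $\nabla\psi_\epsilon$. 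By Korn's inequality on $\T^3$ (for divergence-free fields, $\|\nabla u\|_{L^p}\lesssim\|\nabla_\sym u\|_{L^p}$ for $1<p<\infty$), this extends to the full gradient $\nabla u_\epsilon$. Since $7/6<2$, the factor $\int\!\!\int\psi_\epsilon^{7/6}$ is bounded deterministically by $C(T)\,\mathcal E_0^{7/12}$, where $\mathcal E_0$ denotes the initial energy, using \eqref{total-energy-es-es}. Taking expectations and using $\E[A_\epsilon^{7/10}]\le(\E A_\epsilon)^{7/10}$ (Jensen) together with \eqref{eq_meas_bound_eps}, with $\epsilon<1$, bounds the gradient parts. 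The $L^{7/5}_{t,x}$-norms of $u_\epsilon,\psi_\epsilon$ themselves are controlled by the energy, which completes the $W^{1,7/5}$-bound.

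\textbf{Step 2: the $L^{7/3}$ bound.} The Sobolev embedding $W^{1,7/5}(\T^3)\hookrightarrow L^{21/8}(\T^3)$ holds since $1/(7/5)-1/3=5/7-1/3=8/21$. Interpolating $L^{7/3}$ between $L^2$ and $L^{21/8}$ with weights $2/5$ and $3/5$ gives, for a.e. $t$,
\[
\|\psi_\epsilon(t)\|_{L^{7/3}_x}^{7/3}\le\|\psi_\epsilon(t)\|_{L^2_x}^{14/15}\|\psi_\epsilon(t)\|_{W^{1,7/5}_x}^{7/5}.
\]
Integrating in time and using the pathwise $L^\infty_tL^2_x$ energy bound, we obtain $\E\|\psi_\epsilon\|_{L^{7/3}}^{7/3}\le C\,\E\|\psi_\epsilon\|_{L^{7/5}W^{1,7/5}}^{7/5}$, which is bounded by Step 1. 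The same argument applies to $u_\epsilon$.

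\textbf{Main obstacle.} The delicate point is justifying the pointwise H\"older step when $q_\epsilon$ is only a measure dominating the absolutely continuous density. This requires reading the inequality $q_\epsilon\ge\psi_\epsilon^{-1}(\dots)$ as a bound on the integral of a nonnegative function by $\|q_\epsilon\|_{\M}$. Given the positivity $\psi_\epsilon>0$ a.e. and Convention \ref{conv_inverse}, this is immediate. The other point to check is that the exponents close with precisely the moment $1$ available on $\|q_\epsilon\|$, which dictates $p=7/5$.
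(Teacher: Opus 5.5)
There is a genuine gap in Step 1: the H\"older exponent is miscomputed. Writing $|\nabla\psi_\epsilon|^{7/5}=(\psi_\epsilon^{-1/2}|\nabla\psi_\epsilon|)^{7/5}\,\psi_\epsilon^{7/10}$ and using exponents $10/7$ and $10/3$, the second factor is $(\psi_\epsilon^{7/10})^{10/3}=\psi_\epsilon^{7/3}$. So the correct estimate is
\[
\|\nabla\psi_\epsilon\|_{L^{7/5}_{t,x}}^{7/5}\le A_\epsilon^{7/10}\Bigl(\int_0^T\!\int_{\T^3}\psi_\epsilon^{7/3}\,\d x\,\d t\Bigr)^{3/10},
\]
with $\psi_\epsilon^{7/3}$, not $\psi_\epsilon^{7/6}$. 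Since $7/3>2$, this factor is not controlled by the energy bound \eqref{total-energy-es-es}, which only gives $L^\infty_tL^2_x$. With the energy alone, this H\"older split reaches only gradient exponents $p$ with $p/(2-p)\le 2$, i.e.\ $p\le 4/3$. The same factor appears in your bound for $\nabla_\sym u_\epsilon$. But the $L^{7/3}_{t,x}$-bound on $\psi_\epsilon$ is exactly what your Step 2 derives from Step 1, so as written the argument is circular. The exponent $7/5$ is fixed by this self-consistency with the $L^{7/3}$ integrability of $\psi_\epsilon$, not by the first moment of $\|q_\epsilon\|_{\M}$.

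What is missing is an independent route to $\psi_\epsilon\in L^{7/3}_{t,x}$. The paper gets it from the fact that $A_\epsilon$ controls $\int\!\!\int|\nabla\sqrt{\psi_\epsilon}|^2=\frac14\int\!\!\int|\nabla\psi_\epsilon|^2/\psi_\epsilon$. This identity is justified via the approximations $R_\delta(\psi_\epsilon)=\frac12\int_0^{\psi_\epsilon}\sqrt{h_\delta}$. Hence $\sqrt{\psi_\epsilon}\in L^2(0,T;H^1)\hookrightarrow L^2(0,T;L^6)$, while the energy gives $\sqrt{\psi_\epsilon}\in L^\infty(0,T;L^4)$. Interpolation yields $\sqrt{\psi_\epsilon}\in L^{14/3}_{t,x}$, i.e.\ $\E\|\psi_\epsilon\|_{L^{7/3}}^{7/3}\le C$, and only then does the H\"older step (in $x$, $t$ and $\omega$) give the $W^{1,7/5}$-bounds.

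Alternatively, you can close your own scheme by absorption. Set $X=\|\psi_\epsilon\|_{L^{7/3}_{t,x}}^{7/3}$. The corrected Step 1 combined with Step 2 gives, pathwise, $X\le C\bigl(A_\epsilon^{7/10}X^{3/10}+1\bigr)$ with $C$ depending only on the initial energy and $T$. For fixed $\epsilon$, $X<\infty$ a.s., since $\psi_\epsilon\in L^2(0,T;H^2(\T^3))$ by Definition \ref{defi_solution_ep}~\eqref{Item_defi_sol_1_ep}. Young's inequality then gives $X\lesssim A_\epsilon+1$, and everything closes in expectation. Either way, this step must be supplied. The rest of your argument matches the paper: the Korn/elliptic estimate for $\nabla u_\epsilon$, the Sobolev--interpolation in Step 2, and reading $q_\epsilon\ge\psi_\epsilon^{-1}(\dots)$ as a bound by $\|q_\epsilon\|_{\M}$.
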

	\begin{proof}We start by noticing that \eqref{eq_meas_bound_eps} together with domination property of $q_\epsilon$, see Definition \ref{defi_solution_ep}~\eqref{Item_defi_sol_1_ep} implies that
		\begin{equation}\label{eqn_here}
			\E \biggl[
			\int_0^T \int_{\T^3} \frac{1}{\psi_{\epsilon}}(|\nabla \psi_{\epsilon}|^2 + |\nabla_\sym u_{\epsilon}|^2+\epsilon) \,\d x \, \d t
			\biggr]\,\le\, C,
		\end{equation}
	for $C= C(u_0,\psi_0, F_2,G_2,T)$, which we allow to vary from line to line. The above implies that $\psi_\epsilon>0 $, $\P \otimes \d t \otimes \d x$-almost everywhere, cf.\ Convention \ref{conv_inverse}, and thereby setting 
	\begin{align}
		R_\delta (r) \,=\, \frac{1}{2}\int_0^r \sqrt{h_\delta}(s) \,\d s ,
	\end{align}
 we find that $R_\delta(\psi_\epsilon) \to \sqrt{\psi_\epsilon}$ in $L^2([0,T]\times \T^3)$, $\P$-a.s., as $\delta \to 0$. On the other hand, $\nabla (R_\delta(\psi_\epsilon)) = \sqrt{h_\delta(\psi_\epsilon)} \nabla \psi_\epsilon /2$ is $\P$-a.s.\ uniformly in $ \delta$ bounded in $L^2([0,T]\times \T^3)$, by \eqref{eqn_here}. By extracting a weakly convergent subsequence, we find that $\nabla \sqrt{\psi_\epsilon} = \nabla \psi_\epsilon /(2\sqrt{\psi_\epsilon})$ as functions  in $L^2([0,T]\times \T^3)$ with
 \begin{align}
 	\E \bigl[
 	\| \sqrt{\psi_\epsilon}\|_{L^2(0,T;L^6(\T^3))}^2
 	\bigr]\,\lesssim \, 
 	\E \bigl[
 	\| \sqrt{\psi_\epsilon}\|_{L^2(0,T;H^1(\T^3))}^2
 	\bigr]\,\le\, C,
 \end{align}
where we also invoked the Sobolev embedding and the total energy bound \eqref{total-energy-es-es}. The latter  entails at the same time 
 \[
 \| \sqrt{\psi_\epsilon}\|_{L^\infty(0,T;L^4(\T^3))} \,\le\, C,
 \]
 so that H\"older's inequality yields 
 \begin{align}
 	\E [  \| \sqrt{\psi_\epsilon}\|_{L^{14/3}([0,T] \times \T^3)}^{14/3} ] \,\le\, 
 	\E \bigl[\bigl( \| \sqrt{\psi_\epsilon}\|_{L^2(0,T;L^6(\T^3))}^{3/7}
 	\| \sqrt{\psi_\epsilon}\|_{L^\infty(0,T;L^4(\T^3))}^{4/7}\bigr)^{14/3}
 	\bigr]\,\le\, C,
 \end{align}
which can then be rephrased as 
\begin{align}
	\E \bigl[
	 \| {\psi_\epsilon}\|_{L^{7/3}([0,T] \times \T^3)}^{7/3}
	\bigr]\,\le\, C.
\end{align}
From here, we proceed as Remark \ref{rem_weak_sol}~\eqref{rem_weak_sol_I2} and use H\"older's inequality,  together with \eqref{eqn_here} to estimate
\begin{align}
	\E \biggl[
	\int_0^T \int_{\T^3}|\nabla \psi_{\epsilon}|^{7/5}  \,\d x \, \d t
	\biggr]\,&=\, 
		\E \biggl[
	\int_0^T \int_{\T^3}\psi_\epsilon^{7/10} \psi_\epsilon^{-7/10} |\nabla \psi_{\epsilon}|^{7/5}  \,\d x \, \d t
	\biggr]
	\\&  \le \,
	\E \bigl[
	\| {\psi_\epsilon}\|_{L^{7/3}([0,T] \times \T^3)}^{7/3}
	\bigr]^{3/10}	\E \biggl[
		\int_0^T \int_{\T^3} \frac{1}{\psi_{\epsilon}}|\nabla \psi_{\epsilon}|^2  \,\d x \, \d t
		\biggr]^{7/10}\,\le\, C,
\end{align}
and analogously
\begin{align}
		\E \biggl[
	\int_0^T \int_{\T^3}|\nabla_\sym u_{\epsilon}|^{7/5}  \,\d x \, \d t
	\biggr]\,\le\, C
\end{align}
To leverage the latter, we use that $u_{\epsilon}$ is divergence free, so that $W^{1,p}_x$-estimates for the elliptic equation
$$
\frac{1}{2}\Delta u_{\epsilon} = \nabla \cdot (\nabla_{\sym} u_{\epsilon}),
$$
yield also
\begin{align}
	\E \bigl[
	\|u_\epsilon\|_{L^{7/5}(0,T;W^{1,7/5}(\T^3;\R^3))}^{7/5}
	\bigr]\,\le\, C
\end{align}
Finally, by the Sobolev embedding we estimate 
	\[
\E \bigl[  \| u_\epsilon  \|_{L^{7/5}(0,T; {L}^{21/8}(\T^3 ;\R^3)) }^{7/5} \bigr] \,\le\, C,
\]
and using H\"older's inequality that 
\begin{equation}
	\E \bigl[ \|u_\epsilon \|_{L^{7/3}([0,T]\times\T^3;\R^3)}^{7/3}\bigr] \,\le\, \E\bigl[ \bigl(\| u_\epsilon  \|_{L^\infty(0,T;L^2(\T^3;\R^3))}^{2/5}\| u_\epsilon   \|_{L^{7/5}(0,T; {L}^{21/8}(\T^3 ;\R^3)) }^{3/5} \bigr)^{7/3}\bigr]\,\le\, C,
\end{equation}
in light of \eqref{total-energy-es-es}. Collecting all the bounds results in \eqref{sobolev-es}
  \end{proof}

	Since the time regularity estimates from Lemma \ref{lem-timeregularity} are uniform in $\epsilon$, they transfer to the $\epsilon$-regularization.
	
		\begin{lemma}Let 
			$(u_{\epsilon} ,\psi_{\epsilon},q_\epsilon)$ be the weak martingale solutions to  \eqref{ep_approx}--\eqref{eqn354865865} with initial data $(u_0,\psi_0)$ constructed in Proposition \ref{prp-existence-ep}. Then, for every $\alpha \in (0,1/2)$ and $l > 5/2$, there exists a constant \[C = C(\alpha,l, u_0,\psi_0, F_1,F_2, G_1,G_2,T) <\infty, \] such that
		\begin{align}\label{timeregularity-es}
			\mathbb{E}\|u_{\epsilon}\|_{C^{\alpha}([0,T];H^{-l}(\mathbb{T}^3;\R^3))}+\mathbb{E}\|\psi_{\epsilon}\|_{W^{\alpha,2}([0,T];H^{-l}(\mathbb{T}^3))} \leq C,
		\end{align}
			for all $\epsilon\in (0,1)$.
	\end{lemma}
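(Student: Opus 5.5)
The plan is to follow the proof of Lemma \ref{lem-timeregularity} verbatim, now for the $\epsilon$-regularized solutions, and to check that every ingredient used there has an $\epsilon$-uniform substitute at this level. Concretely, I would start from the weak formulations \eqref{eqn9_ep} and \eqref{Eq_psi_weak_ep}. For $\psi_\epsilon$, I would first pass to a formulation tested only in space by the argument of Lemma \ref{Lemma_cadlag}, with $\int_{[0,t]\times\T^3}\eta\,\d q_\epsilon$ replacing the $h_\delta$-term. I would then take the $C^\alpha_tH^{-l}_x$-norm of $u_\epsilon$ and the $W^{\alpha,2}_tH^{-l}_x$-norm of $\psi_\epsilon$, and split each by the triangle inequality into deterministic integrals and martingale terms.

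The deterministic contributions are bounded exactly as before.
\begin{itemize}
\item The Laplacian, transport and zeroth-order terms are controlled by the energy bound \eqref{total-energy-es-es}, through the embeddings $W^{1,\infty}_tL^1_x\hookrightarrow C^\alpha_tH^{1-l}_x$ and $W^{1,\infty}_tH^{-2}_x\hookrightarrow C^\alpha_tH^{-l}_x$, using $l>5/2$.
\item The bi-Laplacian terms are handled via $\sqrt{\epsilon}\|\Delta u_\epsilon\|_{L^2_{t,x}}$ and $\sqrt{\epsilon}\|\Delta\psi_\epsilon\|_{L^2_{t,x}}$. These are bounded uniformly in $\epsilon\in(0,1)$ by \eqref{total-energy-es-es}, since the right-hand side there is at most $\tfrac12\|(u_0,\psi_0)\|_{L^2}^2+T$.
\item The martingale terms are estimated through BDG and Kolmogorov's criterion for $u_\epsilon$, and through \cite[Lemma 2.1]{FG95} for $\psi_\epsilon$. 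The quadratic variations involve only $\|\psi_\epsilon\|_{L^2_x}$ and $\|u_\epsilon\|_{L^2_x}$, the latter after moving the gradient onto $f_n$ as in \eqref{eqn_martingales_psi2}, together with the constants $F_1,F_2,G_1,G_2$. They are therefore uniformly bounded by \eqref{total-energy-es-es}.
\end{itemize}

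The step needing actual care is the measure term $t\mapsto\int_{[0,t]\times\T^3}\eta\,\d q_\epsilon$, which replaces $\int_0^t h_\delta(\cdots)\,\d s$ and is no longer absolutely continuous in time. I would argue that for $l>3/2$, via Sobolev embedding $H^l(\T^3)\hookrightarrow C(\T^3)$, the map $Q_\epsilon(t)=q_\epsilon|_{[0,t]\times\T^3}(\T^3\text{-marginal})$ has $\|Q_\epsilon(t)-Q_\epsilon(s)\|_{H^{-l}}\lesssim \nu_\epsilon((s,t])$. So $Q_\epsilon$ is of bounded variation in $H^{-l}$ with total variation at most $\|q_\epsilon\|_{\M}$. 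Since $BV([0,T];H^{-l})\cap L^\infty\hookrightarrow W^{\alpha,2}([0,T];H^{-l})$ for $\alpha<1/2$, the measure term is bounded in $W^{\alpha,2}_tH^{-l}_x$ by $C\|q_\epsilon\|_{\M([0,T]\times\T^3)}$. This is precisely why the $\psi$-estimate is stated in $W^{\alpha,2}$ rather than $C^\alpha$. Taking expectations and invoking \eqref{eq_meas_bound_eps} gives an $\epsilon$-uniform bound.

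Alternatively, this bound transfers directly from the $\delta$-level estimate of Lemma \ref{lem-timeregularity}: the norms are lower semicontinuous under the convergences of Proposition \ref{prp-existence-ep}, and the Skorokhod representation preserves laws, so Fatou's lemma yields \eqref{timeregularity-es} with the same constant. I would present this lower semicontinuity route as the main proof, since the constant in \eqref{eqn872374} is independent of $\epsilon$ and $\delta$. The direct BV argument above would serve as the check that the one delicate term is indeed controlled.
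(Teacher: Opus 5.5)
Your proposal is correct, and the route you designate as the main proof is exactly the paper's argument. The paper transfers the $\delta,\epsilon$-uniform bound \eqref{eqn872374} to the limit via equality in law, lower semicontinuity of the $C^\alpha_tH^{-l}_x$- and $W^{\alpha,2}_tH^{-l}_x$-norms under the a.s.\ convergences from the Skorokhod representation, and Fatou's lemma, as in \eqref{eqn_lower_fatou}. Your direct BV-in-time treatment of $\int_{[0,t]\times\T^3}\eta\,\d q_\epsilon$, using $\M(\T^3)\hookrightarrow H^{-l}(\T^3)$, is a valid consistency check but is not needed for the proof.
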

	\begin{proof}
		This follows from Fatou's lemma and the lower semicontinuity of the involved norms with respect to $L^2_{t,x}$-convergence, analogous to \eqref{eqn_lower_fatou}.
	\end{proof}
	
	 Combining  the above estimates, we have the following tightness result.

	\begin{lemma}\label{tightness-2}
		 Let  
		 $(u_{\epsilon} ,\psi_{\epsilon},q_\epsilon)$ be the weak martingale solutions to  \eqref{ep_approx}--\eqref{eqn354865865} with initial data $(u_0,\psi_0)$ constructed in Proposition \ref{prp-existence-ep}. Then for every $\alpha\in(0,1/2)$ and $l>5/2$, the family
		  \[(u_{\epsilon},\psi_{\epsilon},\nabla u_{\epsilon},\nabla\psi_{\epsilon})_{\epsilon\in(0,1)}\] is tight in 
		\begin{align*}
		L^2([0,T];L^2(\mathbb{T}^3 ;\R^3))\cap &C^{\alpha}([0,T];H^{-l}(\T^3;\R^3))\times L^2([0,T];L^2(\mathbb{T}^3))\\
		&\times(L^{7/5}([0,T]\times \mathbb{T}^3 ;\R^{3\times 3}),w)\times(L^{7/5}([0,T]\times\mathbb{T}^3 ;\R^3),w). 
		\end{align*}
		
	\end{lemma}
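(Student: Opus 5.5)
The plan is to treat the four components separately and obtain tightness of the product from tightness of each marginal.

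\emph{Gradient components.} For $\nabla u_\epsilon$ and $\nabla\psi_\epsilon$ in the weak topologies, the closed balls of $L^{7/5}([0,T]\times\T^3)$ are weakly compact by reflexivity. The second bound in \eqref{sobolev-es} together with Markov's inequality gives
\[
\P\bigl(\|\nabla u_\epsilon\|_{L^{7/5}}+\|\nabla \psi_\epsilon\|_{L^{7/5}}>R\bigr)\le C R^{-7/5},
\]
uniformly in $\epsilon$. Hence the laws concentrate on weakly compact sets.

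\emph{Velocity.} Take $1<\alpha'<\ldots$; more precisely, fix $\alpha'\in(\alpha,1/2)$. Consider the set
\[
K_R=\bigl\{v:\ \|v\|_{L^{7/5}(0,T;W^{1,7/5})}+\|v\|_{L^\infty(0,T;L^2)}+\|v\|_{C^{\alpha'}([0,T];H^{-l})}\le R\bigr\}.
\]
The Sobolev exponent $1-3/(7/5)=-8/7>-3/2$, so $W^{1,7/5}(\T^3)$ embeds compactly into $L^2(\T^3)$. The Aubin--Lions--Simon criterion \cite{Simon}, with $W^{1,7/5}\Subset L^2\subset H^{-l}$, then shows that $K_R$ is relatively compact in $L^{7/5}(0,T;L^2)$. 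The uniform $L^\infty_tL^2_x$ bound upgrades this to $L^2(0,T;L^2)$ by interpolation, since $L^p(0,T;L^2)$ for $p<\infty$ is controlled by $L^{7/5}$ and $L^\infty$. Relative compactness of $K_R$ in $C^\alpha([0,T];H^{-l})$ follows from the compact embedding $C^{\alpha'}\Subset C^\alpha$ (Arzel\`a--Ascoli together with the bound in $L^\infty_tL^2_x$, which is compactly embedded in $H^{-l}$ pointwise in time). The probability of the complement of $K_R$ is controlled by Markov's inequality using \eqref{total-energy-es-es}, \eqref{sobolev-es} and \eqref{timeregularity-es}. For \eqref{timeregularity-es} I would apply it with $\alpha'$ in place of $\alpha$, which is allowed since it holds for every exponent in $(0,1/2)$.

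\emph{Temperature.} For $\psi_\epsilon$ the time regularity is only in $W^{\alpha,2}([0,T];H^{-l})$. I would use Simon's fractional version of the criterion: bounded sets of
\[
L^{7/5}(0,T;W^{1,7/5})\cap W^{\alpha,2}(0,T;H^{-l})
\]
are relatively compact in $L^{7/5}(0,T;L^2)$. One can see this either from Simon's Theorem 5 via the translation estimate $\|\tau_h f-f\|_{L^2_tH^{-l}}\lesssim h^\alpha\|f\|_{W^{\alpha,2}}$, or from the Flandoli--Gatarek compactness lemma \cite{FG95}. As before, the uniform $L^\infty_tL^2_x$ bound gives interpolation up to $L^2(0,T;L^2)$.

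\emph{Main obstacle.} The main difficulty is this upgrade from $L^{7/5}_t$ (or $L^p_t$) compactness to $L^2_tL^2_x$, since the gradient bounds are only $L^{7/5}$. To handle it, I would note that a sequence in $K_R$ converging in $L^{7/5}(0,T;L^2)$ is bounded in $L^\infty(0,T;L^2)$. Hence, by H\"older's inequality, $\|f\|_{L^2_tL^2_x}\le\|f\|_{L^{7/5}_tL^2_x}^{7/10}\|f\|_{L^\infty_tL^2_x}^{3/10}$, and therefore it converges in $L^2(0,T;L^2)$.

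Finally, the product of the compact sets yields tightness of the joint family. Measurability issues with the weak topologies are handled as in Lemma \ref{tightness-1}, since these spaces fall under the Jakubowski framework.
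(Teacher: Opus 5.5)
Your proof is correct and follows the paper's route. Aubin--Lions--Simon gives tightness of $(u_\epsilon,\psi_\epsilon)$ in $L^{7/5}(0,T;L^2)$, and the H\"older interpolation $\|f\|_{L^2_tL^2_x}\le\|f\|_{L^{7/5}_tL^2_x}^{7/10}\|f\|_{L^\infty_tL^2_x}^{3/10}$, combined with the uniform energy bound, upgrades this to $L^2_{t,x}$; the paper packages this step as Lemma~\ref{lemma_app_c} with $Y_{3/10}=L^2([0,T]\times\T^3)$, and treats the gradients by weak compactness of $L^{7/5}$-balls exactly as you do. The differences are cosmetic: you build the compact set directly via a Cauchy argument rather than citing that abstract lemma, and for the $C^\alpha([0,T];H^{-l})$ component you use the pointwise $L^2$ bound with $L^2\Subset H^{-l}$, whereas the paper slightly increases $\alpha$ and decreases $l$.
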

	\begin{proof}
		With the help of the uniform estimates \eqref{sobolev-es}, $(\nabla u_{\epsilon},\nabla\psi_{\epsilon})_{\epsilon\in(0,1)}$ is seen to be tight in $$(L^{7/5}([0,T];L^{7/5}(\mathbb{T}^3;\R^{3\times3})),w)\times(L^{7/5}([0,T];L^{7/5}(\mathbb{T}^3;\R^3)),w).$$
		Combining the bounds \eqref{sobolev-es} and \eqref{timeregularity-es} with the Aubin--Lions--Simon compactness criterion, we find also that $(u_{\epsilon},\psi_{\epsilon})_{\epsilon\in(0,1)}$ is tight in 
		$$
		L^{7/5}([0,T];L^{2}(\mathbb{T}^3;\R^3))\times L^{7/5}([0,T];L^{2}(\mathbb{T}^3)).  
		$$
		Combining this with the bound
		$$\operatorname*{ess\,sup}_{t\in[0,T]}\,(\|u_t\|_{L^2(\mathbb{T}^3;\R^3)}^2+\|\psi_t\|_{L^2(\mathbb{T}^3)}^2)\leq C(u_0,\psi_0).$$
		from \eqref{total-energy-es-es}, and setting 
		\[
		Y_0 \,=\, L^{7/5}([0,T];L^{2}(\mathbb{T}^3;\R^3)),\qquad Y_{3/10} \,=\, L^2([0,T]\times \T^3;\R^3),\qquad 
		Y_1 \,=\, L^{\infty}([0,T];L^{2}(\mathbb{T}^3;\R^3)),
		\]
		for $(u_\epsilon)_{\epsilon\in (0,1)}$, we obtain that $u_\epsilon$ is also tight in $ Y_{3/10}$ by Lemma \ref{lemma_app_c}. Analogously, we conclude that $(\psi_\epsilon)_{\epsilon\in (0,1)}$ is tight in $L^2([0,T]\times \T^3)$, while tightness of $(u_\epsilon)_{\epsilon\in (0,1)}$ in $C^\alpha([0,T];H^{-l} (\T^3;\R^3))$ is a consequence of \eqref{timeregularity-es}, up to increasing $\alpha$ and $-l$ slightly.
	\end{proof}
	
	Analogously to \eqref{eqn862346d}--\eqref{eqn862346e}, we let  
	\begin{align}\label{eqn_M1_eps}
		M^1_{\epsilon}=-\frac{1}{\sqrt{2}}\sum_{n\in\mathbb{Z}^3}\int^t_0\Pi\nabla\cdot(\psi_{\epsilon} f_n \,\d B_{n}),
	\end{align}
	and 
	\begin{align}\label{eqn_M2_eps}
		M^2_{\epsilon}=-\sum_{n\in\mathbb{Z}^3}\int^t_0\nabla\cdot(\psi_{\epsilon} g_n\,\d W_n)+\frac{1}{2}\sum_{n\in\mathbb{Z}^3}\int^t_0\psi_{\epsilon}\nabla\cdot(g_n\,\d W_n)-\frac{1}{\sqrt{2}}\sum_{n\in\mathbb{Z}^3}\int^t_0(\nabla u_{\epsilon}:f_n\,\d B_n). 
	\end{align}
	
	\begin{lemma}\label{lem-tight-martingales-2}
		Let 
		 $(u_{\epsilon} ,\psi_{\epsilon},q_\epsilon)$ be the weak martingale solutions to  \eqref{ep_approx}--\eqref{eqn354865865} with initial data $(u_0,\psi_0)$ constructed in Proposition \ref{prp-existence-ep}. Then,
		for   every  $\gamma\in(0,1/2)$ and every  $s>1$, $(M^1_{\epsilon})_{\epsilon\in(0,1)}$ and $(M^2_{\epsilon})_{\epsilon\in(0,1)}$ are tight in $C^{\gamma}([0,T];H^{-s}(\mathbb{T}^3;\mathbb{R}^3))$ and $C^{\gamma}([0,T];H^{-s}(\mathbb{T}^3))$, respectively. 
	\end{lemma}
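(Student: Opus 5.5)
The plan is to argue exactly as in the proof of Lemma \ref{lem-tight-martingales}, since the only inputs there were the uniform-in-time $L^2_x$ bounds on $\psi$ and $u$. These bounds remain available uniformly in $\epsilon$ through the energy estimate \eqref{total-energy-es-es}.

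\textbf{Step 1 (increment moments).} Fix $p\in[2,\infty)$ and $0\le t_1\le t_2\le T$. By the Burkholder--Davis--Gundy inequality in the Hilbert space $H^{-1}(\T^3)$ (resp.\ $H^{-1}(\T^3;\R^3)$), the $p$-th moment of $M^i_\epsilon(t_2)-M^i_\epsilon(t_1)$ is controlled by the $p/2$-th moment of the integrated Hilbert--Schmidt norm of the integrand. For $M^1_\epsilon$, the operator $\Pi\nabla\cdot$ maps $L^2$ boundedly into $H^{-1}$. Hence the Hilbert--Schmidt norm is bounded by
\[
\sum_n\|\psi_\epsilon f_n\|_{L^2}^2\le F_1\|\psi_\epsilon\|_{L^2}^2 ,
\]
using Assumption \ref{Ass:noise}. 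For $M^2_\epsilon$, the first two terms are treated the same way using $G_1$ and $G_2$. For the term involving $\nabla u_\epsilon:f_n$, we rewrite $\nabla u_\epsilon f_n=\nabla(u_\epsilon f_n)-u_\epsilon\otimes\nabla f_n$, as in \eqref{eqn_martingales_psi2}. Its $H^{-1}$ Hilbert--Schmidt norm is then bounded by $(F_1+F_2)\|u_\epsilon\|_{L^2}^2$. This rewriting is important: we must avoid $\nabla u_\epsilon$, for which only the $L^{7/5}$ bound \eqref{sobolev-es} is uniform in $\epsilon$.

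Combining with \eqref{total-energy-es-es}, which gives $\operatorname{ess\,sup}_t\|u_{\epsilon,t}\|_{L^2}^2+\|\psi_{\epsilon,t}\|_{L^2}^2\le \|u_0\|_{L^2}^2+\|\psi_0\|_{L^2}^2+2T$ $\P$-a.s., uniformly in $\epsilon\in(0,1)$, we obtain
\[
\E\|M^i_\epsilon(t_2)-M^i_\epsilon(t_1)\|_{H^{-1}}^p\le C|t_2-t_1|^{p/2},
\]
with $C$ independent of $\epsilon$. The ess sup is harmless because the integrand appears inside a $\d t$-integral.

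\textbf{Step 2 (Hölder bounds and compactness).} Given $\gamma\in(0,1/2)$, pick $\gamma'\in(\gamma,1/2)$ and $p$ with $\gamma'<1/2-1/p$. Kolmogorov's criterion \cite[Theorem 1.1]{KU23} then gives a bound on $\E\|M^i_\epsilon\|^p_{C^{\gamma'}([0,T];H^{-1})}$ that is uniform in $\epsilon$. The embedding $C^{\gamma'}([0,T];H^{-1})\subset C^{\gamma}([0,T];H^{-s})$ is compact for $s>1$, by Arzelà--Ascoli combined with the compactness of $H^{-1}\subset H^{-s}$. Chebyshev's inequality then yields tightness.

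The only point requiring care is the $\nabla u_\epsilon$ term in $M^2_\epsilon$. It is handled by the integration by parts above, so I expect no real obstacle.
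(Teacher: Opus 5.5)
Your proposal is correct and is exactly the paper's argument. The paper proves this lemma by repeating the proof of Lemma~\ref{lem-tight-martingales} (BDG in $H^{-1}$, with the $\nabla u_\epsilon : f_n$ term rewritten by integration by parts as in \eqref{eqn_martingales_psi2}, then Kolmogorov's criterion and the compact embedding $C^{\gamma'}_tH^{-1}_x\subset C^{\gamma}_tH^{-s}_x$), using the $\epsilon$-uniform energy bound \eqref{total-energy-es-es} in place of \eqref{total-energy-es}.
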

	\begin{proof}
		The proof follows along the lines of Lemma \ref{lem-tight-martingales} based on the energy estimate \eqref{total-energy-es-es}. 
	\end{proof}
	\subsubsection{Limiting procedure: $\epsilon\to 0$.}  Finally, we take the limit $\epsilon \to 0$ by another application of the stochastic compactness method. The proof is very similar to the proof of Proposition \ref{prp-existence-ep}, so that we  omit many details. Indeed, in terms of the tightness derived in Lemma \ref{tightness-2}, the only difference compared to Lemma \ref{tightness-1}, is that the gradients are controlled in a weaker space, namely $L^{7/5}_{t,x}$ instead of $L^2_{t,x}$.
	
	\begin{proof}[Proof of Theorem \ref{thm-weak-existence}] As mentioned above, we can largely follow the proof of Proposition  \ref{prp-existence-ep} and assume also here that the weak martingale solutions $(u_\epsilon,\psi_\epsilon,q_\epsilon)$ are defined with respect to the same stochastic basis. Then, fixing   $\alpha\in(0,1/2)$ and $l>5/2$,  the sequence
		\begin{align*}
			(u_{\epsilon}, \psi_{\epsilon}, \nabla u_{\epsilon}, \nabla \psi_{\epsilon})
		\end{align*}
		is tight in
		\begin{align*}
			L^2([0,T]; L^2(\mathbb{T}^3;\R^3))\cap &C^{\alpha}([0,T];H^{-l}(\T^3;\R^3))\times L^2([0,T]; L^2(\mathbb{T}^3))\\ &\times(L^{7/5}([0,T];L^{7/5}(\mathbb{T}^3;\R^{3\times3})),w)\times(L^{7/5}([0,T];L^{7/5}(\mathbb{T}^3;\R^3)),w),
		\end{align*}
	by Lemma \ref{tightness-2}. In light of Lemma \ref{lem-tight-martingales-2} also $M_\epsilon^1$ and $M_\epsilon^2$ defined in \eqref{eqn_M1_eps}--\eqref{eqn_M2_eps}
		are tight in $$C^{\gamma}([0,T];H^{-s}(\mathbb{T}^3;\mathbb{R}^3)) \qquad \text{and}\qquad C^{\gamma}([0,T];H^{-s}(\mathbb{T}^3)),$$respectively, for $\gamma\in(0,1/2)$ and $s>1$, which we fix for this proof as well. 
		Finally, we denote
		$$X_{\epsilon} := \Big(u_{\epsilon}, \psi_{\epsilon}, \nabla u_{\epsilon}, \nabla \psi_{\epsilon},q_{\epsilon}, M^1_{\epsilon},M^2_{\epsilon}\Big),$$
		and let 
		\begin{align*}
			\mathbb{X}:=&L^2([0,T]; L^2(\mathbb{T}^3;\R^3))\cap C^{\alpha}([0,T];H^{-l}(\T^3;\R^3))\times L^2([0,T]; L^2(\mathbb{T}^3)) \times (L^{7/5}([0,T]; L^{7/5}(\mathbb{T}^3;\R^{3\times3})), w)\\
			&\times (L^{7/5}([0,T]; L^{7/5}(\mathbb{T}^3;\R^3)), w)\times(\mathcal{M}([0,T]\times\mathbb{T}^3),w^*)\times C^{\gamma}([0,T];H^{-s}(\mathbb{T}^3;\mathbb{R}^3))\times C^{\gamma}([0,T];H^{-s}(\mathbb{T}^3)).
		\end{align*}
		Then, we have tightness of the joint sequence
			$$
		({X}_{\epsilon}, ({B}_{n})_{n\in\mathbb{Z}^3},({W}_{n})_{n\in\mathbb{Z}^3})_{\epsilon\in(0,1)}
		$$
		in 
		$$
		\mathbb{X} \times C([0,T];\mathbb{R}^{3\times 3})^{\mathbb{Z}^3} \times C([0,T];\mathbb{R}^{3})^{\mathbb{Z}^3}.
		$$
		By the Skorokhod--Jakubowski  theorem \cite{Jak97}, there exists a new probability space 
		$(\bar{\Omega}, { \bar{\mathfrak{A}}}, \bar{\mathbb{P}})$ and a sequence of random variables
		$$
		(\bar{X}_{\epsilon}, (\bar{B}_{n,\epsilon})_{n\in\mathbb{Z}^3},(\bar{W}_{n,\epsilon})_{n\in\mathbb{Z}^3})\,\to\, (\bar{X}, (\bar{B}_{n})_{n\in\mathbb{Z}^3},(\bar{W}_{n})_{n\in\mathbb{Z}^3}) ,
		$$
		$\bar{\P}$-a.s., 
		in 
		$$
		\mathbb{X} \times C([0,T];\mathbb{R}^{3\times 3})^{\mathbb{Z}^3} \times C([0,T];\mathbb{R}^{3})^{\mathbb{Z}^3},
		$$
		such that the joint laws satisfy
		\begin{align}\label{identity-jointlaws-3}
			(\bar{X}_{\epsilon}, (\bar{B}_{n,\epsilon})_{n\in\mathbb{Z}^3},(\bar{W}_{n,\epsilon})_{n\in\mathbb{Z}^3}) \overset{d}{\,=\,}& (X_{\epsilon}, (B_{n})_{n\in\mathbb{Z}^3},(W_{n})_{n\in\mathbb{Z}^3}), 
		\end{align}
		for a subsequence $\epsilon\to 0$. As in the proof of Proposition \ref{prp-existence-ep}, we write  $\bar{X}_{\epsilon} = (\bar{u}_{\epsilon}, \bar{\psi}_{\epsilon},\nabla \bar{u}_{\epsilon},\nabla \bar{\psi}_{\epsilon},\bar{q}_{\epsilon},\bar{M}^1_{\epsilon},\bar{M}^2_{\epsilon})$ and $\bar{X} = (\bar{u}, \bar{\psi},\nabla \bar{u},\nabla \bar{\psi},\bar{q},\bar{M}^1,\bar{M}^2)$, where we used already that one can identify the weak gradients.
		
		From here onward, we can follow the individual steps of the aforementioned proof. Transferring the relations of the components of $X_\epsilon$ to  the new variables $\bar{X}_{\epsilon} $, as performed in the second step is entirely analogous. For the third step some comments are in order: We need to show the convergence of 
		\begin{align}\bar{M}_{\varphi,\epsilon}^1(t) \,=\, &\int_{\T^3}   \bar{u}_{\epsilon}(t)\cdot \vp\,\d x \,-\, 
			\int_{\T^3} u_0 \cdot \vp\,\d x+\epsilon\int_0^t\int_{\T^3}\bar{u}_{\epsilon}\cdot\Delta^2\vp\,\d x\, \d s
			\\& +\int_0^t \int_{\T^3}   \Bigl(\frac{1}{2}+ \frac{F_1}{4}\Bigr) \nabla \bar{u}_{\epsilon}: \nabla  \vp \,-\, \bar{u}_{\epsilon}\otimes \bar{u}_{\delta}:\nabla \vp \,\d x\,\d s
		\end{align}
		to 
		\begin{align}\begin{split}&\int_{\T^3}  \bar{u}(t)\cdot \vp\,\d x \,-\, 
				\int_{\T^3} u_0 \cdot \vp\,\d x  +\int_0^t \int_{\T^3}   \Bigl(\frac{1}{2}+ \frac{F_1}{4}\Bigr) \nabla \bar{u}: \nabla  \vp \,-\, \bar{u}\otimes \bar{u}:\nabla \vp \,\d x\d s, \end{split}
		\end{align}
	as $\epsilon\to 0$,
		and a similar statement involving the equation for $ 
		\bar{M}_{\zeta,\epsilon}^2$, defined as in \eqref{eqn737232}. The passage in the convective term transfers to the current setting, since it relies on the $L^2_{t,x}$-convergence of $\bar{u}_\epsilon \to \bar{u}$, $\P$-almost surely. That \[
		\epsilon\int_0^t\int_{\T^3}\bar{u}_{\epsilon}\cdot\Delta^2\vp\,\d x \,\d s \,\to\, 0,
		\]
		follows from the uniform bound \eqref{total-energy-es-es}, and for the equation involving $ 
		\bar{M}_{\zeta,\epsilon}^2$ one can argue in the same manner.
		 The fourth step moreover, which only involves the Brownian motions, is virtually identical, and  also the fifth step carries over due to the  energy estimate \eqref{total-energy-es-es}. 
		 
		 The only significant change is in  step six, because this time the convergences $\nabla \bar{u}_\epsilon \to \bar{u}$ and $\nabla \bar{\psi}_\epsilon \to \bar{\psi}$ take place in the weak topology of  $L^{7/5}_{t,x}$ instead of $L^2_{t,x}$. Moreover, we  aim here for the relation  that, $\bar{\P}$-a.s.,
			\begin{equation}\label{eqn37-new}
				\bar{q} \,\ge\, \frac{1}{\bar{\psi}} \Big(|\nabla\bar{\psi}|^2 + |\nabla_{\sym} \bar{u}|^2\Big).
			\end{equation}
		Let   $\zeta\in C(\mathbb{T}^3\times[0,T])$ be non-negative. We want to argue again that, for fixed $\kappa>0$,
		\begin{align}\label{weakconvergence-1-new}
			\zeta^{1/2} h_\kappa(\bar{\psi}_{\epsilon})^{1/2}\nabla_{\sym}\bar{u}_{\epsilon}\rightharpoonup\zeta^{1/2} h_\kappa(\bar{\psi})^{1/2}\nabla_{\sym}\bar{u}, 	
		\end{align}
		and
		\begin{align}\label{weakconvergence-2-new}
			\zeta^{1/2} h_\kappa(\bar{\psi}_{\epsilon})^{1/2}\nabla\bar{\psi}_{\epsilon}\rightharpoonup\zeta^{1/2} h_\kappa(\bar{\psi})^{1/2}\nabla\bar{\psi}, 	
		\end{align}
		as $\epsilon\rightarrow0$,  in $L^2_{t,x}$, $\bar{\P}$-a.s., cf.\ \eqref{weakconvergence-1}--\eqref{weakconvergence-2}. For this  we use the estimate 
		\begin{align}
			\int_0^T \int_{\T^3} \frac{1}{\bar{\psi}_\epsilon}\bigl(
			|\nabla \bar{\psi}_\epsilon|^2 +
			|\nabla_\sym \bar{u}_\epsilon|^2+\epsilon
			\bigr)\,\d x \,\d t \,\le\, \| \bar{q}_\epsilon\|_{\M([0,T]\times\T^3)},
		\end{align}
		and that the latter is uniformly in $\epsilon$ bounded, $\P$-a.s., by the weak-${}^*$ convergence $\bar{q}_\epsilon \to \bar{q}$. Therefore, using that $h_\kappa (r)\le 1/r$, cf.\ Assumption \ref{Ass:nu_N}, up to passing to another subsequence (depending on $\omega$), we have the convergence
			\begin{align*}
			\zeta^{1/2} h_\kappa(\bar{\psi}_{\epsilon})^{1/2}\nabla_{\sym}\bar{u}_{\epsilon}\,\rightharpoonup\,  h,
		\end{align*}
		weakly to some  $h  \in L^2([0,T]\times \T^3; \R^{3\times 3})$. Taking a test function $g\in C(\mathbb{T}^3\times[0,T];\mathbb{R}^{3\times 3})$,  we find
		\begin{align*}
		\int^T_0\int_{\mathbb{T}^3}h:g\,\d x\,\d t=&\lim_{\epsilon\rightarrow0}\int^T_0\int_{\mathbb{T}^3}\zeta^{1/2} h_\kappa(\bar{\psi}_{\epsilon})^{1/2}\nabla_{\sym}\bar{u}_{\epsilon}:g\,\d x\,\d t\\
			=&\lim_{\epsilon\rightarrow0}\int^T_0\int_{\mathbb{T}^3}\zeta^{1/2}\left( h_\kappa(\bar{\psi}_{\epsilon})^{1/2}- h_\kappa(\bar{\psi})^{1/2}\right)\nabla_{\sym}\bar{u}_{\epsilon}:g\,\d x\,\d t\\
			&+\lim_{\epsilon\rightarrow0}\int^T_0\int_{\mathbb{T}^3}\zeta^{1/2} h_\kappa(\bar{\psi})^{1/2}\nabla_{\sym}\bar{u}_{\epsilon}:g\,\d x\,\d t\\
			=&\lim_{\epsilon\rightarrow0}\int^T_0\int_{\mathbb{T}^3}\zeta^{1/2}\left( h_\kappa(\bar{\psi}_{\epsilon})^{1/2}- h_\kappa(\bar{\psi})^{1/2}\right)\nabla_{\sym}\bar{u}_{\epsilon}:g\,\d x\,\d t\\
			&+\int^T_0\int_{\mathbb{T}^3}\zeta^{1/2} h_\kappa(\bar{\psi})^{1/2}\nabla_{\sym}\bar{u}:g\,\d x\,\d t,
		\end{align*}
		since $\nabla_\sym \bar{u}_\epsilon \rightharpoonup \nabla_\sym \bar{u}$ in $L^{7/5}([0,T]\times \T^3;\R^{3\times 3})$, $\P$-almost surely. For the former term on the above right-hand side, we observe additionally that $\zeta^{1/2}\left( h_\kappa(\bar{\psi}_{\epsilon})^{1/2}- h_\kappa(\bar{\psi})^{1/2}\right)g \to 0$ in $L^{7/2}([0,T]\times \T^3; \R^{3\times 3})$, by dominated convergence, identifying 
		\[
		h\,=\, \zeta^{1/2} h_\kappa(\bar{\psi})^{1/2}\nabla_{\sym}\bar{u}.
		\] Because this can performed starting from any subsequence, the weak convergence \eqref{weakconvergence-1-new} follows, while  \eqref{weakconvergence-2-new} can be obtained analogously. Then, we can use  the lower semicontinuity of the norm with respect to weak convergence, to argue  that
		\begin{align}\int_{[0,T]\times \T^3} \zeta \,\d \bar{q}  \,&=\,\lim_{\epsilon \to 0}
			\int_{[0,T]\times \T^3} \zeta\, \d \bar{q}_\epsilon \,\ge \, \liminf_{\epsilon \to 0}
		 \int_0^T \int_{\T^3} \zeta \frac{1}{\bar{\psi}_{\epsilon}} \bigl(
			|\nabla \bar{\psi}_{\epsilon}|^2 + |\nabla_\sym \bar{u}_{\epsilon}|^2 + \epsilon
			\bigr) \,\d x \,\d t\\&\ge\, \liminf_{\epsilon\to 0}
		 \int_0^T \int_{\T^3} \zeta h_\kappa(\bar{\psi}_{\epsilon})\bigl(
			|\nabla \bar{\psi}_{\epsilon}|^2 + |\nabla_\sym \bar{u}_{\epsilon}|^2
			\bigr) \,\d x \,\d t \\& \ge\,
			 \int_0^T \int_{\T^3}\zeta h_\kappa(\bar{\psi})\bigl(
			|\nabla \bar{\psi}|^2 + |\nabla_\sym \bar{u}|^2
			\bigr) \,\d x \,\d t .
		\end{align}
		Sending $\kappa \to 0$ yields \eqref{eqn37-new} by the monotone convergence theorem.
		This completes step six, and the concluding seventh step  is again entirely analogous.
	\end{proof}

\section{Local in time existence of strong solutions}\label{Sec_strong_ex}
In this section, we prove the local in time existence of strong solutions to \eqref{Eq_new_var} for regular initial data and noise coefficients. More precisely, we show the following. 
\begin{theorem}[Short-time strong existence]\label{Prop_ex_strong_solution}
	Let $U_0 \in H^3(\T^3; \R^3)$ be divergence-free,  $\Psi_0\in H^3(\T^3)$ satisfy $\inf_{x\in \T^3} \Psi_0(x)>0$ and the noise coefficients from Assumption \ref{Ass:noise} satisfy $f,g\in C^4(\T^3;\ell^2(\Z^3))$.  Then there exists a local strong solution to \eqref{Eq_new_var} starting from $(U_0,\Psi_0)$, up to some $\P$-a.s.\ positive stopping time $\tau$.
\end{theorem}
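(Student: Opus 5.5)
We construct the solution by a Banach fixed-point argument for a truncated problem in the maximal regularity class
\[
\mathcal{X}_T \,=\, L^2\bigl(\Omega; C([0,T];H^3)\cap L^2(0,T;H^4)\bigr),
\]
restricted to adapted processes. Since $H^3(\T^3)\hookrightarrow C^1(\T^3)$, such a solution automatically has the $C^0_tC^1_x$ regularity required in Definition~\ref{defi_strong_sol}.

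\textbf{Step 1: the linear problem.} Write \eqref{Eq_new_var} as a linear stochastic system plus nonlinearities. The linear part consists of:
\begin{itemize}
\item the diffusion $(\frac12+\frac{F_1}{4})\Delta U$ and $(1+\frac{F_1}{2}+\frac{G_1}{2})\Delta\Psi$, together with the zeroth-order term $-(F_2/2+G_2/8)\Psi$;
\item the full set of gradient noises $-\frac{1}{\sqrt2}\Pi\nabla\cdot(\Psi f_n\,\d B_n)$, $-\nabla\cdot(\Psi g_n\,\d W_n)+\frac12\Psi\nabla\cdot(g_n\,\d W_n)$ and $-\frac{1}{\sqrt2}\nabla U:f_n\,\d B_n$. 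These are linear in $(U,\Psi)$ with coefficients in $C^4(\T^3;\ell^2)$.
\end{itemize}

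Because these noise terms are of the same order as the Laplacian, they cannot be treated perturbatively. Instead we prove an a priori $H^3$ energy estimate for the full linear system with a given forcing $(R_u,R_\psi)\in L^2(\Omega;L^2(0,T;H^2))$ (deterministic drift) and with no further noise.

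To do so, apply It\^o's formula to $\|\partial^\alpha U\|_{L^2}^2+\|\partial^\alpha\Psi\|_{L^2}^2$ for $|\alpha|\le3$. The It\^o correction of the top-order part produces $\frac{F_1}{4}\|\nabla\partial^\alpha U\|^2$ (after the computation behind Proposition~\ref{prop_prop_SL}), plus $\frac{G_1}{2}\|\nabla\partial^\alpha\Psi\|^2$ and a cross term. These are exactly absorbed by the extra viscosities $F_1/4$, $F_1/2+G_1/2$, which are the Stratonovich corrections, leaving the dissipation $\frac12\|\nabla\partial^\alpha U\|^2+\|\nabla\partial^\alpha\Psi\|^2$. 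This is the same cancellation as in Lemma~\ref{prop_cons_of_energy}, applied at each derivative level. It uses $\sum_n f_n\nabla f_n=0$, $\sum_n g_n\nabla g_n=0$ and $\Pi$ being an $L^2$-contraction that commutes with derivatives.

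The commutators $[\partial^\alpha,f_n]$ and $[\partial^\alpha,g_n]$ involve at most three derivatives of the solution times up to four derivatives of $f,g$. They are therefore lower order and controlled by $\|f\|_{C^4(\ell^2)}$, $\|g\|_{C^4(\ell^2)}$ via Young's inequality. BDG and Gr\"onwall then give
\[
\E\Bigl[\sup_t\|(U,\Psi)\|_{H^3}^2+\int_0^T\|(U,\Psi)\|_{H^4}^2\,\d t\Bigr]\;\lesssim_T\;\E\|(U_0,\Psi_0)\|_{H^3}^2+\E\|(R_u,R_\psi)\|_{L^2_tH^2}^2 .
\]
Existence for the linear problem follows from a Galerkin scheme built on the same estimate. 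This step is the main technical obstacle: it requires verifying that every top-order It\^o correction, including the mixed $\Psi$--$U$ terms coming from $B_n$, cancels. We expect the commutator bookkeeping to be lengthy but routine.

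\textbf{Step 2: truncated nonlinearities.} Take a smooth cutoff $\theta_R(\|(U,\Psi)\|_{H^3})$, and replace $1/\Psi$ by $h_\delta(\Psi)$ from Assumption~\ref{Ass:nu_N}, choosing $\delta<\frac12\inf\Psi_0$ and a smooth $C^3$-bounded variant of $h_\delta$. The nonlinearities
\[
\mathcal N(U,\Psi)\,=\,\theta_R\cdot\bigl(-\Pi\nabla\cdot(U\otimes U),\; h_\delta(\Psi)(|\nabla\Psi|^2+|\nabla_\sym U|^2)-\nabla\cdot(U\Psi)\bigr)
\]
are then globally Lipschitz from $H^3$ into $H^2$. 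This follows from the algebra property of $H^2$ in three dimensions, the Moser estimate for $h_\delta(\Psi)$, and the standard cutoff argument for the difference $\theta_R(X)N(X)-\theta_R(Y)N(Y)$.

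The map $(U,\Psi)\mapsto$ (solution of the linear system with forcing $\mathcal N(U,\Psi)$) is a contraction on $\mathcal{X}_T$ for small deterministic $T$. Indeed, Step~1 converts $L^2_tH^2$ forcing into $\mathcal X_T$ norm, and the Lipschitz bound gains a factor $T^{1/2}$ through the sup-in-time norm. Iterating over intervals yields a global solution of the truncated system.

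\textbf{Step 3: removing the truncations.} Define
\[
\tau\,=\,\inf\Bigl\{t:\ \|(U,\Psi)(t)\|_{H^3}\ge R-1\ \text{ or }\ \inf_x\Psi(t,x)\le\delta\Bigr\}\wedge T,
\]
with $R>\|(U_0,\Psi_0)\|_{H^3}+1$. By continuity in $H^3\subset C^1$ and $\inf\Psi_0>2\delta$, we have $\tau>0$ almost surely. Up to $\tau$ the truncated and original nonlinearities coincide. Testing the equation against smooth $\varphi,\eta$ gives \eqref{Eq_V_strong} and \eqref{Eq_phi_strong}, with localizing sequence $\tau_j=\tau(1-1/j)$ (or hitting times of levels approaching the thresholds), so that property~\eqref{Item_pos_Phi} holds.
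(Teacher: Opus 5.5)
Your proposal is correct and follows essentially the same route as the paper: an $H^3$/$H^4$ stochastic maximal $L^2$-regularity estimate for the full linear system, obtained from It\^o's formula on third derivatives. In that estimate the top-order It\^o corrections cancel against the Stratonovich viscosities, and the commutators are controlled using $f,g\in C^4(\T^3;\ell^2(\Z^3))$. This is followed by Lipschitz bounds $H^3\to H^2$ for the nonlinearities with $1/r$ replaced by a $C^3$-bounded modification, a Banach fixed point for the truncated problem on a short deterministic interval, and a stopping time that removes the truncations. The differences are only cosmetic: you use a smooth cutoff $\theta_R$ instead of the radial truncation $\Xi^R_{\Hb}$ of Lemma~\ref{Lemma:trunc}, and your global-in-time iteration of the truncated problem is not needed.
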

We remark that, since Lemma \ref{prop_cons_of_energy} implies that strong solutions constitute in particular globally admissible weak solutions, uniqueness of strong solutions follows from the weak-strong uniqueness principle proved in Section \ref{Sec:WS_uniqueness}. Thereby, the strong solution from Theorem \ref{Prop_ex_strong_solution} is guaranteed to be unique up to the stopping time $\tau$. We also remind ourselves that strong solutions are characterized via their formulation in terms of $U$ and the temperature $\Theta = \Psi^2/2$, as is the content of Proposition \ref{prop_prop_SL}. As a consequence, the above yields also the short time existence and uniqueness of strong solutions to the system \eqref{Eq_intro}.
Regarding the proof of Theorem \ref{Prop_ex_strong_solution}, we provide in the preceding  subsection  elementary proofs of a composition estimate in Sobolev spaces and a general truncation lemma. With this at hand, we show  Theorem \ref{Prop_ex_strong_solution} by  setting up a fixed-point argument in sufficiently regular spaces for a suitably modified version of \eqref{Eq_new_var}, in  Subsection \ref{SS:proof_strong_existence}.

\subsection{Two elementary ingredients}
We prepare the proof of Theorem \ref{Prop_ex_strong_solution}.
\begin{lemma}[Composition in $H^2(\T^3)$]\label{lemma:comp}
	Let $h \in  W^{3,\infty}(\R)$ with $h(0) = 0$, then it holds that 
	\begin{align}
		\| h (\Phi )\|_{H^2(\T^3)} \,&\lesssim \, 
		\| h'\|_{W^{1,\infty}(\R)} \bigl(1+ \|\Phi \|_{H^2(\T^3)}^2  \bigr),
		\label{Eq:bddness_nu}
		\\
		\| h(\Psi ) -h(\Phi)\| _{H^2(\T^3)} \,&\lesssim \,
		\| h'\|_{W^{2,\infty}(\R)} \bigl(1+ \|\Psi \|_{H^2(\T^3)}^2 + \|\Phi \|_{H^2(\T^3)}^2  \bigr) \|\Psi -\Phi \|_{H^2(\T^3)},
		\label{Eq:diff_nu}
	\end{align}
	for all $\Psi ,\Phi \in H^2(\T^3)$.
\end{lemma}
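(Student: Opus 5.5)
We prove both estimates by expanding derivatives with the chain rule and using only two facts about $\T^3$. First, $H^2(\T^3)\hookrightarrow L^\infty(\T^3)$. Second, $H^1(\T^3)\hookrightarrow L^4(\T^3)$, so $\|\nabla\Phi\|_{L^4}\lesssim\|\Phi\|_{H^2}$.

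For \eqref{Eq:bddness_nu}, we bound the three pieces of the $H^2$-norm separately.
\begin{enumerate}[(a)]
\item The $L^2$-part: since $h(0)=0$, we have $|h(\Phi)|\le\|h'\|_{L^\infty}|\Phi|$, so this piece is at most $\|h'\|_{L^\infty}\|\Phi\|_{L^2}$.
\item The first derivative: $\nabla h(\Phi)=h'(\Phi)\nabla\Phi$, whose $L^2$-norm is at most $\|h'\|_{L^\infty}\|\Phi\|_{H^1}$.
\item The second derivatives: $\partial_{ij}h(\Phi)=h'(\Phi)\partial_{ij}\Phi+h''(\Phi)\partial_i\Phi\partial_j\Phi$. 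The first term has $L^2$-norm at most $\|h'\|_{L^\infty}\|\Phi\|_{H^2}$. For the second, H\"older and the $L^4$-embedding give $\|h''\|_{L^\infty}\|\nabla\Phi\|_{L^4}^2\lesssim\|h''\|_{L^\infty}\|\Phi\|_{H^2}^2$.
\end{enumerate}
Collecting the three pieces and using $a\le 1+a^2$ gives \eqref{Eq:bddness_nu}. The chain rule for $h\in W^{3,\infty}$ composed with $H^2$ functions is justified by approximating $\Phi$ by smooth functions and passing to the limit; all the bounds above are stable under that limit.

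For \eqref{Eq:diff_nu}, we write the difference as
\[
h(\Psi)-h(\Phi)=(\Psi-\Phi)\int_0^1h'(\Phi+s(\Psi-\Phi))\,\d s=:(\Psi-\Phi)\,m.
\]
We then use the algebra property of $H^2(\T^3)$ (valid since $2>3/2$): $\|(\Psi-\Phi)m\|_{H^2}\lesssim\|\Psi-\Phi\|_{H^2}\|m\|_{W^{2,?}}$. Care is needed here, because $m$ contains a constant part and is not itself in $H^2$ in the form the composition bound wants.

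The cleanest route is to write $m=h'(0)+\tilde m$ with
\[
\tilde m=\int_0^1 k(\Phi_s)\,\d s,\qquad k:=h'-h'(0),\qquad \Phi_s:=\Phi+s(\Psi-\Phi).
\]
Since $k(0)=0$, applying \eqref{Eq:bddness_nu} to $k$ (which uses $k'=h''\in W^{1,\infty}$) gives
\[
\|k(\Phi_s)\|_{H^2}\lesssim\|h''\|_{W^{1,\infty}}\bigl(1+\|\Psi\|_{H^2}^2+\|\Phi\|_{H^2}^2\bigr).
\]
Minkowski's inequality transfers this bound to $\tilde m$. Then
\[
\|(\Psi-\Phi)m\|_{H^2}\le|h'(0)|\,\|\Psi-\Phi\|_{H^2}+C\|\Psi-\Phi\|_{H^2}\|\tilde m\|_{H^2}.
\]
This is bounded by $\|h'\|_{W^{2,\infty}}(1+\|\Psi\|_{H^2}^2+\|\Phi\|_{H^2}^2)\|\Psi-\Phi\|_{H^2}$, which is \eqref{Eq:diff_nu}.

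The only step needing care is the measurability and integrability of $s\mapsto k(\Phi_s)$ in $H^2$, which is needed for the Bochner integral. This follows from continuity of composition, or alternatively by estimating the derivatives of $\tilde m$ pointwise under the integral sign, using the same three-term computation as above.
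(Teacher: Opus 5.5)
Your proof is correct, but it runs in the opposite direction from the paper's. The paper proves \eqref{Eq:diff_nu} first, by hand. It uses that $(1-\Delta)\colon H^2(\T^3)\to L^2(\T^3)$ is an isomorphism, so only $\|h(\Psi)-h(\Phi)\|_{L^2}$ and $\|\Delta(h(\Psi)-h(\Phi))\|_{L^2}$ need to be controlled. The first follows from the Lipschitz bound on $h$. For the second, it expands $\Delta h(\Psi)=h''(\Psi)|\nabla\Psi|^2+h'(\Psi)\Delta\Psi$, telescopes the difference into four terms, and closes with H\"older and $H^2\hookrightarrow W^{1,4}\hookrightarrow L^\infty$. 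It then reads off \eqref{Eq:bddness_nu} by setting $\Psi=0$ in that computation.

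You instead prove \eqref{Eq:bddness_nu} directly. This is the paper's computation at $\Psi=0$, done for all second derivatives rather than just the Laplacian. You then obtain \eqref{Eq:diff_nu} structurally: factor $h(\Psi)-h(\Phi)=(\Psi-\Phi)\int_0^1h'(\Phi_s)\,\d s$, split off $h'(0)$, apply the bound to $k=h'-h'(0)$, and use the algebra property of $H^2(\T^3)$.

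What your route buys is modularity. The Lipschitz estimate becomes a formal consequence of a composition bound plus the Banach-algebra property, so it carries over to $H^s(\T^3)$ with $s>3/2$. It also avoids telescoping the nonlinear second-order terms by hand, and the derivative count comes out the same, since $\|k'\|_{W^{1,\infty}}=\|h''\|_{W^{1,\infty}}$.

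The costs are a heavier tool (the algebra property instead of H\"older plus embeddings) and the Bochner-integral bookkeeping. There is also a small regularity point: you need \eqref{Eq:bddness_nu} for $k$, whose second derivative $k''=h'''$ is only bounded. So the formula for $\partial_{ij}k(\Phi_s)$ must be read in the Lipschitz chain-rule sense, or, as you indicate, the bound obtained via smooth approximation and lower semicontinuity. This is immaterial in the paper's application, where $h$ is smooth. The paper's argument, by contrast, is fully elementary and self-contained.
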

\begin{proof}We start by checking the more delicate \eqref{Eq:diff_nu}. To this end, using that $(1-\Delta)\colon H^2(\T^3) \to L^2(\T^3)$ is an isomorphism, it suffices to estimate
	\begin{align}\label{Eq:nu_est_pf1}
		\| h(\Psi ) -h(\Phi)\| _{H^2(\T^3)}  \,\eqsim \, 	\| h(\Psi ) -h(\Phi)\| _{L^2(\T^3)} \,+\, \|\Delta (h(\Psi ) - h(\Phi))\| _{L^2(\T^3)}.
	\end{align}
	Concerning the former part, using the Lipschitz continuity of $h$, we find 
	\begin{equation}\label{Eq:nu_est_pf2}
		\| h(\Psi ) -h(\Phi)\| _{L^2(\T^3)} \,\le\,  \|h'\|_{L^\infty(\R)}\|\Psi - \Phi \|_{L^2(\T^3)}.
	\end{equation}
	Observing moreover that $ \Delta  (h(\Psi)) = \nabla\cdot ( h'(\Psi ) \nabla \Psi ) = h''(\Psi) |\nabla \Psi|^2 + h'(\Psi) \Delta \Psi$, we can estimate the latter term of \eqref{Eq:nu_est_pf1} by 
	\begin{align}\begin{split}\label{Eq:nu_est_pf3}&
			\|h''(\Psi) |\nabla \Psi|^2 - h''(\Phi) |\nabla \Phi|^2\|_{L^2(\T^3)} \,+\, 
			\| h'(\Psi) \Delta \Psi -  h'(\Phi) \Delta \Phi\|_{L^2(\T^3)}  \\&\quad \le \|(h''(\Psi ) -h''(\Phi) )|\nabla \Psi |^2\|_{L^2(\T^3)} \,+\, \| h''(\Phi ) (\nabla \Psi +\nabla \Phi)\cdot  (\nabla \Psi - \nabla \Phi)\|_{L^2(\T^3)}\\&
			\qquad +\, \|(h'(\Psi ) -h'(\Phi) ) \Delta  \Psi \|_{L^2(\T^3)} \,+\, \| h'(\Phi ) (\Delta \Psi - \Delta \Phi)\|_{L^2(\T^3)}\\& \quad\le \, \| h'''\|_{L^\infty(\R)}\| \nabla \Psi \|_{L^4(\T^3;\R^3)}^2 \| \Psi - \Phi\|_{L^\infty(\T^3)} \\&\qquad +\, \| h''\|_{L^\infty(\R)} \bigl(
			\|\nabla \Psi  \|_{L^4(\T^3;\R^3)} + \|\nabla \Phi  \|_{L^4(\T^3;\R^3)} 
			\bigr)\| \nabla \Phi - \nabla \Psi \|_{L^4(\T^3; \R^3)}\\&
			\qquad +\, \|h''\|_{L^\infty(\R)} \|\Delta\Psi  \|_{L^2(\T^3)} \| \Psi -\Phi \|_{L^\infty(\T^3)} \,+\, \|h'\|_{L^\infty (\R)} \|\Delta \Psi -\Delta \Phi \|_{L^2(\T^3)}.\end{split}
	\end{align}
	Using the chain of Sobolev embeddings $H^2(\T^3) \hookrightarrow W^{1,4}(\T^3) \hookrightarrow L^\infty(\T^3)$, we arrive at \eqref{Eq:diff_nu} upon combining \eqref{Eq:nu_est_pf1}--\eqref{Eq:nu_est_pf3}\noeqref{Eq:nu_est_pf2}. To obtain \eqref{Eq:bddness_nu}, it suffices to set $\Psi =0$ in the above and use that therefore $h(\Psi)=0$ by assumption.
\end{proof}
\begin{lemma}[Lipschitz truncation]\label{Lemma:trunc}Consider a normed vector space $\mathscr{X}$ and let $R>0$. Then the mapping 
	\begin{equation}\label{Eq:definition_trunc}
		\Xi^R_\mathscr{X}\colon \mathscr{X}\to \mathscr{X},
		x \,\mapsto \, \frac{Rx}{\max\{R, \|x\|_\mathscr{X}\}}
	\end{equation}
	satisfies 
	\begin{align}&
		\label{Eq:trunc}
		\|
		\Xi^R_\mathscr{X}(x)\|_{\mathscr{X}} \,\le\, R,\\&
		\label{Eq:Lip}\| \Xi^R_\mathscr{X}(x)- \Xi^R_\mathscr{X}(y)\|_\mathscr{X}\,\le\, 2 \|x-y\|_{\mathscr{X}},
	\end{align}
	for all $x,y\in\mathscr{X}$.
\end{lemma}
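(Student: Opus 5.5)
The plan is to check both claims directly from the definition \eqref{Eq:definition_trunc}. We write $\Xi = \Xi^R_\mathscr{X}$ and $m(x)=\max\{R,\|x\|_\mathscr{X}\}$, so that $\Xi(x) = R x/m(x)$. For \eqref{Eq:trunc} we note $\|\Xi(x)\|_\mathscr{X} = R\|x\|_\mathscr{X}/m(x) \le R$, since $m(x)\ge \|x\|_\mathscr{X}$; this is immediate.

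For \eqref{Eq:Lip} we fix $x,y$ and, by symmetry, assume $\|x\|_\mathscr{X}\ge \|y\|_\mathscr{X}$, hence $m(x)\ge m(y)$. We split
\[
\Xi(x)-\Xi(y) \,=\, \frac{R}{m(x)}(x-y) \,+\, R\,y\Bigl(\frac{1}{m(x)}-\frac{1}{m(y)}\Bigr).
\]
The first term has norm at most $\|x-y\|_\mathscr{X}$, because $R/m(x)\le 1$. For the second term we write
\[
R\,\|y\|_\mathscr{X}\,\frac{m(x)-m(y)}{m(x)m(y)} \,\le\, \frac{R}{m(x)}\cdot\frac{\|y\|_\mathscr{X}}{m(y)}\,\bigl(m(x)-m(y)\bigr) \,\le\, m(x)-m(y).
\]
Since $r\mapsto\max\{R,r\}$ is $1$-Lipschitz, $m(x)-m(y)\le \bigl|\|x\|_\mathscr{X}-\|y\|_\mathscr{X}\bigr|\le\|x-y\|_\mathscr{X}$. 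Summing the two bounds gives the factor $2$.

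The only point needing care is the ordering step: the assumption $\|x\|_\mathscr{X}\ge\|y\|_\mathscr{X}$ lets us pull out $R/m(x)\le1$ from the first term while the second term is controlled via $\|y\|_\mathscr{X}\le m(y)$. If the bound is done without this symmetric choice, one may pick up a ratio $\|y\|_\mathscr{X}/m(x)$ that is not bounded by one. Otherwise the argument is elementary and uses no earlier result of the paper.
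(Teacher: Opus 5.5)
Your proof is correct and follows essentially the paper's route. Up to relabeling $x\leftrightarrow y$, it uses the same splitting into $\frac{R}{m}(x-y)$ (with $m$ taken at the larger-norm vector) plus a scalar difference times the smaller-norm vector, and bounds each piece by $\|x-y\|_\mathscr{X}$; the paper handles the second piece by rewriting $R\|x\|_\mathscr{X}/m(x)=\min\{R,\|x\|_\mathscr{X}\}$ and discarding a nonpositive term, where you factor and use that $r\mapsto\max\{R,r\}$ is $1$-Lipschitz. (Incidentally, your factorization $\frac{R}{m(x)}\cdot\frac{\|y\|_\mathscr{X}}{m(y)}\le 1$ holds for either ordering, so the WLOG assumption only removes absolute values and is not the delicate point your last paragraph suggests.)
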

\begin{proof}The bound \eqref{Eq:trunc} follows by design, while for \eqref{Eq:Lip}  we can assume that $\|y\|_{\mathscr{X}}\ge \|x\|_{\mathscr{X}}$ and estimate 
	\begin{align}
		\|\Xi^R_\mathscr{X}(x)  - \Xi^R_\mathscr{X}(y) \|_\mathscr{X} \,\le\, \Bigl(
		\frac{R}{\max\{R, \|x\|_\mathscr{X}\}}- \frac{R}{\max\{R, \|y\|_\mathscr{X}\}}
		\Bigr) \|x\|_\mathscr{X}\,+\, \frac{R}{\max\{R, \|y\|_\mathscr{X}\}}\|x-y\|_{\mathscr{X}},
	\end{align}
	using the triangle inequality. We further bound
	\begin{align}
		&\Bigl(
		\frac{R}{\max\{R, \|x\|_\mathscr{X}\}} - \frac{R}{\max\{R, \|y\|_\mathscr{X}\}}
		\Bigr) \|x\|_\mathscr{X}
		\\&\quad \le \, \bigl(\min \{R, \|x\|_{\mathscr{X}} \}\, - \,\min \{
		R,\|y\|_{\mathscr{X}}
		\} \bigr)\,+\, \frac{R}{\max\{R,\|y\|_\mathscr{X}\}}\bigl(
		\|y\|_\mathscr{X}-\|x\|_\mathscr{X}
		\bigr),
	\end{align}
	and since the  former term on the right-hand side is non-positive, we deduce \eqref{Eq:Lip}.
\end{proof}
\subsection{Proof of strong existence}\label{SS:proof_strong_existence} 
Before starting with the detailed proof of Theorem \ref{Prop_ex_strong_solution}, let us outline the idea: One writes the system \eqref{Eq_new_var}, posed for $(U,\Psi)$ and for small times, as 
\begin{align}\begin{split}\label{eq_linear}
		\d U \,=&\,\nabla\cdot(\nabla_{\sym} U)\, \d t\, + \,\frac{F_1}{4}\Delta  U\,\d t \,-\, \frac{1}{\sqrt{2}}\sum_{n \in \Z^3}\Pi \nabla\cdot(\Psi f_n  \, \d B_n) \,+\, N_1( U,\Psi)\,\d t,\qquad \nabla\cdot  U\,=\,0, 
		\\
		\d \Psi \, = &\,(1+F_1{/2} +G_1/2)\Delta \Psi \,\d t\, - \,(F_2{/2}+ G_2/8) \Psi\, \d t \\&- \,\sum_{n\in \Z^3} \nabla \cdot ( \Psi g_n \,\d W_n ) \,
		+\, \frac{1}{2} \sum_{n \in \Z^3} \Psi \nabla \cdot ( g_n \, \d  W_n) \,-\,  \frac{1}{\sqrt{2}}\sum_{n\in \Z^3}(\nabla   U: f_n\,\d  B_n) +  N_2(U,\Psi)\,\d t,
		\\ ( U(0),\Psi(0))\, =&\,(  U_0,\Psi_0),
	\end{split}
\end{align}
and couples it to the conditions 
\begin{align}\begin{split}\label{Eq:nonlinearities}&
		N_1( U,\Psi) \,=\, 	N_1( U) \,=\, - \Pi \nabla \cdot ( U \otimes  U),\\&
		N_2( U,\Psi ) \,=\, N_2^a(\Psi ) \,+\, N^b_2( U,\Psi)\,+\, N^c_2( U,\Psi ) \,=\,   h(\Psi)
		|\nabla \Psi|^2 \,+\,  h(\Psi )|\nabla_\sym  U|^2 
		- \nabla \cdot (  U\Psi ).
\end{split}\end{align}
Here, as we aim to show only small time existence,  we may pass to a  modified nonlinearity $h \in C^\infty ( \R)$ with compact support in $(0,\infty)$, which satisfies  $ h(r) = 1/r$ for all $r$  from an open interval  $(\chi_l,\chi_u)$ containing  the range of $\Psi_0$. Below we demonstrate that, after composing \eqref{Eq:nonlinearities} with the truncation operator from Lemma \ref{Lemma:trunc}, the nonlinearities are Lipschitz continuous in suitable spaces. 
Combined with stochastic maximal $L^2$-regularity estimates for the linear but 
inhomogeneous part of \eqref{eq_linear}, i.e., 
\begin{align}\begin{split}\label{Eq_Linear}
		\d U\,=&\,\nabla\cdot(\nabla_{\sym}U)\, \d t\, + \,\frac{F_1}{4}\Delta U\,\d t\, -\, \frac{1}{\sqrt{2}}\sum_{n \in \Z^3}\Pi \nabla\cdot(\Psi f_n   \,\d B_n) \,+\, I \,\d t, \qquad \nabla\cdot U\,=\,0, 
		\\
		\d \Psi \, =\, &(1+F_1{/2} +G_1/2)\Delta \Psi \,\d t \,- \,(F_2{/2}+ G_2/8) \Psi\, \d t \\&- \sum_{n\in \Z^3} \nabla \cdot ( \Psi g_n \,\d W_n ) \,
		+\, \frac{1}{2} \sum_{n \in \Z^3} \Psi \nabla \cdot ( g_n \, \d  W_n) \,-\,  \frac{1}{\sqrt{2}}\sum_{n\in \Z^3}(\nabla  U : f_n\,\d  B_n) \,+ \, J\,\d t,
	\end{split}
\end{align}
for suitable $I$ and $J$,
one is able to apply the Banach fixed-point theorem to conclude the existence  of a strong solution to the modified equation. 
But since the modified nonlinearities coincide with the original ones for values close to $(U_0,\Psi_0)$, the solution solves the original equation 
up to some possibly small but positive stopping  time. Thus, the latter constitutes a local strong solution to \eqref{Eq_new_var} as desired.

\begin{proof}[Proof of Theorem \ref{Prop_ex_strong_solution}]
	We follow the strategy laid out above and aim to  set up a fixed-point argument using the spaces 
	\begin{align}\begin{split}\label{Eq:spaces}
			\Hb \,=\, &\bigl\{ (U,\Psi ) \in H^3(\T^3; \R^3) \times H^3(\T^3) \, \big|\, \nabla \cdot U=0 \bigr\} ,\\
			\Vb \,=\,& \bigl\{ (U,\Psi ) \in H^4(\T^3; \R^3) \times H^4(\T^3)  \, \big|\, \nabla \cdot U = 0 \bigr\} ,\\
			\Wb \,=\, &\bigl\{ (U,\Psi ) \in H^2(\T^3; \R^3) \times H^2(\T^3)  \, \big|\,  \nabla \cdot U = 0 \bigr\}, 
	\end{split}\end{align}
	equipped with their natural norms. We remark that by the characterization of Sobolev spaces based on the Bessel potential, they 
	induce  a Gelfand triple $\Vb \subset \Hb \subset \Vb^*$ with $\Vb^* \eqsim \Wb$. 
	For the sake of  clarity, we divide the proof  into several steps: 
	In the first step, we derive  stochastic maximal $L^2$-regularity estimates in the spaces \eqref{Eq:spaces}
	for the linear equation \eqref{Eq_Linear}. In the second step, we prove compatible local Lipschitz estimates on the nonlinearities \eqref{Eq:nonlinearities}. In the third step, we deduce existence of solutions to a  version of \eqref{eq_linear}--\eqref{Eq:nonlinearities} with additionally truncated nonlinearities. In the  concluding part of this proof, we elaborate how this implies the desired result.

	\textit{Step 1 (Stochastic maximal $L^2$-regularity estimates): For every progressively measurable $(I,J) \in L^2(\Omega\times [0,T];  \Wb)$, there exists a unique adapted, continuous $\Hb$-valued  process $(U,\Psi)$ with 
		\begin{equation}\label{Eq_7213}
			\E\biggl[
			\sup_{t\in [0,T]} \|(U,\Psi)\|_{\Hb}^2 \,+\, \int_0^T \|(U,\Psi)\|_{\Vb}^2\,\d t
			\biggr]\,<\,\infty
		\end{equation}
		satisfying for every $\vp\in C^{\reg}(\T^3; \R^3)$ divergence-free  and $\eta \in C^{\reg}(\T^3)$
		\begin{align}\label{Eq_7214}\\&
			\int_{\T^3}  U_t\cdot \vp\,\d x \,-\, 
			\int_{\T^3} U_0 \cdot \vp\,\d x
			\\&\quad =\,  \int_0^t \int_{\T^3} -  \Bigl(\frac{1}{2}+ \frac{F_1}{4}\Bigr) \nabla U: \nabla  \vp \,\d x\, \d s \,+\, \frac{1}{\sqrt{2}}\sum_{n \in \Z^3}\int_0^t \int_{\T^3}\Psi f_n \nabla \vp\,\d x :\d B_{n,s}
			\ +\, \int_0^t \int_{\T^3}   I\cdot \vp\,\d x \, \d s
			,\\&
			\int_{\T^3} \Psi_t  \eta  \, \d x  \,-\, \int_{\T^3}
			\Psi_0 \eta
			\,\d x \\&\quad =\,  \int_0^t \int_{\T^3} - \bigl(1+ {F_1}/2 +G_1/2\bigr) \nabla \Psi\cdot \nabla  \eta    \,-\, (F_2/2+G_2/8) \Psi \eta \,\d x\, \d s \, +\, \int_0^t \int_{\T^3}   J\eta \,\d x \,\d s
			\\&\qquad + \sum_{i=1}^3
			\sum_{n \in \Z^3}\int_0^t \int_{\T^3} \Psi g_n \partial_i \eta + \frac{1}{2} \Psi  \eta\partial_ig_n  \,\d x \,  \d W_{n,s}^i
			\,-\, \frac{1}{\sqrt{2}}\sum_{n \in \Z^3}\int_0^t\int_{\T^3}\eta f_n \nabla U\,\d x :\d B_{n,s} ,
		\end{align}
		for all $t\in [0,T]$, $\P$-almost surely. Moreover, $(U,\Psi)$ satisfies the estimate 
		\begin{align}\label{Eq42}\E\biggl[
			\sup_{s\in [0,t]} \|(U,\Psi)\|_{\Hb}^2 \,+\, \int_0^t \|(U,\Psi)\|_{\Vb}^2\,\d s
			\biggr]\,
			\lesssim_{(f,g,T)}\,
			\|U_0\|_{\Hb}^2 + \|\Psi_0\|_{\Hb}^2	
			\,+\, \E\bigl[\|(I,J)\|_{L^2(0,t;\Wb)}^2 \bigr] ,
		\end{align} 
		for any $t\in [0,T]$.
	}

	We only show the a priori estimate \eqref{Eq42} for an adapted, continuous $\Hb$-valued  process $(U,\Psi)$ satisfying \eqref{Eq_7213}--\eqref{Eq_7214}. This immediately implies the assertion regarding uniqueness, because, by the linearity of \eqref{Eq_7214}, the difference of two solutions satisfies the same equation but with vanishing initial data  and right-hand side. Moreover, given the a priori estimate \eqref{Eq42}, the existence of solutions follows by a standard Galerkin approximation due to the linearity of the equation, as performed in  \cite[Chapter 4]{LR_book}. 
	
	To prove \eqref{Eq42}, 
	we apply It\^o's formula in order to compute the evolution of the norm
	\begin{equation}\label{Eq991} 
		\frac{1}{2}\|(U_t,\Psi_t)\|_{L^2(\T^3;\R^3)\times L^2(\T^3)}^2 \,+\, \frac{1}{2}\sum_{j,k,l=1}^3\|(\partial_{jkl}U_t,\partial_{jkl}\Psi_t)\|_{L^2(\T^3;\R^3)\times L^2(\T^3)}^2\,\eqsim \,
		\frac{1}{2}\|(U_t,\Psi_t)\|_{\Hb}^2 .
	\end{equation}
	Deriving the It\^o expansion of the former term on the left-hand side amounts to the same calculations as for the proof of the relative energy estimate from Lemma \ref{thm_rel_energy_est}, but in a technically less delicate setting. Indeed, It\^o's formula as stated in \cite[Theorem 4.2.5]{LR_book} applies here and thereby the evolution can be seen from the formula \eqref{Eq_Ito_product} with $(V,\Phi) =(u,\psi^+) = (U,\Psi)$ upon replacing the non-quadratic terms by $2(U,I)_{L^2(\T^3;\R^3)}+2(\Psi,J)_{L^2(\T^3)}$. More precisely, using the cancellation of stochastic integrals remarked below said equation, we have
	\begin{align}\begin{split}
			\label{Eq_Ito_L_2_part}&
			\frac{1}{2} \bigl(
			\|U_t\|_{L^2(\T^3;\R^3)}^2 + \|\Psi_t\|_{L^2(\T^3)}^2
			\bigr) \,=\, 
			\frac{1}{2} \bigl(
			\|U_0\|_{L^2(\T^3;\R^3)}^2 + \|\Psi_0\|_{L^2(\T^3)}^2 \bigr)  \\& \qquad -\, \int_0^t \|\nabla_\sym U\|_{L^2(\T^3;\R^{3\times 3})}^2\,+\,  \|\nabla \Psi\|_{L^2(\T^3;\R^{3})}^2 \,\d s \,+\, 
			\int_0^t (U,I)_{L^2(\T^3;\R^3)}\,+\,  (\Psi,J)_{L^2(\T^3)} \,\d s .
		\end{split}
	\end{align}
	To treat also the leading order term in \eqref{Eq991}, we  take $ -\partial_{jkl}{\vp}$ as test function in \eqref{Eq_7214} to find that 
	\begin{align} \begin{split}&
			\int_{\T^3} \partial_{jkl} U_t	\cdot {\vp}\,\d x\,-\, \int_{\T^3} \partial_{jkl} U_0	\cdot {\vp}\,\d x\,\\&\quad  =\,
			\int_0^t \int_{\T^3} -  \Bigl(\frac{1}{2}+ \frac{F_1}{4}\Bigr) \nabla (\partial_{jkl}U): \nabla  \vp \,\d x\,\d s \, +\,  \frac{1}{\sqrt{2}}\sum_{n \in \Z^3}\int_0^t \int_{\T^3} \partial_{jkl}\Psi f_n \nabla {\vp}\,\d x :\d B_{n,s}
			\\&\qquad +
			\,  \frac{1}{\sqrt{2}}\sum_{n \in \Z^3}\int_0^t \int_{\T^3} [\partial_{jkl} , f_n ] (\Psi) \nabla {\vp}\,\d x :\d B_{n,s}\, +\, \int_0^t \int_{\T^3}  \partial_{jkl} I\cdot \vp\,\d x \,\d s
			,
	\end{split}	\end{align}
	for every divergence-free ${\vp}\in C^{\reg}(\T^3;\R^3)$,
	where 
	\begin{equation}\label{Eq43}
		[\partial_{jkl} , f ] (\Psi) \,=\, \partial_{jkl} (f  \Psi) - f \partial_{jkl}  \Psi
	\end{equation}
	denotes the commutator of differentiation and multiplication with $f$.
	The above is the weak formulation of the equation
	\begin{align}
		& \d \partial_{jkl} U \,=\,  \nabla\cdot\bigl(\nabla_{\sym} (\partial_{jkl} U )\bigr) \,\d t \,+\, \frac{F_1}{4} \Delta (\partial_{jkl} U \bigr) \,\d t \,	- \,\frac{1}{\sqrt{2}}\sum_{n \in \Z^3}\Pi \nabla\cdot((\partial_{jkl} \Psi) f_n  \, \d B_n)
		\\&\qquad  - \frac{1}{\sqrt{2}}\sum_{n \in \Z^3}\Pi \nabla\cdot([\partial_{jkl} , f_n ] (\Psi) \, \d B_n) \,+\, \partial_{jkl} I\,  \d t.
		\label{Eq_u_derivatives}
	\end{align}
	We record for later use also the estimate 
	\begin{equation}\label{eq_comm_est1}
		\| [\partial_{jkl} , f ] (\Psi)  \|_{H^1(\T^3;\ell^2(\Z^3))} \,\lesssim \, \| f\|_{C^{4} (\T^3; \ell^2(\Z^3) )}\| \Psi \|_{H^3(\T^3)},
	\end{equation}
	which holds since  $\Psi$ is differentiated at most twice in \eqref{Eq43} after canceling the leading order contributions. Similarly, by using $- \partial_{jkl} \eta$ as a test function in \eqref{Eq_7214}, we find  that 
	\begin{align}\begin{split}\label{Eq_psi_derivatives}
			\d\bigl( \partial_{jkl} \Psi \bigr)\,=\, 	&(1+F_1{/2} +G_1/2)\Delta \bigl(\partial_{jkl} \Psi\bigr) \, \d t \,-\,  (F_2{/2}+ G_2/8) \bigl(\partial_{jkl} \Psi\bigr) \, \d t \\&- \sum_{n\in \Z^3} \nabla \cdot ( (\partial_{jkl} \Psi )g_n \, \d W_n ) \,
			+\, \frac{1}{2} \sum_{n \in \Z^3} (\partial_{jkl} \Psi) \nabla \cdot ( g_n \,  \d  W_n) \\&-\,  \frac{1}{\sqrt{2}}\sum_{n\in \Z^3}(\nabla  (\partial_{jkl}U) : f_n\, \d  B_n)
			\\&
			- \sum_{n\in \Z^3} \nabla \cdot ( [ \partial_{jkl} ,g_n] (\Psi)\, \d W_n ) \,
			+\, \frac{1}{2} \sum_{i=1}^3\sum_{n \in \Z^3} [\partial_{jkl} ,\partial_{i} g_n](\Psi) \,  \d  W_n^i \\&-\,  \frac{1}{\sqrt{2}}\sum_{n\in \Z^3}( [\partial_{jkl} ,f_n \nabla]  (U) :\d  B_n)
			\, + \,  \partial_{jkl}  J \, \d t,
		\end{split}
	\end{align}
	holds weakly, together with the bounds 
	\begin{align}\begin{split}\label{eq_comm_est2}&\|  [ \partial_{jkl} ,g] (\Psi)\|_{H^1(\T^3; \ell^2(\Z^3))} \,\lesssim \, \|g\|_{C^4(\T^3; \ell^2(\Z^3))}\| \Psi \|_{H^3(\T^3)},
			\\& \|[\partial_{jkl} ,\partial_{i} g](\Psi)  \|_{L^2(\T^3;\ell^2(\Z^3))} \,\lesssim \, \|g\|_{C^4(\T^3; \ell^2(\Z^3))}\| \Psi \|_{H^2(\T^3)},
			\\&\|[\partial_{jkl} ,f\nabla ]  (U)  \|_{L^2(\T^3 ; \ell^2 (\Z^3;\R^{3\times 3}))} \,\lesssim \, \|f\|_{C^3(\T^3; \ell^2(\Z^3))}\| U \|_{H^3(\T^3; \R^{3})},
	\end{split}\end{align}
	on the emerging commutators.
	
	Next, we use It\^o's formula as stated in  \cite[Theorem 4.2.5]{LR_book} to compute the $L^2$-norm of $\partial_{jkl} U $ and $\partial_{jkl} \Psi$, which is applicable by the regularity required in \eqref{Eq_7213}. Moreover, since the system defined through \eqref{Eq_u_derivatives} and \eqref{Eq_psi_derivatives} is a perturbation of \eqref{Eq_Linear}
	with $I$ and $J$ replaced by $\partial_{jkl} I$ and $\partial_{jkl} J$, 
	the resulting It\^o expansion is a perturbation of \eqref{Eq_Ito_L_2_part}. Namely, we have 
	\begin{align}\begin{split}\label{Eq75}&
			\frac{1}{2} \bigl(
			\|\partial_{jkl}U_t\|_{L^2(\T^3;\R^3)}^2 + \|\partial_{jkl}\Psi_t\|_{L^2(\T^3)}^2
			\bigr) \,=\, 
			\frac{1}{2} \bigl(
			\|\partial_{jkl}U_0\|_{L^2(\T^3;\R^3)}^2 + \|\partial_{jkl}\Psi_0\|_{L^2(\T^3)}^2 \bigr)  \\& \qquad -\, \int_0^t \|\nabla_\sym (\partial_{jkl}U)\|_{L^2(\T^3;\R^{3\times 3})}^2\, +\,  \|\nabla( \partial_{jkl})\Psi\|_{L^2(\T^3;\R^{3})}^2 \,\d s \\&\qquad+\, 
			\int_0^t \langle \partial_{jkl}U,\partial_{jkl}I\rangle_{H^1(\T^3;\R^3)\times H^{-1}(\T^3; \R^3)}\,+\,  \langle \partial_{jkl}\Psi,\partial_{jkl}J\rangle_{H^1(\T^3)\times H^{-1}(\T^3)} \,\d s \\&\qquad - \frac{1}{\sqrt{2}}\sum_{n \in \Z^3} \int_0^t (  (\partial_{jkl}U), \nabla\cdot ( [\partial_{jkl} , f_n ] (\Psi)\, \d B_{n,s} ))_{L^2(\T^3;\R^3)}
			\\&\qquad - \sum_{n\in \Z^3} \int_0^t ( \partial_{jkl} \Psi, \nabla \cdot ( [ \partial_{jkl} ,g_n] (\Psi)\,\d W_{n,s} ))_{L^2(\T^3)} \\&\qquad 
			+\, \frac{1}{2}\sum_{i=1}^3 \sum_{n \in \Z^3}\int_0^t (\partial_{jkl} \Psi, [\partial_{jkl} ,\partial_i g_n](\Psi) )_{L^2(\T^3)} \, \d  W_{n,s}^i \\&\qquad-\,  \frac{1}{\sqrt{2}}\sum_{n\in \Z^3}\int_0^t  ( \partial_{jkl}\Psi , ( [\partial_{jkl} ,f_n \nabla]  (U) : \d  B_{n,s}))_{L^2(\T^3)}		\,+\, \frac{1}{2}{\mathrm{CV}}_{\mathrm{rest}, U}^{{j,k,l}}(t) \,+\, \frac{1}{2}{\mathrm{CV}}_{\mathrm{rest}, \Psi}^{{j,k,l}}(t),
		\end{split}
	\end{align}
	where the remaining contributions of the scalar  covariation process  are given by 
	\begin{align*} \mathrm{CV}_{\mathrm{rest} , U}^{j,k,l}(t) \,=\,&
		\frac{1}{2}	\biggl\langle 
		\sum_{n \in \Z^3}\int_0^t \Pi \nabla\cdot([\partial_{jkl} , f_n ] (\Psi)  \,\d B_{n,s}),
		2\sum_{n \in \Z^3}\int_0^t \Pi \nabla\cdot((\partial_{jkl} \Psi) f_n  \, \d B_{n,s}) \\&\qquad   +\sum_{n \in \Z^3}\int_0^t \Pi \nabla\cdot([\partial_{jkl} , f_n ] (\Psi) \, \d B_{n,s})
		\biggr\rangle_t,
	\end{align*}
	and
	\begin{align*} 
		\mathrm{CV}_{\mathrm{rest} , \Psi}^{j,k,l}(t) \,=\, & \frac{1}{2}\biggl\langle 
		\sum_{n\in \Z^3}\int_0^t ( [\partial_{jkl} ,f_n\nabla]  (U) : \d  B_{n,s}), 2\sum_{n\in \Z^3} \int_0^t (\nabla  (\partial_{jkl}U) : f_n\, \d  B_{n,s}) \\&\qquad +\,\sum_{n\in \Z^3}\int_0^t ( [\partial_{jkl} ,f_n\nabla]  (U): \d  B_{n,s})
		\biggr\rangle_t
		\\& +\,\biggl\langle
		- \sum_{n\in \Z^3} \int_0^t \nabla \cdot ( [ \partial_{jkl} ,g_n] (\Psi)\, \d W_{n,s} ) \,
		+\, \frac{1}{2}\sum_{i=1}^3 \sum_{n \in \Z^3}\int_0^t [\partial_{jkl} ,\partial_i g_n](\Psi)  \, \d  W_{n,s}^i,
		\\&\qquad  - 2\sum_{n\in \Z^3} \int_0^t \nabla \cdot ( (\partial_{jkl} \Psi )g_n \,\d W_{n,s} ) \,
		+\, \sum_{n \in \Z^3} \int_0^t  (\partial_{jkl} \Psi) \nabla \cdot ( g_n \, \d  W_{n,s})
		\\&\qquad  
		- \sum_{n\in \Z^3} \int_0^t  \nabla \cdot ( [ \partial_{jkl} ,g_n] (\Psi)\,\d  W_{n,s} ) \,
		+\, \frac{1}{2}\sum_{i=1}^3  \sum_{n \in \Z^3} \int_0^t  [\partial_{jkl} , \partial_i g_n ](\Psi) \, \d  W_{n,s}^i
		\biggr\rangle_t,
	\end{align*}
	paired in $L^2(\T^3;\R^3)$ and $L^2(\T^3)$, respectively. 
	Without precisely computing the latter, based on the previous estimates \eqref{eq_comm_est1} and \eqref{eq_comm_est2} together with 
	\begin{align}\begin{split}\label{eq_comm_est3}&
			\|  (\partial_{jkl}\Psi ) f\|_{H^1(\T^3; \ell^2(\Z^3))}\,\lesssim\, \| f\|_{C^1(\T^3; \ell^2(\Z^3))} \| \Psi \|_{H^4(\T^3)} ,\\&
			\|f \nabla  (\partial_{jkl}U) \|_{L^2(\T^3;  \ell^2(\Z^3;\R^{3\times 3}))} \,\lesssim \| f\|_{C(\T^3; \ell^2(\Z^3))}\|U\|_{H^4(\T^3;\R^3)},\\&
			\| (\partial_{jkl} \Psi )g\|_{H^1(\T^3; \ell^2(\Z^3))}\,\lesssim\, \| g\|_{C^1(\T^3; \ell^2(\Z^3))} \|\Psi\|_{H^4(\T^3)},\\&
			\|(\partial_{jkl} \Psi)\partial_{i} g  \|_{L^2(\T^3; \ell^2(\Z^3))}\,\lesssim \, \|g\|_{C^1(\T^3; \ell^2(\Z^3))}\|\Psi\|_{H^3(\T^3)},
		\end{split}
	\end{align} 
	we deduce that
	\begin{align*}&
		\bigl|\mathrm{CV}_{\mathrm{rest},U}^{j,k,l}(t)\,+\, \mathrm{CV}_{\mathrm{rest},\Psi }^{j,k,l}(t) \bigr| \,\lesssim_{f,g}\,\int_0^t \bigl(\|U\|_{H^4(\T^3; \R^3)} \,+\, \|\Psi\|_{H^4(\T^3)}\bigr)\bigl(\|U\|_{H^3(\T^3; \R^3)} \,+\, \|\Psi\|_{H^3(\T^3)}\bigr)\,\d s
		\\&\quad \le \epsilon \int_0^t  \|U\|_{H^4(\T^3; \R^3)}^2 \,+\, \|\Psi\|_{H^4(\T^3)}^2\,\d s
		\,+\, 
		C_\epsilon\int_0^t  \sup_{\tau \in [0,s]}\bigl(\|U\|_{H^3(\T^3; \R^3)}^2 \,+\, \|\Psi\|_{H^3(\T^3)}^2\bigr) \, \d s ,
	\end{align*}
	fo any $\epsilon>0$ by Young's inequality. 
	Inserting the above in \eqref{Eq75}, taking the supremum until some time $t\in [0,T]$ and the expectation yields  by the Burkholder--Davis--Gundy inequality that
	\begin{align*}&
		\E\Bigl[\sup_{s\in [0,t]}
		\bigl(
		\|\partial_{jkl}U\|_{L^2(\T^3;\R^3)}^2 + \|\partial_{jkl}\Psi\|_{L^2(\T^3)}^2
		\bigr)
		\Bigr]
		\\&\quad +\, \E\biggl[
		\int_0^t \|\nabla_\sym (\partial_{jkl}U)\|_{L^2(\T^3;\R^{3\times 3})}^2\, +\,  \|\nabla( \partial_{jkl})\Psi\|_{L^2(\T^3;\R^{3})}^2 \,\d s
		\biggr]\\&\quad\lesssim_{f,g} \,
		\|\partial_{jkl}U_0\|_{L^2(\T^3;\R^3)}^2 + \|\partial_{jkl}\Psi_0\|_{L^2(\T^3)}^2	
		\,+\, \E\bigl[ (\mathrm{QV}^{j,k,l})^{1/2}(t)\bigr] \\&\qquad +\, \E\biggl[ \int_0^t \langle \partial_{jkl}U,\partial_{jkl}I\rangle_{H^1(\T^3;\R^3)\times H^{-1}(\T^3; \R^3)}\,+\,  \langle \partial_{jkl}\Psi,\partial_{jkl}J\rangle_{H^1(\T^3)\times H^{-1}(\T^3)} \,\d s\biggr] \\&\qquad+\,
		\epsilon\E\biggl[ \int_0^t  \|U\|_{H^4(\T^3; \R^3)}^2 \,+\, \|\Psi\|_{H^4(\T^3)}^2\,\d s\biggr]
		\,+\, 
		C_\epsilon\int_0^t \E\Bigl[ \sup_{\tau \in [0,s]}\bigl(\|U\|_{H^3(\T^3; \R^3)}^2 \,+\, \|\Psi\|_{H^3(\T^3)}^2\bigr)\Bigr] \, \d s,
	\end{align*}
	where $\mathrm{QV}^{j,k,l}$ is the quadratic variation process of the martingale part of \eqref{Eq75}. Concerning the latter, analogous considerations as for $ 	|\mathrm{CV}_{\mathrm{rest},U}^{j,k,l}(t)\,+\, \mathrm{CV}_{\mathrm{rest},\Psi }^{j,k,l}(t) |$, based on the bounds \eqref{eq_comm_est1} and \eqref{eq_comm_est2}, lead to  the estimate
	\begin{align*}&\E\bigl[ (\mathrm{QV}^{j,k,l})^{1/2}(t)\bigr]\\&
		\quad \lesssim_{f,g}  \E\biggl[
		\biggl(
		\int_0^t 
		\|U\|_{H^3(\T^3; \R^3)}^4 +\|\Psi\|_{H^3(\T^3)}^4
		\,\d s
		\biggr)^{1/2}
		\biggr]
		\\& \quad \le\,\E\biggl[
		\Bigl(\sup_{s\in [0,t]} \|U\|_{H^3(\T^3; \R^3)}^2 \,+\, \|\Psi\|_{H^3(\T^3)}^2\Bigr)^{1/2}\biggl(
		\int_0^t 
		\|U\|_{H^3(\T^3; \R^3)}^2 +\|\Psi\|_{H^3(\T^3)}^2
		\,\d s
		\biggr)^{1/2}
		\biggr]
		\\&\quad \le \, \epsilon \E\Bigl[
		\sup_{s\in [0,t]} \bigl(\|U\|_{H^3(\T^3; \R^3)}^2 \,+\, \|\Psi\|_{H^3(\T^3)}^2\bigr)
		\Bigr] \,+\, 
		C_\epsilon\int_0^t \E\Bigl[ \sup_{\tau \in [0,s]}\bigl(\|U\|_{H^3(\T^3; \R^3)}^2 \,+\, \|\Psi\|_{H^3(\T^3)}^2\bigr)\Bigr] \, \d s.
	\end{align*} 
	Another application of Young's inequality in the term with the inhomogeneities $I$ and $J$ and a suffciently small choice of $\epsilon>0$ yields the estimate
	\begin{align}\begin{split}\label{Eq41}&
			\E\Bigl[\sup_{s\in [0,t]}
			\bigl(
			\|\partial_{jkl}U\|_{L^2(\T^3;\R^3)}^2 + \|\partial_{jkl}\Psi\|_{L^2(\T^3)}^2
			\bigr)
			\Bigr]
			\\&\quad +\, \E\biggl[
			\int_0^t \|\nabla (\partial_{jkl}U)\|_{L^2(\T^3;\R^{3\times 3})}^2\, +\,  \|\nabla( \partial_{jkl})\Psi\|_{L^2(\T^3;\R^{3})}^2 \,\d s
			\biggr]\\&\quad\lesssim_{f,g} \,
			\|\partial_{jkl}U_0\|_{L^2(\T^3;\R^3)}^2 + \|\partial_{jkl}\Psi_0\|_{L^2(\T^3)}^2	
			\,+\, \E\bigl[\|I\|_{L^2(0,t;H^2(\T^3; \R^3))}^2 + \|J\|_{L^2(0,t; H^2(\T^3))}^2\bigr] \\&\qquad +\,
			\int_0^t \E\Bigl[ \sup_{\tau \in [0,s]}\bigl(\|U\|_{H^3(\T^3; \R^3)}^2 \,+\, \|\Psi\|_{H^3(\T^3)}^2\bigr)\Bigr] \, \d s,
		\end{split}
	\end{align}
	where we also use that $\partial_{jkl} U $ is divergence-free, so that 
	\[
	\| \nabla_\sym \partial_{jkl} U\|_{L^2(\T^3 ; \R^{3\times 3})}^2 \,=\, 
	\frac{1}{2}\| \nabla \partial_{jkl}U\|_{L^2(\T^3 ; \R^{3\times 3})}^2. 
	\]
	Similarly, taking the supremum until time $t$ in \eqref{Eq_Ito_L_2_part}, using Young's inequality and taking the expectation yields that also 
	\begin{align*}&
		\E\Bigl[\sup_{s\in [0,t]}
		\bigl(
		\|U\|_{L^2(\T^3;\R^3)}^2 + \|\Psi\|_{L^2(\T^3)}^2
		\bigr)
		\Bigr]\, +\, \E\biggl[
		\int_0^t \|\nabla U\|_{L^2(\T^3;\R^{3\times 3})}^2\, +\,  \|\nabla \Psi\|_{L^2(\T^3;\R^{3})}^2 \,\d s
		\biggr]\\&\quad\lesssim\,
		\|U_0\|_{L^2(\T^3;\R^3)}^2 + \|\Psi_0\|_{L^2(\T^3)}^2	
		\,+\, \E\bigl[\|I\|_{L^2(0,t;H^{-1}(\T^3; \R^3))}^2 + \|J\|_{L^2(0,t; H^{-1}(\T^3))}^2\bigr] .
	\end{align*}
	Adding this to the sum of \eqref{Eq41} over all combinations $j,k,l\in\{1,2,3\}$ together with the norm equivalences \eqref{Eq991} and 
	\[
	\|(U,\Psi)\|_{\Vb}^2 \,\eqsim \, 
	\|(U,\Psi)\|_{L^2(\T^3;\R^3)\times L^2(\T^3)}^2 \,+\, \sum_{j,k,l=1}^3\|(\nabla \partial_{jkl}U,\nabla \partial_{jkl}\Psi)\|_{L^2(\T^3;\R^{3\times 3})\times L^2(\T^3; \R^3)}^2
	\]
	yields finally 
	\begin{align*}
		\E\Bigl[\sup_{s\in [0,t]} \|(U,\Psi)\|_{\Hb}^2 \,+\, \int_0^t \|(U,\Psi )\|_{\Vb}^2 \, \d s\Bigr] \,\lesssim_{f,g}\, 
		\|(U_0,\Psi_0)\|_{\Hb}^2
		\,+\,
		\int_0^t \E\Bigl[ \sup_{\tau \in [0,s]}\|(U,\Psi)\|_{\Hb}^2 \Bigr]\,+\, \E\bigl[\|(I,J)\|_{\Wb}^2\bigr] \, \d s.
	\end{align*}
	The desired \eqref{Eq42} follows by Gr\"onwall's inequality.

	\textit{Step 2 (Estimates on the nonlinearities): It holds 
		\begin{align}\label{Est_nonlinearity_tracial}
			\bigl\|\bigl( N_1(U) -N_1(V) , N_2(U,\Psi) - N_2(V,\Phi)\bigr)\bigr\|_{\Wb} \,\lesssim_h \,\bigl( 1+ \|(U,\Psi) \|_{\Hb}^4 + \|(V,\Phi) \|_{\Hb}^4 \bigr) \bigl\|(U,\Psi) -(V,\Phi)\bigr\|_{\Hb}
		\end{align}
		for all $(U,\Psi), (V,\Phi) \in \Hb$. }

	We let  $(U,\Psi), (V,\Phi) \in \Hb$ and recall the notation from \eqref{Eq:nonlinearities}. Then, we estimate 
	\begin{align}\begin{split}\label{Eq:N_1-est}
			\|(N_1(U) -N_1(V)) \|_{H^2(\T^3; \R^3)}\,\lesssim &  \, \| U\otimes U -V\otimes V\|_{H^3(\T^3; \R^{3\times 3})} \\ \le  & \, \| U\otimes ( U -V) \|_{H^3(\T^3;\R^{3\times 3})} \,+\,  \| V\otimes ( U -V) \|_{H^3(\T^3;\R^{3\times 3})} 
			\\\lesssim & \,\bigl(
			\|U\|_{H^3(\T^3; \R^3)} \,+\, \| V\|_{H^3(\T^3; \R^3)} \bigr)\|U-V\|_{H^3(\T^3; \R^3)}
			,\end{split}
	\end{align}
	by the algebra property of $H^3(\T^3)$. Completely analogously, we find that 
	\begin{align}\begin{split}\label{Eq:N_2^c-est}
			\|	N_2^c(U,\Psi) -  N_2^c(V,\Phi) \|_{H^2(\T^3)} \,\lesssim &  \, \|\Psi  U -\Phi V \|_{H^3(\T^3; \R^3)} \\\le  & \, \| \Psi  ( U -V) \|_{H^3(\T^3;\R^{3})} \,+\,  \| (\Psi-\Phi) V \|_{H^3(\T^3;\R^{3})} 
			\\\lesssim & \,
			\|\Psi \|_{H^3(\T^3)} \|U-V\|_{H^3(\T^3; \R^3)}\,+\, \|V\|_{H^3(\T^3; \R^3)} \|\Psi - \Phi\|_{H^3(\T^3)}
			.\end{split}
	\end{align}
	Concerning $N_2^a$, we estimate on the other hand 
	\begin{align}
		\|	N_2^a(\Psi) -  N_2^a(\Phi) \|_{H^2(\T^3)} = &  \, \|h(\Psi) |\nabla \Psi |^2  - h(\Phi) |\nabla \Phi |^2\|_{H^2(\T^3)} \\\le  & \, \|(h ( \Psi) -h(\Phi))  |\nabla \Psi |^2   \|_{H^2(\T^3)} \,+\,  \| h(\Phi ) (\nabla \Psi   + \nabla \Phi   )\cdot (\nabla \Psi   -\nabla \Phi   ) \|_{H^2(\T^3)} 
		\\\lesssim & \,\|\nabla \Psi \|_{H^2(\T^3; \R^3)}^2
		\|h (\Psi) - h(\Phi) \|_{H^2(\T^3)} \\& \;+\, \|h (\Phi)\|_{H^2(\T^3)}\bigl(
		\|\nabla \Psi \|_{H^2(\T^3; \R^3)} + 
		\|\nabla \Phi \|_{H^2(\T^3; \R^3)}
		\bigr) \|\nabla \Psi - \nabla \Phi\|_{H^2(\T^3; \R^3)}
		,
	\end{align}
	since also $H^2(\T^3)$ is an algebra. Estimating $\|h (\Psi) - h(\Phi) \|_{H^2(\T^3)}$ and $\|h (\Phi)\|_{H^2(\T^3)} $ using \eqref{Eq:bddness_nu}--\eqref{Eq:diff_nu} from Lemma \ref{lemma:comp} and Young's inequality for products, we obtain that
	\begin{align}\begin{split}\label{Eq:N_2^a-est}
			\|	N_2^a(\Psi) -  N_2^a(\Phi) \|_{H^2(\T^3)}  \,\lesssim &\, 
			\| h'\|_{W^{2,\infty}(\R)} \bigl(1+ \|\Psi \|_{H^3(\T^3)}^4 + \|\Phi \|_{H^3(\T^3)}^4  \bigr) \|\Psi -\Phi \|_{H^2(\T^3)}
			\\&\;+\, 
			\| h'\|_{W^{1,\infty}(\R)} \bigl(1+ \|\Psi \|_{H^3(\T^3)}^3 + \|\Phi \|_{H^3(\T^3)}^3  \bigr)  \|\Psi - \Phi \|_{H^3(\T^3)}\\\lesssim &\,
			\| h'\|_{W^{2,\infty}(\R)} \bigl(1+ \|\Psi \|_{H^3(\T^3)}^4 + \|\Phi \|_{H^3(\T^3)}^4  \bigr) \|\Psi -\Phi \|_{H^3(\T^3)}.\end{split}
	\end{align}
	For $N^b_2$, we proceed analogously to deduce that 
	\begin{align}\begin{split}
			\label{Eq:N_2^b-est}
			&	\|	N_2^b(U,\Psi) -  N_2^b(V, \Phi) \|_{H^2(\T^3)} \\& \quad =  \, \|h(\Psi) |\nabla_{\sym} U |^2  - h(\Phi) |\nabla_\sym V |^2\|_{H^2(\T^3)} \\& \quad \le   \, \|(h ( \Psi) -h(\Phi))  |\nabla_\sym U |^2   \|_{H^2(\T^3)} \, +\,  \| h(\Phi ) (\nabla_\sym U   + \nabla_\sym V  ): (\nabla_\sym U   -\nabla_\sym V   ) \|_{H^2(\T^3)} 
			\\ &\quad \lesssim \,\|\nabla U \|_{H^2(\T^3; \R^{3\times 3})}^2
			\|h (\Psi) - h(\Phi) \|_{H^2(\T^3)} \\ &\qquad +\, \|h (\Phi)\|_{H^2(\T^3; \R^3)}\bigl(
			\|\nabla U \|_{H^2(\T^3; \R^{3\times 3})} + 
			\|\nabla V\|_{H^2(\T^3; \R^{3\times 3})}
			\bigr) \|\nabla U - \nabla V\|_{H^2(\T^3; \R^{3\times 3})}
			\\&\quad 
			\lesssim 
			\| h'\|_{W^{2,\infty}(\R)} \bigl(1+ \|\Psi \|_{H^3(\T^3)}^4 + \|\Phi \|_{H^3(\T^3)}^4 + \|U \|_{H^3(\T^3;\R^3)}^4 + \|V \|_{H^3(\T^3;\R^3)}^4   \bigr) \|\Psi -\Phi \|_{H^2(\T^3)}
			\\&\qquad +\, 
			\| h'\|_{W^{1,\infty}(\R)} \bigl(1+ \|\Psi \|_{H^3(\T^3)}^3 + \|\Phi \|_{H^3(\T^3)}^3 + \|U \|_{H^3(\T^3;\R^3)}^3 + \|V \|_{H^3(\T^3;\R^3)}^3 \bigr)  \|U - V \|_{H^3(\T^3;\R^3)}\\&\quad \lesssim \,
			\| h'\|_{W^{2,\infty}(\R)}\bigl(1+ \|\Psi \|_{H^3(\T^3)}^4 + \|\Phi \|_{H^3(\T^3)}^4 + \|U \|_{H^3(\T^3;\R^3)}^4 + \|V \|_{H^3(\T^3;\R^3)}^4   \bigr)
			\\ &\qquad \times   \bigl( \|\Psi -\Phi \|_{H^3(\T^3)} + \|U-V\|_{H^3(\T^3;\R^3)} \bigr).
		\end{split}
	\end{align}
	Combining \eqref{Eq:N_1-est}--\eqref{Eq:N_2^b-est}\noeqref{Eq:N_2^c-est}\noeqref{Eq:N_2^a-est}, we obtain \eqref{Est_nonlinearity_tracial}.
	
	\textit{Step 3 (Truncation and fixed-point argument):  For any $R > \|(U_0,\Psi_0)\|_{\Hb}$, there exists   $T_* \in (0,T]$, and an adapted, continuous $\Hb$-valued  process $(U,\Psi)$, which satisfies  with 
		for every divergence-free $\vp\in C^{\reg}(\T^3; \R^3)$ and $\eta \in C^{\reg}(\T^3)$
		\begin{align}&\label{Eq744}
			\int_{\T^3} U_t\cdot \vp\,\d x \,-\, 
			\int_{\T^3} U_0 \cdot \vp\,\d x\,=\, \int_0^t \int_{\T^3}   N_1(\Xi^R_\Hb(U,\Psi))\cdot \vp\,\d x \,\d s
			\\&\qquad +\,  \int_0^t \int_{\T^3} -  \Bigl(\frac{1}{2}+ \frac{F_1}{4}\Bigr) \nabla U: \nabla  \vp \,\d x\,\d s \,+\, \frac{1}{\sqrt{2}}\sum_{n \in \Z^3}\int_0^t \int_{\T^3}\Psi f_n \nabla \vp\,\d x :\d B_{n,s}
			,\\&\label{Eq745}
			\int_{\T^3} \Psi_t  \eta  \, \d x  \,-\, \int_{\T^3}
			\Psi_0 \eta
			\,\d x\,=\,  \int_0^t \int_{\T^3}   N_2(\Xi^R_\Hb(U,\Psi))\eta \,\d x\, \d s \\&\qquad +\,  \int_0^t \int_{\T^3} - \bigl(1+ {F_1}/2 +G_1/2\bigr) \nabla \Psi\cdot \nabla  \eta    \,-\, (F_2/2+G_2/8) \Psi \eta \,\d x\,\d s 
			\\&\qquad + \sum_{i=1}^3
			\sum_{n \in \Z^3}\int_0^t \int_{\T^3} \Psi g_n \partial_i \eta  + \frac{1}{2} \Psi \eta \partial_ig_n  \,\d x \, \d W_{n,s}^i
			\,-\, \frac{1}{\sqrt{2}}\sum_{n \in \Z^3}\int_0^t\int_{\T^3}\eta f_n \nabla U\,\d x :\d B_{n,s} ,
		\end{align}
		for all $t\in [0,T_*]$, $\P$-almost surely, where $\Xi^R_{\Hb}$ is defined in \eqref{Eq:definition_trunc}.}
	
	We denote the mapping that assigns each initial value and right-hand side its solution to \eqref{Eq_Linear} provided by the first step, by $\Sol((U_0,\Psi_0), (I,J))$. In the following, we show that given $R > \|(U_0,\Psi_0)\|_{\Hb}$, for suitably chosen   $T_* \in (0,T]$, the composition 
	\begin{align}
		\Lambda\colon \, &M_{T_*} \to M_{T_*}
		,\,(U,\Psi) \mapsto \Sol\bigl((U_0,\Psi_0),(N_1(\Xi^{R}_\Hb(U,\Psi)), N_2(\Xi^{R}_\Hb(U,\Psi)) )\bigr)
	\end{align}
	is a Lipschitz contraction, where 
	\begin{equation}
		M_{T_*}\,=\, \Bigl\{ (U,\Psi)\colon \Omega\times [0,T_*] \to \Hb\text{ adapted, continuous}\,\Big|\,(U,\Psi) \,\in \, L^2(\Omega;C([0,T_*]; \Hb) \cap L^2(0,T_*; \Vb))  \Bigr\},
	\end{equation}
	with its natural norm. 
	To see that $\Lambda$ maps $M_{T_*}$ into itself, we observe that for $(U,\Psi)\in M_{T_*}$, we have 
	\begin{equation}\label{Eq9999}
		\|\Xi^{R}_\Hb(U,\Psi) \|_{C([0,T_*];\Hb)}\,\le \, R ,
	\end{equation}
	$\P$-a.s., by Lemma \ref{Lemma:trunc}. Using that $N_1(0,0)=0$ and $N_2(0,0)=0$, cf.\ \eqref{Eq:nonlinearities}, this implies by the estimate \eqref{Est_nonlinearity_tracial} derived in the second step  that 
	\[
	\bigl\|\bigl(N_1(\Xi^{R}_\Hb(U,\Psi)),N_2(\Xi^{R}_\Hb(U,\Psi))\bigr)\bigr\|_{C([0,T_*];\Wb)} \,\lesssim_h \, \bigl(
	1+R^4
	\bigr)R,
	\]
	$\P$-almost surely. 
	In particular, we have that
	\[
	(N_1(\Xi^{R}_\Hb(U,\Psi)),N_2(\Xi^{R}_\Hb(U,\Psi))) \,\in  \, L^2(\Omega\times [0,T_*]; \Wb)
	\]
	is progressively measurable and therefore an admissible right-hand side for the operator $\Sol$. In light of  \eqref{Eq_7213}, the latter maps however to $M_{T_*}$. 
	
	To show also the contraction property, we consider the difference $\Lambda (U,\Psi) - \Lambda (V,\Phi)$, which also solves \eqref{Eq_Linear}, but started from $0$ and with right-hand sides $I = N_1(\Xi^{R}_\Hb(U,\Psi))- N_1(\Xi^{R}_\Hb(V,\Phi))$ and $J = N_2(\Xi^{R}_\Hb(U,\Psi))- N_2(\Xi^{R}_\Hb(V,\Phi))$. Using the estimate \eqref{Eq42} we find that
	\begin{align}&
		\| \Lambda (U,\Psi) - \Lambda (V,\Phi)\|_{M_{T_*}} \\&\quad\lesssim_{(f,g,T)}\, \E \bigl[ \bigl\| \bigl(N_1(\Xi^{R}_\Hb(U,\Psi))- N_1(\Xi^{R}_\Hb(V,\Phi)),N_2(\Xi^{R}_\Hb(U,\Psi))- N_2(\Xi^{R}_\Hb(V,\Phi))\bigr)\bigr\|_{L^2(0,T_*;\Wb)}^2 \bigr]^{1/2}
		\\&\quad \lesssim_h \, (1+R^4)\,\E \bigl[ \bigl\|\Xi^{R}_\Hb(U,\Psi) - \Xi^{R}_\Hb(V,\Phi)\bigr\|_{L^2(0,T_*; \Hb)}^2\bigr]^{1/2},
	\end{align}
	based on \eqref{Est_nonlinearity_tracial} and \eqref{Eq9999}. Lastly, we  use H\"older's inequality in time, to deduce that 
	\[
	\| \Lambda (U,\Psi) - \Lambda (V,\Phi)\|_{M_{T_*}}\, \lesssim_{(f,g,h,T)}\, \sqrt{T_*}(1+R^4) \| (U,\Psi) -(V,\Phi)\|_{M_{T_*}}.
	\]
	Choosing $T_*\in (0,T]$ small enough, the above becomes a contraction estimate. An application of Banach's fixed-point theorem yields the claim of this step.

	\textit{Conclusion.} It remains to take $R > \|(U_0,\Psi_0)\|_{\Hb}$ and $T_*$ as well as $(U,\Psi)$ as constructed in the previous step. Then, we  define the stopping time 
	\[
	\tau \,=\, \inf\Bigl\{
	t\in [0,T_*]\,\Big|\, 
	\inf_{x\in \T^3}\Psi_t(x) \le \chi_l\;\text{or}\;
	\sup_{x\in \T^3} \Psi_t(x) \ge \chi_u\;\text{or}\; \sup_{s\in [0,t]}\|(U_t,\Psi_t)\|_{\Hb} \,\ge \, R
	\Bigr\} \,\wedge \, T_*,
	\]
	where we recall that $\chi_l$ and $\chi_u$ were introduced below \eqref{Eq:nonlinearities}. Since $(U,\Psi)$ starts from $(U_0,\Psi_0)$ and is continuous in \begin{equation}\label{Eq746}\Hb \hookrightarrow C^1(\T^3; \R^3)\times C^1(\T^3),
	\end{equation}
	we find that $\tau >0$, $\P$-almost surely. In particular, we also have 
	\[
	\Xi_{\Hb}^{R} (U,\Psi) \,=\, (U,\Psi)\qquad \text{and} \qquad h ( \Psi) \,=\, \frac{1}{\Psi},
	\]
	on $[0,\tau)$, which implies that the modified nonlinearities in \eqref{Eq744} and \eqref{Eq745} coincide with the ones from \eqref{Eq_new_var} on said interval. The embedding \eqref{Eq746} yields also the continuity required in Definition \ref{defi_strong_sol}, which finishes the proof.
\end{proof}

\appendix
\renewcommand{\appendixname}{Appendix~\Alph{section}}
\renewcommand{\theequation}{A.\arabic{equation}}
\section{Verification of Jacobi's identity in the GENERIC framework}\label{sec-app-A}
Here we verify \eqref{Jacobis_identity} for the operator $\mathcal{L}$ defined in \eqref{eqn_M_and_L}. This implies that $\{A,B\}_\mathcal{L}$ is a Poisson bracket, confirming the Hamiltonian structure of the evolution $\partial_t \mathbf{z} = \mathcal{L} \cdot \frac{\delta \mathcal{E}}{\delta \mathbf{z}}$. To this end, we let $A$, $B$ and $C$  be functionals  for which we record that  their second derivatives can be represented by a symmetric $\R^{4\times 4}$-valued kernel by Schwarz's theorem, e.g.,  we have
\begin{align}\label{eqn1059}
	\frac{\delta^{2}A}{\delta \mathbf{z}^2} (x,y) \,=\, \frac{\delta } {\delta \mathbf{z}} \Bigl( \frac{\delta A}{\delta \mathbf{z}}(x) \Bigr) (y) \,=\, \frac{\delta } {\delta \mathbf{z}} \Bigl( \frac{\delta A}{\delta \mathbf{z}}(y) \Bigr) (x).
\end{align}
 We also introduce the notation   $\frac{\delta A}{\delta \mathbf{z}} = (v_A ,\theta_A) $, and analogous conventions  for $B$ and $C$. As a first step, we calculate the functional $ \{B,C\}_\mathcal{L}$ to be 
 \begin{align}&
 		\begin{bmatrix}
 		v_B
 		\\
 		\theta_B
 	\end{bmatrix} \cdot 
 	\begin{bmatrix} -\Pi (\nabla \cdot ( v_C \otimes u ) +  (\nabla v_C)\cdot u )  - \Pi (\vt \nabla \theta_C) 
 		\\
 		-\nabla \cdot  (\vt  v_C )
 	\end{bmatrix}\\&\quad =\,- \int_{\T^3}  v_B \cdot (\nabla \cdot(  v_C \otimes u )) \,\d x\,-\, \int_{\T^3} u \cdot  ( (v_B \cdot \nabla)   v_C )  \,\d x
 \,-\, 
 \int_{\T^3}  \vt v_B\cdot \nabla \theta_C \,\d x\,-\, 
 \int_{\T^3}  \theta_B  \nabla \cdot(\vt v_C ) \,\d x
 \\& \quad = \, \int_{\T^3} u \cdot \bigl(
( v_C \cdot \nabla ) v_B - ( v_B \cdot \nabla ) v_C
 \bigr)\,\d x\, +\, \int_{\T^3} \vt \bigl(v_C \cdot \nabla \theta_B - v_B \cdot \nabla \theta_C \bigr) \,\d x,
\end{align}
using integration by parts. Introducing also
\begin{equation}\label{eqn1044}
	\biggl\{   	\begin{bmatrix}
		v_1
		\\
		\theta_1
	\end{bmatrix} , 
\begin{bmatrix}
	v_2
	\\
	\theta_2
\end{bmatrix} \biggr\}_x\,=\,(v_2\cdot \nabla_x )  \begin{bmatrix}
v_1\\
\theta_1
\end{bmatrix} \,-\, (v_1\cdot \nabla_x )  \begin{bmatrix}
v_2
\\
\theta_2
\end{bmatrix},
\end{equation}
we may rewrite the above to 
\begin{equation}
	\label{eqn1046}
	\{B,C\}_\mathcal{L} \,=\, 
	  \int_{\T^3} 	\begin{bmatrix}
	 u
	 	\\
	 	\vt 
	 \end{bmatrix}  \cdot 	\biggl\{   	\begin{bmatrix}
	 v_B
	 \\
	 \theta_B
 \end{bmatrix} , 
\begin{bmatrix}
v_C
\\
\theta_C
\end{bmatrix} \biggr\}_x \,\d x\,=\, \int_{\T^3} \mathbf{z} \cdot \Bigl\{ 
\frac{\delta B}{\delta \mathbf{z}}, 
\frac{\delta C}{\delta \mathbf{z}}
\Bigr\}_x \,\d x.
\end{equation}
A direct calculation shows moreover that 
\begin{align}\label{jacobi_fin_dim}
	\biggl\{ 	\begin{bmatrix}
		v_1
		\\
		\theta_1
	\end{bmatrix},	\biggl\{   	\begin{bmatrix}
		v_2
		\\
		\theta_2
	\end{bmatrix} , 
	\begin{bmatrix}
		v_3
		\\
		\theta_3
	\end{bmatrix} \biggr\}_x \biggr\}_x  \,+\, 	\biggl\{ 	\begin{bmatrix}
	v_2
	\\
	\theta_2
\end{bmatrix},	\biggl\{   	\begin{bmatrix}
v_3
\\
\theta_3
\end{bmatrix} , 
\begin{bmatrix}
v_1
\\
\theta_1
\end{bmatrix} \biggr\}_x \biggr\}_x \,+\, 	\biggl\{ 	\begin{bmatrix}
v_3
\\
\theta_3
\end{bmatrix},	\biggl\{   	\begin{bmatrix}
v_1
\\
\theta_1
\end{bmatrix} , 
\begin{bmatrix}
v_2
\\
\theta_2
\end{bmatrix} \biggr\}_x \biggr\}_x \,=\, 0, 
\end{align}
i.e.,  the bracket \eqref{eqn1044} satisfies Jacobi's identity. 
The formula $\eqref{eqn1046}$ allows us to calculate
\begin{align}
	\delta \{B,C\}_\mathcal{L}(y)  \,=\, \Bigl\{ 
	\frac{\delta B}{\delta \mathbf{z}}, 
	\frac{\delta C}{\delta \mathbf{z}}
	\Bigr\}_y(y) \,+\, \int_{\T^3} \mathbf{z}(x) \cdot \Bigl\{ 
	 \frac{\delta^2 B}{\delta\mathbf{z}^2}  , 
	\frac{\delta C}{\delta \mathbf{z}}
	\Bigr\}_x (x,y)\,\d x\,+\, \int_{\T^3} \mathbf{z}(x) \cdot \Bigl\{ 
	\frac{\delta B}{\delta \mathbf{z}}, 
	\frac{\delta^2 C}{\delta\mathbf{z}^2} 
	\Bigr\}_x(x,y) \,\d x,
\end{align}
representing the second derivative as in \eqref{eqn1059}.
Applying the formula \eqref{eqn1046} once more, we obtain
\begin{align}
	\{A,  \{B,C\}_\mathcal{L}  \}_\mathcal{L}\,=\, &
	\int_{\T^3} \mathbf{z}(y) \cdot \Bigl\{ \frac{\delta A}{\delta \mathbf{z}}, \Bigl\{ 
	\frac{\delta B}{\delta \mathbf{z}}, 
	\frac{\delta C}{\delta \mathbf{z}}
	\Bigr\}_y \Bigr\}_y (y)\,\d y\\& +\, 
	\int_{\T^3 } 	\int_{\T^3} \mathbf{z}\otimes  \mathbf{z}(x,y) :  \biggl\{ \frac{\delta A}{\delta \mathbf{z}}, \Bigl\{ 
	\frac{\delta^2 B}{\delta\mathbf{z}^2}  , 
	\frac{\delta C}{\delta \mathbf{z}}
	\Bigr\}_x \biggr\}_y (x,y)\,\d x \,\d y
	\\&-\, 
	\int_{\T^3 } \int_{\T^3}\mathbf{z}\otimes \mathbf{z}(x,y) :\biggl\{ \frac{\delta A}{\delta \mathbf{z}}, \Bigl\{ \frac{\delta^2 C}{\delta\mathbf{z}^2}  , 
	\frac{\delta B}{\delta \mathbf{z}}
	\Bigr\}_x \biggr\}_y (x,y)\,\d x \,\d y,
\end{align}
where we used the antisymmetry of \eqref{eqn1044} in the last line. Analogously, we find
\begin{align}
	\{B,  \{C,A\}_\mathcal{L}  \}_\mathcal{L}\,=\, &
	\int_{\T^3} \mathbf{z}(y) \cdot \Bigl\{ \frac{\delta B}{\delta \mathbf{z}}, \Bigl\{ 
	\frac{\delta C}{\delta \mathbf{z}}, 
	\frac{\delta A}{\delta \mathbf{z}}
	\Bigr\}_y \Bigr\}_y (y)\,\d y\\&+\, 
	\int_{\T^3 } 	\int_{\T^3} \mathbf{z}\otimes  \mathbf{z}(x,y) :  \biggl\{ \frac{\delta B}{\delta \mathbf{z}}, \Bigl\{ 
	\frac{\delta^2 C}{\delta\mathbf{z}^2}  , 
	\frac{\delta A}{\delta \mathbf{z}}
	\Bigr\}_x \biggr\}_y (x,y)\,\d x \,\d y
	\\&-\, 
	\int_{\T^3 } \int_{\T^3}\mathbf{z}\otimes \mathbf{z}(x,y) :\biggl\{ \frac{\delta B}{\delta \mathbf{z}}, \Bigl\{ \frac{\delta^2 A}{\delta\mathbf{z}^2}  , 
	\frac{\delta C}{\delta \mathbf{z}}
	\Bigr\}_x \biggr\}_y (x,y) \,\d x \,\d y,
\end{align}
and
 \begin{align}
	\{C,  \{A,B\}_\mathcal{L}  \}_\mathcal{L}\,=\, &
	\int_{\T^3} \mathbf{z}(y) \cdot \Bigl\{ \frac{\delta C}{\delta \mathbf{z}}, \Bigl\{ 
	\frac{\delta A}{\delta \mathbf{z}}, 
	\frac{\delta B}{\delta \mathbf{z}}
	\Bigr\}_y \Bigr\}_y (y)\,\d y\\&+\, 
	\int_{\T^3 } 	\int_{\T^3} \mathbf{z}\otimes  \mathbf{z}(x,y) :  \biggl\{ \frac{\delta C}{\delta \mathbf{z}}, \Bigl\{ 
	\frac{\delta^2 A}{\delta\mathbf{z}^2}  , 
	\frac{\delta B}{\delta \mathbf{z}}
	\Bigr\}_x \biggr\}_y (x,y)\,\d x \,\d y
	\\&-\, 
	\int_{\T^3 } \int_{\T^3}\mathbf{z}\otimes \mathbf{z}(x,y) :\biggl\{ \frac{\delta C}{\delta \mathbf{z}}, \Bigl\{ \frac{\delta^2 B}{\delta\mathbf{z}^2}  , 
	\frac{\delta A}{\delta \mathbf{z}}
	\Bigr\}_x \biggr\}_y (x,y)\,\d x \,\d y.
\end{align}
When adding all these terms together,  the first terms of  each right-hand side cancels, due to \eqref{jacobi_fin_dim}. Regardung the remainder, we observe  that 
\begin{align}&\bigl\{ l, \bigl\{M, r\bigr\}_x \bigr\}_y \,=\, \bigl\{ l, r^v \cdot\nabla_x M  - M^{\cdot v} \cdot \nabla_x r \bigr\}_y  \\&\quad=\, (
	r^v\cdot\nabla_x M^{v \cdot} ) \cdot \nabla_y l \,- \, M^{v v} : (\nabla_x \otimes \nabla_y) r\otimes l
	\,-\, l^v\otimes  r^v : \nabla_y \otimes \nabla_x M \,+\, (l^v \cdot \nabla_y M^{\cdot v} ) \cdot \nabla_x r  ,
\end{align}
for $\R^4$-valued functions $l = (l^v(y),l^\theta(y)) $ and $r= (l^v(x),l^\theta(x)) $ and matrix-valued kernels 
\[M = 
\begin{bmatrix}
	M^{vv}  (x,y)& M^{v \theta}(x,y) \\
	M^{\theta v} (x,y) & M^{\theta\theta}(x,y)
\end{bmatrix}.
\]
Using the symmetry of \eqref{eqn1059},  we deduce
\[
\biggl\{ \frac{\delta A}{\delta \mathbf{z}}, \Bigl\{ 
\frac{\delta^2 B}{\delta\mathbf{z}^2}  , 
\frac{\delta C}{\delta \mathbf{z}}
\Bigr\}_x \biggr\}_y (x,y) \,=\, \biggl\{ \frac{\delta C}{\delta \mathbf{z}}, \Bigl\{ \frac{\delta^2 B}{\delta\mathbf{z}^2}  , 
\frac{\delta A}{\delta \mathbf{z}}
\Bigr\}_x \biggr\}_y (y,x),
\]
and analogous identities for the other double brackets, so that also these cancel each other.

\section{It\^o form of the Stratonovich equation for $(u,\psi)$}\label{sec-app-B}

As outlined in the introduction, the Navier--Stokes--Fourier system \eqref{Eq_intro} turns into the transformed system \eqref{Eq_new_var_formal}--\eqref{Eq_new_u} after the change of variables $\psi = \sqrt{2 \theta}$. The formal application  of the chain rule  is justified through the use of Stratonovich noise in both equations. 
Due to  the rigorous mathematical framework provided by It\^o integration,  we reformulate in this appendix the resulting Stratonovich equation in  It\^o sense by calculating the emerging correction terms.
To this end, we impose the expansions \eqref{Eq_noise_expansion} of the noises $\xi_F$ and $\xi_G$ as well as Assumption \ref{Ass:noise}. Then,
after integrating in time, \eqref{Eq_new_var_formal}--\eqref{Eq_new_u} can be written as 
\begin{align}\label{Eq_vel_app}
	\d u\,=\,&\nabla\cdot(\nabla_{\sym}u)\, \d t\,-\,\Pi \nabla \cdot (u \otimes u )\,\d t \,-\,\frac{1}{\sqrt{2}}\sum_{n \in \Z^3}\Pi \nabla\cdot(\psi f_n  \circ \d B_n),
	\\
	\label{Eq_SR_app}
	\d \psi \, = \,&\Delta \psi \,\d t\, +\, \frac{1}{\psi }\bigl(
	|\nabla \psi|^2 \,+\, |\nabla_\sym u|^2 
	\bigr)\, \d t\,- \,\nabla \cdot ( u\psi )\,\d t\, -\, \sum_{n\in \Z^3} \nabla \cdot ( \psi g_n\circ \d W_n )  \\&
	+\, \frac{1}{2} \sum_{n \in \Z^3} \psi \nabla \cdot ( g_n \circ \d  W_n) \,-\,  \frac{1}{\sqrt{2}}\sum_{n\in \Z^3}\nabla  u : f_n\circ\d  B_n.
\end{align}
We also  recall that the Stratonovich integral can be expressed as the corrected It\^o integral 
\[
f \circ \d \beta \, =\,  
\frac{1}{2}\, \d [f,\beta] \,+\, 
f  \d \beta,
\]
for a single Brownian motion $\beta$ and a semimartingale $f$. To identify the emerging quadratic variations
we therefore need to compute the relevant stochastic integrals in the semimartingale expansion of the integrands in \eqref{Eq_vel_app} and \eqref{Eq_SR_app}. For the former, we observe that the stochastic integral with respect to $B$  in the It\^o expansion of $\partial_i (\psi f_n)$ reads
\[
- 	\frac{1}{\sqrt{2}}\sum_{m \in \Z^3} \partial_i \bigl( f_n \nabla u : f_m \,\d B_m \bigr) \,=\,
- \frac{1}{\sqrt{2}}\sum_{k,l=1}^d\sum_{m \in \Z^3} \partial_i \bigl( f_n \partial_k u_l  f_m \,\d B_m^{lk} \bigr) 
.
\]
Consequently,  using the covariance structure \eqref{Eq_noise_matrix_covariation} we deduce that 
\begin{align}
	-	\frac{1}{\sqrt{2}} \partial_i(\psi f_n) \circ \d B^{ji}_n\,=&\, 
	\frac{1}{4} \partial_i( f_n (\partial_i u_j ) f_n)\, \d t \,+\, \frac{1}{4} \partial_i( f_n (\partial_j u_i ) f_n) \,\d t\,-\, 
	\frac{1}{\sqrt{2}} \partial_i(\psi f_n)\,  \d B^{ji}_n .
\end{align}
Summing over $i=1,2,3$ and $n\in \Z^3$ results in 
\begin{align}
	- 	\sum_{i=1}^{3}\sum_{n\in \Z^3} 	\frac{1}{\sqrt{2}} \partial_i(\psi f_n) \circ \d B^{ji}_n \,&=\, 
	\sum_{i=1}^{3}\sum_{n\in \Z^3}  \biggl(	\frac{1}{4}  \partial_i( f_n (\partial_i u_j ) f_n) \,\d t \,+\, \frac{1}{4} \partial_i( f_n (\partial_j u_i ) f_n)\, \d t\,-\, 
	\frac{1}{\sqrt{2}} \partial_i(\psi f_n)  \,\d B^{ji}_n \biggr)
	\\
	&= \,\frac{1}{4} F_1 \Delta u_j\, \d t \,-\, \frac{1}{\sqrt{2}} 	\sum_{i=1}^{3}\sum_{n\in \Z^3} \partial_i(\psi f_n) \, \d B^{ji}_n,
\end{align}
where we used the assumption \eqref{Eq_decay_condition_FG} and that $\nabla \cdot u = 0$ in the last equality. 
Applying the Helmholtz projection to the above vector yields 
\[
- \frac{1}{\sqrt{2}}\sum_{n \in \Z^3} \Pi \nabla\cdot(\psi f_n   \d B_n)
\,=\, \frac{F_1}{4} \Pi \Delta u\, \d t 
- \frac{1}{\sqrt{2}}\sum_{n \in \Z^3} \Pi \nabla\cdot(\psi f_n   \,\d B_n),
\]
so that we can rephrase \eqref{Eq_vel_app} as 
\[
\d u\,=\,\nabla\cdot(\nabla_{\sym}u)\, \d t\,-\,\Pi \nabla \cdot (u\otimes u)\,\d t\,+\, \frac{F_1}{4}\Delta u\,\d t\, - \,\frac{1}{\sqrt{2}}\sum_{n \in \Z^3}\Pi \nabla\cdot(\psi f_n  \, \d B_n).
\]

To proceed similarly for \eqref{Eq_SR_app}, we need to also identify the relevant martingale parts of the stochastic integrands $\partial_i (\psi g_n)$, $\psi \partial_ig_n$ and $(\partial_iu_j)f_n$. The It\^o expansion of  $\partial_i (\psi g_n)$ contains the following stochastic integrals with respect to $W$:
\[
- \sum_{j=1}^d\sum_{m\in \Z^3} \partial_i( g_n  \partial_j(\psi g_m) ) \,\d W_m^j \,+\, \frac{1}{2}
\sum_{j=1}^d \sum_{m\in \Z^3}  \partial_i( g_n \psi  \partial_j g_m )\, \d W_m^j .
\]
Therefore,
\begin{align}\begin{split}
		- \partial_i(\psi g_n ) \circ \d W_n^i \,=\,  \frac{1}{2}\partial_i( g_n  \partial_i(\psi g_n) ) \,\d t
		\,-\, \frac{1}{4}  \partial_i( g_n \psi  \partial_i g_n)  \,\d t \,-\, \partial_i(\psi g_n ) \d W_n^i,
	\end{split}
\end{align}
so that summing over $i=1,2,3$ and $n\in \Z^3$ results in 
\begin{equation}\label{Eq1}
	- \sum_{n\in \Z^3} \nabla \cdot ( \psi g_n\circ \d W_n ) \,=\,\frac{G_1}{2}\Delta \psi \,\d t- \sum_{n\in \Z^3} \nabla \cdot ( \psi g_n \d W_n ) ,
\end{equation}
where we used \eqref{Eq_decay_condition_FG}.
For $\psi \partial_i g_n$, the stochastic integrals in its It\^o expansion with respect to $W$ are
\[
- \sum_{j=1}^d\sum_{m\in \Z^3} (\partial_i g_n)  \partial_j(\psi g_m)\,  \d W_m^j \,+\, \frac{1}{2}
\sum_{j=1}^d \sum_{m\in \Z^3}  (\partial_i g_n) \psi  \partial_j g_m\,  \d W_m^j,
\]
and therefore
\begin{align}\begin{split}
		\frac{1}{2}\psi\partial_ig_n   \circ \d W_n^i \,=\, - \frac{1}{4}(\partial_ig_n ) \partial_i(\psi g_n)  \,\d t
		\,+\, \frac{1}{8} ( \partial_i g_n) \psi  \partial_i g_n  \,\d t \,+\, 	\frac{1}{2} \psi \partial_i g_n   \,\d W_n^i.
	\end{split}
\end{align}
Thus, summing over $i=1,2,3$ and $n\in \Z^3$, we find 
\begin{equation}\label{Eq2}
	\frac{1}{2} \sum_{n \in \Z^3} \psi \nabla \cdot ( g_n \circ \d  W_n) \,=\, -\frac{G_2}{8}\psi  \,\d t \,+\, 
	\frac{1}{2} \sum_{n \in \Z^3} \psi \nabla \cdot ( g_n  \, \d  W_n),
\end{equation}
based on \eqref{Eq_decay_condition_FG}.

Lastly we consider $(\partial_iu_j)f_n$, for which we identify 
\begin{align}
	- \frac{1}{\sqrt{2}}
	\sum_{m\in \Z^3} f_n \partial_i (\Pi \nabla\cdot(\psi f_m   \,\d B_m^{j \cdot }))\,=\,& 
	- \frac{1}{\sqrt{2}}
	\sum_{k=1}^3\sum_{m\in \Z^3} f_n \partial_i (\partial_k (\psi f_m ) )\, \d B_m^{j k }
	\\& + {\frac{1}{\sqrt{2}}
		\sum_{k,l=1}^3\sum_{m\in \Z^3} f_n \partial_{ij} \Delta^{-1}(\partial_{lk} (\psi f_m ) ) \,\d B_m^{l k }}
\end{align}
as its martingale part, recalling that $\Pi  u =  u -\nabla \Delta^{-1}\nabla \cdot u$. Whence we have 
\[
-\frac{1}{\sqrt{2}} (\partial_iu_j ) f_n \circ \d B_n^{ji} \,=\,  \frac{1+\delta_{ij}}{4}
f_n \partial_i^2(\psi f_n) 
\,\d t\,-\, 
\frac{1}{2} f_n \partial_{ij} \Delta^{-1} \partial_{ij}(\psi f_n)\,\d t\,
-\,\frac{1}{\sqrt{2}} (\partial_iu_j ) f_n\,  \d B_n^{ji},
\]
in light of \eqref{Eq_noise_matrix_covariation} and $\partial_{ij} = \partial_{ji}$.
Summing over $i,j =1,2 ,3 $ and $n\in \Z^3$, and using 
\begin{align}
	\sum_{i,j=1}^3 \frac{1+\delta_{ij}}{4}
	f_n \partial_i^2(\psi f_n) 
	\,\d t \,&=\,\sum_{i=1}^3 f_n \partial_{i}^2(\psi f_n )\,=\, f_n \Delta (\psi f_n),
	\\-
	\frac{1}{2} 
	\sum_{i,j=1}^3 \sum_{n\in \Z^3}f_n \partial_{ij} \Delta^{-1} \partial_{ij}(\psi f_n) \,&=\, 
	-\frac{1}{2}\sum_{n\in \Z^3}f_n  \Delta(\psi f_n), 
\end{align} 
we find 
due to \eqref{Eq_decay_condition_FG}, that
\begin{equation}\label{Eq3}
	\,-\,  \frac{1}{\sqrt{2}}\sum_{n\in \Z^3}\nabla  u : f_n\circ\d  B_n \,=\,
	F_1/2 \Delta \psi \,\d t \,-\ F_2/2 \psi  \d t 
	\, \d t
	\,-\,  \frac{1}{\sqrt{2}}\sum_{n\in \Z^3} \nabla  u : f_n\,\d  B_n.
\end{equation}
Inserting \eqref{Eq1}--\eqref{Eq3}\noeqref{Eq2} into \eqref{Eq_SR_app} yields the It\^o formulation 
\begin{align}
	\d \psi \, =\, &(1+F_1/2 +G_1/2)\Delta \psi\, \d t \,+\, \frac{1}{\psi }\bigl(
	|\nabla \psi|^2 \,+\, |\nabla_\sym u|^2 
	\bigr)\, \d t\,-\, \nabla \cdot ( u\psi )\,\d t\, -\, (F_2/2+ G_2/8) \psi\, \d t \\&- \sum_{n\in \Z^3} \nabla \cdot ( \psi g_n \,\d W_n ) \,
	+\, \frac{1}{2} \sum_{n \in \Z^3} \psi \nabla \cdot ( g_n  \,\d  W_n) \,-\,  \frac{1}{\sqrt{2}}\sum_{n\in \Z^3}\nabla  u : f_n\,\d  B_n
\end{align}
of \eqref{Eq_SR_app}.
\section{Difficulties related to the entropy estimate}\label{app:entropy}
The purpose of this appendix is to consolidate our claim in the introduction, that estimates in terms of the entropy \eqref{eq_ent} do not close at all noise intensities $F_1$. For this, we may informally apply It\^o's formula to compute the evolution of $\log\vt$, starting from \eqref{eqn1001}. This yields that 
	\begin{align}\begin{split}
		\d \log( \vt)\,  =\, &(1+F_1{/2} +G_1/2)\biggl(\Delta \log(\vt) + \frac{1}{\vt^2}|\nabla \vt|^2\biggr) \, \d t \, + \,\frac{1}{\vt}\bigl( (1+F_1/2) |\nabla_\sym u|^2 \,-\,F_1  |\nabla  \sqrt{\vt}|^2
		\bigr) \,\d t\\&-\, \nabla \cdot ( u \log(\vt)  )\,\d t \,-\, F_2 \,\d t \,-\, \frac{1}{2}\biggl(
		G_2 + \frac{G_1}{\vt^2}|\nabla \vt|^2
		\biggr)\,\d t
		\,-\, \frac{2F_1}{\vt} |\nabla_\sym u|^2 \,\d t  
		\\&- \sum_{n\in \Z^3} \frac{1}{\vt}\nabla \cdot ( \vt  g_n \,\d W_n ) \,-\,  \sum_{n\in \Z^3}\biggl( \frac{1}{\sqrt{\vt }}\nabla  u : f_n\,\d  B_n \biggr),
	\end{split}
\end{align}
in light of \eqref{Eq_noise_matrix_covariation} and \eqref{Eq_decay_condition_FG}. 
After simplifying and integrating in space we obtain
\begin{align}
	- \d \int_{\T^3}  \log( \vt) \,\d x \, = \,& (3F_1/2 - 1)\int_{\T^3} \frac{1}{\vt}|\nabla_\sym u|^2\,\d x \,-\, ( F_1/4+1) \int_{\T^3} \frac{1}{\vt}|\nabla \vt|^2\,\d x  \d t\,- \,(F_2 +G_2/2) \, \d t
	\\&+  \sum_{n\in \Z^3} \frac{1}{\vt}\nabla \cdot ( \vt  g_n \,\d W_n ) \,+\,  \sum_{n\in \Z^3}\biggl( \frac{1}{\sqrt{\vt }}\nabla  u : f_n\,\d  B_n \biggr).
\end{align}
While the quantity on the left-hand side may be negative, its negative part is controlled by the energy, cf.\ \cite[Section 2.2.3]{feireisl_book}. The local martingales on the right-hand side may be eliminated by taking the expectation and a stopping time argument. Regarding the deterministic terms on the right-hand side however, an entropy dissipation estimate may only be established for $F_2\le 2/3$. 
\section{Tightness in interpolation spaces}

In Section \ref{Sec:weak_ex} we need the following straightforward  lemma. 
\begin{lemma}\label{lemma_app_c}
	Let $Y_1\hookrightarrow  Y_\alpha \hookrightarrow Y_0$ be Banach spaces, such that $Y_0 \ni  y\mapsto \|y\|_{Y_1} \in \R$ is lower semicontinuous and  the interpolation inequality
	\begin{equation}\label{eqn_app}
		\|  y \|_{Y_\alpha}\,\le\, \|  y \|_{Y_0}^{1-\alpha} \|  y \|_{Y_1}^\alpha
	\end{equation}
	holds for some $\alpha \in (0,1)$. Let $(X_\epsilon)_{\epsilon\in (0,1)}$ be a family of $Y_\alpha$-valued random variables  that is uniformly tight in $ Y_0$ and has uniform tail estimates in $Y_1$, i.e., 
	\[\biggl(\sup_{\epsilon\in (0,1)} \P ( \|X_\epsilon \|_{Y_1} \ge N) \biggr)\,\to\, 0,
	\]
	as $N\to\infty$. Then $(X_\epsilon)_{\epsilon\in (0,1)}$ is also tight in $Y_\alpha$.
\end{lemma}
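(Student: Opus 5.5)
The plan is to verify tightness in $Y_\alpha$ directly by producing, for each $\eta>0$, a compact set $K\subset Y_\alpha$ with $\P(X_\epsilon\notin K)\le\eta$ for all $\epsilon$. Tightness in $Y_0$ gives a compact $K_0\subset Y_0$ with $\sup_\epsilon\P(X_\epsilon\notin K_0)\le\eta/2$. The uniform tail bound gives $N$ with $\sup_\epsilon\P(\|X_\epsilon\|_{Y_1}\ge N)\le\eta/2$. The candidate is
\[
K\,=\,K_0\cap\{y\in Y_0:\ \|y\|_{Y_1}\le N\},
\]
where we use the convention $\|y\|_{Y_1}=\infty$ for $y\notin Y_1$, as implicit in the lower semicontinuity assumption. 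Then $\P(X_\epsilon\notin K)\le\eta$ holds uniformly in $\epsilon$, since $X_\epsilon$ takes values in $Y_\alpha$.

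The first step is to show that $K$ is a closed subset of the compact set $K_0$ in $Y_0$, hence compact in $Y_0$. Closedness follows because the sublevel set $\{\|y\|_{Y_1}\le N\}$ is closed in $Y_0$ by the assumed lower semicontinuity. Moreover, $K\subset Y_1\subset Y_\alpha$.

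The second step is to upgrade this to compactness in $Y_\alpha$. Take a sequence $(y_k)\subset K$. By compactness in $Y_0$ there is a subsequence converging in $Y_0$ to some $y\in K$. Applying the interpolation inequality \eqref{eqn_app} to $y_k-y$ gives
\[
\|y_k-y\|_{Y_\alpha}\,\le\,\|y_k-y\|_{Y_0}^{1-\alpha}\bigl(2N\bigr)^{\alpha}\,\to\,0,
\]
so $y_k\to y$ in $Y_\alpha$. Thus $K$ is sequentially compact in the metric space $Y_\alpha$, and therefore compact.

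The only subtle point is measurability and the setting of the events. The events $\{X_\epsilon\notin K_0\}$ and $\{\|X_\epsilon\|_{Y_1}\ge N\}$ are the same events used in the two hypotheses, so no extra measurability argument is needed. The Borel structure of $Y_\alpha$ versus $Y_0$ does not cause trouble either: $K$ is compact in $Y_\alpha$ and hence Borel there. I expect no genuine obstacle; the key point is the use of the triangle bound $\|y_k-y\|_{Y_1}\le 2N$ inside \eqref{eqn_app}.
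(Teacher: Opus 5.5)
Your proof is correct and follows the paper's argument: you intersect a $Y_0$-compact set coming from tightness with the sublevel set $\{\|y\|_{Y_1}\le N\}$, use lower semicontinuity to keep $Y_0$-limits inside it, and upgrade $Y_0$-convergence to $Y_\alpha$-convergence via the interpolation inequality with the bound $2N$. Your choice $\{\|y\|_{Y_1}\le N\}$ is the right one; the paper's written definition of $K_\delta$ has $\ge N$ there, which is a typo.
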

\begin{proof}
	Let $\delta >0$. Our goal is to identify a compact set $K_\delta \subset Y_\alpha$, such that 
	\[ \P ( X_\epsilon  \notin K_\delta ) \,\le\, \delta  ,
	\]
	for all $\epsilon\in (0,1)$. By tightness of $(X_\epsilon)_{\epsilon\in (0,1)}$ in $Y_0$, there exists a set $J_{\delta/2}$, such that
	\[ \P ( X_\epsilon  \notin J_{\delta/2} ) \,\le\, \delta/2  ,
	\]
	while for $N$ large enough, we have 
	\[
	\P(\| X_\epsilon \|_{Y_1} \ge N   ) \,\le\, \delta/2,
	\]
	for any $\epsilon\in (0,1)$. Defining $K_\delta := \{ y\in J_{\delta/2} |  \| y \|_{Y_1} \ge N  \}$, it remains to argue that the latter set is compact in $Y_\alpha$. To this end, we take  a sequence $(y_k)_{k \in \mathbb N} $ in $ K_\delta $, so that $y_{k_l} \to y$ in $Y_0$ for a suitable subsequence. By lower semicontinuity we deduce that $y\in Y_1$ with  $\|y\|_{Y_1}\le N$.  But then \eqref{eqn_app} implies that
	\begin{align}
		\| y_{k_l} - y\|_{Y_\alpha} \,\le\, \| y_{k_l} - y\|_{Y_0}^{1-\alpha} \| y_{k_l} - y \|_{Y_1}^\alpha\,\le\, (2N)^\alpha \| y_{k_l} - y\|_{Y_0}^{1-\alpha}\,\to\, 0,
	\end{align}
	as $N\to\infty$, showing that the convergence takes also place  in $Y_\alpha$. This implies the desired compactness of $K_\delta\subset Y_\alpha$ completing the proof.	
\end{proof}

\noindent
\textbf{Data availability.} This manuscript has no associated data.

\medskip

\noindent
\textbf{Declaration -- Conflict of interest.} The authors have no conflict of interest.

	\bibliographystyle{plain}	
	\bibliography{NSF_bib}

\begin{thebibliography}{10}

\bibitem{AV19_QSEE_1}
Antonio Agresti and Mark~C. Veraar.
\newblock Nonlinear parabolic stochastic evolution equations in critical spaces
  {P}art {I}. {S}tochastic maximal regularity and local existence.
\newblock {\em Nonlinearity}, 35(8):4100, 2022.

\bibitem{AV19_QSEE_2}
Antonio Agresti and Mark~C. Veraar.
\newblock Nonlinear parabolic stochastic evolution equations in critical spaces
  part {II}.
\newblock {\em Journal of Evolution Equations}, 22(2):1--96, 2022.

\bibitem{AV_PTRF}
Antonio Agresti and Mark~C. Veraar.
\newblock The critical variational setting for stochastic evolution equations.
\newblock {\em Probab. Theory Related Fields}, 188(3-4):957--1015, 2024.

\bibitem{AV_CMP}
Antonio Agresti and Mark~C. Veraar.
\newblock Stochastic {Navier--Stokes} equations for turbulent flows in critical
  spaces.
\newblock {\em Communications in Mathematical Physics}, 405(2):43, 2024.

\bibitem{BGW07}
John~B. Bell, Alejandro~L. Garcia, and Sarah~A. Williams.
\newblock Numerical methods for the stochastic {L}andau-{L}ifshitz
  {N}avier-{S}tokes equations.
\newblock {\em Phys. Rev. E (3)}, 76(1):016708, 12, 2007.

\bibitem{BF20}
Dominic Breit and Eduard Feireisl.
\newblock Stochastic {Navier--Stokes--Fourier} equations.
\newblock {\em Indiana University Mathematics Journal}, 69(4):911--975, 2020.

\bibitem{BFH}
Dominic Breit, Eduard Feireisl, and Martina Hofmanov{\'a}.
\newblock {\em Stochastically forced compressible fluid flows}, volume~3.
\newblock Walter de Gruyter GmbH \& Co KG, 2018.

\bibitem{Bresch_Desj}
Didier Bresch and Beno\^it Desjardins.
\newblock On the existence of global weak solutions to the {N}avier-{S}tokes
  equations for viscous compressible and heat conducting fluids.
\newblock {\em J. Math. Pures Appl. (9)}, 87(1):57--90, 2007.

\bibitem{brzesniak_moytl}
Zdzis{\l}aw Brze\'zniak and El\.zbieta Motyl.
\newblock Existence of a martingale solution of the stochastic
  {N}avier-{S}tokes equations in unbounded 2{D} and 3{D} domains.
\newblock {\em J. Differential Equations}, 254(4):1627--1685, 2013.

\bibitem{BFM09}
Miroslav Bul\'i\v{c}ek, Eduard Feireisl, and Josef M\'alek.
\newblock A {N}avier-{S}tokes-{F}ourier system for incompressible fluids with
  temperature dependent material coefficients.
\newblock {\em Nonlinear Anal. Real World Appl.}, 10(2):992--1015, 2009.

\bibitem{cannizzaro}
Giuseppe Cannizzaro and Jacek Kiedrowski.
\newblock Stationary stochastic {N}avier-{S}tokes on the plane at and above
  criticality.
\newblock {\em Stoch. Partial Differ. Equ. Anal. Comput.}, 12(1):247--280,
  2024.

\bibitem{CFH_Euler}
Dan Crisan, Franco Flandoli, and Darryl~D. Holm.
\newblock Solution properties of a 3{D} stochastic {E}uler fluid equation.
\newblock {\em J. Nonlinear Sci.}, 29(3):813--870, 2019.

\bibitem{DP_D}
Giuseppe Da~Prato and Arnaud Debussche.
\newblock Two-dimensional {N}avier-{S}tokes equations driven by a space-time
  white noise.
\newblock {\em J. Funct. Anal.}, 196(1):180--210, 2002.

\bibitem{da2014stochastic}
Giuseppe Da~Prato and Jerzy Zabczyk.
\newblock {\em Stochastic equations in infinite dimensions}, volume 152.
\newblock Cambridge university press, 2014.

\bibitem{dafermos}
Constantine~M. Dafermos.
\newblock The second law of thermodynamics and stability.
\newblock {\em Arch. Rational Mech. Anal.}, 70(2):167--179, 1979.

\bibitem{DHV_16}
Arnaud Debussche, Martina Hofmanov\'a, and Julien Vovelle.
\newblock Degenerate parabolic stochastic partial differential equations:
  quasilinear case.
\newblock {\em Ann. Probab.}, 44(3):1916--1955, 2016.

\bibitem{DV10}
Arnaud Debussche and Julien Vovelle.
\newblock Scalar conservation laws with stochastic forcing.
\newblock {\em J. Funct. Anal.}, 259(4):1014--1042, 2010.

\bibitem{DP_Lions_Boltzmann}
Ronald~J. DiPerna and Pierre-Louis Lions.
\newblock On the {C}auchy problem for {B}oltzmann equations: global existence
  and weak stability.
\newblock {\em Ann. of Math. (2)}, 130(2):321--366, 1989.

\bibitem{FG23}
Benjamin Fehrman and Benjamin Gess.
\newblock Non-equilibrium large deviations and parabolic-hyperbolic {PDE} with
  irregular drift.
\newblock {\em Invent. Math.}, 234(2):573--636, 2023.

\bibitem{feireisl_book}
Eduard Feireisl.
\newblock {\em Dynamics of viscous compressible fluids}, volume~26 of {\em
  Oxford Lecture Series in Mathematics and its Applications}.
\newblock Oxford University Press, Oxford, 2004.

\bibitem{feireisl}
Eduard Feireisl and Anton\'in Novotn\'y.
\newblock {\em Singular limits in thermodynamics of viscous fluids}.
\newblock Advances in Mathematical Fluid Mechanics. Birkh\"auser Verlag, Basel,
  2009.

\bibitem{FN12}
Eduard Feireisl and Anton\'in Novotn\'y.
\newblock Weak-strong uniqueness property for the full
  {N}avier-{S}tokes-{F}ourier system.
\newblock {\em Arch. Ration. Mech. Anal.}, 204(2):683--706, 2012.

\bibitem{FG95}
Franco Flandoli and Dariusz Gatarek.
\newblock Martingale and stationary solutions for stochastic {Navier-Stokes}
  equations.
\newblock {\em Probab. Theory Related Fields}, 102(3):367--391, 1995.

\bibitem{Fourier1822}
Joseph Fourier.
\newblock {\em Th\'eorie analytique de la chaleur}.
\newblock Chez Firmin Didot, P\`ere et Fils, Paris, 1822.

\bibitem{GHW24}
Benjamin Gess, Daniel Heydecker, and Zhengyan Wu.
\newblock {Landau-Lifshitz-Navier-Stokes Equations}: Large deviations and
  relationship to the energy equality, 2024.

\bibitem{gess2026probabilisticallystrongsolutionsstochastic}
Benjamin Gess and Robert Lasarzik.
\newblock Probabilistically strong solutions to stochastic {Euler} equations,
  2026.

\bibitem{gubinelli_turra}
Massimiliano Gubinelli and Mattia Turra.
\newblock Hyperviscous stochastic {N}avier-{S}tokes equations with white noise
  invariant measure.
\newblock {\em Stoch. Dyn.}, 20(6):2040005, 39, 2020.

\bibitem{hairer_rosati}
Martin Hairer and Tommaso Rosati.
\newblock Global existence for perturbations of the 2{D} stochastic
  {N}avier-{S}tokes equations with space-time white noise.
\newblock {\em Ann. PDE}, 10(1):Paper No. 3, 46, 2024.

\bibitem{HZZ_NS_3d}
Martina Hofmanov\'a, Rongchan Zhu, and Xiangchan Zhu.
\newblock Global existence and non-uniqueness for 3{D} {N}avier-{S}tokes
  equations with space-time white noise.
\newblock {\em Arch. Ration. Mech. Anal.}, 247(3):Paper No. 46, 70, 2023.

\bibitem{hornung2019quasilinear}
Luca Hornung.
\newblock Quasilinear parabolic stochastic evolution equations via maximal
  {$L^p$-regularity}.
\newblock {\em Potential Analysis}, 50(2):279--326, 2019.

\bibitem{Analysis1}
Tuomas~P. Hyt{\"o}nen, Jan~M.A.M. Van~Neerven, Mark~C. Veraar, and Lutz Weis.
\newblock {\em Analysis in {B}anach spaces. {V}ol. {I}. {M}artingales and
  {L}ittlewood-{P}aley theory}, volume~63 of {\em Ergebnisse der Mathematik und
  ihrer Grenzgebiete. 3. Folge.}
\newblock Springer, 2016.

\bibitem{Jak97}
Adam Jakubowski.
\newblock The almost sure {S}korokhod representation for subsequences in
  nonmetric spaces.
\newblock {\em Teor. Veroyatnost. i Primenen.}, 42(1):209--216, 1997.

\bibitem{jin2024fractionalstochasticlandaulifshitznavierstokes}
Ruhong Jin and Nicolas Perkowski.
\newblock Fractional stochastic {Landau-Lifshitz Navier-Stokes} equations in
  dimension $d \geq 3$: Existence and (non-)triviality, 2024.

\bibitem{kotitsas2025gaussianfluctuationsstochasticlandaulifshitz}
Sotiris Kotitsas, Marco Romito, Zhilin Yang, and Xiangchan Zhu.
\newblock Gaussian fluctuations for the stochastic {Landau-Lifshitz
  Navier-Stokes} equation in dimension $d\geq2$, 2025.

\bibitem{KU23}
Volker Kr\"atschmer and Mikhail Urusov.
\newblock A {K}olmogorov-{C}hentsov type theorem on general metric spaces with
  applications to limit theorems for {B}anach-valued processes.
\newblock {\em J. Theoret. Probab.}, 36(3):1454--1486, 2023.

\bibitem{krylov_Lp}
Nicolai~V. Krylov.
\newblock An analytic approach to {SPDE}s.
\newblock In {\em Stochastic partial differential equations: six perspectives},
  volume~64 of {\em Math. Surveys Monogr.}, pages 185--242. Amer. Math. Soc.,
  Providence, RI, 1999.

\bibitem{lad}
Olga~A. Ladyzhenskaya.
\newblock {\em The mathematical theory of viscous incompressible flow}.
\newblock Gordon and Breach Science Publishers, New York-London, english
  edition, 1963.
\newblock Translated from the Russian by Richard A. Silverman.

\bibitem{LL87}
Lew~D. Landau and Evgeny~M. Lifshitz.
\newblock {\em Course of theoretical physics. {V}ol. 6}.
\newblock Pergamon Press, Oxford, second edition, 1987.
\newblock Fluid mechanics, Translated from the third Russian edition by J. B.
  Sykes and W. H. Reid.

\bibitem{Leray}
Jean Leray.
\newblock Sur le mouvement d'un liquide visqueux emplissant l'espace.
\newblock {\em Acta Math.}, 63(1):193--248, 1934.

\bibitem{lifshitz}
Evgeny~M. Lifshitz and Lev~P. Pitaevskii.
\newblock {\em Statistical Physics, Part 2: Theory of the Condensed State},
  volume~9 of {\em Course of Theoretical Physics}.
\newblock Pergamon Press, 1980.

\bibitem{lions_book_1}
Pierre-Louis Lions.
\newblock {\em Mathematical topics in fluid mechanics. {V}ol. 1}, volume~3 of
  {\em Oxford Lecture Series in Mathematics and its Applications}.
\newblock The Clarendon Press, Oxford University Press, New York, 1996.
\newblock Incompressible models, Oxford Science Publications.

\bibitem{lions_book_2}
Pierre-Louis Lions.
\newblock {\em Mathematical topics in fluid mechanics. {V}ol. 2}, volume~10 of
  {\em Oxford Lecture Series in Mathematics and its Applications}.
\newblock The Clarendon Press, Oxford University Press, New York, 1998.
\newblock Compressible models, Oxford Science Publications.

\bibitem{LR_book}
Wei Liu and Michael R\"ockner.
\newblock {\em Stochastic partial differential equations: an introduction}.
\newblock Universitext. Springer, Cham, 2015.

\bibitem{MR2118862}
Remigijus Mikulevicius and Boris~L. Rozovskii.
\newblock Global {$L_2$}-solutions of stochastic {N}avier-{S}tokes equations.
\newblock {\em Ann. Probab.}, 33(1):137--176, 2005.

\bibitem{naumann}
Joachim Naumann.
\newblock An existence theorem for weak solutions to the equations of
  non-stationary motion of heat-conducting incompressible viscous fluids.
\newblock {\em J. Nonlinear Convex Anal.}, 7(3):483--497, 2006.

\bibitem{Navier1822}
Claude-Louis Navier.
\newblock M\'emoire sur les lois du mouvement des fluides.
\newblock {\em M\'emoires de l'Acad\'emie Royale des Sciences de l'Institut de
  France}, pages 389--440, 1822.

\bibitem{Ottinger}
Hans~C. \"{O}ttinger.
\newblock {\em Beyond equilibrium thermodynamics}.
\newblock John Wiley and Sons, 2005.

\bibitem{prodi}
Giovanni Prodi.
\newblock Un teorema di unicit\`a{} per le equazioni di {N}avier-{S}tokes.
\newblock {\em Ann. Mat. Pura Appl. (4)}, 48:173--182, 1959.

\bibitem{protter}
Philip~E. Protter.
\newblock {\em Stochastic integration and differential equations}, volume~21 of
  {\em Stochastic Modelling and Applied Probability}.
\newblock Springer-Verlag, Berlin, second edition, 2005.
\newblock Corrected third printing.

\bibitem{QY98}
Jeremy Quastel and Horng-Tzer Yau.
\newblock Lattice gases, large deviations, and the incompressible
  {N}avier-{S}tokes equations.
\newblock {\em Ann. of Math. (2)}, 148(1):51--108, 1998.

\bibitem{rock_fully_monotone}
Michael R\"ockner, Shijie Shang, and Tusheng Zhang.
\newblock Well-posedness of stochastic partial differential equations with
  fully local monotone coefficients.
\newblock {\em Math. Ann.}, 390(3):3419--3469, 2024.

\bibitem{rockner_tamed}
Michael R\"ockner and Tusheng Zhang.
\newblock Stochastic 3{D} tamed {N}avier-{S}tokes equations: existence,
  uniqueness and small time large deviation principles.
\newblock {\em J. Differential Equations}, 252(1):716--744, 2012.

\bibitem{serrin}
James Serrin.
\newblock On the interior regularity of weak solutions of the {N}avier-{S}tokes
  equations.
\newblock {\em Arch. Rational Mech. Anal.}, 9:187--195, 1962.

\bibitem{Simon}
Jacques Simon.
\newblock Compact sets in the space {$L^p(0,T;B)$}.
\newblock {\em Ann. Mat. Pura Appl. (4)}, 146:65--96, 1987.

\bibitem{Stokes1845}
George~Gabriel Stokes.
\newblock On the theories of the internal friction of fluids in motion, and of
  the equilibrium and motion of elastic solids.
\newblock {\em Transactions of the Cambridge Philosophical Society},
  8:287--305, 1845.

\bibitem{stoch_fubini}
Mark~C. Veraar.
\newblock The stochastic {F}ubini theorem revisited.
\newblock {\em Stochastics}, 84(4):543--551, 2012.

\bibitem{wiedemann}
Emil Wiedemann.
\newblock Weak-strong uniqueness in fluid dynamics.
\newblock In {\em Partial differential equations in fluid mechanics}, volume
  452 of {\em London Math. Soc. Lecture Note Ser.}, pages 289--326. Cambridge
  Univ. Press, Cambridge, 2018.

\end{thebibliography}

\end{document}